	\def\bdef{\begin{definition}}
	\def\endef{\end{definition}}
	\def\bthm{\begin{theorem}}
	\def\ethm{\end{theorem}}
	\def\blm{\begin{lemma}}
	\def\elm{\end{lemma}}
	\def\brm{\begin{remark}}
	\def\erm{\end{remark}}
	\def\bprop{\begin{proposition}}
	\def\eprop{\end{proposition}}
	\def\bcor{\begin{corollary}}
	\def\ecor{\end{corollary}}
	\def\be{\begin{eqnarray}}
	\def\ee{\end{eqnarray}}
	\def\beal{\begin{aligned}}
	\def\enal{\end{aligned}}
	\newtheorem*{remark}{Remark}
	\newtheorem{theorem}{Theorem}[section]
	\newtheorem*{thmstar}{Theorem}
	\newtheorem{proposition}[theorem]{Proposition}
	\newtheorem{corollary}[theorem]{Corollary}
	\newtheorem{lemma}[theorem]{Lemma}
	\newtheorem*{definition}{Definition}
	\newtheorem*{main}{Main Result}
	\newtheorem*{conjecture}{Conjecture}
	\newcommand{\Z}{\mathbb{Z}}
	\newcommand{\N}{\mathbb{N}}
	\newcommand{\R}{\mathbb{R}}
	\newcommand{\T}{\mathbb{T}}
	\newcommand{\al}{\alpha}
	\newcommand{\gm}{\gamma}
	\newcommand{\Gm}{\Gamma}
	\newcommand{\lb}{\lambda}
	\newcommand{\eps}{\epsilon}
	\newcommand{\cC}{\mathcal{C}}
	\newcommand{\cA}{\mathcal{A}}
	\newcommand{\cV}{\mathcal{V}}
	\newcommand{\cG}{\mathcal{G}}
	\newcommand{\cM}{\mathcal{M}}
	\newcommand{\cL}{\mathcal{L}}
	\newcommand{\cF}{\mathcal{F}}
	\newcommand{\cN}{\mathcal{N}}
	\newcommand{\cB}{\mathcal{B}}
	\newcommand{\cH}{\mathcal{H}}
	\newcommand{\cU}{\mathcal{U}}
	\newcommand{\cI}{\mathcal{I}}
	\newcommand{\fC}{\mathfrak{C}}
	\newcommand{\cX}{\mathcal{X}}
	\newcommand{\cY}{\mathcal{Y}}
	\newcommand{\cZ}{\mathcal{Z}}
	\newcommand{\Span}{\mathrm{span}}
	\newcommand{\st}{\mathrm{st}}
	\newcommand{\wk}{\mathrm{wk}}
	\newcommand{\lM}{\mu}
	\newcommand{\bcG}{\overline{\mathcal{G}}}
\newcommand{\myheading}[1]{%
    \par\medskip\textit{#1}%
}
\newcommand{\breakheading}[1]{%
    \par\medskip\noindent\textit{#1}%
}
\numberwithin{equation}{section}
\newcommand{\dist}{\mathrm{dist}}
\newcommand{\tilphi}{\tilde{\varphi}}
\newcommand{\tilI}{\tilde{I}}
\newcommand{\bphi}{\bar{\phi}}
\newcommand{\diag}{\mathrm{diag}}
\newcommand{\bmat}[1]{\begin{bmatrix}#1\end{bmatrix}}
\newcommand{\Id}{\mathrm{Id}}
\newcommand{\bark}{\bar{k}}
\newcommand{\Sym}{\mathrm{Sym}}
\newcommand{\supp}{\mathrm{supp}}
\newcommand{\Ess}{\mathcal{ES}}
\newcommand{\Path}{\mathcal{P}}
\newcommand{\Lbi}[1]{\Lambda^{(#1)}}
\newcommand{\cBi}[1]{\mathcal{B}^{(#1)}}
\newcommand{\Gai}[1]{\Gamma^{(#1)}}
\newcommand{\cKi}[1]{\mathcal{K}^{(#1)}}
\newcommand{\cPi}[1]{\mathcal{P}^{(#1)}}
\newcommand{\cEi}[1]{\mathcal{E}^{(#1)}}
\newcommand{\Sgi}[1]{\Sigma^{(#1)}}
\newcommand{\cBe}[1]{\mathcal{B}^{(#1)}_e}
\newcommand{\cLi}[1]{\mathcal{L}^{(#1)}}
\newcommand{\graph}{\mathrm{Graph}}
\newcommand{\inv}{\mathrm{inv}}
\DeclarePairedDelimiter{\flr}{\lfloor}{\rfloor}
\author{
 V. Kaloshin\footnote{University of Maryland at College Park
 (\texttt{vadim.kaloshin\@ gmail.com})},\ \ \
 K. Zhang\footnote{University of Toronto (\texttt{kzhang\@ math.utoronto.edu})}}
\title{Dynamics of the dominant Hamiltonian,
with
 applications to Arnold diffusion}
\begin{document}
\maketitle

\begin{abstract}

It is well known that instabilities of nearly integrable Hamiltonian 
systems occur around resonances. Dynamics near resonances 
of these systems is well approximated by the associated 
averaged system, called {\it slow system}. Each resonance is 
defined by a basis (a collection of integer vectors). We introduce 
a class of resonances whose basis can be divided into two well 
separated groups and call them {\it dominant}. We prove that 
the associated slow system can be well approximated by 
a subsystem given by one of the groups, both in the sense 
of the vector field and weak KAM theory. 

One of crucial ingredients of proving Arnold diffusion is 
understanding the structure of invariant (Aubry) sets of nearly 
integrable systems.  As an important application we construct 
a diffusion path for a generic nearly integrable system such 
that invariant (Aubry) sets along this path have a "simple" 
structure similar to the structure of Aubry-Mather sets of 
twist maps. This is a crucial ingredient in proving Arnold 
diffusion for convex Hamiltonians in any number of degrees 
of freedom.
\end{abstract}


\tableofcontents

\section{Introduction}
\label{sec:intro}

Consider a nearly integrable system with $n\frac12$ degrees of freedom
\begin{equation}
 \label{pert-hamiltonian}
	H_\varepsilon(\theta, p, t) = H_0(p) + \varepsilon H_1(\theta, p, t), \quad \theta \in \T^n,\ p \in \R^n,\ t\in \T. 
\end{equation}
We will restrict to the case where the integrable part $H_0$ is strictly convex, more precisely, we assume that there is $D>1$ such that 
\[
	D^{-1}\, \Id \le \partial^2_{pp}H_0(p) \le D\, \Id
\]
as quadratic forms, where $\Id$ denotes the identity matrix. 

The main motivation behind this work is the question of Arnold diffusion, that is, topological instability for the system $H_\varepsilon$. Arnold  provided the first example in \cite{Arn64}, and asks (\cite{Ar3,Ar2,Ar5}) whether topological instability is ``typical'' in nearly integrable systems with $n\ge 2$ (the system is stable when $n=1$, due to low dimensionality). 

It is well known that the instabilities of nearly integrable systems occurs along resonances. Given an integer vector $k = (\bark, k^0) \in \Z^n \times \Z$ with $\bark \ne 0$, we define the resonant submanifold to be	$\Gamma_k = \{p \in \R^n: \,k \cdot (\omega(p), 1) =0\}$, 
where $\omega(p) = \partial_p H_0(p)$. More generally, we consider a subgroup $\Lambda$ of $\Z^{n+1}$ which does not contain vectors of the type $(0, \cdots, 0, k^0)$, called a \emph{resonance lattice}. The \emph{rank} of $\Lambda$ is the dimension of the real subspace containing it. Then for a rank $d$ resonance lattice $\Lambda$, we define
\[
	\Gamma_\Lambda = \bigcap \{\Gamma_k: \ \  k \in \Lambda\} = \bigcap_{i=1}^d \Gamma_{k_i},
\]
where $\{k_1, \cdots, k_d\}$ is any linear independent set in $\Lambda$. We call such $\Gamma_\Lambda$ a {\it $d-$resonance submanifold} ($d-$resonance for short), which is a co-dimension $d$ submanifold of $\R^n$, and in particular, an $n-$resonant submanifold is a single point. We say that $\Lambda$ is \emph{irreducible} if it is not contained in any lattices of the same rank, or equivalently, $\Span_\R \Lambda \cap \Z^{n+1} = \Lambda$.

We now focus on the diffusion that occurs along a connected net of  $(n-1)-$resonances, with each $(n-1)-$resonance being  a curve in $\R^n$. Let us first consider diffusion along a single $(n-1)-$resonance $\Gamma$. It is shown in \cite{BKZ11} that generically, diffusion indeed occur  along $\Gamma$, except for a finite subset of $n-$resonances (called 
{\it the strong resonances}) which divides $\Gamma$ into disconnected
 components. A strong resonance can be viewed as the intersection of 
$\Gamma$ with a transveral $1-$resonance manifold $\Gamma_{k'}$
(see Figure \ref{fig:diffusion path}). 

\begin{figure}[t]
\centering
\def\svgwidth{2.5in}
\input 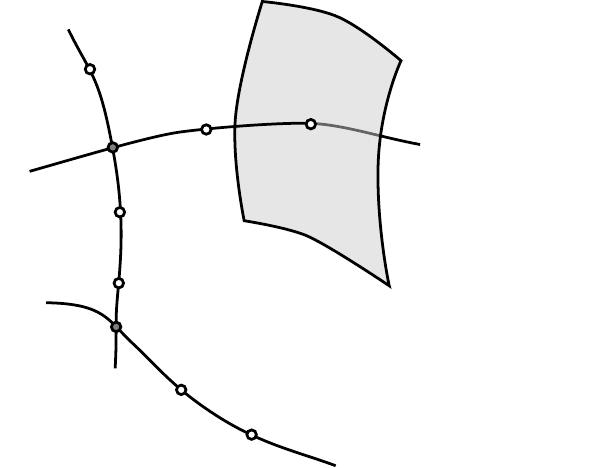_tex
\caption{Diffusion path and essential resonances in $n=3$.  The hollow dots requires crossing, while the grey dots requires switching}
\label{fig:diffusion path}
\end{figure}

The main obstacle to proving diffusion along $\Gamma$ reduces to whether the diffusion can ``cross'' the strong resonances. In a more general diffusion path that contains two intersecting $(n-1)-$resonances $\Gamma_1$ and $\Gamma_2$, the intersection is an $n-$resonance which by definition is 
strong. The question is then whether one can travel along $\Gamma_1$
 and then ``switch'' to $\Gamma_2$ at the intersection. Solution to either 
problem requires an understanding of the system near an $n-$resonance. 

For an $n-$resonance $\{p_0\} = \Gamma_\Lambda$, we assume that $\Lambda$ is irreducible, and $\cB = [k_1, \cdots, k_n]$ is an ordered basis over $\Z$.  The study of diffusion near $p_0$ reduces to the study of a particular \emph{slow system} defined on $\T^n \times \R^n$, denoted $H^s_{p_0, \cB}$. More precisely, in an $O(\sqrt{\varepsilon})-$neighborhood of $p_0$, the system $H_\varepsilon$ admits the normal form (see \cite{KZ13}, Appendix B)
\[
	H^s_{p_0,\cB}(\varphi, I) + \sqrt{\varepsilon} P(\varphi, I, \tau), \quad \varphi \in \T^n, I\in \T^n, \tau \in \sqrt{\varepsilon}\T, 
\]
where
\[
	\varphi_i = k_i \cdot (\theta,t), \, 1\le i \le n, \quad (p-p_0)/\sqrt{\epsilon} = \bark_1 I_1 + \cdots \bark_n I_n. 
\]
Therefore, $H_\epsilon$ is conjugate to  a fast periodic perturbation to $H^s_{p_0, \cB}$.  While it is possible to give a basis free definition of the slow system, we opt to choose a particular basis $\cB$, and rendering our setup \emph{basis dependent}. Such averaged systems were studied in \cite{Mat08}.

When $n=2$, the slow system is a $2$ degrees of freedom mechanical system, the structure of
its (minimal) orbits is well understood. This fact underlies the results on Arnold diffusion in 
two and half degrees of freedom 
(see \cite{Mather03}, \cite{Mat08}, \cite{Mather11}, \cite{Ch13}, \cite{KZ13},\cite{GK14}, \cite{KMV04}, \cite{Mar1}, \cite{Mar2}). 
This is no longer the case when $n>2$, which is a serious obstacle to proving Arnold diffusion in 
higher degrees of freedom. In \cite{KZ14} it is proposed that we can sidestep this difficulty by using 
\emph{dimension reduction}: using existence of normally hyperbolic invariant cylinders (NHICs) to 
restrict the system to a lower dimensional manifold. This approach only works when the slow system 
has a particular \emph{dominant structure}, which is the topic of this paper. 

In order to make this idea specific it is convenient 
to define the slow system for any $p_0$ and any $d-$resonance 
$d\le n$. For $p_0 \in \R^n$, an irreducible rank $d$ resonance 
lattice $\Lambda$, and its basis 
$\cB = [k_1, \cdots, k_d]$, the slow system is
\begin{equation}
\label{eq:slow-system}
H_{p_0, \cB}^s(\varphi, I) = K_{p_0, \cB}(I) - U_{p_0, \cB}(\varphi), \quad \varphi\in \T^d, I \in \T^d.
\end{equation}
Suppose the fourier expansion of $H_1$ is  $\sum_{k \in \Z^{n+1}} h_k(p) e^{2\pi i k \cdot (\theta, t)}$, then  
\begin{equation}
\label{eq:Kn}
	K_{p_0, \cB} (I) = \frac12 \partial^2_{pp}H_0(p_0)(I_1 \bark_1 + \cdots + I_d \bark_d) \cdot (I_1 \bark_1 + \cdots + I_d \bark_d), 
\end{equation}
\begin{equation}
\label{eq:Un}
	  U_{p_0, \cB}(\varphi_1, \cdots, \varphi_d) = - \sum_{l \in \Z^d} h_{l_1 k_1 + \cdots l_d k_d}(p) e^{2\pi i (l_1 \varphi_1 + \cdots + l_d \varphi_d)}. 
\end{equation}
The system $H_{p_0, \cB}^s$ is only dynamically meaningful when $p_0 \in \Gamma_\Lambda$. However, the more general set up allows us to embed the meaningful slow systems into a nice space.

We say that the resonance lattice $\Lambda$ admits a dominant structure if it contains an irreducible lattice $\Lambda^\st$ of rank $m <d$, such that 
\begin{equation}
 \label{eq:relative-lattice-norm}
	M(\Lambda|\Lambda^\st) : = 
\min_{k \in \Lambda \setminus \Lambda^\st} |k| \gg \max_{k \in \cB^\st} |k|, 
\end{equation}
where $|k| = \sup_i|k_i|$ is the sup-norm. Given the relation $\Lambda^\st \subset \Lambda$, one can choose an \emph{adapted basis} $\cB = [k_1, \cdots, k_d]$ of $\Lambda$, meaning that $\cB^\st = [k_1, \cdots, k_m]$ 
is a properly ordered  basis of $\Lambda^\st$. 

In this case we have two 
systems $H^s_{p_0, \cB^\st}$ and $H^s_{p_0, \cB}$, which we will call the \emph{strong system} and \emph{slow system} respectively. When the lattices have a dominant structure (see \eqref{eq:relative-lattice-norm}), the slow system $H^s_{p_0, \cB}$ inherits considerable amount of information from the strong system. Indeed, let us denote 
\[
	  H^s_{p_0, \cB^\st} = K^\st(I_1, \cdots, I_m) - U^\st(\varphi_1, \cdots, \varphi_m), 
\]
\[
	  H^s_{p_0, \cB} = K^s(I_1, \cdots, I_d) - U^s(\varphi_1, \cdots, \varphi_d), 
\]
under \eqref{eq:relative-lattice-norm} and after choosing an appropriate adapted basis, we will show that 
\begin{equation}\label{eq:comp-slow-st}
  K^\st(I_1, \cdots, I_m) = K^s(I_1, \cdots, I_m, 0, \cdots, 0), \quad \|U^s - U^\st\|_{C^2} \ll \|U^\st\|_{C^2},
\end{equation}
which indicates $H^s_{p_0, \cB}$ can be approximated by $H^s_{p_0, \cB^\st}$. The variables $\varphi_i, I_i$, $1 \le i \le m$ are called the \emph{strong variables}, while $\varphi_i, I_i$, $m+1 \le i \le d$ are called the \emph{weak variables}.

Recall that for each convex Hamiltonian $H$, we can associate a Lagrangian $L = L_H$, and the Euler-Lagrange flow is conjugate to the Hamiltonian flow. In particular, when $H = K(I) - U(\varphi)$, and $K(I)$ is a quadratic form,
\[
   \dot{\varphi} = v, \quad \dot{v} = \frac{d}{dt}(\partial_I K(I))  = (\partial^2_{II} K) \, \partial_\varphi U(\varphi).
\]
whose vector field is denoted the Euler-Lagrange vector field.  Denote by $X_{Lag}^\st$ and $X_{Lag}^s$ the Euler-Lagrange vector fields associated to the Hamiltonians $H^s_{p_0, \cB^\st}$ and $H^s_{p_0, \cB}$. Since the system for $X^\st_{Lag}$ is only defined for the strong variables $(\varphi_i, v_i)$, $1\le i\le m$, we define a \emph{trivial extension} of $X^\st_{Lag}$ by setting $\dot{\varphi}_i = \dot{v}_i =0 $, $m+1 \le i \le d$.



We show that after 
choosing a proper adapted basis, and a suitable rescaling 
transformation in the weak variables, the transformed vector field $X^s_{Lag}$ converges to that of $X^\st_{Lag}$ in some sense. In particular, if $X^\st_{Lag}$ admits a normally hyperbolic invariant cylinder (NHIC), so does $X^s_{Lag}$.  In a separate direction, 
we also obtain a limit theorem on the weak KAM solutions by 
variational arguments. We now formulate our main results in 
loose language, leaving the precise version for the next section. 
\begin{main}
Assume that $r>n+4(d-m)+4$. Given a fixed lattice
$\Lambda^\st$ of rank $m$ with a fixed basis $\cB^\st$, for each 
rank $d,\ m\le d\le n$ irreducible lattice $\Lambda\supset \Lambda^\st$, there exists an adapted basis $\cB$ such that:
\begin{enumerate}
\item (Geometrical) As $M(\Lambda|\Lambda^\st) \to \infty$, the projection of $X_{Lag}^s$ to the strong variables $(\varphi_i, v_i)$, $1 \le i \le m$ converges to $X_{Lag}^\st$ uniformly. Moreover, by introducing a coordinate change and rescaling affecting only the weak variables $(\varphi_i, v_i)$, $m+1 \le i \le d$, the transformed vector field of $X_{Lag}^s$ converges to a trivial extension of the vector field of $X_{Lag}^\st$. As a corollary, we obtain that if $X_{Lag}^\st$ admits an NHIC, then so does $X_{Lag}^s$ for sufficiently large $M(\Lambda|\Lambda^\st)$. 
\item (Variational) As $M(\Lambda|\Lambda^\st)\to \infty$, the weak KAM solution of  $H^s_{p_0, \cB}$ (of properly chosen cohomology classes)  converges uniformly to a trivial extension of a weak KAM solution of $H^s_{p_0, \cB^\st}$, considered as functions on $\R^d$. We also obtain corollaries concerning the limits of Ma\~ne, Aubry sets, rotation number of minimal measure, and Peierl's barrier function. The precise definitions of these objects will be given later. 
\end{enumerate}
\end{main}

The combination of the persistence of NHIC, and limits of the weak KAM solutions allows us to localize and restrict Aubry sets. 
 As a demonstration of our theory,  in Appendix~\ref{sec:diff-path} 
we show that one can construct a connected net of $(n-1)-$resonances, 
such that all strong resonances have the dominant structure. 
We then show that most\footnote{See definition of the AM property in Appendix~\ref{sec:intro-path}} Aubry sets with a chosen rational homology class are contained 
in three dimensional NHICs, and the topology of such Aubry sets resembles the Aubry-Mather sets 
for twist maps. This is closely related to the AM property we introduce in Appendix~\ref{sec:diff-path}. 
The relation between the property of the Aubry sets and diffusion mechanism is given in Appendix~\ref{sec:diffusion-amproperty}.  In \cite{KZ13} and \cite{KZ14} using these structures we prove 
existence of Arnold diffusion. This net of diffusion paths also can be chosen to 
be $\gamma-$dense for any pre-determined $\gamma>0$.  
We show that  for a ``typical'' $H_\varepsilon$ (for any $n > 2$), such 
a net exists and expect to prove Arnold diffusion along this net in 
a future publication.

The statement that $H^s_{p_0, \cB^\st}$ approximates $H^s_{p_0, \cB}$ is related to the classic result of partial averaging (see for example \cite{AKN}). The statement $\min_{k \in \Lambda \setminus \Lambda^\st} |k| \gg \max_{k \in \Lambda^\st} |k|$ says that the resonances in $\Lambda^\st$ is much \emph{stronger} than the rest of the resonances in $\Lambda$.   Partial averaging says that the weaker resonances contributes to smaller terms in a normal form. 

However, our treatment of the partial averaging theory is quite different from the classical theory. By looking at the rescaling limit, we study the property of the averaging independent of the small parameter $\varepsilon$. 
The theory is far from a simple corollary of \eqref{eq:comp-slow-st}, with the main difficulty coming from the fact that 
as $M(\Lambda|\Lambda^\st) \to \infty$, the quadratic part of the system $H^s_{p_0, \cB}$ becomes unbounded.

In \cite{Mat08}, John Mather developed a theory of (partial) averaging for a nearly integrable Lagrangian  system 
$$
L_\varepsilon(\theta, v, t) = L_0(v)  + \varepsilon L_1(\theta, v, t),\  
\textup{ where }\ (\theta,v,t)\in\T^n \times \R^n \times \T.
$$ 
In particular, it is shown that the slow system relative to a resonant lattice can be defined on the tangent bundle of 
a sub-torus $\T^d \times \R^d,\ d<n$. Quantitative estimates on the action of  minimizing orbits of the original system 
versus the slow system are obtained. Our variational result is related to \cite{Mat08}, but different in many ways. 
We work with the scaling limit system, and the small parameter $\varepsilon$ does not show up in our analysis. 
We also avoid quantitative estimates (in the statement of the theorem) and obtain a limit theorem in weak KAM 
solutions. This allows us to take consecutive limits, which is very useful for our construction of diffusion path in 
higher dimensions (see Appendix~\ref{sec:diff-path}).

The formulation of the limit theorem in weak KAM solution requires special care. 
A natural candidate is Tonelli convergence (convergence of Lagrangian within the Tonelli family, see \cite{Ber10}). In our setup, $H^s_{p_0, \cB^\st}$ and $H^s_{p_0, \cB}$ are defined on different spaces, we need to consider the trivial extension of $H^s_{p_0, \cB^\st}$ to a higher dimensional space. The extended Lagrangian is then \emph{degenerate} and obviously not Tonelli. Moreover, the standard $C^2$ norm of the Lagrangian becomes unbounded in the limit process.  We nevertheless obtain the convergence of weak KAM solutions. 

While this paper is mainly motivated by Arnold diffusion, we hope our treatment of partial averaging is of independent interest. 

The plan of the paper is as follows. The rigorous formulation of the results will be presented in section~\ref{sec:results}. The choice of the basis is handled in section~\ref{sec:basis}, and the estimates of the vector fields, including the geometrical result is in  section~\ref{sec:strong-slow}. The variational aspect is more involved, and occupies sections 
\ref{sec:con-weak-kam}  and  \ref{sec:cor-aub}, with some technical estimates deferred to section~\ref{sec:tech-est} .
As we already mentioned Appendices A and B are devoted to application of dominant systems to Arnold diffusion.
In Appendix C we prove Theorem \ref{thm:nhic-persist} about existence of normally hyperbolic invariant 
cylinders stated in section \ref{sec:nhic-persistence}.

\section{Formulation of results}
\label{sec:results}

\subsection{The slow system and the choice of basis}

Recall that a slow system is 
\[
	H_{p_0, \cB}^s(\varphi, I)  = K_{p_0, \cB}(I) - U_{p_0, \cB}(\varphi), \quad \varphi \in \T^d, I \in \R^d,
\]
defined for a rank-$d$ irreducible lattice $\Lambda$ with ordered basis $\cB = [k_1, \cdots, k_d]$, and a point $p_0 \in \R^n$. Let $Q_0(p) = \partial^2_{pp}H_0(p) \in \Sym(n)$, where $\Sym(n)$ denote the space of $n\times n$ symmetric matrices. Define 
\be \label{quadratic-form}
	Q(p) = \bmat{ Q_0(p) & 0 \\ 0 & 0} \in \Sym(n+1), 
\ee
then \eqref{eq:Kn} becomes
\begin{equation}\label{eq:Kd}
  	K_{p_0, \cB}(I_1, \cdots, I_d) = \frac12 Q(p_0)(I_1 k_1 + \cdots + I_d k_d) \cdot (I_1 k_1 + \cdots + I_d k_d). 
\end{equation}
Let us also denote 
\begin{equation}
\label{eq:ZB}
	Z_\cB(\varphi_1, \cdots, \varphi_d,p) = \sum_{l \in \Z^d} h_{l_1 k_1 + \cdots l_d k_d}(p) e^{2\pi i (l_1 \varphi_1 + \cdots + l_d \varphi_d)}, 
\end{equation}
where  $H_1(\theta, p, t)  = \sum_{k \in \Z^{d+1}} h_k(p) e^{2\pi i k \cdot(\theta, t)}$, then \eqref{eq:Un} becomes 
\[
U_{p_0, \cB}(\varphi) = - Z_{\cB}(\varphi, p_0). 
\]

We now fix  a rank-$m$ irreducible resonant lattice $\Lambda^\st$, called the strong lattice,  and study all irreducible lattices $\Lambda\supset \Lambda^\st$ of rank $d$. Fix a basis $\cB^\st = [k_1, \cdots, k_m]$ of the strong lattice $\Lambda^\st$, we extend it to an adapted basis $\cB = [k_1, \cdots, k_d]$ of $\Lambda$. The extended basis is of course non-unique, and our first theorem concerns the choice of a ``nice'' basis.  When $\cB$ is an adapted basis, the corresponding slow system is $H^s_{p_0, \cB}$ is denoted $H^s_{p_0, \cB^\st, \cB^\wk}$, where
\[
	\cB^\st = [k_1, \cdots, k_m] = [k_1^\st, \cdots, k_m^\st],
\]
\[
  \cB^\wk = [k_{m+1}, \cdots, k_d] = [k_1^\wk, \cdots, k_{d-m}^\wk].
\]
Denote
\[
	\varphi^\st = (\varphi_1, \cdots, \varphi_m), \quad \varphi^\wk = (\varphi_{m+1}, \cdots, \varphi_d),
\]
\[
	I^\st = (I_1, \cdots, I_m), \quad I^\wk = (I_{m+1}, \cdots, I_d), 
\]
we write
\begin{multline*}
	H^s_{p_0, \cB}(\varphi, I) = H^s_{p_0, \cB^\st, \cB^\wk}(\varphi^\st, \varphi^\wk, I^\st, I^\wk)  \\
	 =  K_{p_0,\cB^\st,\cB^\wk}(I^\st, I^\wk) - U_{p_0, \cB^\st}(\varphi^\st) - U^\wk_{p_0, \cB^\st, \cB^\wk}(\varphi^\st, \varphi^\wk), 
\end{multline*}
where 
\[
	U_{p_0, \cB^\st}(\varphi^\st) = - Z_{\cB^\st}(\varphi^\st, p_0),  \quad U^\wk_{p_0, \cB^\st, \cB^\wk}(\varphi^\st, \varphi^\wk)= - (Z_\cB - Z_{\cB^\st})(\varphi^\st, \varphi^\wk,p_0).
\]
Note that the slow system  to $\Lambda^\st$ is
\begin{align*}
	H^s_{p_0, \cB^\st}(\varphi^\st, I^\st) &= K_{p_0, \cB^\st}(I^\st) - U_{p_0, \cB^\st}(\varphi^\st) \\
	&= K_{p_0, \cB^\st, \cB^\wk}(I^\st, 0) - U_{p_0, \cB^\st}(\varphi^\st),
\end{align*}
where the second line follows directly from \eqref{eq:Kd}.
In other words, $H^s_{p_0, \cB^\st}(\varphi^\st, I^\st)$ can be obtained from $H^s_{p_0, \cB^\st, \cB^\wk}$ by setting $I^\wk =0$ and $U^\wk_{p_0, \cB^\st, \cB^\wk}=0$. 

We will show that by choosing a good basis, the term $U^\wk_{p_0, \cB^\st, \cB^\wk}$ can be made arbitrarily small. Moreover, we can further decompose $U^\wk_{p_0, \cB^\st, \cB^\wk}$ to obtain precise estimates on its dependence on each  of the weak angles $\varphi_j^\wk$.  To do this 
we define, for each $1 \le i \le d$, $\cB_i = [k_1, \cdots, k_i]$ an ordered 
basis and $\Lambda_i$  the corresponding rank $i$ lattice. Then 
$\cB_m = \cB^\st \subset \cdots \subset \cB_d = \cB$. For 
$m < i  \le d$, we write
\begin{align*}
	 U^\wk_{p_0,\cB_{i-1}, \cB_i} &= -  (Z_{\cB_i}- Z_{\cB_{i-1}})(\varphi_1, \cdots, \varphi_i,p_0) \\
	 &= -  (Z_{\cB_i}- Z_{\cB_{i-1}})(\varphi^\st, \varphi_1^\wk, \cdots, \varphi_{i-m}^\wk,p_0).   
\end{align*}
Then the slow system takes the form
\begin{equation}
\label{eq:Hs}
H^s_{p_0, \cB^\st, \cB^\wk} = K_{p_0, \cB} - U_{p_0, \cB^\st} - U^\wk_{p_0, \cB_m, \cB_{m+1}} - \cdots - U^\wk_{\cB_{d-1}, \cB_d}. 
\end{equation}

%

\begin{theorem}\label{thm:basis-norm}
Let $\Lambda^\st \subset \Lambda$ be irreducible resonance lattices of 
rank $m$ and $d$ resp., where $m < d$. Fix an ordered basis 
$\cB^\st =[k_1^\st, \cdots, k_m^\st]$ of $\Lambda^\st$. Suppose $H_1$ 
is $C^r$ with $r >n+ 2(d-m)+4$, and $\|H_1\|_{C^r} =1$. Then there exists  
a constant $\kappa = \kappa(H_0, \cB^\st, n)>1$, integer vectors $\cB^\wk = [k_1^\wk, \cdots, k_{d-m}^\wk]$ with $[\cB^\st, \cB^\wk]$ forming an adapted basis,  such that the following hold. 
  \begin{enumerate}
   \item For any $1 \le i < j \le d-m$, $|k_i^\wk| < \kappa (1+|k_j^\wk|)$. 
   \item For $1 \le j \le d-m$, we have
   \[
   \|U^\wk_{p_0,\cB_{j+m-1}, \cB_{j+m}}\|_{C^2} \le \|Z_{\cB_{j+m}} - Z_{\cB_{j+m-1}}\|_{C^2} \le \kappa |k_j^\wk|^{-r + n + 2(d-m) +4}. 
   \]
   \end{enumerate} 
\end{theorem}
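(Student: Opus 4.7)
My strategy is a greedy inductive construction of $\cB^\wk$ that makes Property~1 near-automatic, together with a termwise Fourier estimate exploiting the $C^r$-decay of the coefficients $h_k$ and the minimality built into the construction.  Starting from $\Lambda_m:=\Lambda^\st$, I construct $k_j^\wk$ by induction on $j=1,\ldots,d-m$. Assume $\cB_{m+j-1}$ has been produced and spans a rank-$(m+j-1)$ \emph{primitive} sublattice $\Lambda_{m+j-1}\subset\Lambda$, i.e., $\Lambda/\Lambda_{m+j-1}$ is torsion-free.  I take $k_j^\wk$ to be a vector of minimal sup-norm in $\Lambda\setminus\Lambda_{m+j-1}$, and, if necessary, modify it by an element of $\Lambda_{m+j-1}$ of norm bounded in terms of $\cB^\st$ and $n$ alone, so as to ensure that $\Lambda_{m+j}:=\Lambda_{m+j-1}+\Z k_j^\wk$ is itself primitive in $\Lambda$.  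Since no vector of $\Lambda$ is omitted, after $d-m$ steps $\Lambda_d=\Lambda$ and $[\cB^\st,\cB^\wk]$ is an adapted basis.

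For Property~1, if $i<j$ then $k_j^\wk\in\Lambda\setminus\Lambda_{m+j-1}\subset\Lambda\setminus\Lambda_{m+i-1}$, so the greedy choice at step $i$ gives $|k_i^\wk|\le |k_j^\wk|$; the primitivity modification then inflates this by the multiplicative constant $\kappa$. For Property~2, I expand
\begin{equation*}
	Z_{\cB_{j+m}}-Z_{\cB_{j+m-1}} \;=\; \sum_{l\in\Z^{j+m},\; l_{j+m}\neq 0} h_{l\cdot\cB_{j+m}}(p)\, e^{2\pi i l\cdot\varphi}
\end{equation*}
and bound its $C^2$-norm termwise using three ingredients: (i) $\|H_1\|_{C^r}=1$ gives the standard Fourier decay $|h_k(p)|\le C_n(1+|k|)^{-r}$; (ii) every frequency $k=l\cdot\cB_{j+m}$ lies in $\Lambda\setminus\Lambda_{m+j-1}$, hence $|k|\ge |k_j^\wk|$ by the greedy choice; and (iii) derivatives in $\varphi$ produce factors of $|l|$, which a Cramer-type reduced-basis inequality controls by $|l|\le C(\max_i|k_i|)^{j+m-1}|k|$. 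Invoking Property~1 and $|\cB^\st|=O(1)$ to replace $\max_i|k_i|$ by a multiple of $1+|k_j^\wk|$, and summing $(1+|k|)^{2-r}$ over $k\in\Z^{n+1}$ with $|k|\ge |k_j^\wk|$ (which yields a tail of order $|k_j^\wk|^{n+3-r}$), the exponents combine to the claimed bound $\kappa|k_j^\wk|^{-r+n+2(d-m)+4}$.

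The delicate point is the primitivity modification inside the construction: a minimal-norm coset representative of $\Lambda/\Lambda_{m+j-1}$ need not on its own generate a primitive sublattice, and without primitivity the induction would fail to terminate in a basis of $\Lambda$.  My plan is to establish a short lemma asserting that any such representative can be modified by a short element of $\Lambda_{m+j-1}$ so that the resulting $\Lambda_{m+j}$ is primitive, at the cost of an additive constant depending only on $\cB^\st$ and $n$.  Once this is granted, both Property~1 and the Fourier estimate reduce to straightforward bookkeeping.
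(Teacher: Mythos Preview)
Your overall plan—greedy selection of shortest vectors, a correction step to obtain a genuine basis, then a Fourier tail bound combined with a Cramer-type control on the change of variables—is exactly the route the paper takes. But the correction step, as you formulate it, cannot work. You propose to modify the minimal-norm vector $k_j'^{\wk}\in\Lambda\setminus\Lambda_{m+j-1}$ \emph{by an element of $\Lambda_{m+j-1}$} so that $\Lambda_{m+j-1}+\Z k_j^\wk$ becomes primitive. Translation by $\Lambda_{m+j-1}$ does not change the image of $k_j'^{\wk}$ in the free quotient $\Lambda/\Lambda_{m+j-1}$, so if that image happens to be divisible, no translate will cure it. A concrete obstruction: take $\Lambda=\Z^2$, $\Lambda^\st=\Z(3,5)$; the shortest vector outside $\Lambda^\st$ is $(1,0)$, yet $\det\bigl[(3,5),(1,0)+t(3,5)\bigr]=-5$ for every integer $t$, so no translate of $(1,0)$ by $\Lambda^\st$ completes a basis. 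What the paper does instead (Proposition~\ref{prop:basis}, via Lemma~\ref{lem:cons-min}) is Siegel's reduction: it replaces $k_j'$ by $k_j=c_jk_j'+\sum_{i<j}c_{j,i}k_i'$ with $0<c_j\le1$, $0\le c_{j,i}<1$, which is a rescaled combination, not a translate. This gives $|k_j|\le\sum_{i\le j}|k_i'|\le\bar M+(d-m)M_j$—a bound \emph{proportional} to $M_j$ rather than an additive constant—and that is precisely what yields Property~1 with $\kappa$ depending only on $\cB^\st$ and $n$.

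There is also a bookkeeping slip in your Fourier estimate. The bound $|l|\le C(\max_i|k_i|)^{j+m-1}|k|$ is valid but wasteful: if you then replace $\max_i|k_i|$ by $C(1+|k_j^\wk|)$, the contribution becomes $|k_j^\wk|^{2(j+m-1)}$, and combined with the tail $|k_j^\wk|^{n+3-r}$ this matches the target exponent $-r+n+2(d-m)+4$ only when $m\le1$. The remedy is to bound the \emph{product} $\prod_i|k_i|$ rather than the maximum, separating the $m$ strong vectors (each $\le\bar M=O_{\cB^\st}(1)$) from the $\le j$ weak ones (each $\lesssim|k_j^\wk|$); this gives $|l|\lesssim|k_j^\wk|^{\,j}|k|$ and the exponent comes out right. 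The paper obtains the same effect by completing $\cB_{m+j}$ to a square integer matrix with unit coordinate vectors and invoking Lemma~\ref{lem:int-norm} to bound $\|P^{-1}\|\lesssim\bar M^m|k_j^\wk|^{\,j}$.
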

\begin{remark}
Item 2 implies that as $M(\Lambda|\Lambda^\st) \to \infty$, for 
the specifically chosen basis, we have 
$\|U^\wk_{p_0, \cB^\st,\cB^\wk}\|_{C^2} \to 0$. 

Item 1 says that vectors in $\cB^\wk$ are approximately in an increasing order. This, when combined with item 2, implies that the norm of the weak potentials $U_{p_0, \cB_{j+m-1}, \cB_{j+m}}$ are approximately in an decreasing order.
\end{remark}
This theorem is proven in section~\ref{sec:basis}. 

We will call any Hamiltonian that satisfy the conclusions of Theorem~\ref{thm:basis-norm} a \emph{dominant Hamiltonian}. In the next section, we define an abstract space of dominant Hamiltonians. 

\subsection{Abstract space of dominant Hamiltonians}
\label{sec:intro-dom}

We start with the following data:
\begin{enumerate}
\item A $C^2$ function $Q_0: \R^n \to Sym(n)$, where $Sym(n)$ denote the space of $n\times n$ symmetric matrices. Define as before $Q(p) = \bmat{Q_0(p) & 0 \\ 0 & 0}$, and assume $D^{-1}\Id \le Q_0 \le D \Id$. 
\item An irreducible resonance lattice $\Lambda^\st \subset \Z^{n+1}$ of rank $1 \le m < n$, and a basis $\cB^\st = [k_1^\st, \cdots, k_m^\st]$. 
\item Constants $\kappa >1$ and $q>1$.  
\end{enumerate}
We continue to use the notation $(k_1, \cdots, k_m) = (k_1^\st, \cdots, k_m^\st)$ and $(k_{m+1}, \cdots, k_d) = (k_1^\wk, \cdots, k_{d-m}^\wk)$, and apply the same convention to the variables $\varphi$ and $I$. Define
\[
	\Omega^{m,d}:= (\Z^{n+1})^{d} \times \R^n \times C^2(\T^m)  \times C^2(\T^{m+1}) \times \cdots \times C^2(\T^d),
\quad \cH^s: \Omega^{m,d} \to C^2(\T^d \times \R^d), 
\]
with
\begin{multline*}
	(\cB^\st = [k_1^\st, \cdots, k_m^\st], 
\cB^\wk = [k_1^\wk, \cdots, k_{d-m}^\wk], 
p_0, U^\st, \cU^\wk= \{U^\wk_1, \cdots, U^\wk_{d-m}\}) \mapsto  \\
	 \cH^s(\cB^\st,\cB^\wk, p_0, U^\st, \cU^\wk) = K_{p_0, \cB^\st, \cB^\wk}(I) - U^\st(\varphi_1, \cdots, \varphi_m) - \sum_{j=1}^{d-m} U_j^\wk(\varphi_1, \cdots, \varphi_{j+m}), 
\end{multline*}
where 
\[
	K_{p_0, \cB^\st, \cB^\wk}(I) = \frac12  
 Q(p_0)(k_1 I_1 + \cdots k_d I_d) \cdot (k_1 I_1 + \cdots + k_d I_d) .
\]
We equip $\Omega^{m,d}$ with the product topology, with discrete topology on $k_j^\wk$ and the standard norms on other components.  $\cH^s$ is smooth in $p_0, U^\st, U_1^\wk, \cdots, U_{d-m}^\wk$. 
Let $\Omega^{m,d}(\cB^\st)$ be the subset of $\Omega^{m,d}$ 
with fixed $\cB^\st$.

We define  $\Omega^{m,d}_{\kappa, q} (\cB^\st)
\subset \Omega^{m,d}(\cB^\st)$ to be the tuple 
$(\cB^\wk, p_0, U^\st, \cU^\wk)$ satisfying 
the following conditions:
\begin{enumerate}
   \item For any $1 \le i < j \le d-m$, $|k_i^\wk| \le \kappa$ $(1+|k_j^\wk|)$. 
   \item For each $1 \le j \le d-m$,   $\|U^\wk_j\|_{C^2} \le \kappa |k_j^\wk|^{-q}$.
\end{enumerate}
Each element in $\cH^s(\Omega^{m,d}_{\kappa,q})$ is called an $(m,d)-$dominant Hamiltonian with constants $(\kappa, q)$.  Define $$\lM(\cB^\wk) = \min_{1 \le j \le d-m}|k_j^\wk|,
$$ 
then in $\Omega^{m,d}_{\kappa, q}(\cB^\st)$, we have 
$\|U_j^\wk\| \le \kappa \lM(\cB^\wk)^{-q}$, i.e. the weak potential 
$U^\wk := \sum_{j=1}^{d-m} U_j^\wk\to 0$ as $\lM(\cB^\wk) \to \infty$. 

We now restate Theorem~\ref{thm:basis-norm} using the formal definition. 
\begin{thmstar}[Theorem~\ref{thm:basis-norm} restated]
Under the assumptions of Theorem~\ref{thm:basis-norm},  there exists  a constant $\kappa = \kappa(H_0, \cB^\st, n)>1$, integer vectors $\cB^\wk = [k_1^\wk, \cdots, k_{d-m}^\wk]$ with $\cB^\st, \cB^\wk$ forming an adapted basis, such that
\[
	(\cB^\wk, p, U_{p_0, \cB^\st}, (U_{p_0, \cB_m, \cB_{m+1}}, \cdots, U_{p_0, \cB_{d-1}, \cB_d})) \in \Omega^{m,d}_{\kappa, r-n- 2(d-m)-4}(\cB^\st). 
\]
\end{thmstar}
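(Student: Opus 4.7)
The plan is a two-step construction: first reduce in the quotient lattice, then lift back to $\Lambda$.  Because $\Lambda^\st$ is irreducible, it is saturated in $\Z^{n+1}$ (and hence in $\Lambda$), so the image $\bar\Lambda := \pi(\Lambda)$ under the canonical projection $\pi\colon \R^{n+1}\to \R^{n+1}/\Span_\R \Lambda^\st$ is a free abelian group of rank $d-m$.  I apply a Minkowski (or LLL) reduction to $\bar\Lambda$, producing a basis $\bar k_1^\wk,\dots,\bar k_{d-m}^\wk$ with two standard properties: (a) $|\bar k_j^\wk|$ is comparable, up to a constant depending only on $n$, to the $j$-th successive minimum of $\bar\Lambda$; and (b) every $v\in\bar\Lambda$ not in $\Span_\R\{\bar k_1^\wk,\dots,\bar k_{j-1}^\wk\}$ satisfies $|v|\ge c(n)\,|\bar k_j^\wk|$ (via Gram--Schmidt applied to the reduced basis).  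Each $\bar k_j^\wk$ is lifted to $k_j^\wk\in\Lambda$ by choosing any preimage and subtracting its nearest point in $\Lambda^\st$; because $\Lambda^\st$ has a fundamental domain of sup-diameter at most $C(\cB^\st)$, the lift satisfies $|\bar k_j^\wk|\le|k_j^\wk|\le|\bar k_j^\wk|+C(\cB^\st)$.  The union $\cB=\cB^\st\cup\cB^\wk$ is an adapted basis of $\Lambda$ since its reduction mod $\Lambda^\st$ is a basis of $\bar\Lambda$.

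Item~(1) is then immediate: successive minima are non-decreasing, so $|\bar k_i^\wk|\le C(n)\,|\bar k_j^\wk|$ for $i<j$, and combining with the two-sided lift estimate gives $|k_i^\wk|\le\kappa(1+|k_j^\wk|)$ for a sufficiently large $\kappa=\kappa(H_0,\cB^\st,n)$.

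For item~(2), the function $Z_{\cB_{j+m}}-Z_{\cB_{j+m-1}}$ is Fourier-supported on modes $k(l)=\sum_{s=1}^{j+m} l_s k_s$ with $l_{j+m}\ne 0$.  The standard Fourier decay $|\partial_p^\beta h_k(p)|\le C_n|k|^{|\beta|-r}$ (from $\|H_1\|_{C^r}=1$, via iterated integration by parts), together with the bound $|l|\le C(\cB^\st,n)|k(l)|$ which holds because the adapted basis is reduced, reduces the estimate to
\[
  \|Z_{\cB_{j+m}}-Z_{\cB_{j+m-1}}\|_{C^2}\ \le\ C\sum_{k\in\Lambda_{j+m}\setminus\Lambda_{j+m-1}} |k|^{2-r}.
\]
For any such $k$ the projection $\pi(k)$ has nonzero $\bar k_j^\wk$-coefficient and so lies outside $\Span_\R\{\bar k_1^\wk,\dots,\bar k_{j-1}^\wk\}$; property~(b) and $|k|\ge|\pi(k)|$ then give the uniform lower bound $|k|\ge c\,|k_j^\wk|$.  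I factor out $|k|^{-s}\le c\,|k_j^\wk|^{-s}$ with $s=r-n-2(d-m)-4$ and bound the residual by the ambient series $\sum_{k\in\Z^{n+1}\setminus\{0\}}|k|^{-n-2(d-m)-2}$.  The exponent is strictly less than $-(n+1)$ (using $d>m$), so the series converges absolutely to an $n$-dependent constant, yielding the required factor $|k_j^\wk|^{-r+n+2(d-m)+4}$.

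The main technical point is property~(b), the ``per-layer'' lower bound of the reduced basis with constants depending only on $d-m\le n$.  This is classical (Gram--Schmidt applied to a Minkowski-reduced basis), but it is the only ingredient in which the ambient dimension enters the constants, and it drives the $n$-dependence of $\kappa$.  Everything else is bookkeeping: the additive constant $C(\cB^\st)$ in the lift, the transfer $|l|\lesssim|k|$ from reducedness, and the choice of $s$ large enough for the residual Fourier sum to converge on $\Z^{n+1}$.
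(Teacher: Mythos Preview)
Your argument is correct and follows a genuinely different route from the paper's proof.

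For the basis, the paper works directly in $\Lambda$: it picks successive minima $k_j'$ relative to the growing chain $\Lambda_{j-1}$ and then applies Siegel's procedure (Lemma~3.1) to turn the $k_j'$ into an honest basis with controlled coefficients.  You instead pass to the quotient $\bar\Lambda=\Lambda/\Lambda^\st$, apply Minkowski reduction there, and lift by nearest-point in $\Lambda^\st$.  Both yield item~(1) with essentially the same work.

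The real divergence is in item~(2).  The paper never bounds the $\varphi$--Fourier sum of $Z_{\cB_i}-Z_{\cB_{i-1}}$ directly.  Instead it pulls back to the ambient torus: writing $(Z_{\cB_i}-Z_{\cB_{i-1}})(P_i^T(\theta,t))=[H_1]_{\Lambda_i}-[H_1]_{\Lambda_{i-1}}$, it applies the standard tail estimate (Lemma~3.2) to the right-hand side and pays $\|P_i^{-1}\|^2$ for the coordinate change, which via Lemma~3.3 costs at most $M_i^{2(i-m)}$.  This is where the $2(d-m)$ in the exponent comes from in the paper.  You instead bound the sum over $l$ directly, and the crucial input is the inequality $|l|\le C(\cB^\st,n)\,|k(l)|$.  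This does hold for your basis, but it deserves slightly more than ``because the adapted basis is reduced'': one needs (i) that the Gram--Schmidt vectors of a Minkowski-reduced basis satisfy $|\bar k_j^{*}|\ge c(n)|\bar k_j|$, giving $|l_{m+j}|\le C|\pi(k(l))|/|\bar k_j^*|$; (ii) that $\Span_\R\Lambda^\st$ is a rational subspace, so $|\bar k_j^*|\ge |\bar k_j|\cdot c(n) \ge \lambda_1(\bar\Lambda)\cdot c(n)$ is bounded below by a constant depending only on $\cB^\st$; and (iii) that the nearest-point lift keeps the $\Span_\R\Lambda^\st$--component of each $k_j^\wk$ bounded by $C(\cB^\st)$, which then controls the strong coefficients $l_1,\dots,l_m$.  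Once $|l|\lesssim|k(l)|$ is in hand, your factor-and-sum argument is clean and in fact could yield a sharper exponent than the paper's (you sum over $\Z^{n+1}$ rather than the sparser $\Lambda_{j+m}$, which throws the improvement away, but that is harmless here).

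One cosmetic point: your constant carries a factor $c^{-s}$ with $s=r-n-2(d-m)-4$, so $\kappa$ depends on $r$.  The paper's proof has the same hidden dependence when passing from $M_i$ to $|k_i|$, so this is not a discrepancy with the paper, only with the literal statement $\kappa=\kappa(H_0,\cB^\st,n)$.
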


The strong Hamiltonian is defined by the mapping
\begin{equation*}
	\cH^\st: \R^n \times C^2(\T^m) \to C^2(\T^m \times \R^m), \quad  \cH^\st(p_0, U^\st)
	= K_{p_0, \cB^\st, \cB^\wk}(I^\st, 0) - U^\st(\varphi^\st).
\end{equation*}
We extend the definition to $\Omega^{m,d}$ by writing $\cH^\st(\cB^\st,\cB^\wk, p_0, U^\st, \cU^\wk) = \cH^\st(p_0, U^\st)$. We will prove all our limit theorems in the space $\Omega^{m,d}_{\kappa,q}(\cB^\st)$. 

\subsection{The rescaling limit}
\label{sec:intro-rescale}
 We fix	$\cB^\st, \kappa>1$ and 
$(\cB^\wk, p, U^\st, \cU^\wk) \in \Omega^{m,d}_{\kappa, q}(\cB^\st)$. 
Denote
\[
	H^s  = \cH^s(\cB^\st,\cB^\wk, p, U^\st, \cU^\wk),   \quad H^\st = \cH^\st(p, U^\st).
\]
Then
\[
	H^s(\varphi, I) = K(I) - U^\st(\varphi^\st) - U^\wk(\varphi^\st, \varphi^\wk), \quad  H^\st(\varphi^\st, I^\st) = K(I^\st, 0) - U^\st(\varphi^\st), 
\]
where $U^\wk = \sum_{j=1}^{d-m} U_j^\wk$. As $\lM(\cB^\wk) \to \infty$, we have $\|U^\wk\|_{C^2} \to 0$. However, $K(I^\st, I^\wk)$ is not a small perturbation of $K(I^\st, 0)$, in fact, as $\lM(\cB^\wk) \to \infty$,  $K(I^\st, I^\wk)$ becomes unbounded (since each $|k_j^\wk| \to \infty$, see also \eqref{eq:Kd}) .

We write
\begin{equation}
\label{eq:block-ABC}
	\partial^2_{II}K = 
	\begin{bmatrix}
	A & B \\ B^T & C
	\end{bmatrix}, \quad A = \partial^2_{I^\st I^\st}K, B = \partial^2_{I^\st I^\wk}K, C = \partial^2_{I^\wk I^\wk}K, 
\end{equation}
then 
\begin{equation}\label{eq:ABC}
	(A)_{ij} = (k_i^\st)^TQk_j^\st, \quad (B)_{ij} = (k_i^\st)^T Q k_j^\wk, \quad 
	(C)_{ij} = (k_i^\wk)^T Q k_j^\wk. 
\end{equation}
Note in particular that $A =\partial^2_{I^\st I^\st} H^\st$. The Hamiltonian equation for $H^s$ reads
\begin{equation*}
	 \begin{cases}
	\dot\varphi^\st = A I^\st + BI^\wk, \quad & \dot{I}^\st = \partial_{\varphi^\st} U,   \ \ \\
	\dot\varphi^\wk = B^T I^\st + C I^\wk,	\quad &\dot{I}^\wk  = \partial_{\varphi^\wk} U,\qquad \qquad
	 \end{cases}
\end{equation*}
where $U = U^\st + U^\wk$. Then the Lagrangian vector field is 
\begin{equation}\label{eq:Hs-Lag}
  \begin{cases}
  \dot\varphi^\st = v^\st, \quad & \dot{v}^\st = A\partial_{\varphi^\st} U + B \partial_{\varphi^\wk} U,   \ \ \\
	\dot\varphi^\wk = v^\wk,	\quad &\dot{v}^\wk  = B^T \partial_{\varphi^\st} U  + C\partial_{\varphi^\wk} U,\qquad \qquad
  \end{cases}
\end{equation}
which will be compared to the Lagrangian vector field of $H^\st$
\begin{equation}\label{eq:Xst}
	\dot{\varphi}^\st = v^\st, \quad \dot{v}^\st = A \partial_{\varphi^\st} U^\st,
\end{equation}
denoted $X^\st$. 
To show that the projection of \eqref{eq:Hs-Lag} converges to \eqref{eq:Xst}, we only need to show $\|B \partial_{\varphi^\wk}U\| \to 0$. 

For convergence of weak variables, we will need a rescaling. It turns out that it is better to rescale the $I^\wk$ variable. Introduce the coordinate change
\begin{equation}
\label{eq:half-lag}
(\varphi^\st, v^\st, \varphi^\wk, v^\wk) \mapsto (\varphi^\st, v^\st, \varphi^\wk,   I^\wk). 
\end{equation}
 This is a ``half Lagrangian'' setting in the sense that $(\varphi^\st, v^\st)$ is remain the Lagrangian setup, while $(\varphi^\wk, I^\wk)$ is in the Hamiltonian format. Using 
 \[
 v^\st = A I^\st + B I^\wk, \quad v^\wk = B^T I^\wk + C I^\wk,
 \]
 we get 
 \[
 v^\wk = B^T A^{-1} v^\st - \tilde{C} I^\wk,
 \]
 where $\tilde{C} = C - B^T A^{-1} B$ is an invertible symmetric matrix. Then the half-Lagrangian equation writes
\begin{equation}
\label{eq:Xs}
	 \begin{cases}
	\dot\varphi^\st = v^\st, \quad &\dot{v}^\st = A \partial_{\varphi^\st} U + B \partial_{\varphi^\wk} U,\\
	 \dot\varphi^\wk = B^T A^{-1}v^\st - \tilde{C} I^\wk, \quad	&\dot{I}^\wk = \partial_{\varphi^\wk} U.
	\end{cases}
\end{equation}
 We denote by $X^s(\varphi^\st, v^\st, \varphi^\wk, I^\wk)$ the vector field of \eqref{eq:Xs}, defined on the universal cover $\R^m \times \R^m\times \R^{d-m} \times \R^{d-m}$.

Consider the trivial lift of the strong Lagrangian vector field $X^\st$, defined on the universal cover
\begin{equation}
\label{eq:XstL}
	 \begin{cases}
	\dot\varphi^\st = v^\st, \quad \qquad &\dot{v}^\st = A \partial_{\varphi^\st}U, \\
	\dot\varphi^\wk = 0, \qquad	\quad &\dot{I}^\wk = 0,
	\end{cases}
\end{equation}
whose vector field we denote by $X^\st_L(\varphi^\st, v^\st, \varphi^\wk, I^\wk)$. We show that $X^\st_L$ is a rescaling limit of $X^s$. 

Given $1\ge  \sigma_1 \ge \cdots \ge \sigma_{d-m}>0$, let $\Sigma = \diag\{\sigma_1, \cdots, \sigma_{d-m}\}$. We define a rescaling coordinate change $\Phi_\Sigma: \R^{2d}\to \R^{2d}$ by
\be \label{rescale}
	\Phi_\Sigma: (\varphi^\st,v^\st,  \varphi^\wk, I^\wk) \mapsto (\varphi^\st,  v^\st, \tilde{\varphi}^\wk,\tilI^\wk): = (\varphi^\st,  v^\st, \Sigma^{-1}\varphi^\wk,\Sigma I^\wk).
\ee
The rescaled vector field for $X^s$ is
\begin{equation}\label{eq:rescaling}
 \tilde{X}^s := (\Phi_\Sigma)^{-1} X^s \circ \Phi_\Sigma^{-1},
\quad \tilde{X}^s(\varphi^\st, v^\st, \varphi^\wk,  I^\wk)=
(\Phi_\Sigma)^{-1} X^s
(\varphi^\st,  v^\st, \Sigma^{-1}\varphi^\wk, \Sigma I^\wk), 
\end{equation}
while $X^\st_L$ is unchanged under the rescaling. 

\begin{theorem}\label{thm:resc-est}
Fix $\cB^\st$ and $\kappa>1$.
Assume that $q>2$. Then there exists a constant $M = M(\cB^\st, Q, \kappa, q,d-m) >1$, such that for $(\cB^\wk, p, U^\st, \cU^\wk) \in \Omega^{m,d}_{\kappa, q}(\cB^\st)$ and $H^s = \cH^s(\cB^\st,\cB^\wk, p, U^\st, \cU^\wk)$, $H^\st = \cH^\st(p, U^\st)$, such that the following hold. 

For the rescaling parameter $\sigma_j = |k_j^\wk|^{-\frac{q+1}{3}}$, uniformly on $\R^{m} \times \R^{d-m} \times \R^{m} \times \R^{d-m}$ we have 
\[
	\|\Pi_{(\varphi^\st, v^\st)}(\tilde{X}^s - X^\st_L) \|_{C^0} \le M \lM(\cB^\wk)^{-(q-1)},
\]
\[
	\|D\tilde{X}^s - DX_L^\st\|_{C^0} \le M \lM(\cB^\wk)^{-\frac{q-2}{3}}. 
\]
\end{theorem}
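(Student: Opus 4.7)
The plan is to compute $\tilde X^s-X^\st_L$ in block form in the variables $(\varphi^\st,v^\st,\tilde\varphi^\wk,\tilI^\wk)$ and estimate each of its components using the dominant-structure bounds from Theorem~\ref{thm:basis-norm}. Writing $U=U^\st+U^\wk$ with $U^\wk=\sum_{j=1}^{d-m}U_j^\wk$, the nonzero pieces of the difference come entirely from the weak part: the $v^\st$-block is $A\partial_{\varphi^\st}U^\wk+B\partial_{\varphi^\wk}U$, the $\tilde\varphi^\wk$-block is a linear function of $v^\st$ and $\tilI^\wk$ with coefficient matrices involving $\Sigma^{\pm1}$, $B^TA^{-1}$ and $\tilde C$, and the $\tilI^\wk$-block is $\partial_{\varphi^\wk}U$ weighted by $\Sigma$ and pulled back through $\Phi_\Sigma$. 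The central analytic input is the per-level bound $\|U_j^\wk\|_{C^2}\le\kappa|k_j^\wk|^{-q}$; combined with the fact that $\partial_{\varphi_{m+j}}U^\wk$ receives contributions only from the $U_\ell^\wk$ with $\ell\ge j$, Property~1 of Theorem~\ref{thm:basis-norm} ($|k_i^\wk|\le\kappa(1+|k_j^\wk|)$ for $i<j$) forces $|k_\ell^\wk|\gtrsim_\kappa|k_j^\wk|$ for $\ell\ge j$ and hence $|\partial_{\varphi_{m+j}}U^\wk|\lesssim_\kappa|k_j^\wk|^{-q}$, with analogous estimates for second derivatives.

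For the $C^0$-bound on $\Pi_{(\varphi^\st,v^\st)}(\tilde X^s-X^\st_L)$, the term $A\partial_{\varphi^\st}U^\wk$ is trivially $O(\mu^{-q})$. The main term is $B\partial_{\varphi^\wk}U$: since $B_{aj}=(k_a^\st)^TQ\,k_j^\wk$ satisfies $|B_{aj}|\lesssim|k_j^\wk|$ with a constant depending only on $\cB^\st$ and $Q$, the per-coordinate estimate above yields $|(B\partial_{\varphi^\wk}U)_a|\lesssim\sum_j|k_j^\wk|^{1-q}\lesssim\mu^{-(q-1)}$, which is the desired bound.

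For the $C^0$-bound on $D\tilde X^s-DX^\st_L$, I work out the Jacobian blockwise. Two worst-case types of terms emerge: linear kinetic coefficients in the $\tilde\varphi^\wk$-block of size $\sigma_j|k_j^\wk|$, and second derivatives of $U$ pulled through the chain rule of size $\sigma_j^{-2}|k_j^\wk|^{-q}$. Writing $\sigma_j=|k_j^\wk|^{\alpha}$, the first type has magnitude $|k_j^\wk|^{1+\alpha}$ and the second $|k_j^\wk|^{-q-2\alpha}$; both decay only when $-q/2<\alpha<-1$, and the minimum of the two decay rates $-1-\alpha$ and $2\alpha+q$ is maximized when they coincide, at $\alpha=-(q+1)/3$, giving common rate $(q-2)/3$. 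This reproduces the theorem's rescaling exponent and its decay rate $\mu^{-(q-2)/3}$; all other blocks (the $A\partial^2U^\wk$ and $B\partial^2U$ entries, the $\tilI^\wk$-derivatives of $\dot{\tilI}^\wk$, and so on) give strictly faster decay by essentially the same per-$j$ case analysis.

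The main obstacle is that the unrescaled kinetic Hessian itself is unbounded: the $C$-block $\partial^2_{I^\wk I^\wk}K$ has entries $(k_i^\wk)^TQk_j^\wk$ of order $|k_i^\wk||k_j^\wk|$, so $X^s$ has no uniform limit as $\mu\to\infty$ without rescaling in the weak variables. The fine tuning of the exponent $(q+1)/3$ is precisely what absorbs this growth while preserving the smallness of the $U^\wk$-contributions. Equally essential is that Theorem~\ref{thm:basis-norm} delivers a per-level decomposition $U^\wk=\sum_jU_j^\wk$ with separate bounds $\kappa|k_j^\wk|^{-q}$: a single uniform bound on $\|U^\wk\|_{C^2}$ would not let us match each power of $|k_j^\wk|$ appearing in $B$ or $\tilde C$ against a comparable negative power coming from the $j$-th weak potential, which is exactly what the argument requires when the weak vectors can have vastly different sizes.
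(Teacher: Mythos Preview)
Your proposal is correct and follows essentially the same approach as the paper: compute $\tilde X^s-X^\st_L$ and its Jacobian explicitly in block form (the paper's \eqref{eq:xs-diff} and \eqref{eq:dxs-diff}), then bound each entry using the per-level estimates $\|U_j^\wk\|_{C^2}\le\kappa|k_j^\wk|^{-q}$ together with the quasi-monotonicity $|k_i^\wk|\le\kappa(1+|k_j^\wk|)$ for $i<j$. The paper packages the individual block estimates into a separate lemma (Lemma~\ref{lem:matrix-bound}), but the content is identical to what you outline; your observation that the exponent $-(q+1)/3$ is exactly the value balancing the kinetic term $\sigma_j|k_j^\wk|$ against the potential term $\sigma_j^{-2}|k_j^\wk|^{-q}$ is the right explanation for the choice of $\sigma_j$, though the paper simply posits it. One minor terminological point: the inputs you attribute to Theorem~\ref{thm:basis-norm} are in fact the defining properties of the space $\Omega^{m,d}_{\kappa,q}(\cB^\st)$ itself, so no appeal to that theorem is needed here.
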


In particular, Theorem~\ref{thm:resc-est} implies that as $\lM(\cB^\wk) \to \infty$, the vector field $\tilde{X}^s$ 
converges to $X^\st_L$ in the $C^1$ topology over compact sets. Theorem~\ref{thm:resc-est} is proven in 
section~\ref{sec:vector-field}. 

\subsection{Persistence of normally hyperbolic invariant cylinders}
\label{sec:nhic-persistence}

Our main application for Theorem~\ref{thm:resc-est} is to prove persistence of normally hyperbolic invariant cylinders (NHICs). 

Let $W$ be a manifold. For $R>0$, let $B^l_R\subset \R^l$ denote the ball 
of radius $R$ at the origin. A $2l-$cylinder $\Lambda$ is defined by 
$\Lambda = \chi(\T^l \times B^l_R)$, where 
$\chi: \T^l \times B^l_R \to W$ is an embedding. 

Let $\phi_t$ be a $C^2$ flow on $W$, and  $\Lambda \subset W$ be a cylinder.  
We say that $\Lambda$ is \emph{normally hyperbolic (weakly) invariant cylinder} (NHWIC) if there exists 
$t_0 >0$ such that the following hold. 
\begin{itemize}
\item The vector field of $\phi_t$ is tangent to $\Lambda$ at every $z \in \Lambda$. 	
\item For each $z \in \Lambda_a$, there exists a splitting
\[
	T_z M = E^c(z) \oplus E^s(z) \oplus E^u(z), \quad \text{ where } E^c(z) = T_z \Lambda, 
\]
weakly invariant in the sense that 
\[
	D\phi_{t_0}(z) E^\sigma(z) = E^\sigma(\phi_{t_0}z), \quad \text{ if }  z,\, \phi_{t_0}z \in \Lambda \quad \text{and} 
\quad \sigma = c, s, u.  
\]
\item There exists $0 < \alpha < \beta <1$ and a $C^1$ Riemannian metric $g$ called the adapted metric on a neighborhood of $\Lambda$ such that whenever $z, \phi_{t_0}z \in \Lambda$,
\[
	\|D\phi_{t_0}(z)|E^s\|, \quad \|(D \phi(z)|E^u)^{-1}\| < \alpha,
\]
\[ 
\|(D\phi_{t_0}(z)|E^c)^{-1}\|,  \quad  \|D\phi_{t_0}(z)|E^c\| > \beta, 
\]
where the norms taken is with respect to the metric $g$. 
\end{itemize}
The cylinder is called \emph{normally hyperbolic (fully) invariant} if it satisfies the above conditions, and both $\Lambda$ and $\partial \Lambda$ are invariant under $\phi_{t_0}$.  A more common definition of normally hyperbolic (fully) invariant cylinders assumes a spectral radius condition, but our definition is equivalent, see  e.g. \cite{BrSt} Prop.5.2.2. 

Moreover:
\begin{itemize}
 \item  If the parameters $\alpha, \beta$ satisfies the bunching condition $\alpha < \beta^2$, then the bundles $E^s, E^u$ are $C^1$ smooth. 
 \item  When $E^s, E^u$ are smooth, we can always choose the adapted metric $g$ such that $E^s$, $E^u$ and $E^c$ are orthogonal. 
\end{itemize}


Recall that $X_{Lag}^\st$, $X_{Lag}^s$ denotes the Lagrangian vector fields. Suppose $X_{Lag}^\st$ admits a normally hyperbolic (fully) invariant cylinder $\Lambda^\st$, we claim that $X_{Lag}^s$  admits an weakly invariant cylinder diffeomorphic to $\Lambda^\st \times (\T^{d-m} \times \R^{d-m})$. 

\begin{theorem}\label{thm:nhic-persist}
Consider a strong lattice $\cB^\st$, a strong potential $U^\st \in C^2(\T^m),$ $\kappa>0,\ a>0,$ and $q >2$. 

 Assume that the Euler-Lagrange vector field $X^\st_{Lag}$ of $H^\st = \cH^\st(\cB^\st, p_0, U^\st)$ admits a $2l-$dimensional cylinder $\Lambda^\st_a=\chi^\st(\T^l \times B^l_{1+a})$ that 
is normally hyperbolic (fully) invariant, with the parameters $0 < \alpha < \beta^2 < 1$. 

Then there exists 
an open set $V \supset \Lambda^\st_0$ such that for any 
$\delta>0$, there exists $M>0$, such that for any 
$(\cB^\st, \cB^\wk, p_0, U^\st) \in \Omega_{m,d}^{\kappa,q}(\cB^\st)$, $H^s = \cH^s(\cB^\st, \cB^\wk, p_0, U^\st) $, the following hold.

There exists a $C^1$ embedding
 \[
 \eta^s = (\eta^\st, \eta^\wk) : (\T^l \times B^l) \times (\T^{d-m} \times \R^{d-m})  \to (\T^m \times B^m) \times (\T^{d-m} \times \R^{d-m}),
 \]
 such that $\Lambda^s = 
\eta^s((\T^l \times B^l) \times (\T^{d-m} \times \R^{d-m}))$ 
is a $2(l+d-m)-$dimensional NHWIC under $X_{Lag}^s$. Moreover, 
we have 
 \[
 \|\eta^\st - \chi^\st\|_{C^0}<\delta,
 \]
 and any $X_{Lag}^s-$invariant set contained in $V \times (\T^{d-m}\times \R^{d-m})$ is contained in $\Lambda^s$.  
 \end{theorem}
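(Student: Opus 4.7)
The plan is to pass to the rescaled coordinates $\Phi_\Sigma$ introduced in Section~\ref{sec:intro-rescale}, in which Theorem~\ref{thm:resc-est} furnishes $C^1$-closeness of the rescaled vector field $\tilde X^s$ to the trivial extension $X^\st_L$, uniformly on the universal cover, with smallness of order $\lM(\cB^\wk)^{-(q-2)/3}$. The crucial observation is that $X^\st_L$ has a pure product structure: on the strong factor $(\varphi^\st, v^\st)$ it acts as $X^\st$, while on the weak factor $(\varphi^\wk, I^\wk)$ it is identically zero. Consequently the set
\[
  \Lambda^\st_L \;:=\; \Lambda^\st_a \times (\T^{d-m} \times \R^{d-m})
\]
is invariant under the time-$t_0$ map of $X^\st_L$ and inherits a normally hyperbolic splitting: the bundles $E^s, E^u$ are the hyperbolic bundles of $\Lambda^\st$ extended constantly along the weak directions, while $E^c = T\Lambda^\st_a \oplus T(\T^{d-m} \times \R^{d-m})$. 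The weak directions contribute only neutral rates, but the strong rates $\alpha < \beta^2$ are unchanged, so the bunching condition persists and $E^s, E^u$ remain $C^1$.

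Next I would apply a standard $C^1$ version of the persistence theorem for normally hyperbolic invariant manifolds (graph-transform style, in the spirit of Hirsch--Pugh--Shub or Bates--Lu--Zeng), treating $\tilde X^s$ as a $C^1$-small perturbation of $X^\st_L$. For $\lM(\cB^\wk)$ beyond a threshold $M$ depending on $\cB^\st$, $Q$, $\kappa$, $q$, the data of $\Lambda^\st_a$, and the target accuracy $\delta$, this produces a $C^1$ embedded submanifold $\tilde\Lambda^s$ diffeomorphic to $(\T^l \times B^l) \times (\T^{d-m} \times \R^{d-m})$, normally hyperbolic and weakly invariant for $\tilde X^s$, and $C^1$-close to $\Lambda^\st_L$. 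Setting $\Lambda^s := \Phi_\Sigma^{-1}(\tilde\Lambda^s)$ and reading off the embedding $\eta^s = (\eta^\st, \eta^\wk)$, the fact that $\Phi_\Sigma$ is the identity on the strong coordinates yields $\|\eta^\st - \chi^\st\|_{C^0} < \delta$ at once. Local uniqueness then follows because the stable and unstable manifolds of $\tilde\Lambda^s$ foliate a uniform neighborhood, so any complete orbit that remains in this tube must lie on $\tilde\Lambda^s$; translating back through $\Phi_\Sigma^{-1}$ and shrinking to an appropriate $V \supset \Lambda^\st_0$ yields the claim that any $X^s_{Lag}$-invariant set inside $V \times (\T^{d-m} \times \R^{d-m})$ sits on $\Lambda^s$.

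The main obstacle is non-compactness: the manifold $\Lambda^\st_L$ is unbounded in the $\R^{d-m}$ factor, whereas classical NHIM persistence theorems are phrased over compact invariant manifolds. Two features of the setup rescue the argument. First, $X^\st_L$ is translation-invariant in $(\varphi^\wk, I^\wk)$, so $\Lambda^\st_L$ has bounded geometry and admits a uniform tubular neighborhood. Second, Theorem~\ref{thm:resc-est} provides the $C^1$ perturbation bound globally on $\R^{2d}$, not merely on compact subsets. Together these let one run the graph-transform contraction in the Banach space of $C^1$-bounded sections of the normal bundle of $\Lambda^\st_L$, producing a unique fixed point that is $\tilde\Lambda^s$. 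A secondary subtlety is the strong-direction boundary $\chi^\st(\T^l \times \partial B^l_{1+a})$, which need not be preserved under perturbation; this forces the conclusion down to weak invariance, and the buffer parameter $a > 0$ built into the hypothesis is precisely what guarantees that $\Lambda^\st_0$ lies safely in the interior of $\tilde\Lambda^s$ so that the uniqueness statement can be applied on $V$.
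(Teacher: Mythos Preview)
Your strategy coincides with the paper's: pass to the rescaled half-Lagrangian coordinates, observe that $X^\st_L$ has product structure so that $\Lambda^\st_a \times (\T^{d-m}\times\R^{d-m})$ is normally hyperbolic with the same exponents, and then invoke a persistence theorem for the $C^1$-small perturbation $\tilde X^s$. The paper, however, does not cite an off-the-shelf non-compact NHIM theorem. Instead it builds a bespoke center-manifold result via Conley--McGehee isolation blocks: it passes to time-$1$ maps, straightens the bundles in a tubular neighborhood of $\Lambda^\st_a$, and then---to handle the boundary $\partial B^l_{1+a}$---replaces the perturbed map by a mollified combination $\tilde F = (1-\rho)F + \rho L$ interpolating to the \emph{linearization} at the cylinder near the boundary. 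This makes the boundary of the center component exactly invariant, so the topological isolating-block conditions [C1]--[C2] and cone conditions [C3]--[C4] can be verified directly, yielding a Lipschitz (then $C^1$) graph over the full non-compact center $\Omega^\st_a \times \Omega^\wk$.

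Your appeal to bounded-geometry NHIM persistence (Bates--Lu--Zeng, Eldering) is a legitimate alternative and arguably more economical, but you should be aware that the combination of a non-compact center \emph{and} a manifold with boundary is not covered by the most commonly cited statements; the paper's mollification trick is precisely what lets it avoid that issue. One small omission: you invert only $\Phi_\Sigma$, but the map $\eta^s$ in the theorem lands in Lagrangian coordinates, so you must also invert the half-Lagrangian change $\Phi$ of \eqref{eq:half-lag}. Since $\Phi$ is the identity on $(\varphi^\st, v^\st)$ this does not affect your estimate $\|\eta^\st - \chi^\st\|_{C^0}<\delta$, but it should be said.
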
 
The  assumption $\alpha < \beta^2$ is not necessary, and is assumed for simplicity of the proof. Nevertheless, the assumption is  satisfied in our intended application and in most perturbative settings. The proof is presented in Appendix~\ref{sec:nhic-proof}. 

\subsection{The variational aspect of dominant Hamiltonians}
\label{sec:intro-var}

We will develop a similar perturbation theory for the weak KAM solutions of the dominant Hamiltonian. The weak KAM solution is closely related to some important invariant sets of the Hamiltonian system, known as the Mather, Aubry and Ma\~ne sets.

\begin{itemize}
\item {\bf Preliminaries in weak KAM solutions}

In this section we give only enough concepts to formulate our theorem.  A more detailed exposition will be given in Section~\ref{sec:intro-weak-kam}. 
Let
\[
	H: \T^d \times \R^d \to \R
\]
be a $C^3$ Hamiltonian satisfying the condition $D^{-1}\Id \le  \partial^2_{II}H(\varphi, I) \le D\Id$.  The associated Lagrangian $L: \T^d \times \R^d \to \R$ is given by 
\[
	L_H(\varphi, v) = \sup_{I\in \R^n}\{ I \cdot v - H(\varphi, I)\} .
\]

Let $c \in \R^d \simeq H^1(\T^d, \R)$, we define Mather's alpha function to be
\[
	\alpha_H(c)  = -  \inf_\nu \left\{ \int (L_{H} - c\cdot v) d\nu \right\},
\]
where the infimum is taken over all Borel probability measures on $\T^d \times \R^d$ that is invariant under the Euler-Lagrange flow of $L_{H}$.

A continuous function $u : \T^d \to \R$ is called a (backward) weak KAM solution to $L_{H} - c\cdot v$ if for any $t>0$, we have 
\[
	u(x) = \inf_{y \in \T^d, \gamma(0) =y, \gamma(t) =x} \left( u(y) + \int_0^t (L_{H}(\gamma(t), \dot{\gamma}(t))- c \cdot \dot{\gamma}(t) + \alpha_H(c) )dt  \right), 
\]
where $\gamma:[0,t] \to \T^d$  is absolutely continuous. Weak KAM solutions exist and are Lipschitz  (see \cite{Fat08}, \cite{Ber10}
). 

\item {\bf The relation between Lagrangians}\ 
We now turn to the weak KAM solutions of dominant Hamiltonians. Fix 
$\cB^\st$ and consider
\[
	(\cB^\wk, p, U^\st, \cU^\wk) \in \Omega^{m,d}(\cB^\st)
\]
and write $H^s = \cH^s(\cB^\st,\cB^\wk, p, U^\st, \cU^\wk)$, $H^\st = \cH^\st(p, \cB^\st,U^\st)$. Note 
\[
	H^s(\varphi^\st, \varphi^\wk, I^\st, I^\wk) = K(I^\st, I^\wk) - U^\st(\varphi^\st) - U^\wk(\varphi^\st,\varphi^\wk), 
\]
where $U^\wk = \sum_{j=1}^{d-m} U_j^\wk$, and 
\[
	H^\st(\varphi^\st, I^\st) = K(I^\st, 0) - U^\st(\varphi^\st). 
\]
Denote $L^s = L_{H^s}$ and $L^\st = L_{H^\st}$, we have 
\[
	L^s(\varphi^\st, \varphi^\wk, v^\st, v^\wk) = L_0^s(v^\st, v^\wk) +  U^\st(\varphi^\st) + U^\wk(\varphi^\st,\varphi^\wk), 
\]
\[
	L^\st(\varphi^\st, v^\st) = L_0^\st(v^\st) + U^\st(\varphi^\st), 
\]
where $L_0^s$, $L_0^\st$ are quadratic functions with $(\partial^2_{vv}L_0^s) = (\partial^2_{II} K)^{-1}$ and $(\partial^2_{v^\st v^\st}L_0^\st) = (\partial^2_{I^\st I^\st}K)^{-1}$ as matrices.

 Given $c = (c^\st, c^\wk) \in \R^m \times \R^{d-m} = \R^d$, we show that the weak KAM solution of $L^s - c\cdot v$ is related to the weak KAM solution of $L^\st - \bar{c}\cdot v^\st$, where $\bar{c}$ is computed using an explicit formula. 
More precisely, we define 
\[	
	\bar{c} = c^\st + A^{-1}B c^\wk,
\]
where $\partial^2_{II}K = \bmat{A & B \\ B^T & C}$  as in \eqref{eq:ABC}. Then (we refer to section~\ref{sec:slow-lag} for details)
\begin{align*}
	&L^s(\varphi^\st, \varphi^\wk, v^\st, v^\wk) - (c^\st, c^\wk) \cdot (v^\st, v^\wk)  = L^\st(\varphi^\st, v^\st) - \bar{c} \cdot v^\st  \\
	&+ \frac12 (v^\wk - B^T A^{-1}v^\st - \tilde{C} c^\wk) \cdot \tilde{C}^{-1} (v^\wk - B^T A^{-1}v^\st - \tilde{C} c^\wk) \\
	&+ \frac12 c^\wk \cdot \tilde{C} c^\wk + U^\wk(\varphi^\st, \varphi^\wk),
\end{align*}
where $\tilde C=C-B^TA^{-1}B$. 
The above computation suggests a connection between the Lagrangian $L^s - c \cdot v$ and $L^\st - \bar{c} \cdot v^\st$. Indeed, in Proposition~\ref{prop:Ls-alpha} we show
\[
	\alpha_{H^s}(c) - \|U^\wk\|_{C^0} \le   \alpha_{H^\st}(\bar{c}) + \frac12 c^\wk \cdot \tilde{C}c^\wk \le \alpha_{H^s}(c)+ \|U^\wk\|_{C^0}. 
\]

\item {\bf Semi-continuity of weak KAM solutions}

We now state our main variational results. We consider a sequence  of dominant Hamiltonians with  $\lM(\cB^\wk)\to \infty$, and cohomology classes $c_i$ such that the corresponding $\bar{c}_i$ converge. Then the weak KAM solutions has a converging subsequence, and the limit point is the weak KAM solution of the strong Hamiltonian. This is sometimes referred to as upper semi-continuity. 

\begin{theorem}\label{thm:semi-cont}
Fix $\cB^\st$ and $\kappa>1$. Assume that $q> 2(d-m)$.

For $p_0 \in \R^n$, $U^\st_0 \in C^2(\T^m)$ and $\bar{c}\in \R^m$, we consider a sequence 
\[
	(\cB^\wk_i, p_i, U^\st_i, \cU^\wk_i) \in 
\Omega^{m,d}_{\kappa, q}(\cB^\st), \quad 
c_i = (c_i^\st, c_i^\wk) \in \R^m\times \R^{d-m}, 
\]
and let $u_i$ be a weak KAM solution of
\[
L_{\cH^s(\cB^\st,\cB^\wk_i, p_i, U^\st_i, \cU^\wk_i)} - c_i \cdot v.
\]
Denote $K_i = K_{p_i, \cB^\st, \cB_i^\wk}$, and 
\[
	A_i = \partial^2_{I^\st I^\st}K_i, B_i = \partial^2_{I^\st I^\wk}K_i, C_i = \partial^2_{I^\wk I^\wk}K_i. 
\]
Assume:
\begin{itemize}
\item $\lM(B_i^\wk) \to \infty$, $p_i \to p_0$, $U^\st_i \to U^\st_0$.
\item $c_i^\st + A_i^{-1}B_i c_i^\wk \to \bar{c}$.
\end{itemize}
Then: 
\begin{enumerate}
\item The sequence $\{u_i\}$ is equi-continuous. In particular, the sequence $\{u_i(\cdot) - u_i(0)\}$ is pre-compact in the $C^0$ topology.
\item Let $u$ be any accumulation point of the sequence $u_i(\cdot) - u_i(0)$. Then there exists $u^\st: \T^m \to \R$ such that $u(\varphi^\st, \varphi^\wk) = u^\st(\varphi^\st)$, i.e, $u$ is independent of $\varphi^\wk$.
\item $u^\st$ is a weak KAM solution of 
\[
	L_{\cH^\st(p_0, U_0^\st)} - \bar{c} \cdot v^\st.
\]
\end{enumerate}
\end{theorem}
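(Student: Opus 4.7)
The proof rests on the Lagrangian identity derived in section~\ref{sec:slow-lag} just before the theorem,
\[
L^s - c\cdot v = (L^\st - \bar c \cdot v^\st) + \tfrac12 (I^\wk - c^\wk)^T \tilde C (I^\wk - c^\wk) + \tfrac12 c^\wk \cdot \tilde C c^\wk + U^\wk,
\]
together with the $\alpha$-function comparison
\[
\bigl|\alpha_{H^s}(c) - \alpha_{H^\st}(\bar c) - \tfrac12 c^\wk \cdot \tilde C c^\wk\bigr| \leq \|U^\wk\|_{C^0}
\]
(Proposition~\ref{prop:Ls-alpha}), which I would first prove via Ma\~n\'e's characterization of $\alpha$ through closed measures: the upper bound is obtained by lifting an optimal strong closed measure to $\T^d\times\R^d$ via $I^\wk = c^\wk$, killing the quadratic penalty, while the lower bound comes from pushing a slow optimal measure forward to the strong variables and invoking the non-negativity of the penalty. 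A crucial byproduct is that any slow minimizing measure $\mu^*_i$ satisfies $\int (I^\wk - c^\wk)^T \tilde C (I^\wk - c^\wk)\,d\mu^*_i \leq 4\|U^\wk\|_{C^0}$, and since $\tilde C$ has smallest eigenvalue of order $\mu(\cB_i^\wk)^2$, this concentrates the support of $\mu^*_i$ near $I^\wk = c_i^\wk$ in the limit.

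For Conclusions~1 and~2, the key estimate is a uniform bound on $|du_i|$. Note first that the hypothesis $c_i^\st + A_i^{-1} B_i c_i^\wk \to \bar c$, combined with $\|A_i^{-1} B_i\| \sim \mu(\cB_i^\wk)$, forces $|c_i^\wk| = O(\mu(\cB_i^\wk)^{-1})$ and $c_i^\st$ bounded in the regime of interest (this is the only relevant regime, since otherwise $\{u_i - u_i(0)\}$ admits no convergent subsequence). At a differentiability point of $u_i$, Hamilton--Jacobi gives $H^s(\varphi, c_i + du_i) \leq \alpha_{H^s}(c_i)$; using the block decomposition
\[
H^s = \tfrac12 (J^\st)^T A J^\st + \tfrac12 (I^\wk)^T \tilde C I^\wk - U, \qquad J^\st := I^\st + A^{-1} B I^\wk,
\]
and the $\alpha$-function identity to cancel the divergent $\tfrac12 c^{\wk T} \tilde C c^\wk$ contribution, one obtains
\[
\tfrac12 (J^\st)^T A J^\st + \tfrac12 (I^\wk - c^\wk)^T \tilde C (I^\wk - c^\wk) + (I^\wk - c^\wk)^T \tilde C c^\wk \leq \alpha_{H^\st}(\bar c) + \|U\|_{C^0} + \|U^\wk\|_{C^0}.
\]
Plugging in the scalings ($\tilde C \sim \mu^2$, $c^\wk \sim \mu^{-1}$) yields $|J^\st| = O(1)$ and $|I^\wk - c_i^\wk| = O(\mu(\cB_i^\wk)^{-1})$. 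Since $du_i = I_i - c_i$, this gives a uniform Lipschitz bound (Conclusion~1 follows from Arzela--Ascoli) together with $\|\partial_{\varphi^\wk} u_i\|_{C^0} \to 0$, so any $C^0$-accumulation point $u$ is independent of $\varphi^\wk$ (Conclusion~2) and we may write $u(\varphi^\st, \varphi^\wk) = u^\st(\varphi^\st)$.

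For Conclusion~3 I verify domination and calibration. \emph{Domination:} given any Lipschitz curve $\gamma^\st : [0,T] \to \T^m$, lift it to $\gamma_i = (\gamma^\st, \gamma_i^\wk)$ with $\dot\gamma_i^\wk = B_i^T A_i^{-1} \dot\gamma^\st + \tilde C_i c_i^\wk$, arranging $I^\wk \equiv c_i^\wk$ along $\gamma_i$ so the quadratic penalty vanishes. The Lagrangian and $\alpha$-function identities give
\[
\int_0^T (L^s - c_i \cdot \dot\gamma_i + \alpha_{H^s}(c_i))\,dt \leq \int_0^T (L^\st - \bar c_i \cdot \dot\gamma^\st + \alpha_{H^\st}(\bar c_i))\,dt + O(\|U^\wk\|_{C^0} T),
\]
and $u_i$'s domination along $\gamma_i$, combined with $\bar c_i \to \bar c$, $\|U^\wk\|_{C^0} \to 0$, and uniform convergence of $u_i - u_i(0)$ along the subsequence, yields the domination inequality for $u^\st$. \emph{Calibration:} given $x^\st \in \T^m$ and $T > 0$, pick calibrating curves $\sigma_i$ for $u_i$ ending at a chosen lift of $(x^\st, 0)$ over time $T$; the Lipschitz bound of Conclusion~1 controls the velocities $\dot\sigma_i$, so a subsequence converges to $\sigma = (\sigma^\st, \sigma^\wk)$. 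The equality case of the Lagrangian identity then forces the quadratic penalty along $\sigma_i$ to vanish in the limit (equivalently $I^\wk_{\sigma_i} \to c_i^\wk$), so equality passes to the limit and $\sigma^\st$ calibrates $u^\st$ for $L^\st - \bar c \cdot v^\st$. \textbf{The main obstacle} is precisely the equi-Lipschitz bound: a naive straight-segment argument produces a Lipschitz constant of order $|c_i| + \alpha_{H^s}(c_i) \sim \mu(\cB_i^\wk)^2$, and the uniform bound is recovered only through the exact cancellation of these divergent contributions made available by the half-Lagrangian form of the dominant structure.
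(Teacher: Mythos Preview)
Your argument has a genuine gap at the step where you claim that $\bar c_i = c_i^\st + A_i^{-1}B_i c_i^\wk \to \bar c$ forces $|c_i^\wk| = O(\lM(\cB_i^\wk)^{-1})$. This is false: the hypothesis constrains only the combination $\bar c_i$, and one may take, for instance, $c_i^\wk$ equal to a fixed nonzero vector and $c_i^\st = \bar c - A_i^{-1}B_i c_i^\wk$, satisfying all assumptions of the theorem with $|c_i^\wk|$ bounded away from zero. Your parenthetical escape (``this is the only relevant regime, since otherwise no convergent subsequence'') is circular, since pre-compactness is precisely Conclusion~1, which must be proved. Without $c_i^\wk$ small, your Hamilton--Jacobi bound collapses: the cross term $(I^\wk - c^\wk)^T \tilde C\, c^\wk$ has size of order $\lM^2|c^\wk|$ and can be very negative, so the identity
\[
\tfrac12 (J^\st)^T A J^\st + \tfrac12 (I^\wk - c^\wk)^T \tilde C (I^\wk - c^\wk) + (I^\wk - c^\wk)^T \tilde C\, c^\wk = O(1)
\]
no longer pins down either $|J^\st|$ or $|\partial_{\varphi^\wk}u_i| = |I^\wk - c^\wk|$. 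Completing the square gives only $|J^\st| = O(\lM)$ and $|\partial_{\varphi^\wk}u_i| = O(1)$, which is neither equi-Lipschitz nor enough for Conclusion~2. A secondary issue is your claim that the smallest eigenvalue of $\tilde C$ is of order $\lM^2$; in general only the largest eigenvalue has this size, and the smallest can be much smaller (cf.\ Lemma~\ref{lem:z-j-bounds}, where $\tilde z_i^{-1}$ is bounded by $|k_i^\wk|^{2i}$, not $|k_i^\wk|^{-2}$).

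The paper avoids the Hamilton--Jacobi route entirely and instead establishes an \emph{approximate} Lipschitz property, $|u(\varphi)-u(\psi)|\le D\|\varphi-\psi\|+\delta$ with $\delta\to 0$, separately in the strong and weak angles (Propositions~\ref{prop:appr-lip-weak} and~\ref{prop:appr-lip-strong}). The mechanism is action comparison along ``linear drift'' test curves (Lemma~\ref{lem:lin-drift}) with the drift time $T$ optimized to balance $\tilde z_j^{-1}/T$ against $T\|U_j^\wk\|_{C^2}$; this yields a bound $\sigma_j \sim (\tilde z_j^{-1}\|U_j^\wk\|)^{1/2}$ in which the $c$-linear contributions cancel exactly, so no assumption on $c^\wk$ is needed. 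The weak angles are handled one at a time by a further block diagonalization (section~\ref{sec:tech-est}), and this inductive structure is what produces the exponent $q > 2(d-m)$ in the hypothesis---a constraint your sketch never uses. For Conclusion~3 the paper also relies on a projected a~priori compactness estimate (Proposition~\ref{prop:proj-comp}) to control $\dot\gamma_i^\st$ along calibrated curves; your appeal to ``the Lipschitz bound of Conclusion~1 controls the velocities $\dot\sigma_i$'' does not follow, since $\dot\sigma_i = \partial_I K_i(c_i+du_i)$ involves the unbounded matrix $\partial^2_{II}K_i$.
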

The proof of Theorem~\ref{thm:semi-cont} occupies sections \ref{sec:strong-slow} and \ref{sec:con-weak-kam}, with some 
technical statements deferred to section~\ref{sec:tech-est}.
\end{itemize}
\begin{remark}
Theorem~\ref{thm:basis-norm} implies that by choosing a good basis, we can express a slow system as a dominant system with parameters $\kappa, q$, where $q = r- n - 2(d-m) -4$. For Theorem~\ref{thm:resc-est} we need $q>2$, and for  Theorem~\ref{thm:semi-cont} we need $q > 2(d-m)$. Therefore for application to nearly integrable systems, we need $r > n + 4(d-m) + 4$ as stated in our main result. 
\end{remark}

Using the point of view in \cite{Ber10}, the semi-continuity of the weak KAM solution is closely related to the semi-continuity of the Aubry and Ma\~ne sets. These properties have important applications to Arnold diffusion.  In section~\ref{sec:cor-aub} we develop an analog of these results for the dominant Hamiltonians. 

\section{The choice of basis and averaging}\label{sec:basis}

In this section we prove Theorem~\ref{thm:basis-norm}. The proof consists of two parts: the choice of the basis and estimates on the norms.

\subsection{The choice of the basis}

Recall that we have a fixed irreducible lattice $\Lambda^\st \subset \Z^{n+1}$ of rank $m < n$, and a fixed basis $\cB^\st = \{k_1, \cdots, k_m\}$ for $\Lambda^\st$. The following proposition describes the choice of the adapted basis for any irreducible $\Lambda \supset \Lambda^\st$.

\begin{proposition}\label{prop:basis}
Let $\Lambda^\st \subset \Z^{n+1}$ be an irreducble  lattice of rank $m < n$, and fix a basis $k_1, \cdots, k_m$. Let $\Lambda \supset \Lambda^\st$ be an irreducible lattice of rank $m<d\le n$, then there exists $k_{m+1}, \cdots, k_d \in \Z^{n+1}$ such that $k_1, \cdots, k_d$ form a basis of $\Lambda$, and the following hold. 
\begin{enumerate}
\item For each $m < j \le d$, 
\[
	|k_j| \le \bar{M} + (d-m)M_j,
\]
where 
\[
	\bar{M} = |k_1| + \cdots + |k_m|, \quad \Lambda_j = \Span_\Z\{ k_1, \cdots, k_j\}, \quad  M_j = M(\Lambda_j | \Lambda_{j-1}). 
\]
\item For each $m < i < j \le d$, 
\[
	|k_i| \le \bar{M} + (d-m) |k_j|. 
\]
\end{enumerate}
\end{proposition}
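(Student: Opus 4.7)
The plan is to build $k_{m+1},\ldots,k_d$ inductively, at each step picking a sup-norm shortest vector of $\Lambda$ outside the current sublattice, promoting it (if necessary) to a generator of the extending cyclic quotient, and then reducing its sup-norm modulo $\Lambda_{j-1}$ by a standard rounding argument.

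At stage $j\in\{m+1,\ldots,d\}$, I assume $k_1,\ldots,k_{j-1}$ are already constructed with $\Lambda_{j-1}=\Span_\Z(k_1,\ldots,k_{j-1})$ primitive in $\Lambda$ (the base case $\Lambda_m=\Lambda^\st$ is primitive since $\Lambda^\st$ is irreducible in $\Z^{n+1}$, hence also in $\Lambda$). Let $k_j^\circ$ be a shortest vector of $\Lambda$ outside $\Span_\R\Lambda_{j-1}$; by primitivity of $\Lambda_{j-1}$, this coincides with a shortest vector of $\Lambda\setminus\Lambda_{j-1}$. I then let $\Lambda_j$ be the primitive closure in $\Lambda$ of $\Lambda_{j-1}+\Z k_j^\circ$, so that $\Lambda_j$ has rank $j$, is primitive in $\Lambda$, and $M_j=M(\Lambda_j|\Lambda_{j-1})=|k_j^\circ|$. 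The fact that $\Lambda_d$ has rank $d$ and is primitive forces $\Lambda_d=\Lambda$, so $k_1,\ldots,k_d$ will indeed form a basis of $\Lambda$.

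Because $\Lambda_{j-1}$ is primitive in the primitive lattice $\Lambda_j$, the quotient $\Lambda_j/\Lambda_{j-1}\cong\Z$ is free. I would then choose a short lift $e_j\in\Lambda_j$ of a generator and finally set $k_j:=e_j-\sum_{i<j}\lfloor r_i+\tfrac12\rfloor k_i$, where $\sum_i r_i k_i$ is the $\Span_\R\Lambda_{j-1}$-component of $e_j$ expressed in the basis $k_1,\ldots,k_{j-1}$. The standard rounding estimate in sup-norm gives $|k_j|\le |e_j|+\tfrac12\sum_{i<j}|k_i|$. The central claim is that the lift can be chosen so that $|e_j|\le|k_j^\circ|=M_j$: if $k_j^\circ$ already projects to $\pm 1$ in $\Lambda_j/\Lambda_{j-1}$ take $e_j=k_j^\circ$; otherwise $k_j^\circ=a e_j+v$ for some integer $|a|\ge 2$ and $v\in\Lambda_{j-1}$, and sup-norm reduction of $e_j$ against $\Lambda_{j-1}$ combined with the minimality of $|k_j^\circ|$ on $\Lambda\setminus\Lambda_{j-1}$ rules out $|e_j|>|k_j^\circ|$. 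Plugging in and using the inductive hypothesis $|k_i|\le\bar{M}+(d-m)M_i$ together with the greedy-choice monotonicity $M_i\le M_j$ for $i<j$ (the admissible set at stage $i$ contains that at stage $j$ because $\Span_\R\Lambda_{i-1}\subsetneq\Span_\R\Lambda_{j-1}$), the sum telescopes to the bound $|k_j|\le\bar{M}+(d-m)M_j$ claimed in item (1). Item (2) is then immediate: $|k_i|\le\bar{M}+(d-m)M_i\le\bar{M}+(d-m)M_j\le\bar{M}+(d-m)|k_j|$.

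The main obstacle is the step extracting a short generator of $\Lambda_j/\Lambda_{j-1}$ when $k_j^\circ$ itself projects to $\pm a$ with $|a|\ge 2$. In that case $k_j^\circ$ is not a basis-extending vector, and bounding $|e_j|$ requires controlling a coset closest-vector problem: among lifts of the generator, find one whose sup-norm is at most $|k_j^\circ|$. The proof hinges on showing that the interplay between the primitive closure operation and the sup-norm minimality of $k_j^\circ$ rules out the bad case, after which the remainder of the argument is a routine induction on $j$.
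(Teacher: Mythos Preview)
Your greedy selection of short vectors $k_j^\circ$ with $|k_j^\circ|=M_j$ and the monotonicity $M_{m+1}\le\cdots\le M_d$ match the paper exactly. The gap is in the reduction step, and it is twofold.

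First, the bound $|k_j|\le |e_j|+\tfrac12\sum_{i<j}|k_i|$ does not follow from the procedure you describe. After rounding, $k_j=q+\sum_{i<j}\tilde r_ik_i$ where $q$ is the component of $e_j$ orthogonal to $\Span_\R\Lambda_{j-1}$ and $|\tilde r_i|\le\tfrac12$; but the sup-norm $|q|$ is not controlled by $|e_j|$ (orthogonal projection is a contraction for the Euclidean norm, not the sup-norm).

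Second, and more seriously, even granting that inequality your induction does not close. Because you reduce against the already-constructed $k_i$ rather than against the short vectors $k_i^\circ$, you are forced to feed the inductive bound $|k_i|\le\bar M+(d-m)M_i$ back into the sum. This yields a term of order $\tfrac12(j-m)\bar M$ on the $\bar M$ side and $\tfrac12(d-m)\sum_{i=m+1}^{j-1}M_i$ on the $M$ side; for $j-m\ge3$ the first already exceeds $\bar M$. Nothing telescopes.

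The paper avoids both problems by reducing against the short vectors $k_1',\ldots,k_j'$ themselves (your $k_1,\ldots,k_m,k_{m+1}^\circ,\ldots,k_j^\circ$), via a classical construction of Siegel: one finds coefficients $c_{j,i}\in[0,1)$, $c_j\in(0,1]$ such that the vectors $k_j=c_{j,1}k_1'+\cdots+c_{j,j-1}k_{j-1}'+c_jk_j'$ form a $\Z$-basis of $\Lambda$. Then directly
\[
|k_j|\le\sum_{i\le j}|k_i'|=\bar M+M_{m+1}+\cdots+M_j\le\bar M+(d-m)M_j,
\]
with no recursion at all. This also dissolves the obstacle you flag in your last paragraph: Siegel's procedure handles the case where $k_j^\circ$ fails to generate $\Lambda_j/\Lambda_{j-1}$ automatically, since $c_j$ is allowed to be any value in $(0,1]$.
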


We now describe the choice of the vectors $k_{m+1}, \cdots, k_d$. We define
$k_i' = k_i$ for $1\le i \le m$, and define $k_i'$ with $i >m$ inductively using the following procedure. Suppose $k_1', \cdots, k_i'$ are defined, let 
\[
	\Lambda_i = \Span_\R\{k_1', \cdots, k_i'\} \cap \Lambda, \quad
	M_{i+1} = \min\{ |k|: \quad k \in \Lambda \setminus \Lambda_i\}.
\]
We define $k_{i+1}'$ to be a vector reaching the minimum in the definition of $M_{i+1}$, i.e $|k_{i+1}'| = M_{i+1}$. We have 
\[
	|k_i'| = M_i, \, m < i \le d, \quad |k_j'| \le |k_i'|, \, m < j < i \le d,
\]
but $k_1', \cdots, k_d'$ may not form a basis. We turn them into a basis using the following procedure (see \cite{Sie}). 

 For each $j=1, \cdots, m$, define 
\begin{equation}\label{eq:ci}
	c_j = \min\{s_j: \quad  s_{j,1} k_1' + \cdots + s_{j , j-1} k_{j-1}' + s_j k_j' \in \Lambda, \, s_j \in \R^+,  \,  s_{j,i} \in \R^+ \cup \{0\}\}.
\end{equation}
We define $c_{j,j-1}$ using a similar minimization given the value $c_j$:
\[
	c_{j,j-1} =  \min\{s_{j,j-1}: \quad 
    s_{j,1} k_1' + \cdots + s_{j,j-1} k_{j-1}' + c_j k_j', \, s_{j,i} \in \R^+\cup\{0\}
	\}.
\]
We now define $c_{j,i}$ for $1\le i\le j-2$ inductively as follows. Assume that $c_{j,i}, \cdots, c_{j,j-1}$ are all defined, then
\begin{multline*}
	c_{j,i-1} = \min\{s_{j,i-1}: \\
    s_{j,1} k_1' + \cdots + s_{j,i-1} k_{i-1}' +  c_{j,i} k_i' + \cdots + 
	c_{j,j-1} k_{j-1}' +  c_j k_j'\in \Lambda, \\
     s_{j,1}, \cdots, s_{j,i-1} \in \R^+\cup\{0\}
	\}.
\end{multline*}
Finally, 
\[
	k_j = c_{j,1} k_1' + \cdots + c_{j, j-1} k_{j-1}' + c_j k_j'. 
\]
We have the following lemma from the geometry of numbers.

\begin{lemma}[see \cite{Sie}]\label{lem:cons-min}
Let $\Lambda\subset \Z^{n+1}$ be a lattice of rank $d\le n$ and let $k_1', \cdots, k_d'$ be any linearly independent set in $\Lambda$. Let 
\[
	k_j = c_{j,1} k_1' + \cdots + c_{j, j-1} k_{j-1}' + c_j k_j'. 
\]
be defined using the procedure above. Then
\begin{enumerate}
\item For each $1 \le j \le d$, $k_1, \cdots, k_j$ form a basis of $\Span_\R\{k_1', \cdots, k_j'\}\cap \Lambda$ over $\Z$. In particular, 
$k_1, \cdots, k_d$ form a basis of $\Lambda$. 
\item For $1 \le j < d$ and $1 \le i \le j-1$, we have 
\[
	0 \le c_{j,i} <1, \quad 0 < c_j \le 1. 
\]
\item If for some $1\le m \le d$, $k_1', \cdots, k_m'$ already form a basis 
of $\Span_\R\{k_1', \cdots, k_m'\}\cap \Lambda$ over $\Z$, then 
$k_1 = k_1', \cdots, k_m = k_m'$. 
\end{enumerate}
\end{lemma}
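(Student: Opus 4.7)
The plan is induction on $j$, proving items (1) and (2) simultaneously, and deducing (3) at the end. Write $V_j = \Span_\R\{k_1', \ldots, k_j'\}$, so $V_j \cap \Lambda$ is a rank-$j$ discrete subgroup of $V_j$. For the base case $j = 1$, the $k_1'$-coordinate identifies $V_1 \cap \Lambda$ with a discrete subgroup $g\Z \subset \R$; since $k_1' \in \Lambda$ has coordinate $1$, we get $g \in (0, 1]$, and $k_1 = c_1 k_1' = g k_1'$ generates.

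For the inductive step, assume (1) and (2) hold below $j$. Let $s_j : V_j \to \R$ denote the $k_j'$-coefficient in the basis $\{k_1', \ldots, k_j'\}$. Then $s_j(V_j \cap \Lambda)$ is a discrete subgroup of $\R$ containing $s_j(k_j') = 1$, hence equals $c\Z$ for some $c \in (0, 1]$. I claim $c_j = c$. The inequality $c_j \ge c$ is immediate from the definition. For $c_j \le c$, choose $\ell_0 \in V_j \cap \Lambda$ with $s_j(\ell_0) = c$. Translating $\ell_0$ by elements of $V_{j-1} \cap \Lambda = \Z\{k_1, \ldots, k_{j-1}\}$ (a full-rank lattice in $V_{j-1}$, by induction) leaves $s_j$ fixed while shifting $(s_{j,1}, \ldots, s_{j,j-1})$ over a full-rank lattice of $\R^{j-1}$, so some representative has all $s_{j,i} \ge 0$. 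This gives a feasible competitor with $s_j = c$, so $c_j \le c$.

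The bound $c_{j,i} < 1$ is the key estimate. At the $i$-th sub-step the values $c_j, c_{j,j-1}, \ldots, c_{j,i+1}$ are frozen; the feasible set is a coset of $V_i \cap \Lambda = \Z\{k_1, \ldots, k_i\}$ intersected with the orthant $\{s_{j,l} \ge 0 : l \le i\}$, and it is non-empty since the minimizer from the previous sub-step lies in it. Using the inductive formula $k_r = c_{r,1}k_1' + \cdots + c_{r,r-1}k_{r-1}' + c_r k_r'$, translation by $k_i$ shifts $s_{j,i}$ by $c_i$ while fixing $s_{j,l}$ for $l > i$, and translation by $k_r$ with $r < i$ leaves $s_{j,i}$ unchanged. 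Hence $s_{j,i}$ on the feasible set takes values in a coset of $c_i\Z$ bounded below by $0$, so the minimum $c_{j,i} \ge 0$ is attained. For the strict upper bound, if $c_{j,i} \ge 1$ then subtracting $k_i' \in \Lambda$ from a minimizer decreases $s_{j,i}$ by $1$ and leaves every other $s_{j,l}$ and $s_j$ unchanged; the result is still feasible with $s_{j,i} = c_{j,i} - 1 \ge 0$, contradicting minimality. Thus $c_{j,i} < 1$.

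With (2) at level $j$ in hand, part (1) is routine: any $\ell \in V_j \cap \Lambda$ has $s_j(\ell) \in c_j\Z$, say $s_j(\ell) = n c_j$, so $\ell - n k_j$ has $s_j = 0$ and lies in $V_{j-1} \cap \Lambda = \Z\{k_1, \ldots, k_{j-1}\}$ by induction, giving $\ell \in \Z\{k_1, \ldots, k_j\}$. Part (3) is a clean corollary: if $k_1', \ldots, k_m'$ already form a $\Z$-basis of $V_m \cap \Lambda$, then for $j \le m$ we have $s_j(V_j \cap \Lambda) = \Z$, forcing $c_j = 1$; then $k_j' \in \Lambda$ itself is feasible for each subsequent $c_{j,i}$-minimization with $s_{j,i} = 0$, so $c_{j,i} = 0$ and $k_j = k_j'$. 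The main point requiring care will be the nested minimization bookkeeping — verifying that the feasible set remains non-empty at each sub-step and that $s_{j,i}$ varies in a discrete set — but the coset description via $V_i \cap \Lambda$ together with the shift-by-$k_i$ analysis handles both issues uniformly.
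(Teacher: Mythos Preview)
Your proof is correct and considerably more self-contained than the paper's. The paper simply cites Siegel for item (1) and gives one-line arguments for (2) and (3): for (2), subtracting $k_i'$ (resp.\ $k_j'$) from a putative minimizer with $c_{j,i}\ge 1$ (resp.\ $c_j>1$) yields a feasible competitor with smaller coefficient; for (3), if $k_1',\ldots,k_m'$ is already a $\Z$-basis then each $k_j\in\Z\{k_1',\ldots,k_m'\}$ forces all $c_{j,i},c_j$ to be integers, and the bounds of (2) pin them down. Your argument uses the same subtraction idea for $c_{j,i}<1$, but you go further by characterizing $c_j$ as the positive generator of $s_j(V_j\cap\Lambda)$ and carrying the full induction for item (1) yourself. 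The payoff is that you actually establish well-definedness of all the minima (the paper tacitly defers this to Siegel), and your argument for (3) via feasibility of $k_j'$ is a clean alternative to the paper's integrality argument.

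One small point worth tightening: when you assert that $s_j(V_j\cap\Lambda)$ is a discrete subgroup of $\R$, this is not automatic for linear images of lattices in general. Here it follows because $\Z\{k_1',\ldots,k_j'\}\subset V_j\cap\Lambda$ has finite index (both are rank-$j$ lattices), so every coordinate lies in $\tfrac{1}{N}\Z$ for $N$ equal to that index; one line to this effect would close the gap.
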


\begin{proof}



For proof of item 1, we refer to \cite{Sie}, Theorem 18. Item 2 and 3 follow from definition and item 1 as we explain below.

For item 2, note that for any 
$k_j = c_{j,1} k_1' + \cdots + c_{j, j-1} k_{j-1}' + c_j k_j' \in \Lambda$, 
we can always subtract an integer from any $c_{j,i}$ or $c_j$ and remain 
in $\Lambda$. If the estimates do not hold, we can get a contradiction by reducing $c_{j,i}$ or $c_j$. 

For item 3, if $k_1', \cdots, k_m'$ is a basis (over $\Z$) of $\Span_\R\{k_1', \cdots, k_m'\}\cap \Lambda$, then all coefficients of $k_j = c_{j,1} k_1' + \cdots + c_{j, j-1} k_{j-1}' + c_j k_j' \in \Lambda$ for $j \le m$ must be integers. Then the constraints of item 2 implies $c_{j,i}=0$ and $c_j =1$, namely $k_j = k_j'$. 
\end{proof}

\begin{proof}[Proof of Proposition~\ref{prop:basis}]
We choose the basis $k_1, \cdots, k_d$ as described.  Lemma~\ref{lem:cons-min} implies $k_j = k_j'$ for $1\le j \le m$. Using  
\[
	0 < c_{j+1} \le 1, \quad 0 \le c_{j+1, i} < 1,
\]
we get
\[
	|k_j| \le |k_1'| + \cdots + |k_j'| = |k_1| + \cdots+ |k_m| + M_{m+1} + \cdots + M_j. 
\]
Since $M_{m+1} \le \cdots \le M_d$, and $\bar{M} = |k_1| + \cdots+ |k_m|$, we get 
\[
	|k_j| \le \bar{M} + (j-m)M_j \le \bar{M} + (d-m) M_j. 
\]
Moreover, for $i < j$, we have 
\[
|k_i| \le \bar{M} + (d-m)M_i < \bar{M} + (d-m)M_j \le \bar{M} + (d-m)|k_j|.
\]
\end{proof}
We note that the basis, as chosen  in Proposition~\ref{prop:basis}, 
satisfies item 1 of Theorem~\ref{thm:basis-norm} for 
$\kappa \ge \max\{\bar{M}, d-m\}$. 

\subsection{Estimating the weak potential}

In this section we prove the second item in Theorem~\ref{thm:basis-norm} and conclude its proof. Assume that $H_1 \in C^r(\T^n\times \R^n \times \T)$ with $r > n + 2d - 2m + 4$. Let the basis $k_1, \cdots, k_n$ be chosen as in Proposition~\ref{prop:basis}. We show that there exists $\kappa = \kappa(\cB^\st, Q, n)>1$ such that for $m < i \le d$,
   \[
   \|U^\wk_{p_0,\cB_{i-1}, \cB_i}\|_{C^2} \le \|Z_{\cB_i} - Z_{\cB_{i-1}}\|_{C^2} \le \kappa |k_i|^{-r + 3n -2m +6}. 
   \]
For a lattice $\Lambda$ let 
\[
	[H]_\Lambda(\theta, p, t) = \sum_{k \in \Lambda} h_k(p) e^{2\pi i k \cdot (\theta, t)}, 
\]
then we have 
\[
	(Z_{\cB_i} - Z_{\cB_{i-1}})(k_1 \cdot (\theta, t), \cdots, k_i \cdot (\theta, t),p) = ([H_1]_{\Lambda_i}- [H_1]_{\Lambda_{i-1}} )(\theta,p, t),
\]
and the norm of $[H_1]_{\Lambda_i}- [H_1]_{\Lambda_{i-1}} $ can be estimated using a standard estimates of the Fourier series.
\begin{lemma}[c.f. \cite{BKZ11}, Lemma 2.1, item 3]\label{lem:fourier-bd}
Let $H_1(\theta, p, t) = \sum_{k\in \Z^{n+1}} h_k(p) e^{2\pi i k \cdot (\theta, t)}$  satisfy $\|H_1\|_{C^r} =1$, with $r\ge n+4$. There exists a constant $C_n$ depending only on $n$, such that for any subset  $\tilde{\Lambda}\subset \Z^{n+1}$  with
$\min_{k \in \tilde{\Lambda}} |k| = M >0$, we have 
\[
	\|\sum_{k\in \tilde{\Lambda}} h_k(p) e^{2\pi i k \cdot (\theta, t)}\|_{C^2} \le C_n M^{-r+n+4}.
\]
\end{lemma}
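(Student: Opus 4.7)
The plan is a standard Fourier-coefficient decay estimate, split into two parts: a pointwise bound on $|\partial^\alpha_p h_k(p)|$, and a tail sum over $\{k\in\tilde{\Lambda}:|k|\ge M\}\subset\Z^{n+1}$. The first step is to show that for any multi-index $\alpha$ in $p$ with $|\alpha|\le r$ and any $k\ne 0$,
\[
|\partial^\alpha_p h_k(p)|\le C_n\,\|H_1\|_{C^r}\,|k|^{-(r-|\alpha|)}.
\]
Since $|\cdot|$ is the sup-norm, some coordinate $j$ of $(\theta,t)$ satisfies $|k_j|=|k|$. Differentiating under the integral in $p$ and then integrating by parts $r-|\alpha|$ times in that $j$th $(\theta,t)$-variable inside
\[
\partial^\alpha_p h_k(p)=\int_{\T^{n+1}}\partial^\alpha_p H_1(\theta,p,t)\,e^{-2\pi i k\cdot(\theta,t)}\,d\theta\,dt
\]
produces a factor $(2\pi i k_j)^{-(r-|\alpha|)}$ in front of an integral whose modulus is at most $\|H_1\|_{C^r}$, which yields the claimed pointwise bound.

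Next, I would differentiate $F(\theta,p,t)=\sum_{k\in\tilde{\Lambda}}h_k(p)e^{2\pi i k\cdot(\theta,t)}$ termwise. A mixed partial $\partial^{\alpha_1}_\theta\partial^{\alpha_2}_p\partial^{\alpha_3}_t$ of total order at most $2$ sends the $k$th term to something whose modulus is bounded by
\[
C_n\,|k|^{|\alpha_1|+|\alpha_3|}\,|\partial^{\alpha_2}_p h_k(p)|\le C_n\,|k|^{|\alpha_1|+|\alpha_2|+|\alpha_3|-r}\le C_n\,|k|^{2-r},
\]
uniformly in $(\theta,p,t)$. Using that $|k|\ge M$ on $\tilde{\Lambda}$ and the lattice-point count $\#\{k\in\Z^{n+1}:|k|=R\}\le C_n R^n$, the tail sum yields
\[
\|F\|_{C^2}\le C_n\sum_{k\in\tilde{\Lambda},\,|k|\ge M}|k|^{2-r}\le C_n\sum_{R\ge M}R^{n+2-r},
\]
which for $r\ge n+4$ is bounded by $C_n M^{-(r-n-3)}\le C_n M^{-r+n+4}$ since $M\ge 1$. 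The same estimate applied to partial sums justifies the termwise differentiation and absolute convergence.

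The proof is entirely routine Fourier analysis; there is no genuine obstacle. The only points that require care are correctly selecting the coordinate $j$ maximizing $|k_j|$ (needed for the clean $|k|^{-(r-|\alpha|)}$ decay when $|\cdot|$ denotes the sup-norm), and tracking the dimension-dependent constant $C_n$ through the lattice-point count so that the final bound depends only on $n$.
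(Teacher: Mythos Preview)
Your proof is correct and is the standard argument; the paper itself does not supply a proof of this lemma but simply cites \cite{BKZ11}, so there is nothing to compare against. Your estimate in fact yields the slightly sharper bound $C_n M^{-r+n+3}$, which you then relax to the stated $C_n M^{-r+n+4}$.
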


Since $\min_{k\in \Lambda_i \setminus \Lambda_{i-1}}|k| = M_i$, we apply Lemma~\ref{lem:fourier-bd} to $\Lambda_i \setminus \Lambda_{i-1}$ to get 
\begin{equation}
\label{eq:hl-norm}
	\| [H_1]_{\Lambda_i}- [H_1]_{\Lambda_{i-1}} \|_{C^2} \le C_n M_i^{-r+n+3}.
\end{equation}
To estimate $Z_{\cB_i} - Z_{\cB_{i-1}}$, we apply a linear coordinate change. Given $k_1, \cdots, k_i$, we choose $\hat{k}_{i+1}, \cdots, \hat{k}_{n+1}\in \Z^{n+1}$ to be coordinate vectors (unit integer vectors) so that 
\[P_i := \bmat{k_1& \cdots& k_i& \hat{k}_{i+1}& \cdots& \hat{k}_{n+1}}\]
is invertible. We extend $(Z_{\cB_i} - Z_{\cB_{i-1}})(\varphi_1, \cdots, \varphi_i)$ trivially to a function of $(\varphi_1, \cdots, \varphi_{n+1})$, then 
\[
	(Z_{\cB_i} - Z_{\cB_{i-1}})\left(P_i^T \bmat{\theta \\ t} \right) = ([H_1]_{\Lambda_i}- [H_1]_{\Lambda_{i-1}})(\theta,t). 
\]
We get 
\[
	\|Z_{\cB_i} - Z_{\cB_{i-1}}\|_{C^2} \le (1+\|P_i^{-1}\| ) (1+\|(P_i^T)^{-1}\|)  \| [H_1]_{\Lambda_i}- [H_1]_{\Lambda_{i-1}} \|_{C^2}. 
\]
We apply the following lemma in linear algebra:
\begin{lemma}\label{lem:int-norm}
Given $1 \le s \le n+1$, let $P = \bmat{k_1 & \cdots & k_s}$ be an integer matrix with linearly independent columns. Then there exists $c_n>1$ depending only on $n$ such that 
\[
	\min_{\|v\| = 1} \|Pv\| = \min_{\|v\| =1}(v^T P^T Pv)^{\frac12} = \|(P^TP)^{-1}\|^{-\frac12} \ge c_n^{-1} |k_1|^{-1} \cdots |k_m|^{-1}. 
\]
In particular, if $s = n+1$, then $\|P^{-1}\| = \|(P^T)^{-1}\| \le c_n |k_1| \cdots |k_{n+1}|$. 
\end{lemma}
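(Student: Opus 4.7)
The plan is to reduce the claim to the case of a square integer matrix, which can be treated by the classical adjugate formula combined with Hadamard's inequality. The identity $\min_{\|v\|=1}\|Pv\| = \|(P^TP)^{-1}\|^{-1/2}$ is standard linear algebra, so the real task is the lower bound on $\sigma_{\min}(P)$, or equivalently an upper bound on the inverse operator norm.

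I would first extend $P$ to an invertible square integer matrix. Since $k_1,\dots,k_s$ are linearly independent, the $(n{+}1)\times s$ matrix $P$ has rank $s$, so there exist row indices $J\subset\{1,\dots,n{+}1\}$ with $|J|=s$ such that the corresponding $s\times s$ submatrix of $P$ is invertible. Setting $I=\{1,\dots,n{+}1\}\setminus J$ and $P':=[k_1,\dots,k_s,\{e_i\}_{i\in I}]$, a block-triangular computation (after permuting rows so that rows in $J$ come first) shows that $P'$ is an invertible integer matrix, so $|\det P'|\ge 1$. Embedding $v\in\R^s$ as $w=(v,0)\in\R^{n+1}$ gives $Pv=P'w$ and $\|w\|=\|v\|$, hence
\[
\sigma_{\min}(P) \;\ge\; \sigma_{\min}(P') \;=\; \|(P')^{-1}\|^{-1}.
\]

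Next I would estimate $\|(P')^{-1}\|$ via Cramer's rule: each entry of $(P')^{-1}$ is of the form $\pm M/\det(P')$, where $M$ is some $n\times n$ minor of $P'$. Hadamard's inequality bounds $|M|$ by the product of Euclidean norms of the columns of the corresponding submatrix; a column coming from $k_l$ contributes at most $\sqrt{n+1}\,|k_l|$, while a standard-basis column contributes at most $1$. Combining this with $|\det P'|\ge 1$, with $|k_l|\ge 1$, and with the crude estimate $\|A\|_{\mathrm{op}}\le (n{+}1)\max_{a,b}|A_{ab}|$ for square matrices of size $n{+}1$, one obtains
\[
\|(P')^{-1}\| \;\le\; c_n\prod_{l=1}^{s}|k_l|
\]
for a constant $c_n$ depending only on $n$. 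Together with the previous display this proves the claimed bound on $\sigma_{\min}(P)$, and the ``in particular'' clause for $s=n{+}1$ is just the case where the extension step is vacuous.

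The main subtlety is really the choice of an integer extension with controlled determinant. A naive approach based only on $\det(P^TP)\ge 1$ (which holds since $P^TP$ is a positive-definite integer matrix), combined with bounding the remaining singular values by $\|P\|_{\mathrm{op}}\le\sqrt{s}\max_l|k_l|$, yields only the weaker estimate $\sigma_{\min}(P)\gtrsim(\max_l|k_l|)^{-(s-1)}$, which is insufficient when the $|k_l|$ vary widely in magnitude. Passing through the square integer extension $P'$ allows one to exploit the full product $\prod_l|k_l|$ honestly through the cofactor formula, which is what gives the claimed bound.
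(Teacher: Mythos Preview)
Your proof is correct, but the route differs from the paper's. The paper never extends $P$ to a square matrix; instead it works directly with the Gram matrix $P^TP$, which is a positive-definite $s\times s$ integer matrix and hence has $\det(P^TP)\ge 1$. It then bounds the cofactors of $P^TP$ entrywise via $(P^TP)_{ij}=k_i^Tk_j\le n|k_i||k_j|$, obtaining $|((P^TP)^{-1})_{ij}|\le c_n(\prod_{l\ne i}|k_l|)(\prod_{l\ne j}|k_l|)$ and hence $\|(P^TP)^{-1}\|\le c_n(\prod_l|k_l|)^2$, which is exactly the bound needed. So the paper also uses the ingredient $\det(P^TP)\ge 1$ that you dismiss as ``naive'' --- the point is that it combines this with a careful cofactor expansion rather than the crude singular-value product argument you had in mind.

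Your extension-to-square approach is equally valid and arguably more transparent, since it avoids squaring the matrix and applies Hadamard directly. Amusingly, the construction you use to build $P'$ (appending coordinate vectors to make an invertible integer matrix) is precisely what the paper does immediately \emph{after} the lemma in order to apply it, so you have in effect absorbed that step into the proof itself. The paper's argument is marginally shorter; yours makes the geometric content clearer. Both rest on the same two facts: an integer determinant is at least $1$ in absolute value, and cofactors are controlled by products of column sizes.
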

\begin{proof}
We only estimate $\|(P^TP)^{-1}\|$. Let $a_{ij} = (P^TP)_{ij}$ and $b_{ij} = (P^TP)^{-1}_{ij}$, then using Cramer's rule and the definition of the cofactor, we have 
\[
	|b_{ij}| \le  \frac{1}{\det(P^TP)}\sum_{\sigma} \prod_{s \ne i} a_{s \sigma(s)},
\]
where $\sigma$ ranges over all one-to-one mappings from $\{1, \cdots, m\} \setminus \{i\}$ to $\{1, \cdots, m\} \setminus \{j\}$. Since $P$ is a nonsingular integer matrix, we have $\det(P^TP)\ge 1$. Moreover, $a_{ij} = k_i^T k_j \le n |k_i| |k_j|$. Therefore
\[
	|b{ij}| \le \sum_{\sigma}\prod_{s \ne i} |k_s| |k_{\sigma(s)}| \le c_n (\prod_{s \ne i} |k_s|)(\prod_{s \ne j} |k_s|), 
\]
where $c_n$ is a constant depending only on $n$. Using the fact that the norm of a matrix is bounded by its largest entry, up to a factor depending only on dimension, by changing to a different $c_n$, we have 
\[
	\|(P^TP)^{-1}\| \le c_n \sup_{i,j} |B_{ij}| \le c_n \sup_{i,j} (\prod_{s \ne i} |k_s|)(\prod_{s \ne j} |k_s|) \le c_n (\prod_{s=1}^m |k_s|)^2. 
\]
If $s=n+1$, then $\|P^{-1}\| = \|(P^TP)^{-1}\|^{\frac12} = \|(P P^T)^{-1}\|^\frac12 = \|(P^T)^{-1}\|$. 
\end{proof}

Using Lemma~\ref{lem:int-norm}, there exists a constant $c_n>0$  depending only on $n$ such that 
\[
	\|P_i^{-1}\| = \|(P_i^T)^{-1}\| \le c_n |k_1| \cdots |k_i| |\hat{k}_{i+1}| \cdots |\hat{k}_{n+1}| .
\]
We have $|k_1|, \cdots, |k_m| \le \bar{M}$, $|\hat{k}_{i+1}|=\cdots = |\hat{k}_{n+1}| =1$, and from Lemma~\ref{lem:cons-min}, $|k_{m+1}|, \cdots, |k_i| \le \bar{M} + (d-m)M_i$. Hence there exists a constant $c_{n, \bar{M}}>0$  such that 
\[
	\|P_i^{-1}\| = \|(P_i^T)^{-1}\| \le c_{n,\bar{M}} M_i^{i-m}. 
\]
Combine with \eqref{eq:hl-norm}, we get for $\kappa = \kappa(n, \bar{M})$,
\[
	\|Z_{\cB_i} - Z_{\cB_{i-1}}\|_{C^2} \le \kappa M_i^{-r+n+4 + 2(i-m)} \le \kappa M_i^{-r+n+4 + 2(d-m)} \le \kappa |k_i|^{-r+n + 2d - 2m +4}. 
\]
This implies item 2 of Theorem~\ref{thm:basis-norm}. The proof is complete. 

\section{Strong and slow systems of dominant 
Hamiltonians}\label{sec:strong-slow}

In this section we study the relation between Hamiltonians and 
the corresponding Lagrangians for dominant systems. 
We start by comparing the Hamitonian vector fields 
and then compare their Lagrangians. 
 
\subsection{Vector fields of dominant Hamiltonians}\label{sec:vector-field}

In this section we expand on section~\ref{sec:intro-rescale} and prove Theorem~\ref{thm:resc-est}. 
Fix	$\cB^\st, \kappa>1$ and let $(\cB^\wk, p, U^\st, \cU^\wk) \in \Omega^{m,d}_{\kappa, q}(\cB^\st)$, we recall the notations
\[
	H^s  = \cH^s(\cB^\st,\cB^\wk, p, U^\st, \cU^\wk),   \quad H^\st = \cH^\st(p, U^\st).
\]
Then
\[
	H^s(\varphi, I) = K(I) - U^\st(\varphi^\st) - U^\wk(\varphi^\st, \varphi^\wk), \quad  H^\st(\varphi^\st, I^\st) = K(I^\st, 0) - U^\st(\varphi^\st). 
\]
Recall from \eqref{eq:block-ABC} that $\partial^2_{II}K = \bmat{A & B \\ C & D}$, then 
\[
	(A)_{ij} = (k_i^\st)^TQk_j^\st, \quad (B)_{ij} = (k_i^\st)^T Q k_j^\wk, \quad 
	(C)_{ij} = (k_i^\wk)^T Q k_j^\wk. 
\]

The vector field $X^s(\varphi^\st, v^\st,  \varphi^\wk, I^\wk) $ defined on the universal cover $\R^m  \times \R^m \times \R^{d-m}\times \R^{d-m}$ is obtained from the Lagrangian vector field via  the coordinate change $\tilde{C}I^\wk = B^T A^{-1}v^\st - v^\wk$ (see \eqref{eq:half-lag}). The vector field $X^\st_L(\varphi^\st, v^\st, \varphi^\wk,  I^\wk) $ is defined as a trivial extension of the Lagrangian vector field of $H^\st$, also defined on the universal cover. More explicitly (see \eqref{eq:Xs}, \eqref{eq:XstL})
\begin{equation}
\label{eq:XsXstl}
	X^s= 
	\bmat{ v^\st \\ 
	A\partial_{\varphi^\st}U + B \partial_{\varphi^\wk} U \\ 
	B^T A^{-1} v^\st - \tilde{C} I^\wk \\
	\partial_{\varphi^\wk} U}, 
	 \quad 
	X^\st_L = 
	\bmat{v^\st \\ A \partial_{\varphi^\st}U^\st \\0 \\  0 }. 
\end{equation}

Given $1\ge  \sigma_1 \ge \cdots \ge \sigma_{d-m}>0$, let $\Sigma = \diag\{\sigma_1, \cdots, \sigma_{d-m}\}$. The rescaling is  $\Phi_\Sigma: \R^{2d}\to \R^{2d}$, given by (\ref{rescale}).
 We denote by $\tilde{X}^s(\varphi^\st, v^\st, \tilphi^\wk, \tilI^\wk)$ the rescaled $X^s$. Using  \eqref{eq:XsXstl}, we have
\begin{equation}\label{eq:xs-diff}
	\tilde{X}^s - X^\st_L = (\Phi_\Sigma)^{-1} X^s \circ \Phi_\Sigma^{-1} - X^\st =
	\begin{bmatrix}
	0 \\
	(A \partial_{\varphi^\st} U^\wk  + B \partial_{\varphi^\wk} U^\wk)(\varphi^\st, \Sigma^{-1} \tilde{\varphi}^\wk) \\	
	\Sigma B^T A^{-1} v^\st - \Sigma \tilde{C} \Sigma \tilI^\wk \\
	\Sigma^{-1}\partial_{\varphi^\wk} U^\wk (\varphi^\st, \Sigma^{-1} \tilde{\varphi}^\wk)
	\end{bmatrix}  
\end{equation}
noting that $U^\st$ is independent of $\varphi^\wk$, so $\partial_{\varphi^\wk}U = \partial_{\varphi^\wk} U^\wk$. 
Furthermore
\begin{multline}\label{eq:dxs-diff}
	D(\tilde{X}^s - X^\st_L) = (\Phi_\Sigma)^{-1} D X^s  \circ (\Phi_\Sigma)^{-1} - X^\st_L = \\
	\begin{bmatrix}
	0 & 0 & 0 & 0 \\
	A \partial^2_{\varphi^\st \varphi^\st}U^\wk  + B  \partial^2_{\varphi^\st \varphi^\wk} U^\wk & 0 & B \partial^2_{\varphi^\wk \varphi^\wk}U^\wk \Sigma^{-1} & 0 \\
	0 & \Sigma B^T A^{-1} & 0 & \Sigma \tilde{C} \Sigma \\
	\Sigma^{-1} \partial^2_{\varphi^\st \varphi^\wk} U^\wk & 0 &\Sigma^{-1}\partial^2_{\varphi^\wk \varphi^\wk} U^\wk \Sigma^{-1} & 0 
	\end{bmatrix}. 
\end{multline}


The quantities in \eqref{eq:xs-diff} and \eqref{eq:dxs-diff} are estimated as follows. 
\begin{lemma}\label{lem:matrix-bound} Fix $\cB^\st, \kappa>1$.
Assume $q>2$. Then there exists a constant $M_1= M_1(\cB^\st, Q, \kappa, q,d-m)$ such that for the  parameters $\sigma_j = |k_j^\wk|^{-\frac{q+1}{3}}$, uniformly over $\R^m\times \R^m\times \R^{d-m}\times \R^{d-m}$, the following hold. 
\begin{enumerate}
\item For any $ 1\le i \le m$ and $1 \le j \le d-m$, $\|\partial_{\varphi^\wk_j}U^\wk\|_{C^0}, \|\partial^2_{\varphi_i^\st  \varphi_j^\wk} U^\wk\|_{C^0}\le M_1 |k_j^\wk|^{-q}$; 

for any $1 \le i, j \le d-m$, $\|\partial^2_{\varphi_i^\wk \varphi_j^\wk} U^\wk\|_{C^0} \le M_1 \sup\{ |k_i^\wk|^{-q}, |k_j^\wk|^{-q}\}$. 
\item $ \|A \partial_{\varphi^\wk}U^\wk\|_{C^0} \le M_1 \sup_j\{|k_j^\wk|^{-q}\}$,  $\| A \partial^2_{\varphi^\st \varphi^\st}U^\wk \|_{C^0} \le M_1 \sup_j \{|k_j^\wk|^{-q}\}$. 
\item $ \|B \partial_{\varphi^\wk} U^\wk\|_{C^0} \le M_1 \sup_j \{ |k_j^\wk|^{-(q-1)}\}$, $ \|B \partial^2_{\varphi^\st \varphi^\wk} U \|_{C^0}\le  M_1 \sup_j \{ |k_j^\wk|^{-(q-1)}\}$ . 

\item $\|B \partial^2_{\varphi^\st \varphi^\wk} U \Sigma^{-1}\|_{C^0} \le  M_1 \sup_j \{ |k_j^\wk|^{-\frac{2q-4}{3}}\}$. 
\item $\| \Sigma^{-1} \partial^2_{\varphi^\wk \varphi^\wk}U \Sigma^{-1}\|_{C^0} \le M_1 \sup_j \{ |k_j^\wk|^{-\frac{q-2}{3}}\}$. 
\item $\|\Sigma B^T A^{-1}\|_{C^0} \le M_1 \sup_j \{ |k_j^\wk|^{-\frac{q-2}{3}}\}$. 
\item $\|\Sigma \tilde{C} \Sigma\|_{C^0} \le M_1 \sup_j \{ |k_j^\wk|^{-\frac{2q-4}{3}}\}$. 
\end{enumerate}
\end{lemma}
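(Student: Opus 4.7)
The plan is to derive all seven estimates by combining three ingredients: (i) a bookkeeping of which $|k_j^\wk|$ controls each entry of the matrices $A, B, C, \tilde C$; (ii) a hierarchical support property of $U^\wk$; and (iii) the balancing afforded by the specific choice $\sigma_j = |k_j^\wk|^{-(q+1)/3}$.

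First I would record the baseline matrix bounds. Since $\cB^\st$ is fixed and $Q$ is continuous, $\|A\|, \|A^{-1}\|$ are bounded by a constant $M_0(\cB^\st, Q)$. The definitions in \eqref{eq:ABC} give entrywise bounds $|B_{ij}|\le M_0 |k_j^\wk|$ and $|C_{ij}|\le M_0 |k_i^\wk||k_j^\wk|$, and the identity $\tilde C = C - B^T A^{-1} B$ shows $|\tilde C_{ij}|\le M_0' |k_i^\wk||k_j^\wk|$. Next I would exploit the crucial structural feature that $U^\wk_l$ depends only on $\varphi_1,\ldots,\varphi_{l+m}$, so $\partial_{\varphi^\wk_i} U^\wk_l = 0$ whenever $i > l$. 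Consequently
\[
\partial^2_{\varphi^\wk_i \varphi^\wk_j} U^\wk = \sum_{l \ge \max(i,j)} \partial^2_{\varphi^\wk_i \varphi^\wk_j} U^\wk_l,
\]
and similarly for the first partials and the mixed partials in $\varphi^\st$. Combining $\|U^\wk_l\|_{C^2}\le\kappa|k_l^\wk|^{-q}$ with the approximate monotonicity $|k_{\max(i,j)}^\wk|\le\kappa(1+|k_l^\wk|)$ for $l\ge \max(i,j)$, the tail sum is dominated by a constant multiple of $|k_{\max(i,j)}^\wk|^{-q}$, which proves item~1.

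With item~1 in hand, items~2 and~3 follow by multiplying entrywise: $\|A\|=O(1)$ gives item~2, and the column bound $\|B_{\cdot j}\|\le M_0|k_j^\wk|$ costs one power of $|k_j^\wk|$ to yield the exponent $-(q-1)$ in item~3. The substantive work is items~4--7, which mix $\Sigma^{\pm 1}$ with $B$ or $\tilde C$. Here operator-norm estimates are too lossy, so I would argue entrywise. A representative computation is
\[
\left(\Sigma^{-1}\partial^2_{\varphi^\wk\varphi^\wk}U^\wk\Sigma^{-1}\right)_{ij} = \sigma_i^{-1}\sigma_j^{-1}\,\partial^2_{\varphi^\wk_i\varphi^\wk_j}U^\wk,
\]
whose modulus, assuming WLOG $i\le j$, is bounded by $|k_i^\wk|^{(q+1)/3}|k_j^\wk|^{(q+1)/3}|k_j^\wk|^{-q}$. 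Invoking $|k_i^\wk|\lesssim|k_j^\wk|$ from the monotonicity, this collapses to $|k_j^\wk|^{-(q-2)/3}$, which is exactly item~5. Item~7 uses $|\tilde C_{ij}|\lesssim|k_i^\wk||k_j^\wk|$ in place of the $U^\wk$ bound and produces the exponent $-(2q-4)/3$ after the same cancellation; items~4 and~6 are the remaining two analogous entrywise calculations. The choice $\sigma_j=|k_j^\wk|^{-(q+1)/3}$ is precisely the unique one that equates the polynomial growth of $\tilde C\Sigma$ with the polynomial decay coming from $U^\wk$ after rescaling.

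The main obstacle is organizational rather than computational: every block in \eqref{eq:dxs-diff} involves indices for which the controlling weak basis vector can be either $k_i^\wk$ or $k_j^\wk$, and one must consistently use WLOG together with the monotonicity condition~1 of $\Omega^{m,d}_{\kappa,q}(\cB^\st)$ to isolate the largest $|k^\wk|$. Once this bookkeeping is handled, converting the entrywise estimates to operator-norm bounds over a $(d-m)\times(d-m)$ block costs only a dimensional factor absorbed into $M_1$. The assumption $q>2$ enters exactly to ensure the exponents $-(q-2)/3$ and $-(2q-4)/3$ are strictly negative, so that $\tilde X^s\to X^\st_L$ in $C^1$ as $\lM(\cB^\wk)\to\infty$.
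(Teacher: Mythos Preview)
Your proposal is correct and follows essentially the same route as the paper: both exploit the support hierarchy $\partial_{\varphi^\wk_i}U^\wk_l=0$ for $i>l$ together with the approximate monotonicity $|k_i^\wk|\le\kappa(1+|k_j^\wk|)$ to prove item~1, and then do entrywise computations for the remaining items. The one small difference is in item~7: the paper first bounds $\|\Sigma C\Sigma\|$ entrywise and then uses positive definiteness ($0\le\Sigma\tilde C\Sigma\le\Sigma C\Sigma$ implies $\|\Sigma\tilde C\Sigma\|\le 2\|\Sigma C\Sigma\|$), whereas you directly bound $|\tilde C_{ij}|\lesssim|k_i^\wk||k_j^\wk|$ from $\tilde C=C-B^TA^{-1}B$; both arguments are equally short and valid.
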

We first prove Theorem~\ref{thm:resc-est} using our lemma. 
\begin{proof}[Proof of Theorem~\ref{thm:resc-est}]
Noting that $\Pi_{(\varphi^\st, v^\st)}(\tilde{X}^s - X^\st_L)$ is the first and third line of \eqref{eq:xs-diff}, using item 2 and 3 of Lemma~\ref{lem:matrix-bound} we get 
\[
	\|\Pi_{(\varphi^\st, v^\st)}(\tilde{X}^s - X^\st_L) \| \le M \sup_j \{|k_j^\wk|^{-(q-1)}\} = M^* \lM(\cB^\wk)^{-(q-1)},
\]
for any constant  $M \ge 2 M^*_1$, where $M_1$ is from Lemma~\ref{lem:matrix-bound}. 

Since $D(\tilde{X}^s - X^\st_L)$ is bounded, up to a universal constant, the sum of the norms of all the non-zero blocks in \eqref{eq:dxs-diff}, using Lemma~\ref{lem:matrix-bound} items 4-8, we get 
\[
		\|D\tilde{X}^s - DX_L^\st\| \le M \sup_j \{|k_j^\wk|^{-\frac{q-2}{3}}\}  =  M^* \lM(\cB^\wk)^{-\frac{q-2}{3}},
\]
where $M$ depends only on $M_1$. 
\end{proof}

The rest of the section is dedicated to proving Lemma~\ref{lem:matrix-bound}. 

\begin{proof}[Proof of Lemma~\ref{lem:matrix-bound}]
Denote $\bar{M} = |k_1^\st| + \cdots + |k_m^\st|$, which depends only on $\cB^\st$. 
\myheading{Item 1}. We have 
\begin{multline*}
	\|\partial_{\varphi_j^\wk} U^\wk\|_{C^0} \le \sum_{l=1}^{d-m} \|\partial_{\varphi^\wk_j}U_l^\wk\|_{C^0} \le \sum_{l \ge j} \|\partial_{\varphi^\wk_j}U_l^\wk\|_{C^0} \\
	\le \kappa \sum_{l\ge j} |k_l^\wk|^{-q} \le (d-m) \kappa^{q+1} |k_j^\wk|^{-q}, 
\end{multline*}
where the second inequality is due to $U_l^\wk$ depending only on $(\varphi_1^\wk, \cdots, \varphi_l^\wk)$, and the last two inequalities uses the definition of $\Omega^{m,d}_{\kappa, q}$, see section~\ref{sec:intro-dom}.  
By the same reasoning, we have 
	\[
	\|\partial^2_{\varphi_i^\st \varphi_j^\wk}U^\wk\| \le \sum_{l \ge j}\|U_l^\wk\|_{C^2} \le (d-m) \kappa^{q+1} |k_j^\wk|^{-q}, 
	\]
	\[
	\| \partial^2_{\varphi_i^\wk \varphi_j^\wk}U^\wk\| \le \sum_{l \ge \sup\{i,j\}} \|U_l^\wk\|_{C^2} \le (d-m) \kappa^{q+1} \sup\{ |k_i^\wk|^{-q}, |k_j^\wk|^{-q}\}
	\]
the second and third estimate follows.

\myheading{Item 2}.We have 
\begin{multline*}
	|(A \partial_{\varphi^\st} U^\wk)_j| = |\sum_i(k_i^\st)^T Q k_j^\st \partial_{\varphi_j^\st}U^\wk| \le  m\bar{M}^2 \|Q\| \|\partial_{\varphi_j^\st}U\| \\
	\le  m(d-m) \bar{M}^2 \|Q\| \kappa^{q+1}|k_j^\wk|^{-q},
\end{multline*}
where the last line is due to item 1.
Similarly, 
\[
	|(A \partial^2_{\varphi^\st \varphi^\st}U^\wk )_{ij}| \le \bar{M}^2 \|Q\| \|\partial_{\varphi^\st_i\varphi_j^\st}U\|\le  (d-m) \bar{M}^2 \|Q\| \kappa^{q+1}|k_j^\wk|^{-q}. 
\]
 Since the vector or matrix norm is bounded by the supremum of all matrix entries, up to a constant depending only on dimension, item 2 follows. In the sequel, we apply the same reasoning and only estimate the supremum of matrix/vector entries. 

\myheading{Item 3}. Similar to item 2,
\begin{multline*}
	|(B\partial_{\varphi^\wk} U)_{j}| = |\sum_i(k_i^\st)^T Q k_j^\wk \partial_{\varphi^\wk_j} U^\wk| \le (d-m)\bar{M}\|Q\| |k_j^\wk|  \|\partial_{\varphi_j^\wk} U^\wk\|  \\
	 \le (d-m)\bar{M} \|Q\|(d-m) \kappa^{q+1} |k_j^\wk|^{-(q-1)}, 
\end{multline*}
while 
\[
	|(B \partial^2_{\varphi^\st\varphi^\wk} U^\wk)_{ij}| = |(k_i^\st)^T Q k_j^\wk \partial^2_{\varphi^\st_i\varphi^\wk_j} U^\wk| \le (d-m)\kappa^{q+1}\bar{M} \|Q\| |k_j^\wk|^{-(q-1)}. 
\]

\myheading{Item 5}. 
\begin{multline*}
	|(B\partial^2_{\varphi^\wk \varphi^\wk}U\Sigma^{-1} )_{ij}| = |\sum_l (k_i^\st)^T Q k_l^\wk  \partial^2_{\varphi^\wk_l \varphi^\wk_j} U \sigma_j^{-1}| 
	\le \bar{M}\|Q\| \sum_{l \ge j} |k_l^\wk| \sigma_j^{-1} |\partial^2_{\varphi_l^\wk \varphi_j^\wk} U| 	\\
	 \le  \bar{M}\|Q\|  (d-m)^2 \kappa^{q+2}  |k_j^\wk| |k_j^\wk|^{-q} |k_j^\wk|^{\frac{q+1}{3}} =  \bar{M}\|Q\|  (d-m) \kappa^{q+2}  |k_j^\wk|^{- \frac{2q-4}{3}},
\end{multline*}
where the inequality of the second line uses $|k_l^\wk| \le \kappa |k_j^\wk|$, item 1 and the choice of $\sigma_j$.

\myheading{Item 6}. Using item 1 and choice of $\sigma_j$, we have
\begin{multline*}
	|(\Sigma^{-1} \partial^2_{\varphi^\wk \varphi^\wk} U^\wk \Sigma^{-1})_{ij}| = |\sigma_i^{-1} \partial^2_{\varphi_i^\wk \varphi_j^\wk}U^\wk \sigma_j^{-1}| \\
	 \le (d-m) \kappa^{q+1}  \sigma_i^{-1} \sigma_j^{-1} \sup\{|k_i^\wk|^{-q}, |k_j^\wk|^{-q} \} \\
	 \le (d-m) \kappa^{q+1}  \sup \{ |k_i^\wk|^{- \frac{q-2}{3}}, |k_j^\wk|^{- \frac{q-2}{3}}\}. 
\end{multline*}

\myheading{Item 7}. We have
\[
	|(\Sigma B^T)_{ij}| = |\sigma_i (k_i^\wk)^T Q k_j^\st| \le \bar{M}\|Q\| \sup_j \{|k_j^\wk| \sigma_j\}  =  \bar{M}\|Q\| \sup_j \{ |k_j^\wk|^{-\frac{q-2}{3}}  \}  
\]
and uses $\|\Sigma B^T A^{-1}\| \le \|\Sigma B^T\| \|A^{-1}\|$, noting that $\|A^{-1}\|$  depends only on $Q$ and $\cB^\st$. 

\myheading{Item 8}. Recall 
$\tilde{C} = C - B^T A^{-1} B$. We have 
\[
	|(\Sigma C \Sigma)_{ij}| = |\sigma_i (k_i^\wk)^T Q k_j^\wk \sigma_j| \le (\sup_j \sigma_j |k_j^\wk|)^2 \|Q\| \le \|Q\| \sup_j \{ |k_j^\wk|^{- \frac{2q-4}{3}}\}.
\]
Suppose $S_1, S_2$ are  positive definite symmetric matrices with $S_1 \ge S_2$,  for any $v\in \R^{d-m}$,
\[
	v^T S_1 v = v^T (S_1 - S_2 + S_2) v \ge v^T S_2 v,
\]
we obtain $\|S_1\| \ge \|S_2\|$. Since $C - B^T A^{-1} B \ge 0$, we have $\Sigma C \Sigma - \Sigma B^T A^{-1} B  \Sigma \ge 0 $. Apply the observation to the matrices $\Sigma C \Sigma$ and $\Sigma B^T A^{-1} B  \Sigma$ we get 
\[
	\|\Sigma \tilde{C} \Sigma\|\le  \|\Sigma C \Sigma\| + \|\Sigma B^T A^{-1} B  \Sigma\| \le 2 \| \Sigma C \Sigma\|. 
\]
Item 8 follows. 
\end{proof}

\subsection{The slow Lagrangian}
\label{sec:slow-lag}

We derive the special form of the slow Lagrangian described in section~\ref{sec:intro-var}. We fix $\cB^\st, \ \kappa>1$ and 
$(\cB^\wk, p, U^\st, \cU^\wk) \in \Omega^{m,d}_{\kappa, q}(\cB^\st)$. 
Denote $H^s  = \cH^s(\cB^\st,\cB^\wk, p, U^\st, \cU^\wk)$,  $H^\st = \cH^\st(p, U^\st)$ and the associated Lagrangian is denoted $L^s$ and $L^\st$. 

As before we write 
\[
	H^s(\varphi, I) = K(I) - U^\st(\varphi^\st) - U^\wk(\varphi^\st, \varphi^\wk), \quad  H^\st(\varphi^\st, I^\st) = K(I^\st, 0) - U^\st(\varphi^\st), 
\]
and
\[
	L^s(\varphi, v) = L_0^s(v) + U^\st(\varphi^\st) + U^\wk(\varphi^\st, \varphi^\wk), \quad L^\st(\varphi^\st, v^\st) = 
L_0^\st(v^\st) + U^\st(\varphi^\st), 
\]
where $\partial^2_{vv}L_0^s = (\partial^2_{II}K)^{-1}$, $\partial^2_{v^\st v^\st} L_0^\st = (\partial^2_{I^\st I^\st}K)^{-1}$ for 
$v=\partial_I K$ and $v^\st=\partial_{I^\st}K$. Recall the notation
\[
	\partial^2_{II}K = 
	\begin{bmatrix}
	A & B \\ B^T & C
	\end{bmatrix}, \quad A = \partial^2_{I^\st I^\st}K,\ B = \partial^2_{I^\st I^\wk}K,\ C = \partial^2_{I^\wk I^\wk}K.
\]
\begin{lemma}\label{lem:ls-split}
With the above notations we have 
\begin{enumerate}
\item 
\begin{equation}
\label{eq:Ls-split}
\beal &L^s(v, \varphi) = L^\st(\varphi^\st,v^\st) + \\
&\qquad \quad \frac12 (v^\wk - B^T A^{-1}v^\st) \cdot  \tilde{C}^{-1} (v^\wk - B^T A^{-1}v^\st) + U^\wk(\varphi^\st, \varphi^\wk),
\enal 
\end{equation}
where 
\[
	\tilde{C} = C- B^T A^{-1} B. 
\]
\item Let $c = (c^\st, c^\wk) \in \R^m \times \R^{d-m}$. We denote \footnote{We stress here that no coordinate change is performed: $w^\wk$ is simply an abbreviation for $v^\wk - B^T A^{-1} v^\st$. }
\be \label{c-bar}
	\bar{c} = c^\st + A^{-1} B c^\wk, \quad w^\wk = v^\wk - B^T A^{-1}v^\st,
\ee
then 
\begin{multline}
\label{eq:Lsc-split}
L^s(v, \varphi) - c \cdot v = L^\st(\varphi^\st, v^\st) - \bar{c} \cdot v^\st  \\
+ \frac12 (w^\wk -  \tilde{C} c^\wk) \cdot \tilde{C}^{-1} (w^\wk - \tilde{C}c^\wk) - \frac12 c^\wk \cdot \tilde{C} c^\wk + U^\wk(\varphi^\wk, \varphi^\st). 
\end{multline}
\end{enumerate}
\end{lemma}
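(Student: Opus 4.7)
The plan is to reduce both identities to straightforward block-matrix algebra, using the quadratic structure of $K$ and the fact that $L_0^s$ and $L_0^\st$ are themselves quadratic forms with Hessians $(\partial^2_{II}K)^{-1}$ and $A^{-1}$ respectively.

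For item~1, I will first invert the block matrix $\partial^2_{II}K=\bigl[\begin{smallmatrix}A & B\\ B^T & C\end{smallmatrix}\bigr]$ using the Schur complement $\tilde C=C-B^TA^{-1}B$, obtaining
\[
(\partial^2_{II}K)^{-1}=\begin{bmatrix}A^{-1}+A^{-1}B\tilde C^{-1}B^TA^{-1} & -A^{-1}B\tilde C^{-1}\\ -\tilde C^{-1}B^TA^{-1} & \tilde C^{-1}\end{bmatrix}.
\]
Substituting into $L_0^s(v)=\tfrac12 v^T(\partial^2_{II}K)^{-1}v$ and introducing the shorthand $w^\wk=v^\wk-B^TA^{-1}v^\st$, the cross terms assemble precisely into $\tfrac12 w^\wk\cdot\tilde C^{-1}w^\wk$, while the diagonal $v^\st$-block leaves behind $\tfrac12 A^{-1}v^\st\cdot v^\st=L_0^\st(v^\st)$. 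Adding the potentials $U^\st(\varphi^\st)+U^\wk(\varphi^\st,\varphi^\wk)$ gives \eqref{eq:Ls-split}.

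For item~2, I will take \eqref{eq:Ls-split} as established and simply complete the square in $w^\wk$ after subtracting the linear term $c\cdot v$. Writing $c\cdot v=c^\st\cdot v^\st+c^\wk\cdot v^\wk$ and using $v^\wk=w^\wk+B^TA^{-1}v^\st$, the quantity $c^\wk\cdot v^\wk$ splits as $c^\wk\cdot w^\wk+(A^{-1}Bc^\wk)\cdot v^\st$ (here $B^TA^{-1}$ is the transpose of $A^{-1}B$ because $A$ is symmetric), which combines with $c^\st\cdot v^\st$ into exactly $\bar c\cdot v^\st$ with $\bar c=c^\st+A^{-1}Bc^\wk$. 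The remaining $c^\wk\cdot w^\wk$ piece completes the square:
\[
\tfrac12 w^\wk\cdot\tilde C^{-1}w^\wk - c^\wk\cdot w^\wk
=\tfrac12(w^\wk-\tilde Cc^\wk)\cdot\tilde C^{-1}(w^\wk-\tilde Cc^\wk)-\tfrac12 c^\wk\cdot\tilde Cc^\wk,
\]
which is exactly \eqref{eq:Lsc-split}.

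There is no real obstacle here: the statement is a purely algebraic identity and the only delicate step is correctly invoking the Schur-complement inversion formula and tracking the transpose $(A^{-1}B)^T=B^TA^{-1}$ when reassembling $\bar c\cdot v^\st$. The symmetry of $\tilde C$ (inherited from $A$ and $C$ being symmetric) is what makes the completion of the square in item~2 go through cleanly.
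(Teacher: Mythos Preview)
Your proposal is correct and follows essentially the same route as the paper: both use the Schur-complement inversion of $\partial^2_{II}K$ to split $L_0^s$ into $L_0^\st(v^\st)$ plus the quadratic form in $w^\wk$, and then complete the square in $w^\wk$ for item~2. The only cosmetic difference is that the paper writes the block inverse in the factored form $\bigl[\begin{smallmatrix}A^{-1}&0\\0&0\end{smallmatrix}\bigr]+\bigl[\begin{smallmatrix}-A^{-1}B\\ \Id\end{smallmatrix}\bigr]\tilde C^{-1}\bigl[\begin{smallmatrix}-B^TA^{-1}&\Id\end{smallmatrix}\bigr]$, which makes the appearance of $w^\wk=v^\wk-B^TA^{-1}v^\st$ immediate, whereas you expand the four blocks and then regroup; the algebra is identical.
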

\begin{proof}
We have the following identity in block matrix inverse, which can be verified by a direct computation. 
\[
	\bmat{A & B \\ B^T & C}^{-1} = \bmat{A^{-1} & 0 \\ 0 & 0} + \bmat{-A^{-1}B  \\ \Id}  \tilde{C}^{-1} \bmat{ - B^T A^{-1} & \Id}. 
\]
Then 
\begin{align*}
L_0^s(v^\st, v^\wk) &= \frac12 \bmat{ (v^\st)^T (v^\wk)^T} \left(  \bmat{A^{-1} & 0 \\ 0 & 0} + \bmat{\Id \\ -A^{-1}B} \tilde{C}^{-1} \bmat{\Id & - B^T A^{-1}} \right) \bmat{v^\st \\ v^\wk} \\
 &= \frac12 v^\st \cdot A^{-1} v^\st +  \frac12 (v^\wk - B^T A^{-1}v^\st) \cdot  \tilde{C}^{-1} (v^\wk - B^T A^{-1}v^\st) \\
 & = L_0^\st(v^\st) +  \frac12 (v^\wk - B^T A^{-1}v^\st) \cdot  \tilde{C}^{-1} (v^\wk - B^T A^{-1}v^\st),
\end{align*}
and \eqref{eq:Ls-split} follows. 

Moreover, 
\begin{align*}
&L_0^s - (c^\st, c^\wk)\cdot (v^\st, v^\wk) \\
&= L_0^\st(v^\st) - (c^\st + A^{-1}B c^\wk) \cdot v^\st +  \frac12 w^\wk \cdot \tilde{C}^{-1} w^\wk - c^\wk \cdot v^\wk + A^{-1}B c^\wk \cdot v^\st \\
& = L_0^\st(v^\st) - \bar{c} \cdot v^\st + \frac12 w^\wk \cdot \tilde{C}^{-1} w^\wk - c^\wk \cdot (v^\wk - B^T A^{-1}v^\st) \\
&= L_0^\st(v^\st) - \bar{c} \cdot v^\st + \frac12 w^\wk \cdot \tilde{C}^{-1} w^\wk - ( \tilde{C} c^\wk) \cdot \tilde{C}^{-1}w^\wk \\
& = L_0^\st(v^\st) - \bar{c} \cdot v^\st + \frac12 (w^\wk -  \tilde{C} c^\wk) \cdot \tilde{C}^{-1} (w^\wk - \tilde{C}c^\wk) - \frac12 c^\wk \cdot \tilde{C} c^\wk.
\end{align*}
We obtain \eqref{eq:Lsc-split}. 
\end{proof}

The Euler-Lagrange flow of $L^s$ satisfies the following estimates. 
\begin{lemma}\label{lem:EL-est} Fix $\cB^\st,\ \kappa>1$. 
Assume that $q>1$,  $L^s = L_{\cH^s(\cB^\st,\cB^\wk, p, U^\st, \cU^\wk)}$, with  $(\cB^\wk, p, U^\st, \cU^\wk) \in \Omega^{m,d}_{\kappa,q}(\cB^\st)$. Let $\gamma = (\gamma^\st, \gamma^\wk):[0,T] \to \T^d$ satisfy the Euler-Lagrange equation of $L^s$. 
\begin{enumerate}
\item There exists a constant $M_1 = M_1(\cB^\st, Q, \kappa, q)$ such that 
\[
	\|\ddot{\gamma}^\st - A \partial_{\varphi^\st} U^\st(\gamma^\st)\|_{C^0} \le M_1 (\mu(\cB^\wk))^{-(q-1)}. 
\]
\item There exists a constant $M_2 = M_2(\cB^\st, Q, \kappa, q, \|U^\st\|)$ such that 
\[
	\|\ddot{\gamma}^\st\|_{C^0} \le M_2. 
\]
\end{enumerate}
\end{lemma}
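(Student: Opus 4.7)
The plan is to invoke the explicit Euler--Lagrange equation \eqref{eq:Hs-Lag} and reduce immediately to the matrix bounds already established in Lemma~\ref{lem:matrix-bound}. From \eqref{eq:Hs-Lag}, a trajectory $\gamma=(\gamma^\st,\gamma^\wk)$ of the Euler--Lagrange flow of $L^s$ satisfies
\[
\ddot{\gamma}^\st \;=\; A\,\partial_{\varphi^\st}U(\gamma) \;+\; B\,\partial_{\varphi^\wk}U(\gamma),
\]
where $U = U^\st+U^\wk$. Since $U^\st$ depends only on $\varphi^\st$, we have $\partial_{\varphi^\wk}U = \partial_{\varphi^\wk}U^\wk$, and therefore
\[
\ddot{\gamma}^\st - A\,\partial_{\varphi^\st}U^\st(\gamma^\st) \;=\; A\,\partial_{\varphi^\st}U^\wk(\gamma) \;+\; B\,\partial_{\varphi^\wk}U^\wk(\gamma).
\]

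For item 1, I would simply take $C^0$ norms of the two terms on the right. The bound $\|A\,\partial_{\varphi^\st}U^\wk\|_{C^0}\le M_1\sup_j|k_j^\wk|^{-q}$ is item 2 of Lemma~\ref{lem:matrix-bound}, and $\|B\,\partial_{\varphi^\wk}U^\wk\|_{C^0}\le M_1\sup_j|k_j^\wk|^{-(q-1)}$ is item 3. Since $q>1$ and $\mu(\cB^\wk)=\min_j|k_j^\wk|\ge 1$, the $B$--term dominates and both are controlled by a constant (depending on $\cB^\st,Q,\kappa,q$) times $\mu(\cB^\wk)^{-(q-1)}$. This yields item 1.

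For item 2, I would combine item 1 with the elementary bound on the strong piece. Precisely,
\[
\|\ddot{\gamma}^\st\|_{C^0} \;\le\; \|A\,\partial_{\varphi^\st}U^\st\|_{C^0} + \|A\,\partial_{\varphi^\st}U^\wk + B\,\partial_{\varphi^\wk}U^\wk\|_{C^0},
\]
where the first term is bounded by $\|A\|\,\|U^\st\|_{C^1}$ (and $\|A\|$ depends only on $\cB^\st$ and $Q$ via \eqref{eq:ABC}), and the second is $\le M_1\mu(\cB^\wk)^{-(q-1)}\le M_1$, uniformly in $\cB^\wk$. The required constant $M_2$ then depends only on $\cB^\st,Q,\kappa,q$, and $\|U^\st\|_{C^1}$, as claimed.

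There is really no obstacle here: the content of the lemma is that Lemma~\ref{lem:matrix-bound} already contains the two estimates needed, so the proof is a one-line assembly. The only care required is to note that the $\varphi^\wk$--derivative of $U^\st$ vanishes, so that the difference $\ddot{\gamma}^\st - A\,\partial_{\varphi^\st}U^\st$ involves only the weak potential $U^\wk$, whose contributions are precisely those controlled by Lemma~\ref{lem:matrix-bound}.
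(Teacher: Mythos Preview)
Your proof is correct and is essentially the paper's own argument: the paper observes that the $(\varphi^\st,v^\st)$ component of the Euler--Lagrange vector field is $\Pi_{(\varphi^\st,v^\st)}\tilde{X}^s$ and cites the first conclusion of Theorem~\ref{thm:resc-est}, which in turn is proved exactly via items~2 and~3 of Lemma~\ref{lem:matrix-bound}; you simply bypass the intermediate reference to Theorem~\ref{thm:resc-est} and quote Lemma~\ref{lem:matrix-bound} directly. One small caveat (shared with the paper's proof): Lemma~\ref{lem:matrix-bound} is stated under the hypothesis $q>2$, whereas the present lemma assumes only $q>1$; the specific items you invoke (items~2 and~3) do not in fact use $q>2$ in their proofs, so the argument goes through, but strictly speaking you are appealing to estimates whose proofs, not whose formal statement, cover your case.
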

\begin{proof}
Observe that the $(\varphi^\st, v^\st)$ component of the Euler-Lagrange vector field of $L^s$ is precisely the vector field $\Pi_{\varphi^\st, v^\st} \tilde{X}^s$ in Theorem~\ref{thm:resc-est}. The Euler-Lagrange equation of $L^\st$ (which is $X^\st$ in Theorem~\ref{thm:resc-est}) is $\ddot{\varphi}^\st = A\partial_{\varphi^\st} U^\st$.  Hence item 1 is a rephrasing of the first conclusion of  in Theorem~\ref{thm:resc-est}. 

Since $\|A \partial_{\varphi^\st} U^\st\| \le \|A\| \|U^\st\|$, and $\|A\|$  depends only on $\cB^\st$ and $Q$, item 2 follows directly from item 1. 
\end{proof}


\section{Weak KAM solutions of dominant Hamiltonians and 
convergence}
\label{sec:con-weak-kam}

In this section, we provide some basic information about 
the weak KAM solution of the dominant system. 

In section~\ref{sec:intro-weak-kam}, we give an overview on the relevant weak KAM theory. Recall that in section~\ref{sec:slow-lag}, 
we derive the relation between the slow Lagrangian and 
the strong Lagrangian. In section~\ref{sec:proj-comp}, we obtain 
a compactness result for the strong component of a minimizing curve.  In section~\ref{sec:intro-app-lip} to \ref{sec:conv-weak-kam}, we prove Theorem~\ref{thm:semi-cont} with some technical statements deferred to section~\ref{sec:tech-est}. 

\subsection{Weak KAM solutions of Tonelli Lagrangian}
\label{sec:intro-weak-kam}

For an extensive exposition of the topic, we refer to \cite{Fat08}. 

\breakheading{Tonelli Lagrangian.} The Lagrangian function $L = L(\varphi, v): \T^d \times \R^d \to \R$ is called Tonelli if it satisfies the following conditions. 
\begin{enumerate}
\item (smoothness) $L$ is $C^r$ with $r \ge 2$.
\item (fiber convexity) $\partial^2_{vv}L$ is strictly positive definite. 
\item (superlinearity) $\lim_{\|v\|\to \infty} |L(x,v)|/\|v\| = \infty$. 
\end{enumerate}
The Lagrangians considered in this paper are Tonelli. 

\breakheading{Minimizers.} An absolutely continuous curve $\gamma: [a,b] \to \T^d$ is called \emph{minimizing}  for the Tonelli Lagrangian $L$ if 
\[
	\int_a^b L(\gamma, \dot{\gamma}) dt = \min_\xi \int_a^b L(\xi, \dot{\xi}) dt,
\]
where the minimization is over all absolutely continuous curves $\xi:[a,b] \to \T^d$ with $b>a$, such that $\xi(a) = \gamma(a)$, $\xi(b) = \gamma(b)$. The functional 
$$
\mathbb{A}(\gamma) = \int_a^b L(\gamma, \dot{\gamma}) dt
$$ 
is called \emph{the action functional}. The curve $\gamma$ is called an \emph{extremal} if it is a critical point of the action functional. A minimizer is extremal, and it satisfies the Euler-Lagrange equation 
\[
	\frac{d}{dt}(\partial_vL(\gamma, \dot{\gamma})) = \partial_\varphi L (\gamma, \dot{\gamma}). 
\]
\

\breakheading{Tonelli Theorem and a priori compactness.} By the Tonelli Theorem (c.f \cite{Fat08}, Corollary 3.3.1),  for any $[a,b] \subset \R$ with $b>a$, $\varphi, \psi \in \T^d$, there always exists a $C^r$ minimizer. Moreover, there exists $D >0$ depending only on a lower bound of $b-a$ such that $\|\dot{\gamma}\| \le D$ (\cite{Fat08} Corollary 4.3.2). This property is called the a priori compactness. 
\

\breakheading{The alpha function and minimal measures.} A measure $\mu$ on $\T^d \times \R^d$ is called a closed measure (see \cite{Sor}, Remark 4.40) if for all $f \in C^1(\T^d)$, 
\[
	\int df(\varphi) \cdot v\, d\mu(\varphi, v) =0. 
\]
This notion is equivalent to the more well known notion of holonomic measure defined by Ma\~ne (\cite{Man97}). 

For $c \in H^1(\T^d, \R) \simeq \R^d$, the alpha function 
\[
	\alpha_L(c) = - \inf_\nu \int (L(\varphi, v) - c \cdot v) d\nu(\varphi, v),
\]
where the minimization is over all closed Borel probability measures. When $L = L_H$ we also use the notation $\alpha_H(c)$.  A measure $\mu$ is called a $c-$\emph{minimizing} if it reaches the infimum 
above. A minimizing measure always exists, and is invariant under the Euler-Lagrange 
flow (c.f \cite{Man97, Ber08}). Hence this definition of the alpha function is equivalent 
to the one given in section~\ref{sec:intro-var}, where the minimization is over invariant probability measures. 

\breakheading{Rotation number and the beta function.}  The rotation number $\rho$ of a closed measure $\mu$ is defined by the relation 
\[
	\int (c \cdot v) d\mu(\varphi, v) = c \cdot \rho, \quad \text{ for all } c\in H^1(\T^d, \R). 
\]
For $h \in H_1(\T^d, \R) \simeq \R^d$, the beta function is
\[
	\beta_L(h) = \inf_{\rho(\nu) =h} \int L(\varphi, v) d\nu(\varphi, v). 
\]
When $L = L_H$ we use the notation $\beta_H(h)$. 
The alpha function and beta function are Legendre duals: 
\[
	\beta_L(h) = \sup_{c \in \R^d}\{c \cdot h - \alpha_L(c)\}. 
\]
\

\breakheading{The Legendre-Fenichel transform.}
Define the Legendre-Fenichel
transform associated to the beta function 
\be \label{LF-transform}
\beal 
\cL\cF_\beta:H_1(\T^d,\R)\to \qquad \qquad \qquad \qquad 
\\
\textup{the collection of nonempty, compact convex subsets of }
H^1(\T^d,\R),
\enal 
\ee 
defined by 
\[
\cL\cF_\beta(h)=\{c\in H^1(\T^n,\R):\ \beta_L(h)+\alpha_L(c)=c\cdot h\}.
\]
\

\breakheading{Domination and calibration.} For $\alpha \in \R$, a function $u:\T^d \to \R$ is dominated by $L + \alpha$ if for all $[a,b]\subset \R$ and piecewise $C^1$ curves $\gamma:[0,T] \to \T^d$, we have 
\[
	u(\gamma(b)) - u(\gamma(a)) \le \int_a^b L(\gamma, \dot{\gamma}) dt + \alpha(b-a). 
\]
A  piecewise $C^1$ curve $\gamma:I \to \R$ defined on an interval $I \subset \R$ is called $(u, L, \alpha)$-calibrated if for any $[a,b]\subset I$, 
\[
	u(\gamma(b)) - u(\gamma(a)) = \int_a^b L(\gamma, \dot{\gamma}) dt + \alpha(b-a). 
\]

\breakheading{Weak KAM solutions.} A function $u: \T^d \to \R$ is called a weak KAM solution of $L$ if there exists $\alpha \in \R$ such that the following hold. 
\begin{enumerate}
\item $u$ is dominated by $L + \alpha$. 
\item For all $\varphi \in \T^d$, there exists a $(u, L, \alpha)$-calibrated curve $\gamma: (-\infty, 0] \to \T^d$ with $\gamma(0) = \varphi$. 
\end{enumerate}
This definition of the weak KAM solution is equivalent to the one given in section~\ref{sec:intro-var} (see \cite{Fat08}, Proposition 4.4.8), and the constant $\alpha = \alpha_L(0)$, where $\alpha_L$ is the alpha function.

\breakheading{Peierls' barrier.} For $T>0$, we define the function $h^T_L: \T^d \times \T^d \to \R$ by 
\[
	h^T_L(\varphi, \psi) = \min_{\gamma(0)= \varphi, \gamma(T) = \psi}\int_0^T (L(\gamma , \dot{\gamma}) + \alpha_L) dt. 
\]
Peierls' barrier is $h_L(\varphi, \psi) = 
\lim_{T \to \infty} h_L^T(\varphi, \psi)$. The limit exists, and the function $h_L$ is Lipschitz in both variables. Denote $h_{L,c} = h_{L - c\cdot v}$.

\breakheading{Mather, Aubry and Ma\~ne sets}. 
These sets are defined by Mather (see \cite{Mather93}). Here 
we only introduce the projected version. Define the projected Aubry and  Ma\~ne
sets as   
\[
	\cA_L(c) = \{x\in \T^d: \quad h_{L,c}(x,x) =0\}, 
\]
\[
 \cN_L(c) = \left\{ y \in \T^d: \quad \min_{x,z\in \cA_L(c)}
	\left( h_{L,c}(x,y) + h_{L,c}(y,z) - h_{L,c}(x,z) \right) =0
	\right \} . 
\]
The Mather set is $\tilde{\cM}_L(c) = \overline{\bigcup_{\mu}\supp(\mu)}$ is the closure of the support of all $c-$minimal measures. Its projection $\pi \tilde{\cM}(c)={\cM}(c)$ onto $\T^d$ is called the projected Mather set. Then $$
\cM_L(c) \subset \cA_L(c) \subset \cN_L(c). 
$$
When $L = L_H$ we also use the subscript $H$ to identify these sets. 

%

\ 
\breakheading{Static classes.} For any $\varphi, \psi \in \cA_L(c)$, Mather defined the following equivalence relation: 
$$
\varphi \sim \psi\  \textup{ if }\ h_{L,c}(\varphi, \psi) + h_{L,c}(\psi, \varphi) =0. 
$$
The equivalence classes defined by this equivalence condition are called the static classes. The static classes are linked to the family of weak KAM solutions, in particular, if there is only one static class, then the weak KAM solution is unique up to a constant.

In this section, we provide a few useful estimates in 
weak KAM theory, and prove Theorem~\ref{thm:semi-cont}.
In section~\ref{sec:proj-comp}, we prove a projected version of the a priori compactness property. We then introduce an approximate version of Lipschitz property and use it to prove Theorem~\ref{thm:semi-cont}.

\subsection{Minimizers of strong and slow Lagrangians,
their a priori compactness}
\label{sec:proj-comp}

We prove a version of the a priori compactness theorem for the strong component.

\begin{proposition}\label{prop:proj-comp} Fix $\cB^\st, \kappa >1$. 
For any $R>0$, there exists $M = M(\cB^\st, Q, R, \kappa)$ 
such that the following hold. For any
\[
	(\cB^\wk, p, U^\st, \cU^\wk) \in \Omega^{m,d}_{\kappa, q} (\cB^\st)
\cap \{ \| U^\st\|_{C^2} \le R\}
\ \textup{ 
and }\ 
	L^s = L_{\cH^s(\cB^\st,\cB^\wk, p, U^\st, \cU^\wk)},  
\]
let $T \ge \frac12$,  $c = (c^\st, c^\wk)\in\R^m \times \R^{d-m}$ and  $\gamma = (\gamma^\st, \gamma^\wk): [0, T]  \to \T^d$ be a minimizer of $L^s - c\cdot v$. Then 
for $\bar c = c^\st + A^{-1}Bc^\wk$, we have 
\[
	\| \dot{\gamma}^\st - A \bar{c} \| \le M. 
\]
\end{proposition}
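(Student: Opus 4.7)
My plan is to combine a competitor argument with the pointwise acceleration bound from Lemma~\ref{lem:EL-est}. Since any restriction of a minimizer to a subinterval is again a minimizer, and $T\ge 1/2$, for each $t\in[0,T]$ I may choose $[s,s+1/2]\subset[0,T]$ containing $t$ and work with $\gamma|_{[s,s+1/2]}$. By Lemma~\ref{lem:EL-est}(2), $\|\ddot\gamma^\st\|_{C^0}\le M_2(\cB^\st,Q,\kappa,q,R)$, so there exists $t^*\in[s,s+1/2]$ with $\dot\gamma^\st(t^*)=\bar v^\st:=2(\gamma^\st(s+1/2)-\gamma^\st(s))$ (computed on any lift of $\gamma^\st$ to $\R^m$), and $\|\dot\gamma^\st(t)-\bar v^\st\|\le M_2/2$ throughout. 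It therefore suffices to bound $\|\bar v^\st-A\bar c\|$ uniformly in the class $\Omega^{m,d}_{\kappa,q}(\cB^\st)\cap\{\|U^\st\|_{C^2}\le R\}$.

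To bound the average, I construct an affine competitor $\xi=(\xi^\st,\xi^\wk):[s,s+1/2]\to\R^d$ in the universal cover. Endpoint matching on $\T^d$ forces the slopes $v^*=\dot\xi^\st\in\bar v^\st+2\Z^m$ and $w^*=\dot\xi^\wk\in\bar w^\wk+2\Z^{d-m}$. Within each coset, pick $v^*$ closest to $A\bar c$ (so $\|v^*-A\bar c\|\le\sqrt{m}$), and $w^*$ closest to $V^\wk:=B^Tc^\st+Cc^\wk$ (so $\|w^*-V^\wk\|\le\sqrt{d-m}$). The identity $\tilde Cc^\wk=V^\wk-B^TA^{-1}(A\bar c)$, which follows directly from $\bar c=c^\st+A^{-1}Bc^\wk$ and $\tilde C=C-B^TA^{-1}B$, then gives
\[
w^\wk_\xi-\tilde Cc^\wk=(w^*-V^\wk)-B^TA^{-1}(v^*-A\bar c),
\]
and in particular $\|w^\wk_\xi-\tilde Cc^\wk\|\le\sqrt{d-m}+\|B^TA^{-1}\|\sqrt{m}$.

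I next apply Lemma~\ref{lem:ls-split} to rewrite both $\int_s^{s+1/2}(L^s-c\cdot v)|_\gamma\,dt$ and the analogous integral for $\xi$ as strong kinetic (about $A\bar c$) $+\,U^\st+$ weak kinetic (about $\tilde Cc^\wk$) $+\,U^\wk$, plus a common constant (which cancels). The minimizing property of $\gamma|_{[s,s+1/2]}$, together with dropping the non-negative weak kinetic contribution of $\gamma$, yields
\[
\tfrac12\int_s^{s+1/2}(\dot\gamma^\st-A\bar c)\cdot A^{-1}(\dot\gamma^\st-A\bar c)\,dt\le C_1,
\]
where $C_1$ depends only on $\|A^{-1}\|$, $R$, $\|U^\wk\|_{C^0}\le\kappa\mu(\cB^\wk)^{-q}$, and the scalar $\|\tilde C^{-1}\|\bigl(d-m+m\|B^TA^{-1}\|^2\bigr)$. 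This last quantity is $O(1)$ uniformly over $\Omega^{m,d}_{\kappa,q}(\cB^\st)$: items~7 and 8 of Lemma~\ref{lem:matrix-bound} yield the rescaling-invariant bound $\|B^TA^{-1}\|^2\|\tilde C^{-1}\|=O(1)$, morally corresponding to $\|B^TA^{-1}\|=O(\mu(\cB^\wk))$ and $\|\tilde C^{-1}\|=O(\mu(\cB^\wk)^{-2})$. Cauchy--Schwarz then upgrades the $L^2$ bound to $\|\bar v^\st-A\bar c\|\le M_3(\cB^\st,Q,R,\kappa)$, and combining with the Lipschitz estimate from the first paragraph gives the desired bound $M=M_2/2+M_3$.

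The main technical obstacle is precisely the uniform estimate $\|\tilde C^{-1}\|\,\|B^TA^{-1}\|^2=O(1)$. This is a statement about the Schur complement of $\partial^2_{II}K$ in the adapted basis, and relies on a lower bound $\Sigma\tilde C\Sigma\succeq c'\,\mathrm{Id}$ for some $c'>0$ depending on $\cB^\st,Q,\kappa$. Fortunately this structural estimate is already encoded in Lemma~\ref{lem:matrix-bound} (thanks to the careful basis selection of Theorem~\ref{thm:basis-norm}), so once that lemma is available the competitor argument above goes through verbatim.
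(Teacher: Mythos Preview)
Your argument has a genuine gap at precisely the point you flag as the ``main technical obstacle'': the claimed bound $\|\tilde C^{-1}\|\,\|B^TA^{-1}\|^2=O(1)$ is \emph{false} in the space $\Omega^{m,d}_{\kappa,q}(\cB^\st)$. Take $n=2$, $m=1$, $d=2$, $Q_0=\Id$, $k_1^\st=(1,0,0)$ and $k_1^\wk=(N,1,0)$. Then $A=1$, $B=N$, $C=N^2+1$, so $\tilde C=C-B^TA^{-1}B=1$ while $\|B^TA^{-1}\|=N$, and the product is $N^2$. This is a perfectly legitimate element of $\Omega^{1,2}_{\kappa,q}(\cB^\st)$ for any $\kappa,q$ (the two defining conditions place no constraint on a single weak vector beyond the size of $U_1^\wk$). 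Items~7 and~8 of Lemma~\ref{lem:matrix-bound} give \emph{upper} bounds on $\|\Sigma B^TA^{-1}\|$ and $\|\Sigma\tilde C\Sigma\|$; neither yields a lower bound on $\tilde C$, so they cannot furnish an upper bound on $\|\tilde C^{-1}\|$. Geometrically, $\tilde C$ measures the $Q_0$-distance from $\bar k^\wk$ to $\mathrm{span}\{\bar k_i^\st\}$, and nothing in the definition of $\Omega^{m,d}_{\kappa,q}$ prevents this distance from being $O(1)$ even as $|k^\wk|\to\infty$.

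The paper's proof sidesteps this issue entirely by a different choice of competitor. Rather than an affine curve with independently chosen strong and weak slopes, it first takes $\gamma_0^\st$ to be the $L^\st-\bar c\cdot v^\st$ minimizer with the same strong endpoints as $\gamma^\st$, and then sets
\[
\gamma_0^\wk(t)=\gamma^\wk(t)-B^TA^{-1}\gamma^\st(t)+B^TA^{-1}\gamma_0^\st(t),
\]
so that $\dot\gamma_0^\wk-B^TA^{-1}\dot\gamma_0^\st=\dot\gamma^\wk-B^TA^{-1}\dot\gamma^\st$ identically. Then in the decomposition of Lemma~\ref{lem:ls-split} the weak kinetic terms of $\gamma$ and $\gamma_0$ are \emph{equal} and cancel in the comparison, leaving only a $2T\|U^\wk\|_{C^0}$ error (Lemma~\ref{lem:strong-act-bound}). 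No estimate on $\tilde C^{-1}$ or $B^TA^{-1}$ is ever needed. From there an affine competitor \emph{in the strong variables only} bounds the $L^2$ norm of $\dot\gamma^\st-A\bar c$, and the upgrade to $L^\infty$ via the acceleration bound proceeds as in your first paragraph. The key idea you are missing is to let the minimizer itself dictate the weak component of the competitor.
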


We first state a lemma on the strong component of the action and 
relate minimizers of the slow system with those of the strong one. 
\begin{lemma}
\label{lem:strong-act-bound} In the notations of Proposition 
\ref{prop:proj-comp} for $T \ge \frac12$  and $c\in\R^d$, let 
$\gamma = (\gamma^\st, \gamma^\wk) : [0,T] \to \T^d$ be 
a minimizer for the lagrangian $L^s - c\cdot v$. Then 
\[
	\int_0^T (L^\st - \bar{c} \cdot v^\st)(\gamma^\st, \dot{\gamma}^\st)dt \le \min_\zeta\int_0^T (L^\st - \bar{c} \cdot v^\st)(\zeta, \dot{\zeta})dt  + 2T\| U^\wk\|_{C^0},
\]
where the minimization is over all absolutely continuous $\zeta:[0,T] \to \T^m$ with $\zeta(0) = \gamma^\st(0)$, $\zeta(T) = \gamma^\st(T)$. 
\end{lemma}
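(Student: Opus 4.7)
The plan is to exploit the exact decomposition of $L^s - c\cdot v$ from Lemma~\ref{lem:ls-split}, namely
\[
(L^s - c\cdot v)(\xi,\dot\xi) = (L^\st - \bar c\cdot v^\st)(\xi^\st,\dot\xi^\st) + \tfrac12 (w^\wk_\xi - \tilde C c^\wk)\cdot \tilde C^{-1}(w^\wk_\xi - \tilde C c^\wk) - \tfrac12 c^\wk\cdot\tilde C c^\wk + U^\wk(\xi),
\]
where $w^\wk_\xi = \dot\xi^\wk - B^T A^{-1}\dot\xi^\st$. Apply this identity to $\gamma$ itself and to an arbitrary competitor $(\zeta,\eta)$ with endpoints $(\gamma^\st(0),\gamma^\wk(0))$ and $(\gamma^\st(T),\gamma^\wk(T))$; the constant $-\tfrac{T}{2} c^\wk\cdot\tilde C c^\wk$ will cancel when we compare the two actions, so the problem reduces to controlling the quadratic term in $w^\wk$ and the $U^\wk$ term.

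The key trick is to choose $\eta$ specifically so that the quadratic $w^\wk$-term is the \emph{same} for $(\zeta,\eta)$ as for $\gamma$. Given any competitor $\zeta:[0,T]\to\T^m$ for the strong action, I define
\[
\eta(t) = \gamma^\wk(t) + \int_0^t B^T A^{-1}\bigl(\dot\zeta(s)-\dot\gamma^\st(s)\bigr)\,ds
\]
(working with lifts to the universal cover and projecting back to $\T^{d-m}$). Since $\zeta(0)=\gamma^\st(0)$ and $\zeta(T)=\gamma^\st(T)$, this curve satisfies $\eta(0)=\gamma^\wk(0)$ and $\eta(T)=\gamma^\wk(T)$, so $(\zeta,\eta)$ is an admissible competitor for the slow Lagrangian on $\T^d$. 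Moreover $\dot\eta - B^T A^{-1}\dot\zeta = \dot\gamma^\wk - B^T A^{-1}\dot\gamma^\st$, i.e.\ $w^\wk_{(\zeta,\eta)} = w^\wk_\gamma$ pointwise, so the two quadratic contributions to the slow action cancel exactly.

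With this choice, the minimality of $\gamma$ for $L^s - c\cdot v$ combined with the decomposition yields
\[
\int_0^T (L^\st - \bar c\cdot v^\st)(\gamma^\st,\dot\gamma^\st)\,dt \;+\; \int_0^T U^\wk(\gamma)\,dt \;\le\; \int_0^T (L^\st - \bar c\cdot v^\st)(\zeta,\dot\zeta)\,dt \;+\; \int_0^T U^\wk(\zeta,\eta)\,dt.
\]
Bounding $\int_0^T U^\wk(\zeta,\eta)\,dt - \int_0^T U^\wk(\gamma)\,dt \le 2T\|U^\wk\|_{C^0}$ and taking the infimum over $\zeta$ gives the claim.

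The step I expect to require the most care is the boundary matching for $\eta$: the curve is most naturally defined on the universal cover $\R^{d-m}$, and I must verify that the construction descends to $\T^{d-m}$ with the correct endpoint homotopy class so that $(\zeta,\eta)$ is a genuine competitor against which the minimizer $\gamma$ can be tested. Once that is pinned down, everything else is algebra using Lemma~\ref{lem:ls-split}, and in particular no smallness assumption on $U^\wk$ is used — the bound $2T\|U^\wk\|_{C^0}$ comes purely from the trivial $C^0$-estimate on the weak potential.
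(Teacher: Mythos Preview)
Your proposal is correct and follows essentially the same approach as the paper: the paper too picks a strong competitor $\gamma_0^\st$, defines the weak component by $\gamma_0^\wk(t)=\gamma^\wk(t)+B^TA^{-1}(\gamma_0^\st(t)-\gamma^\st(t))$ so that $w^\wk$ coincides with that of $\gamma$, applies the decomposition of Lemma~\ref{lem:ls-split}, and bounds the $U^\wk$ difference by $2T\|U^\wk\|_{C^0}$. Your concern about endpoint matching on the torus is exactly the point where the paper is also tacit; it is resolved by working with lifts and noting that $\zeta(T)=\gamma^\st(T)$ in $\R^m$ forces $\eta(T)=\gamma^\wk(T)$ in $\R^{d-m}$, which suffices since a $\T^d$-minimizer is a fortiori a minimizer among curves with fixed lifted endpoints.
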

\begin{proof}
Let $\gamma_0^\st:[0,T] \to \T^m$ be such that 
\[
	\int_0^T (L^\st - \bar{c} \cdot v^\st)(\gamma_0^\st, \dot{\gamma}_0^\st)dt  = \min_\zeta\int_0^T (L^\st - \bar{c} \cdot v^\st)(\zeta, \dot{\zeta})dt 
\]
with $\zeta(0) = \gamma^\st(0)$, $\zeta(T) = \gamma^\st(T)$.  Define  $\gamma_0 = (\gamma_0^\st, \gamma_0^\wk): [0,T] \to \T^d$, by 
\[
	\gamma_0^\wk(t) = \gamma^\wk(t) - A^{-1} B \gamma^\st(t) + A^{-1}B \gamma_0^\st(t)  .
\]
Note that 
\begin{equation}
\label{eq:vel-cancel}
	\gamma_0^\wk(0) = \gamma^\wk(0), \quad \gamma_0^\wk(T) = \gamma^\wk(T), \quad \dot{\gamma}^\wk_0 - A^{-1}B \dot{\gamma}_0^\st = \dot{\gamma}^\wk - A^{-1}B \dot{\gamma}^\st. 
\end{equation}
Using \eqref{eq:Lsc-split} and \eqref{eq:lst-square}, we have
\begin{multline}
\label{eq:ls-long}
L^s - c \cdot v + \frac12 c^\wk \cdot \tilde{C}^{-1} c^\wk  =  
 L^\st - \bar{c} \cdot v^\st  \\
 + \frac12(v^\wk - B^T A^{-1} v^\st - \tilde{C}^\wk) \cdot \tilde{C} (v^\wk - B^T A^{-1} v^\st - \tilde{C}^\wk) + U^\wk
\end{multline}
Since $\gamma$ is a minimizer for $L^s - c \cdot v$, 
\[
	\int_0^T (L^s - c \cdot v)(\gamma, \dot{\gamma}) dt \le \int_0^T (L^s - c \cdot v)(\gamma_0, \dot{\gamma}_0) dt. 
\]
By \eqref{eq:ls-long}, we have
\begin{align*}
& \int_0^T (L^\st - \bar{c} \cdot v^\st )(\gamma^\st, \dot{\gamma}^\st)dt + \int_0^T U^\wk(\gamma(t)) dt \\
& + \frac12(\dot{\gamma}^\wk - B^T A^{-1} \dot{\gamma}^\st - \tilde{C}^\wk) \cdot \tilde{C} (\dot{\gamma}^\wk - B^T A^{-1} \dot{\gamma}^\st - \tilde{C}^\wk)  \\
\le & \int_0^T (L^\st - \bar{c} \cdot v^\st )(\gamma^\st_0, \dot{\gamma}^\st_0)dt + \int_0^T U^\wk(\gamma_0(t)) dt \\
& + \frac12(\dot{\gamma}^\wk_0 - B^T A^{-1} \dot{\gamma}^\st_0 - \tilde{C}^\wk) \cdot \tilde{C} (\dot{\gamma}^\wk_0 - B^T A^{-1} \dot{\gamma}^\st_0 - \tilde{C}^\wk). 
\end{align*}
By \eqref{eq:vel-cancel}, the second and fourth line of the above inequality cancels, therefore
\[
	\int_0^T (L^\st - \bar{c} \cdot v^\st)(\gamma^\st, \dot{\gamma}^\st)dt \le \int_0^T (L^\st - \bar{c} \cdot v^\st)(\gamma_0^\st, \dot{\gamma}_0^\st)dt  + 2T\| U^\wk\|_{C^0}. 
\]
\end{proof}

\begin{proof}[Proof of Proposition~\ref{prop:proj-comp}]		
First, observe that any segments of a minimizer is still a minimizer. By dividing the interval $[0,T]$ into subintervals, it suffice to prove our proposition for $T \in [\frac12, 1)$. 

We first produce an upper bound for 
\[
	 \min_\zeta\int_0^T (L^\st - \bar{c} \cdot v^\st + \frac12 \bar{c} \cdot A \bar{c})(\zeta, \dot{\zeta})dt. 
\]
By completing the squares as in Lemma~\ref{lem:ls-split}, we have 
\begin{equation}
\label{eq:lst-square}
	L^\st - \bar{c} \cdot v^\st + \frac12 \bar{c} \cdot A \bar{c} = \frac12 (v^\st - A\bar{c})\cdot A^{-1}(v^\st - A\bar{c}) + U^\st(\varphi^\st). 
\end{equation}
We then take 
\[
\zeta_0(t) = \gamma^\st(0) + t A\bar{c} +  \frac{t}{T}y
\]
where $y \in [0,1)^d$ is such that $\zeta_0(0) + T A\bar{c} +  y = \gamma^\st(T) \mod \Z^m$. We then have $\dot\zeta_0 - A \bar{c} = \frac{1}{T}y$, so 
\[
	\int_0^T(L^\st - \bar{c} \cdot v^\st + \frac12 \bar{c} \cdot A \bar{c}) (\zeta_0, \dot{\zeta}_0)dt \le \frac{1}{2T} \|A^{-1}\| \|y\|^2 + T\|U^\st\|_{C^0} \le d \|A^{-1}\| + \|U^\st\|_{C^0}
\]
using $T \in [0,1)$ and $\|y\|^2 \le d$.

Using Lemma~\ref{lem:strong-act-bound}, and   adding $\frac12 \bar{c}\cdot A\bar{c}$ to the Lagrangian to both sides, we obtain 
\begin{multline*}
	\int_0^T (L^\st - \bar{c} \cdot v^\st + \frac12 \bar{c} \cdot A \bar{c})(\gamma^\st, \dot{\gamma}^\st)dt \le 2T \|U^\wk\| + d\|A^{-1}\| + \|U^\st\|_{C^0} \\
	 \le d\|A^{-1}\| + \|U^\st\|_{C^0} + 2\|U^\wk\|_{C^0}
\end{multline*}
since $T \in [\frac12,1)$.

We now use the above formula get an $L^2$ estimate on $(\dot{\gamma}^\st - A\bar{c})$ and use the Poincar\'{e} estimate 
to conclude. Using the above formula and \eqref{eq:lst-square}, 
we have 
\[
	\int_0^T (\dot{\gamma}^\st - A\bar{c})\cdot A^{-1}(\dot{\gamma}^\st - A\bar{c}) dt \le d \|A^{-1}\| + 2\|U^\st\|_{C^0} + 2\|U^\wk\|_{C^0}. 
\]
Using the fact that $A^{-1}$ is strictly positive definite, we get 
$$
\|\dot{\gamma}^\st - A\bar{c}\|_{L^2} \le \|A\|(d \|A^{-1}\| + 2\|U^\st\|_{C^0} + 2\|U^\wk\|_{C^0})=:M_1.
$$ 
Then
\begin{equation}
\label{eq:average}
	\left\|  \frac{1}{T} \int_0^T (\dot{\gamma}^\st - A\bar{c})\,dt \right\|^2  \le \frac{1}{T^2} \int_0^T \|\dot{\gamma}^\st - A\bar{c}\|^2\, dt\le 4M_1. 
\end{equation}
Moreover, from Lemma~\ref{lem:EL-est}, 
\[
		\|\ddot{\gamma}^\st\| \le M_2(\cB^\st, Q, \kappa, q, R).
\]
The Poincar\'e estimate gives, for some uniform constant $D>0$, 
\[
	\left\|  (\dot{\gamma}^\st - A\bar{c}) - \frac{1}{T} \int_0^T (\dot{\gamma}^\st - A\bar{c})dt \right\|_{L^\infty} \le \| \ddot{\gamma}^\st\|_{L^\infty} \le D M_2. 
\]
Combine with \eqref{eq:average} and we conclude the proof. 
\end{proof}

\subsection{Approximate Lipschitz property of weak KAM
solutions}\label{sec:intro-app-lip}

The weak KAM solutions of the slow Hamiltonian is Lipshitz, however, it is not clear if the Lipschitz constant is bounded as $\lM(\cB^\wk) \to \infty$. To get uniform estimates, we consider the following weaker notion. 

\begin{definition}
For $D, \delta >0$, a function $u: \R^d \to \R$ is called  $(D, \delta)$ approximately Lipschitz if 
\[
	|u(x) - u(y)| \le D \|x - y\| + \delta, \quad x, y \in \R^d. 
\]
For $u: \T^d \to \R$, the approximate Lipschitz property is defined by its lift to $\R^d$. 
\end{definition}

In Proposition~\ref{prop:appr-lip-weak} and \ref{prop:appr-lip-strong} we state the approximate Lipschitz property of a weak KAM solution in weak and strong angles. 
\begin{proposition}
\label{prop:appr-lip-weak} Fix $\cB^\st, \ \kappa>1$. 
Assume that $q> 2(d-m)$. For $R>0$, there exists a constant $M = M(\cB^\st, Q, \kappa, q, R) >0$, such that for all 
\[
	(\cB^\wk, p, U^\st, \cU^\wk) \in \Omega^{m,d}_{\kappa, q} (\cB^\st)
\cap \{ \|U^\st\| \le R\},
\]
and 
\[
	\delta(\cB^\wk) = M \lM(\cB^\wk)^{-(\frac{q}{2}-d+m)},
\]
let  $u= u(\varphi^\st,\varphi^\wk):\T^m \times \T^{d-m} \to \R$ be a weak KAM solution of
\[
	L_{\cH^s}(\cB^\st,\cB^\wk, p, U^\st, \cU^\wk) - c \cdot v.
\]
Then for all $\varphi^\st \in \T^m$, the function $u(\varphi^\st, \cdot)$ is $(\delta, \delta)$ approximately Lipschitz. 
\end{proposition}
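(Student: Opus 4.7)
The approximate Lipschitz property of $u$ in the weak direction will follow from combining (i) an energy estimate derived from the Hamilton--Jacobi equation, which gives $L^2$ smallness of $\partial_{\varphi^\wk}u$, with (ii) interpolation together with a Morrey-type embedding on the weak torus to upgrade to pointwise control.

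\emph{Step 1: Energy identity and $L^2$ bound on $\partial_{\varphi^\wk}u$.} Since $u$ is a weak KAM solution of $L^s - c\cdot v$, it is Lipschitz and is a viscosity (hence a.e.\ classical) solution of the Hamilton--Jacobi equation $K(c+\nabla u) = \alpha + U^\st + U^\wk$ with $\alpha = \alpha_{H^s}(c)$. Integrating over $\T^d$ (justified via mollification using Lipschitz regularity) and using $\int_{\T^d}\nabla u\,d\varphi = 0$ by periodicity, one obtains
\[
\alpha = \tfrac{1}{2}\, c\cdot(\partial^2_{II}K)c + \tfrac{1}{2}\int_{\T^d}\nabla u\cdot(\partial^2_{II}K)\nabla u\,d\varphi + \bar U,
\]
where $\bar U$ is the mean of $U^\st + U^\wk$. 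Using the algebraic identity $c\cdot (\partial^2_{II}K)c = \bar c\cdot A\bar c + c^\wk\cdot \tilde{C}c^\wk$, the comparison $\alpha \le \alpha_{H^\st}(\bar c) + \tfrac{1}{2}c^\wk\cdot \tilde C c^\wk + \|U^\wk\|_{C^0}$ (Proposition~\ref{prop:Ls-alpha}, restated later in the paper), and the standard bound $\alpha_{H^\st}(\bar c) - \tfrac{1}{2}\bar c\cdot A\bar c \le \|U^\st\|_{C^0} \le R$ for mechanical Lagrangians, the potentially large $c^\wk$-dependent terms cancel, yielding
\[
\int_{\T^d}\nabla u\cdot(\partial^2_{II}K)\nabla u\,d\varphi \le C(R + \|U^\wk\|_{C^0})
\]
uniformly in $c$. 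Since the $\wk$-block $\tilde C$ of $\partial^2_{II}K$ satisfies $\lambda_{\min}(\tilde C) \ge C_1 \lM(\cB^\wk)^2$ (from the definition \eqref{eq:ABC} and the lower bound $|k_j^\wk|\ge \lM(\cB^\wk)$), this translates into the $L^2$ bound $\|\partial_{\varphi^\wk}u\|_{L^2(\T^d)}^2 \le C_2(R + \lM(\cB^\wk)^{-q})\lM(\cB^\wk)^{-2}$.

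\emph{Step 2: Interpolation and Morrey embedding.} The HJ equation also gives the pointwise bound $|\partial_{\varphi^\wk}u|^2\lambda_{\min}(\tilde C) \le 2(\alpha + \|U\|_{C^0})$, leading to an a priori $L^\infty$-bound on $\partial_{\varphi^\wk}u$. Interpolating between $L^2$ (small in $\lM$) and $L^\infty$ gives, for every $p\in[2,\infty)$, control $\|\partial_{\varphi^\wk}u\|_{L^p(\T^d)}$ decaying in $\lM(\cB^\wk)$. Fixing $\varphi^\st\in\T^m$ (and using Fubini to work fiberwise in $\varphi^\wk$) and applying Morrey's embedding $W^{1,p}(\T^{d-m})\hookrightarrow L^\infty(\T^{d-m})$ for $p>d-m$ to the mean-subtracted function $u(\varphi^\st,\cdot) - \bar u(\varphi^\st)$, optimizing the interpolation parameter yields
\[
\|u(\varphi^\st,\cdot) - \bar u(\varphi^\st)\|_{L^\infty(\T^{d-m})} \le M\lM(\cB^\wk)^{-(q/2 - d + m)}.
\]
This oscillation estimate implies the $(\delta,\delta)$-approximate Lipschitz property on the universal cover $\R^{d-m}$: for $\|\varphi_1^\wk - \varphi_0^\wk\|$ bounded, the bound follows directly from the oscillation estimate since both points project into the torus; for large $\|\varphi_1^\wk - \varphi_0^\wk\|$, the term $\delta\|\varphi_1^\wk - \varphi_0^\wk\|$ automatically absorbs the (bounded) oscillation of $u$.

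\emph{Main obstacle.} The principal technical hurdle lies in Step 2: obtaining the precise rate $\lM^{-(q/2-d+m)}$ requires a careful interpolation that leverages the smallness $\|U^\wk\|_{C^0}\le \kappa \lM(\cB^\wk)^{-q}$ from the definition of $\Omega^{m,d}_{\kappa,q}$ in Section~\ref{sec:intro-dom}, sharpening the $L^2$ bound beyond its initial $O(\lM^{-1})$ form; the hypothesis $q>2(d-m)$ is precisely what makes this exponent positive after combining with the Morrey inequality in dimension $d-m$. A subtlety is that the pointwise Lipschitz constant of $u$ may depend on $c$, but the cancellation of $c^\wk\cdot \tilde C c^\wk$ in the energy identity of Step 1 keeps the $L^2$ side uniform, and the Morrey-interpolation constants depend only on $d-m$ and the final exponent $p$. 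Secondary technical issues, routine but requiring care, include justifying integration by parts for Lipschitz viscosity solutions via mollification and tracking the dependence of constants on $\cB^\st, Q, \kappa, q, R$.
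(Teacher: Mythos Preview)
Your proposal has a genuine gap in Step~1: the claim that $\lambda_{\min}(\tilde C)\ge C_1\,\lM(\cB^\wk)^2$ is false in general. Having $|k_j^\wk|\ge \lM(\cB^\wk)$ for every $j$ does \emph{not} force the Gram-type matrix $\tilde C=C-B^TA^{-1}B$ to have large smallest eigenvalue; it only forces large diagonal entries. For instance with $d-m=2$, $Q_0=\Id$, $\bar k_1^\wk=Ne_2$ and $\bar k_2^\wk=Ne_2+e_3$ (both of size $\sim N$), the difference $\bar k_1^\wk-\bar k_2^\wk$ has unit length, so for $v^\wk=(1,-1)/\sqrt 2$ one gets $v^{\wk\,T}\tilde Cv^\wk=O(1)$, not $O(N^2)$. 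The paper's own Lemma~\ref{lem:z-j-bounds} confirms the correct picture: the successive Schur complements satisfy only $\tilde z_i^{-1}\le M^*|k_i^\wk|^{2i}$, i.e.\ $\tilde z_i$ can be as small as $c\,|k_i^\wk|^{-2i}$. Consequently your energy identity yields at best $\int|\partial_{\varphi^\wk}u|^2\lesssim \lM^{2(d-m)}$, which carries no smallness, and the same defect wrecks the pointwise $L^\infty$ bound in Step~2.

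This is exactly why the paper does \emph{not} work with $\tilde C$ globally but instead performs a finer block diagonalization (Lemma~\ref{lem:lag-fine-dec}) separating the weak variables one at a time, and then runs an induction from $\varphi_{d-m}^\wk$ down to $\varphi_1^\wk$ (Lemma~\ref{lem:last-var} and Proposition~\ref{prop:var-induction}). The inductive step exploits two pieces of structure that your approach discards: (i) each weak potential $U_j^\wk$ depends only on $(\varphi^\st,\varphi_1^\wk,\dots,\varphi_j^\wk)$, and (ii) $\|U_j^\wk\|_{C^2}\le\kappa|k_j^\wk|^{-q}$ individually. Pairing the possibly bad factor $\tilde z_i^{-1}\le M^*|k_i^\wk|^{2i}$ against the matching small factor $\sum_{j\ge i}\|U_j^\wk\|\le C|k_i^\wk|^{-q}$ gives $\sigma_i\lesssim|k_i^\wk|^{-(q/2-i)}$, which is small precisely when $q>2(d-m)$. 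The argument is variational, via calibrated curves and the linear drift comparison (Lemma~\ref{lem:lin-drift}), not via the Hamilton--Jacobi equation. Your integrated-HJ/Morrey route would need an independent lower bound on $\lambda_{\min}(\tilde C)$ that simply is not available under the hypotheses of $\Omega^{m,d}_{\kappa,q}(\cB^\st)$.
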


\begin{proposition}
\label{prop:appr-lip-strong}
There exists a constant $M' = M'(\cB^\st, Q, \kappa, q, R) >0$, let $\delta'(\cB^\wk) = M' (\lM(\cB^\wk))^{-(\frac{q}{2}-d+m)}$, and $u$ 
be the weak KAM solution described in Proposition~\ref{prop:appr-lip-weak}. 
Then for all $\varphi^\wk \in \T^{d-m}$, the function 
$u(\cdot, \varphi^\wk)$ is $(M',\delta')$ approximately Lipschitz. 
\end{proposition}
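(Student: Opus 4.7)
My plan is to bound $|u(\varphi^\st_2,\varphi^\wk_0) - u(\varphi^\st_1,\varphi^\wk_0)|$ via the action of an explicit connecting curve, and to use Proposition~\ref{prop:appr-lip-weak} to absorb an induced $\varphi^\wk$-drift. The structural key is identity~\eqref{eq:Lsc-split}: along any curve satisfying $w^\wk := v^\wk - B^TA^{-1}v^\st\equiv 0$, the integrand $L^s - c\cdot v$ collapses to $L^\st(\varphi^\st,v^\st) - \bar c\cdot v^\st + U^\wk(\varphi)$, a strong-Lagrangian expression plus the small potential $U^\wk$, since the $\tfrac12 c^\wk\tilde Cc^\wk$ term and the quadratic $w^\wk$ term exactly cancel.

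Concretely, with $\xi = \varphi^\st_2 - \varphi^\st_1$, I will take the affine curve $\gamma(t) = (\varphi^\st_1 + t\xi/T,\ \varphi^\wk_0 + tB^TA^{-1}\xi/T)$ on $[0,T]$, which has constant velocity and $w^\wk\equiv 0$ by construction. By~\eqref{eq:Lsc-split} its action equals $\tfrac{1}{2T}\xi^TA^{-1}\xi - \bar c\cdot\xi + \int_0^T[U^\st(\gamma^\st) + U^\wk(\gamma)]dt$; the domination inequality for the weak KAM solution $u$ then bounds $u(\gamma(T)) - u(\gamma(0))$ from above by this quantity plus $T\alpha_{H^s}(c)$. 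Since $\gamma(T) = (\varphi^\st_2,\varphi^\wk_0 + B^TA^{-1}\xi)$, Proposition~\ref{prop:appr-lip-weak} (with strong variable fixed at $\varphi^\st_2$) bridges this endpoint to $(\varphi^\st_2,\varphi^\wk_0)$ at a cost of $\delta(\cB^\wk)\|B^TA^{-1}\|\,\|\xi\| + \delta(\cB^\wk)$. Running the same argument in reverse (which flips the sign of $\bar c\cdot\xi$) and choosing $T$ of order $\|\xi\|$ to balance the kinetic $\|\xi\|^2/T$ against $T(\|U^\st\|_{C^0}+\|U^\wk\|_{C^0}+\alpha_{H^s}(c))$ yields the desired approximate Lipschitz inequality.

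The main obstacle is identifying the final constants $M' = M'(\cB^\st,Q,\kappa,q,R)$ and $\delta' \sim \lM(\cB^\wk)^{-(q/2-d+m)}$. The drift $|\bar c\cdot\xi|$ and the term $T\alpha_{H^s}(c)$ a priori depend on $c$; absorbing them requires Proposition~\ref{prop:Ls-alpha} to write $\alpha_{H^s}(c)\le \alpha_{H^\st}(\bar c) + \tfrac12 c^\wk\tilde Cc^\wk + \|U^\wk\|_{C^0}$ and a uniform control of $|\bar c|$ and $c^\wk\tilde Cc^\wk$, which is furnished in the intended application by the convergence hypothesis of Theorem~\ref{thm:semi-cont}. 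The bridging contribution $\delta(\cB^\wk)\|B^TA^{-1}\|\|\xi\|$ is the source of the precise exponent $q/2 - d + m$: since $\|B^TA^{-1}\|\lesssim\max_j|k_j^\wk|$ and condition~1 of $\Omega^{m,d}_{\kappa,q}(\cB^\st)$ controls the weak basis magnitudes in terms of one another, the product decays at the stated rate provided $q > 2(d-m)$, which is precisely the hypothesis that makes the resulting exponent positive and hence $\delta'\to 0$ as $\lM(\cB^\wk)\to\infty$.
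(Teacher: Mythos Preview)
Your approach has a genuine gap: it produces a Lipschitz constant that depends on $c$ through $|\bar c|$ and $\alpha_{H^s}(c)$, whereas the proposition asserts $M' = M'(\cB^\st, Q, \kappa, q, R)$ with no dependence on $c$ whatsoever. You acknowledge this yourself when you say the control of $|\bar c|$ and $c^\wk\tilde C c^\wk$ ``is furnished in the intended application by the convergence hypothesis of Theorem~\ref{thm:semi-cont}'' --- but that hypothesis is not available here, and $\alpha_{H^s}(c)$ grows quadratically in $c$. Using only the domination inequality in both directions cannot cancel these terms.

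The paper's proof avoids this by exploiting that $u$ is a weak KAM \emph{solution}, not merely dominated: for each $\varphi$ there is a backward calibrated curve $\gamma$, giving the \emph{equality} $u(\varphi) = u(\gamma(-T)) + \int_{-T}^0(L^s - c\cdot v + \alpha_{H^s}(c))(\gamma,\dot\gamma)\,dt$. One then builds a test curve $\xi$ with the same initial point $\gamma(-T)$ and with $\dot\xi^\wk - B^TA^{-1}\dot\xi^\st - \tilde C c^\wk \equiv 0$, and applies domination to $\xi$. Subtracting, the terms $u(\gamma(-T))$, $T\alpha_{H^s}(c)$, and $\frac12 c^\wk\cdot\tilde C c^\wk$ cancel exactly; the positive quadratic in $w^\wk$ along $\gamma$ is dropped (it has the right sign). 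The strong component $\xi^\st$ is a linear drift of an $L^\st$-minimizer $\gamma_0^\st$, so Lemma~\ref{lem:lin-drift} yields an approximate semi-concavity estimate $u(\varphi^\st + y,\cdot) - u(\varphi^\st,\varphi^\wk) \le l\cdot y + D\|y\|^2 + \delta'$ with $D = \|A^{-1}\| + \|U^\st\|_{C^2}$ independent of $c$ (though $l$ may depend on $c$). The key final step is Lemma~\ref{lem:app-semi-conc}: periodicity of $u$ forces $\|l\| \le \sqrt d(D+\delta')$, which kills the $c$-dependence entirely.

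A minor side remark: your worry about the bridging term $\delta(\cB^\wk)\|B^TA^{-1}\|\,\|\xi\|$ is misplaced. Since $u(\varphi^\st,\cdot)$ is $(\delta,\delta)$ approximately Lipschitz on the \emph{torus} $\T^{d-m}$, the cost of moving in $\varphi^\wk$ is at most $2\delta$ regardless of how large $\|B^TA^{-1}\xi\|$ is in $\R^{d-m}$; the paper uses exactly this. So your exponent analysis via $\|B^TA^{-1}\|$ is not the actual source of the rate $\lM(\cB^\wk)^{-(q/2-d+m)}$ --- that rate is inherited directly from $\delta(\cB^\wk)$ in Proposition~\ref{prop:appr-lip-weak}.
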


The proof of these statements are deferred to section~\ref{sec:tech-est}. 


\subsection{The alpha function and rotation number estimate}
\label{sec:alpha-rot}

In this section we provide a few useful estimates in weak KAM
theory and prove Theorem \ref{thm:semi-cont} using 
Propositions~\ref{prop:appr-lip-weak} and ~\ref{prop:appr-lip-strong}.
Recall that the notations 
$c=(c^\st,c^\wk),\ \bar c=c^\st+A^{-1}Bc^\wk$.  

\begin{proposition}\label{prop:Ls-alpha}
We have 
\[
	\left|\alpha_{H^s}(c) - \alpha_{H^\st}(\bar{c}) + \frac12 (\tilde{C} c^\wk) \cdot c^\wk  \right|\le  \|U^\wk\|_{C^0}, 
\]
\end{proposition}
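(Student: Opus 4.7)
The plan is to obtain both inequalities directly from the completed-square identity of Lemma~\ref{lem:ls-split},
\[
L^s-c\cdot v \;=\; (L^\st-\bar c\cdot v^\st) \;+\; \tfrac12(w^\wk-\tilde Cc^\wk)\cdot\tilde C^{-1}(w^\wk-\tilde Cc^\wk) \;-\; \tfrac12 c^\wk\cdot \tilde C c^\wk \;+\; U^\wk,
\]
combined with the characterization of $\alpha_H(c)$ as $-\inf_{\mu}\int(L-c\cdot v)\,d\mu$ over closed Borel probability measures $\mu$ on $\T^d\times\R^d$. The key features I will exploit are the non-negativity of the quadratic term and the freedom to make it vanish on suitably chosen measures.

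For one direction, I would take an arbitrary closed probability measure $\mu$ on $\T^d\times\R^d$ and observe that its pushforward $\pi_*\mu$ under $(\varphi,v)\mapsto(\varphi^\st,v^\st)$ is a closed probability measure on $\T^m\times\R^m$, since the closedness condition against $f\in C^1(\T^m)$ reduces to testing $\mu$ against $f$ viewed as a function of $\varphi\in\T^d$ that is independent of $\varphi^\wk$. Integrating the identity against $\mu$, the strong piece contributes $\int(L^\st-\bar c\cdot v^\st)\,d(\pi_*\mu)\ge -\alpha_{H^\st}(\bar c)$, the quadratic piece is non-negative, and the $U^\wk$ piece is at least $-\|U^\wk\|_{C^0}$. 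Taking the infimum over $\mu$ yields one inequality of the proposition.

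For the matching direction, I would construct an explicit test measure realizing near-equality. Let $\mu^\st$ be a $\bar c$-minimizing closed measure for $L^\st$, let $\lambda$ denote normalized Lebesgue measure on $\T^{d-m}$, and define $\mu$ on $\T^d\times\R^d$ as the pushforward of $\mu^\st\otimes\lambda$ under
\[
\bigl((\varphi^\st,v^\st),\varphi^\wk\bigr)\;\longmapsto\;\bigl(\varphi^\st,\varphi^\wk,\;v^\st,\;B^TA^{-1}v^\st+\tilde Cc^\wk\bigr).
\]
On the support of $\mu$ one has $w^\wk=\tilde Cc^\wk$ identically, so the quadratic term in the identity vanishes pointwise, and the identity integrates to $-\alpha_{H^\st}(\bar c)-\tfrac12 c^\wk\cdot \tilde Cc^\wk+\int U^\wk\,d\mu$, which is bounded above by $-\alpha_{H^\st}(\bar c)-\tfrac12 c^\wk\cdot \tilde Cc^\wk+\|U^\wk\|_{C^0}$, giving the matching bound.

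The main obstacle is verifying that $\mu$ is actually closed. For $f\in C^1(\T^d)$, I split $df\cdot v=\partial_{\varphi^\st}f\cdot v^\st+\partial_{\varphi^\wk}f\cdot v^\wk$. The first term integrates to zero for each fixed $\varphi^\wk$ by closedness of $\mu^\st$ applied to $f(\cdot,\varphi^\wk)\in C^1(\T^m)$, and Fubini eliminates the $\varphi^\wk$-integral. For the second term, Fubini reduces it to a $\mu^\st$-integral of $(B^TA^{-1}v^\st+\tilde Cc^\wk)\cdot\int_{\T^{d-m}}(\partial_{\varphi^\wk}f)(\varphi^\st,\varphi^\wk)\,d\varphi^\wk$; since the coefficient is independent of $\varphi^\wk$ and each component $\int_{\T^{d-m}}\partial_{\varphi^\wk_i}f\,d\varphi^\wk$ vanishes by the fundamental theorem on the torus, this term is zero as well, completing the verification.
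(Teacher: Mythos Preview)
Your argument is correct. The first inequality (arbitrary closed $\mu$, project to the strong variables, drop the non-negative quadratic term) is exactly the paper's argument, only you phrase it for a general closed measure and then take the infimum, while the paper plugs in a minimal measure directly.

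For the matching inequality your construction is genuinely different from the paper's. The paper picks an \emph{ergodic} minimal measure $\mu^\st$ for $L^\st-\bar c\cdot v^\st$, takes a generic Euler--Lagrange orbit $\varphi^\st(t)$ in its support, lifts it to $\T^d$ via $\varphi^\wk(t)=\varphi^\wk_0+B^TA^{-1}\varphi^\st(t)+\tilde Cc^\wk t$, and then takes a weak-$*$ limit of the empirical measures $\tfrac1T(\gamma,\dot\gamma)|_{[0,T]}$; closedness of the limit is invoked as a general fact about such limits. You instead build the test measure in one stroke as the pushforward of $\mu^\st\otimes\lambda$ under $((\varphi^\st,v^\st),\varphi^\wk)\mapsto(\varphi^\st,\varphi^\wk,v^\st,B^TA^{-1}v^\st+\tilde Cc^\wk)$ and verify closedness by the explicit Fubini/periodicity computation. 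Your route is more elementary: it needs no ergodicity, no generic-point argument, and no weak-$*$ compactness, and the identity $\pi_*\mu=\mu^\st$ is immediate by construction rather than a consequence of the ergodic theorem. The paper's route, on the other hand, produces a measure that is visibly a limit of trajectory measures, which fits more naturally with the dynamical viewpoint used elsewhere (e.g.\ in the proof of Proposition~\ref{prop:rotation} and in Section~\ref{sec:conv-weak-kam}).
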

\begin{proof}

Let $\mu$ be a minimal measure for $L^s - c\cdot v$. Let $\pi$ denote the natural projection from $(\varphi^\st, \varphi^\wk, v^\st, v^\wk)$ to $(\varphi^\st, v^\st)$. By Lemma \ref{lem:ls-split} we have
\begin{equation}
\label{eq:alpha-Ls}
\begin{aligned}
& - \alpha_{H^s}(c)  = \int (L^s - c \cdot v) d\mu  \\
&= \int (L^\st - \bar{c}  \cdot v^\st) d\mu\circ \pi - \frac12 c^\wk \cdot \tilde{C} c^\wk  \\
& \qquad  + \int \left( \frac12 (w^\wk - \tilde{C} c^\wk) \cdot \tilde{C}^{-1} (w^\wk - \tilde{C} c^\wk)  + U^\wk \right) d\mu \\
& \ge - \alpha_{H^\st}(\bar{c}) - \|U^\wk\|_{C^0} - \frac12c^\wk \cdot \tilde{C} c^\wk . 
\end{aligned}
\end{equation}
On the other hand, let $\mu^\st$ be an ergodic minimal measure for $L^\st - \bar{c} \cdot v^\st$.  For an $L^\st-$Euler-Lagrange orbit $\varphi^\st(t)$  in the support of $\mu^\st$,  and any $\varphi^\wk_0 \in \T^{d-m}$, define
\begin{equation}
\label{eq:lift-gamma}
	\varphi^\wk(t) =  \varphi^\wk_0 + B^T A^{-1} \varphi^\st(t) + \tilde{C}c^\wk t, \quad t \in \R 
\end{equation}
and write $\gamma = (\gamma^\st, \gamma^\wk)$. We take 
a weak-$*$ limit point $\mu^s$ of the probability measures 
$\frac{1}{T}(\gamma, \dot{\gamma})|_{[0,T]}$ as $T\to +\infty$. Then $\mu^s$ 
is a closed measure (see section~\ref{sec:intro-weak-kam}).

Since on the support of $\mu^s$,  $v^\wk - B^T A^{-1} v^\st - \tilde{C} c^\wk =0$, we have
\begin{align*}
& - \alpha_{H^s}(c) \le  \int (L^s - c \cdot v) d\mu^s \\
& =  \int (L^\st - \bar{c}\cdot v^\st) d\mu^\st + \int U^\wk d\mu - \frac12 \tilde{C}c^\wk \cdot c^\wk \\
& \le - \alpha_{H^\st}(\bar{c}) + \|U\|_{C^0} - \frac12\tilde{C}c^\wk \cdot c^\wk. 
\end{align*}
\end{proof}

The following proposition establishes relations between rotation numbers of minimal measures of the slow and strong systems. 
\begin{proposition}\label{prop:rotation}
Let $\mu^s$ be an ergodic minimal measure of $L^s - c \cdot v$, and let $(\rho^\st, \rho^\wk)$ denote its rotation number. Then 
\[
	0 \le \frac12(\tilde{C} (\rho^\wk - B^T A^{-1}\rho^\st-\tilde{C}c^\wk)) \cdot (\rho^\wk - B^T A^{-1}\rho^\st- \tilde{C}c^\wk) \le \|U^\wk\|_{C^0}
\]
and 
\[
	0 \le \alpha_{H^\st}(\bar{c}) + \beta_{H^\st}(\rho^\st) - \bar{c}\cdot \rho^\st \le \|U^\wk\|_{C^0}. 
\]
\end{proposition}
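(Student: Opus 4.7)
My plan is to decompose the minimal action along $\mu^s$ using the identity \eqref{eq:Lsc-split}, then match it against an averaged test measure. Let $\mu^s$ be the ergodic $c$-minimizer for $L^s$, and let $\pi$ project onto $(\varphi^\st,v^\st)$. Integrating \eqref{eq:Lsc-split} against $\mu^s$ gives
\begin{equation*}
-\alpha_{H^s}(c)=\int (L^\st-\bar{c}\cdot v^\st)\,d(\pi_\ast\mu^s)+Q^{qu}-\tfrac{1}{2}c^\wk\cdot\tilde{C}c^\wk+\int U^\wk\,d\mu^s,
\end{equation*}
with $Q^{qu}:=\int\tfrac{1}{2}(w^\wk-\tilde{C}c^\wk)\cdot\tilde{C}^{-1}(w^\wk-\tilde{C}c^\wk)\,d\mu^s\ge 0$ and $w^\wk=v^\wk-B^TA^{-1}v^\st$. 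Since $\pi_\ast\mu^s$ is a closed measure on $\T^m\times\R^m$ with rotation $\rho^\st$, the definition of $\beta_{H^\st}$ together with Young--Fenchel yields $\int (L^\st-\bar{c}\cdot v^\st)\,d(\pi_\ast\mu^s)=-\alpha_{H^\st}(\bar{c})+E_1+E_2$, where $E_1:=\alpha_{H^\st}(\bar{c})+\beta_{H^\st}(\rho^\st)-\bar{c}\cdot\rho^\st\ge 0$ is the Fenchel gap and $E_2:=\int L^\st\,d(\pi_\ast\mu^s)-\beta_{H^\st}(\rho^\st)\ge 0$ is the $\beta$-gap. Jensen's inequality applied to the convex quadratic defining $Q^{qu}$ bounds it below by the same expression evaluated at the mean $\int w^\wk\,d\mu^s=\rho^\wk-B^TA^{-1}\rho^\st$, which is precisely the quadratic form appearing in the first claim.

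\textbf{Matching upper bound via an averaged test measure.} Next I construct a closed probability measure $\bar{\mu}^s$ on $\T^d\times\R^d$ as the product of any $\bar{c}$-minimizing measure $\mu^\st$ for $L^\st$ (in $(\varphi^\st,v^\st)$), the normalized Haar measure on $\T^{d-m}$ (in $\varphi^\wk$), and the Dirac mass on $v^\wk=B^TA^{-1}v^\st+\tilde{C}c^\wk$. Closedness is a Fubini computation: for $f\in C^1(\T^d)$, $\int\partial_{\varphi^\wk}f\cdot v^\wk\,d\bar{\mu}^s=0$ because integration over the uniform $\varphi^\wk$ kills the $\varphi^\wk$-derivative, while $\int\partial_{\varphi^\st}f\cdot v^\st\,d\bar{\mu}^s=\int\partial_{\varphi^\st}h\cdot v^\st\,d\mu^\st=0$ with $h(\varphi^\st):=\int f\,d\varphi^\wk$, by closedness of $\mu^\st$. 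On $\bar{\mu}^s$ the $Q^{qu}$-integrand vanishes by construction, and crucially $\int U^\wk\,d\bar{\mu}^s=0$: each summand $U^\wk_j=-(Z_{\cB_{j+m}}-Z_{\cB_{j+m-1}})$ is, by the difference structure of Theorem~\ref{thm:basis-norm}, a Fourier series supported on frequencies with nonzero $(j+m)$-th component, so $\int_\T U^\wk_j\,d\varphi_{j+m}\equiv 0$. Therefore $\int(L^s-c\cdot v)\,d\bar{\mu}^s=-\alpha_{H^\st}(\bar{c})-\tfrac{1}{2}c^\wk\cdot\tilde{C}c^\wk$.

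\textbf{Extraction of the bounds and main obstacle.} By minimality, $\int(L^s-c\cdot v)\,d\mu^s\le\int(L^s-c\cdot v)\,d\bar{\mu}^s$; substituting the decomposition and cancelling the shared $-\alpha_{H^\st}(\bar{c})-\tfrac{1}{2}c^\wk\cdot\tilde{C}c^\wk$ yields
\begin{equation*}
E_1+E_2+Q^{qu}\le -\int U^\wk\,d\mu^s\le\|U^\wk\|_{C^0}.
\end{equation*}
Each of $E_1,E_2,Q^{qu}$ is non-negative, so each is individually bounded by $\|U^\wk\|_{C^0}$; the bound on $E_1$ is the second stated inequality, and combining the bound on $Q^{qu}$ with the Jensen lower bound yields the first. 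The main obstacle is producing a test measure simultaneously closed \emph{and} with vanishing $U^\wk$-integral: the averaging-in-$\varphi^\wk$ construction succeeds precisely because the dominant structure of Theorem~\ref{thm:basis-norm} forces each $U^\wk_j$ to be mean zero in its last weak angle. Without this zero-mean feature, a direct appeal to Proposition~\ref{prop:Ls-alpha} would only yield the weaker bound $2\|U^\wk\|_{C^0}$.
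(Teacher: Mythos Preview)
Your overall strategy—decompose the $\mu^s$-action via \eqref{eq:Lsc-split}, compare against a closed test measure on which the quadratic term vanishes, then split the resulting inequality into the non-negative pieces $E_1$, $E_2$, $Q^{qu}$ and apply Jensen—is exactly what the paper does. The paper simply packages the test-measure step as a citation of Proposition~\ref{prop:Ls-alpha} (whose proof builds the test measure as a weak-$*$ limit along orbits with $\dot\varphi^\wk = B^TA^{-1}\dot\varphi^\st + \tilde{C}c^\wk$), while you build it as a product with Haar measure in $\varphi^\wk$; both are valid closed measures and both kill the quadratic term.

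There is, however, a gap in your sharpening to the constant $\|U^\wk\|_{C^0}$. Your claim that $\int U^\wk\,d\bar\mu^s = 0$ relies on each $U^\wk_j$ having zero mean in its last angle $\varphi_{j+m}$. That is true for the concrete functions $-(Z_{\cB_{j+m}} - Z_{\cB_{j+m-1}})$ produced by Theorem~\ref{thm:basis-norm}, but the proposition is stated in the abstract space $\Omega^{m,d}_{\kappa,q}(\cB^\st)$ of Section~\ref{sec:intro-dom}, where $U^\wk_j$ is an arbitrary element of $C^2(\T^{j+m})$ subject only to the norm bound $\|U^\wk_j\|_{C^2}\le \kappa|k^\wk_j|^{-q}$; no zero-mean condition is imposed there. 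Without that structure your argument yields only $E_1 + E_2 + Q^{qu} \le 2\|U^\wk\|_{C^0}$, exactly as you anticipate in your closing remark. (A careful reading of the paper's own route through Proposition~\ref{prop:Ls-alpha} also seems to deliver $2\|U^\wk\|_{C^0}$ rather than the stated $\|U^\wk\|_{C^0}$; the constant is immaterial for every downstream application, since only $\|U^\wk\|_{C^0}\to 0$ is used.)
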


\begin{proof}
Using \eqref{eq:alpha-Ls} and the conclusion of Proposition~\ref{prop:Ls-alpha}, we have 
\begin{multline}\label{eq:bound-rot-int}
	 \|U^\wk\|_{C^0} 	  \ge  \int (L^\st - \bar{c}  \cdot v^\st + \alpha_{H^\st}(\bar{c})) d\mu^s\circ \pi \\
	  + \int \frac12(\tilde{C}^{-1} (w - \tilde{C} c^\wk)) \cdot (w - \tilde{C} c^\wk) d \mu^s \circ \pi. 
\end{multline}
Note the first of the two integrals is non-negative by definition, we obtain 
\[
	0 \le \int \frac12 (w^\wk - \tilde{C} c^\wk) \cdot \tilde{C}^{-1} (w^\wk - \tilde{C} c^\wk)  d\mu^s \le \|U^\wk\|_{C^0}. 
\]
Denote $\bar{w}^\wk : = \int w^\wk d\mu^s = \rho^\wk - B^T A^{-1}\rho^\st$, and rewrite the left hand side of the last formula as 
\begin{multline*}
	\frac12(\tilde{C}^{-1} (\bar{w}^\wk - \tilde{C} c^\wk)) \cdot (\bar{w}^\wk - \tilde{C} c^\wk) +  \int \tilde{C}^{-1}(\bar{w}^\wk - \tilde{C}c^\wk) \cdot (w^\wk - \bar{w})d\mu^s \\
	 + \frac12\int (\tilde{C}^{-1}(w^\wk - \bar{w}^\wk)) \cdot (w^\wk - \bar{w}^\wk) d\mu^s. 
\end{multline*}
Note that the second term vanishes and the third term is non-negative. Therefore 
\[
	\frac12(\tilde{C}^{-1} (\bar{w}^\wk - \tilde{C} c^\wk)) \cdot (\bar{w}^\wk - \tilde{C} c^\wk) \le \|U^\wk\|_{C^0}
\]
which is the first conclusion. 

For the second conclusion, using \eqref{eq:bound-rot-int}, we get 
\[
	\|U^\wk\|_{C^0} \ge \int (L^\st - \bar{c}  \cdot v^\st + \alpha_{H^\st}(\bar{c})) d\mu\circ \pi = \int L^\st d\mu \circ \pi - \bar{c}\cdot \rho^\st + \alpha_{H^\st}(\bar{c}).  
\]
Using $\int L^\st d\mu\circ \pi \ge \beta_{H^\st}(\rho^\st)$ we get the upper bound of the second conclusion. The lower bound holds by definition. 
\end{proof}

\subsection{Convergence of weak KAM solutions}
\label{sec:conv-weak-kam}
We now prove  Theorem~\ref{thm:semi-cont}. Fix $\cB^\st$ and 
$\kappa>1$. 

Let $(\cB^\wk_i, p_i, U^\st_i, \cU^\wk_i) \in 
\Omega^{m,d}_{\kappa,q}(\cB^\st)$ and $c_i = (c_i^\st, c_i^\wk)$ be 
a sequence satisfying the assumption of the theorem, namely $\lM(\cB^\wk_i) \to \infty$, $p_i \to p_0$, $U_i^\st \to U_0^\st$ in $C^2$, and $c^\st + A_i^{-1} B_i c_i^\wk \to \bar{c}$. 

\myheading{Item 1}. Let $u_i$ be the weak KAM solution to $L_i^s = L_{\cH^s(\cB^\st,\cB^\wk_i, p_i, U^\st_i, \cU^\wk_i)} - c_i \cdot v$. 
We first show the sequence $\{u_i\}$ is equi-continuous. 

Let $M^*$ be a constant larger than the constants in both  Proposition~\ref{prop:appr-lip-weak} and \ref{prop:appr-lip-strong}. Using both propositions, for any $\varphi = (\varphi^\st, \varphi^\wk)$, $\psi = (\psi^\st, \psi^\wk)$, 
\[
	|u_i(\varphi^\st, \varphi^\wk) - u_i(\psi^\st, \psi^\wk)| \le M^*\|\varphi^\st - \psi^\st\| + \delta_i \|\varphi^\wk - \psi^\wk\| + 2\delta_i,
\]
where $\delta_i = M^*(\lM(\cB^\wk_i))^{-\frac{q}2-d+m}$. 

Since $\delta_i \to 0$ as $i \to \infty$, for any $0 <\varepsilon <1$ there exists $M>0$ such that for all $i >M$, $3\delta_i < \frac{\varepsilon}2$. It follows that if $\|\varphi - \psi\| < \frac{\varepsilon}{2D}<1$, then 
\[
	|u_i(\varphi^\st, \varphi^\wk) - u_i(\psi^\st, \psi^\wk)| < \varepsilon. 
\]
Since $\{u_i\}_{i \le M}$ is a finite family, it is equi-continuous. In particular, there exist $\sigma >0$ such that
\[
	|u_i(\varphi) - u(\psi)| < \varepsilon,\quad \text{ if } 1 \le i \le M, \, \|\varphi - \psi\|< \sigma. 
\]
This proves equi-continuity. Moreover, since $u_i$ are all periodic, $u_i - u_i(0)$ are equi-bounded, therefore Ascoli's theorem applies and the sequence is pre-compact in uniform norm. 

\myheading{Item 2.}  Let $u$ be any accumulation point of $u_i - u_i(0)$, without loss of generality, we assume $u_i - u_i(0)$ converges to $u$ uniformly. Proposition~\ref{prop:appr-lip-weak} implies that 
\[
	\lim_{i \to \infty}\sup_{\varphi^\st}(\max u_i(\varphi^\st, \cdot) - \min_i u_i(\varphi^\st, \cdot)) \le 2 \lim_{i \to \infty} \delta_i =0,
\]
therefore $u$ is independent of $\varphi^\wk$. 

\myheading{Item 3.}  From item 2, there exists  $u^\st(\varphi^\st) = \lim_{i \to \infty} u_i(\varphi^\st, \varphi^\wk)$. We show $u^\st$ is a weak KAM solution of $L^\st_0 - \bar{c} \cdot v^\st = L_{\cH^\st(p_0, U_0^\st)} - \bar{c} \cdot v^\st$. Denote $L_i^\st = L_{\cH^\st(p_i, U^\st_i)}$, we have $L_i^\st \to L_0^\st$ in $C^2$. 

We first show that $u^\st$ is dominated by $L_0^\st - \bar{c} \cdot v^\st$.  Let $\xi^\st :[0,T] \to \T^m$ be an extremal curve of $L^\st_0$. In the same way as  \eqref{eq:lift-gamma} in the proof of  Proposition~\ref{prop:Ls-alpha}, we define $\xi_i = (\xi_i^\st, \xi_i^\wk): [a, b] \to \T^m$ such that  $\xi_i^\st(a) = \xi^\st(a)$, $\xi_i^\st(b) = \xi^\st(b)$ and $\dot{\xi}_i^\st - B^T_i A_i^{-1} \dot{\xi} - \tilde{C} c^\wk_i =0$. Since for $\bar{c}_i = c_i^\st + A^{-1}_i B_i c^\wk_i$,  $u_i$ are dominated by $L^s_i - c_i \cdot v + \alpha_{H^s_i}(c_i)$, we have 
\begin{align*}
&u_i(\xi_i(b)) - u_i(\xi_i(a)) \le \int_a^b (L^s_i - c_i \cdot v^s+ \alpha_{H^s_i}(c_i))(\xi_i, \dot{\xi}_i) dt \\
& = \int_a^b (L^\st_i - \bar{c}_i \cdot v^\st)(\xi^\st_i, \dot{\xi}_i^\st) dt + \int_a^b (U^\wk_i(\xi_i) + \alpha_{H^s_i}(c_i) - \frac12 \tilde{C}_i c^\wk_i \cdot c^\wk_i) dt,
\end{align*}
where the equality is due to $\dot{\xi}_i^\st - B^T_i A_i^{-1} \dot{\xi}_i - \tilde{C} c^\wk_i =0$. Using the fact that $\|U^\wk_i\|_{C^0} \to 0$, $L_i^\st \to L_0^\st$,  and from Proposition~\ref{prop:Ls-alpha}, $\alpha_{H^s_i} - \frac12 \tilde{C}_i c^\wk_i \cdot c^\wk_i \to \alpha_{H^\st}(\bar{c})$ as $i \to \infty$, we get 
\begin{equation}
\label{eq:Lst-dom}
	u^\st(\xi^\st(b)) - u^\st(\xi^\st(a)) \le \int_a^b (L^\st_0 - \bar{c}\cdot v^\st + \alpha_{H^\st}(\bar{c})) dt.  
\end{equation}
Therefore $u^\st$ is dominated by $L^\st_0 - \bar{c}\cdot v^\st$.

Secondly, we show that for any $\varphi^\st \in \T^m$, there exists a $(u^\st, L_0^\st, \bar{c})$-calibrated curve $\gamma^\st: (-\infty, 0]\to \T^m$ with $\gamma^\st(0) = \varphi^\st$. 
Because $u_i$ are weak KAM solutions, for each $i$ there exists a $(u_i, L_i^s - c_i \cdot v, \alpha(H_i^s))$-calibrated curve $\gamma_i = (\gamma_i^\st, \gamma_i^\wk):(-\infty, 0] \to \T^d$.  By Proposition~\ref{prop:proj-comp},  all $\gamma_i^\st$ are uniformly Lipschitz, so there exists a subsequence that converges in $C^1_{loc}((-\infty,0], \T^d)$. Assume without loss of generality that $\gamma_i^\st \to \gamma^\st$, since $\gamma_i = (\gamma_i^\st, \gamma_i^\wk)$ is extremal for $L^s_i$, we have 
\[
	\ddot{\gamma}_i^\st  = \frac{d}{dt}(A_i I^\st + B_i I^\wk) = A_i \partial_{\varphi^\st} U_i^\st + B_i \partial_{\varphi^\st} U^\wk_i. 
\]
By our assumption, as $i\to \infty$, $A_i \to A := \partial^2_{v^\st v^\st}L_0^\st$, and by Lemma~\ref{lem:EL-est} $\|B_i\|\|U^\wk_i\|_{C^2} \to 0$, we have
\begin{equation}
\label{eq:str-extremal}
	\ddot{\gamma}^\st = A \partial_{\varphi^\st}U^\st(\gamma^\st),
\end{equation}
which is the Euler-Lagrange equation for $L^\st_0$.

On the other hand, since $\gamma_i$ are $(u_i, L_i^s - c_i \cdot v, \alpha(H_i^s))$ calibrated,  for any $[a,b] \subset (-\infty, 0]$, 
\begin{align*}
&u_i(\gamma_i(b)) - u_i(\gamma_i(a)) = \int_a^b (L^s_i - c_i \cdot v^s+ \alpha_{H^s_i}(c_i))(\gamma_i, \dot{\gamma}_i) dt \\
& \ge  \int_a^b (L^\st - \bar{c}_i \cdot v^\st)(\gamma_i^\st, \dot{\gamma}_i^\st) dt + \int_a^b (U^\wk_i + \alpha_{H^s_i}(c_i) - \frac12 \tilde{C}_i c^\wk_i \cdot c^\wk_i)(\gamma_i, \dot{\gamma}_i) dt 
\end{align*}
Take limit again to get 
\[
	u^\st(\gamma^\st(b)) - u^\st(\gamma^\st(a)) \ge \int_a^b (L^\st_0 - \bar{c}\cdot v^\st + \alpha_{H^\st}(\bar{c})) (\gamma^\st, \dot{\gamma}^\st) dt.
\]
Because $\gamma^\st$ is an $L^\st$ extremal curve (see \eqref{eq:str-extremal}), \eqref{eq:Lst-dom} hold for $\gamma^\st$.  Combining with last displayed formula, \eqref{eq:Lst-dom} becomes an equality. Then $\gamma^\st$ is a calibrated curve for $L^\st - \bar{c}\cdot v^\st + \alpha_{H^\st}(\bar{c})$, and $u^\st$ is a weak KAM solution.

\section{The Ma\~ne and the Aubry sets and the barrier function}
\label{sec:cor-aub}

We prove the following result.
\begin{proposition} \label{prop:aubry-mane} Fix $\cB^\st$ and 
$\kappa>1$. Assume that  $(\cB^\wk_i, p_i, U^\st_i, \cU^\wk_i)$ 
satisfies the assumptions of Theorem~\ref{thm:semi-cont}.
Denote $H_i^s = \cH_i^s(\cB^\st,\cB^\wk_i, p_i, U^\st_i, \cU^\wk_i)$, $H^\st_0 = \cH^\st(p_0, U_0^\st)$, $L^s_i = L_{H^s_i}$  and 
$L^\st_0 = L_{H^\st_0}$. 
\begin{enumerate}
\item  Any limit point of $\varphi_i \in \cN_{H^s_i}(c_i)$ is contained in $\cN_{H^\st_0}(\bar{c})\times \T^{d-m}$. 
\item If  $\cA_{H^\st_0}(\bar{c})$ contains only finitely many static classes, then any limit point of $\varphi_i \in \cA_{H^s_i}(c_i)$ is contained in $\cA_{H^\st_0}(\bar{c})\times \T^{d-m}$.
\item Assume that $\cA_{H^\st}(\bar{c})$ contains only one static class. Let  $\varphi_i = (\varphi_i^\st, \varphi_i^\wk) \in \cA_{H_i^s}(c_i)$ be such that $\varphi_i^\st \to \varphi^\st \in \cA_{H^\st_0}(\bar{c})$. Then for any $\psi = (\psi^\st, \psi^\wk)\in \T^d$, 
\[
	\lim_{i \to \infty}h_{L^s_i, c_i}(\varphi, \psi) = h_{L^\st_0, \bar{c}}(\varphi^\st, \psi^\st). 
\]
\item Let $(\rho^\st_i, \rho^\wk_i)$ be the rotation number of any $c_i-$minimal measure of $L^s_i$. Then we have 
\[
	 \lim_{i \to \infty}(\rho^\wk_i - B^T_iA_i^{-1} \rho^\st_i - \tilde{C}_ic^\wk_i ) =0,
\]
and any accumulation point $\rho$ of $\rho^\st_i$ is contained in the set $\partial \alpha_{H^\st}(\bar{c})$.
\end{enumerate}
\end{proposition}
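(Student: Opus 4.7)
The plan is to prove items (1)--(4) in order, using as main tools the a priori compactness of Proposition~\ref{prop:proj-comp}, the semi-continuity of weak KAM solutions in Theorem~\ref{thm:semi-cont}, and the rotation number estimates of Proposition~\ref{prop:rotation}; throughout I write $\bar{c}_i = c_i^\st + A_i^{-1}B_i c_i^\wk$, which converges to $\bar{c}$ by hypothesis. For item (1), I would characterize $\varphi_i \in \cN_{H^s_i}(c_i)$ via semi-static orbits $\gamma_i = (\gamma_i^\st, \gamma_i^\wk) : \R \to \T^d$ with $\gamma_i(0) = \varphi_i$ that are simultaneously calibrated by a forward and a backward weak KAM solution of $L^s_i - c_i \cdot v$. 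Proposition~\ref{prop:proj-comp} renders the strong projections $\gamma_i^\st$ equi-Lipschitz, so a subsequence converges in $C^1_{\mathrm{loc}}$ to a curve $\gamma^\st$ that solves the Euler-Lagrange equation of $L^\st_0$, by exactly the argument at the end of the proof of Theorem~\ref{thm:semi-cont}. Applying Theorem~\ref{thm:semi-cont} to the associated forward and backward weak KAM solutions produces limit solutions of $L^\st_0 - \bar{c} \cdot v^\st$ that calibrate $\gamma^\st$, so $\varphi_\infty^\st = \gamma^\st(0) \in \cN_{H^\st_0}(\bar{c})$.

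Item (2) refines (1): item (1) already places $\varphi_\infty^\st$ in $\cN_{H^\st_0}(\bar{c})$, and since $\cA \subseteq \cN$ and the Ma\~ne set decomposes into the finitely many static classes together with their heteroclinic connectors, it remains to exclude the case that $\varphi_\infty^\st$ sits on a proper heteroclinic. I would exploit that $\varphi_i \in \cA_{H^s_i}(c_i)$ forces $h_{L^s_i, c_i}(\varphi_i, \varphi_i) = 0$; after passing to a subsequence so that the endpoints realising the Ma\~ne characterization of $\varphi_i$ lie in fixed limiting static classes $C_{j_1}, C_{j_2}$ of $\cA_{H^\st_0}(\bar{c})$, a semi-continuity argument applied to the restricted barrier on each class forces $C_{j_1} = C_{j_2}$ and places $\varphi_\infty^\st$ in that class. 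This is the step I expect to be the main obstacle, since outside the one-static-class regime upper semi-continuity of the Aubry set is known to be delicate; the finite-class hypothesis is used precisely so that only finitely many such configurations arise along subsequences.

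For item (3), the one-static-class hypothesis implies the weak KAM solution of $L^\st_0 - \bar{c} \cdot v^\st$ is unique up to an additive constant. Since $\varphi_i \in \cA_{H^s_i}(c_i)$, the function $\tilde{u}_i(\cdot) := h_{L^s_i, c_i}(\varphi_i, \cdot)$ is a weak KAM solution of $L^s_i - c_i \cdot v$ satisfying $\tilde{u}_i(\varphi_i) = 0$. By Theorem~\ref{thm:semi-cont}, any subsequential uniform limit of $\tilde{u}_i - \tilde{u}_i(0)$ depends only on $\varphi^\st$ and equals a weak KAM solution $\tilde{u}^\st$ of $L^\st_0 - \bar{c} \cdot v^\st$, and the normalization $\tilde{u}_i(\varphi_i) = 0$ together with continuity fixes $\tilde{u}^\st(\varphi_\infty^\st) = 0$. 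Uniqueness up to a constant then identifies $\tilde{u}^\st(\cdot) = h_{L^\st_0, \bar{c}}(\varphi_\infty^\st, \cdot)$, so the full sequence converges to this limit and the claimed convergence of barriers follows.

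For item (4), both statements are direct readouts of Proposition~\ref{prop:rotation} combined with Proposition~\ref{prop:proj-comp}. The first inequality of Proposition~\ref{prop:rotation} yields
\[
(\tilde{C}_i (\rho^\wk_i - B^T_i A_i^{-1}\rho^\st_i - \tilde{C}_i c^\wk_i))\cdot(\rho^\wk_i - B^T_i A_i^{-1}\rho^\st_i - \tilde{C}_i c^\wk_i) \le 2\|U^\wk_i\|_{C^0},
\]
and since the smallest eigenvalue of $\tilde{C}_i$ grows without bound as $\lM(\cB^\wk_i) \to \infty$ while $\|U^\wk_i\|_{C^0} \to 0$, the vector $\rho^\wk_i - B^T_i A_i^{-1}\rho^\st_i - \tilde{C}_i c^\wk_i$ tends to zero. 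For the second claim, Proposition~\ref{prop:proj-comp} forces $\rho^\st_i$ to be bounded, and the second inequality of Proposition~\ref{prop:rotation} together with $\bar{c}_i \to \bar{c}$ and the continuity of $\alpha_{H^\st_0}$ and $\beta_{H^\st_0}$ gives $\alpha_{H^\st_0}(\bar{c}) + \beta_{H^\st_0}(\rho) = \bar{c} \cdot \rho$ at any accumulation point $\rho$; Legendre--Fenichel duality then yields $\rho \in \partial \alpha_{H^\st_0}(\bar{c})$.
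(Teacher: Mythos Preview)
Your treatment of items (1), (3), and (4) matches the paper's proof closely. For item (1) the paper uses Fathi's characterization $\tilde{\cN}_L=\bigcup_u \tilde{\cI}(L,u)$ with backward weak KAM solutions and a calibrated-curve limit (Lemma~\ref{lem:semi-cont-I}); your forward/backward pairing is an equivalent framing. For item (3) the paper does exactly what you describe: view $h_{L^s_i,c_i}(\varphi_i,\cdot)$ as a weak KAM solution, apply Theorem~\ref{thm:semi-cont}, and use uniqueness of the weak KAM solution modulo constants plus the normalization $h_{L^s_i,c_i}(\varphi_i,\varphi_i)=0$ to pin down the constant. For item (4) the paper also reads off Proposition~\ref{prop:rotation} directly; your extra observation that Proposition~\ref{prop:proj-comp} bounds $\rho_i^\st$ is correct and implicit in the paper. (One caution: the quadratic form in Proposition~\ref{prop:rotation} is $\tilde{C}^{-1}$, not $\tilde{C}$, as one sees from its proof; your eigenvalue argument should be adjusted to use $\|\tilde C_i\|\,\|U_i^\wk\|\to 0$ rather than growth of the smallest eigenvalue of $\tilde{C}_i$.)

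Item (2) is where your plan genuinely diverges from the paper and where there is a gap. Your sketch tries to upgrade the Ma\~ne-set containment from item (1) by ruling out heteroclinics via some limiting argument on ``endpoints realising the Ma\~ne characterization'' and barrier semi-continuity. But the only Aubry-set information you have at step $i$ is $h_{L_i^s,c_i}(\varphi_i,\varphi_i)=0$, and you have no way to pass this barrier identity to the limit without already knowing item (3), which itself requires the single-static-class hypothesis. Nor does ruling out proper heteroclinics suffice: even if the limit orbit is homoclinic to a single static class, that does not place $\varphi_\infty^\st$ in the Aubry set. The paper avoids this entirely by using Ma\~ne's chain-recurrence characterization: under finitely many static classes, $\varphi\in\cA_L(c)$ iff $\varphi\,\fC_u\,\varphi$ for some weak KAM solution $u$ (Proposition~\ref{prop:chain-trans}). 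One then builds $(\varepsilon_i,M_i)$-chains for each $\varphi_i$, shows via Lemma~\ref{lem:EL-est} that the strong projections of these piecewise Euler--Lagrange curves accumulate locally uniformly to the Euler--Lagrange flow of $L_0^\st$ restricted to $\bcG(L_0^\st-\bar c\cdot v^\st,u^\st)$, and invokes a soft lemma (from \cite{Ber10}) that locally uniform accumulation of pseudo-orbits yields chain recurrence in the limit. This chain-transitive reformulation is the missing idea in your outline.
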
 

The proof of item 2 requires additional discussion and is presented in Section~\ref{sec:semi-aubry}. In Section~\ref{sec:mane} we prove item 1, 3 and 4. 

\subsection{The Ma\~ne set and barrier function}
\label{sec:mane}

We first state an alternate definition of the Aubry and Ma\~ne sets due to Fathi (see also \cite{Ber08}). Let $u$ be a weak KAM solution for the Lagrangian $L$. We define $\bcG(L,u)$ to be the set of points $(\varphi, v)\in \T^d \times \R^d$ such that there exists a $(u, L, \alpha_L)$-calibrated curve $\gamma: (-\infty, 0] \to \T^d$, such that $(\varphi, v) = (\gamma(0), \dot{\gamma}(0))$. Let $\phi_t$ denote the Euler-Lagrange flow of $L$, then 
\begin{equation}
\label{eq:aubry-mane}
	\tilde{\cI}(L, u) = \bigcap_{t \le 0} \phi_t(\overline{\cG}(L,u)), \quad \tilde{\cA}_L = \bigcap_{u} \tilde{\cI}(L,u), \quad \tilde{\cN}_L = \bigcup_u \tilde{\cI}(L,u),
\end{equation}
where the union and intersection are over all weak KAM solutions of $L$. The Aubry set and Ma\~ne set of $c\in H^1(\T^d, \R)$ is defined as 
\[
	\tilde{\cA}_L(c) = \tilde{\cA}_{L - c\cdot v}, \quad \tilde{\cN}_L(c) = \tilde{\cN}_{L - c\cdot v}.
\]
The projected Aubry and Ma\~ne sets are the projection of these sets to $\T^d$.

We now turn to the setting of Proposition~\ref{prop:aubry-mane}. Let $L_i^s$, $L_0^\st$, $c_i$, $\bar{c}$ be as in the assumption. The strategy of the proof is similar to the one in \cite{Ber10}. 

\begin{lemma} 
\label{lem:semi-cont-I}
Let $u_i$ be a weak KAM solution of $L^s_i - c_i \cdot v$. Assume that $\tilde{\varphi}_i = (\varphi_i, v_i) \in \tilde{\cI}(L_i^s - c_i \cdot v, u_i)$ satisfies $\tilde{\varphi}_i \to \tilde{\varphi} = (\varphi, v) = (\varphi^\st, \varphi^\wk, v^\st, v^\wk)$, and $u_i(\varphi^\st, \varphi^\wk) \to u^\st(\varphi^\st)$. Then 
\[
	(\varphi^\st, v^\st) \in \tilde{\cI}(L^\st - \bar{c}\cdot v^\st, u^\st). 
\]
\end{lemma}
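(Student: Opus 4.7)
The plan is to establish that the full orbit of $(\varphi^\st, v^\st)$ under the Euler--Lagrange flow of $L^\st_0$ is $(u^\st, L^\st_0-\bar c\cdot v^\st, \alpha_{H^\st_0}(\bar c))$-calibrated on all of $\R$, using the standard characterization (cf.\ \cite{Fat08}) that $\tilde{\cI}(L,u)$ coincides with the set of phase points whose Euler--Lagrange trajectory is globally calibrated. Accordingly, I will first produce such a trajectory for each $L^s_i$ and extract a limit of its strong projection.

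First, let $\gamma_i = (\gamma_i^\st, \gamma_i^\wk): \R \to \T^d$ be the Euler--Lagrange trajectory of $L^s_i - c_i\cdot v$ starting at $\tilde{\varphi}_i$; by the characterization above it is globally $(u_i, L^s_i - c_i\cdot v, \alpha_i)$-calibrated, where $\alpha_i = \alpha_{H^s_i}(c_i)$. Proposition~\ref{prop:proj-comp} gives a uniform bound on $\dot{\gamma}_i^\st$ (since $A_i\bar{c}_i \to A\bar{c}$), and Lemma~\ref{lem:EL-est} gives uniform bounds on $\ddot{\gamma}_i^\st$ on every compact interval. Arzel\`a--Ascoli then yields a subsequence with $\gamma_i^\st \to \gamma^\st$ in $C^1_{loc}(\R, \T^m)$, and a second application of Lemma~\ref{lem:EL-est} (using $\mu(\cB^\wk_i)\to\infty$) identifies $\gamma^\st$ as an Euler--Lagrange trajectory of $L^\st_0$ with $(\gamma^\st(0), \dot{\gamma}^\st(0)) = (\varphi^\st, v^\st)$.

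Next I will pass to the limit in the calibration identity
\[
u_i(\gamma_i(b)) - u_i(\gamma_i(a)) = \int_a^b (L^s_i - c_i\cdot v)(\gamma_i, \dot{\gamma}_i)\, dt + \alpha_i(b-a)
\]
on a fixed compact interval $[a,b]$. Using Lemma~\ref{lem:ls-split} I rewrite the integrand as
\[
(L^\st_i - \bar{c}_i\cdot v^\st)(\gamma_i^\st, \dot{\gamma}_i^\st) + Q_i(t) + U^\wk_i(\gamma_i(t)) - \tfrac12 c_i^\wk \cdot \tilde{C}_i c_i^\wk,
\]
where
\[
Q_i(t) = \tfrac12\bigl(\dot{\gamma}_i^\wk - B_i^T A_i^{-1} \dot{\gamma}_i^\st - \tilde{C}_i c_i^\wk\bigr)\cdot \tilde{C}_i^{-1}\bigl(\dot{\gamma}_i^\wk - B_i^T A_i^{-1} \dot{\gamma}_i^\st - \tilde{C}_i c_i^\wk\bigr) \ge 0.
\]
Dropping the non-negative $Q_i$ produces a lower bound on the right-hand side in which the remaining terms are controlled: $L^\st_i \to L^\st_0$ in $C^2$, $\bar{c}_i \to \bar{c}$, $\|U^\wk_i\|_{C^0} \to 0$, and $\alpha_i - \tfrac12 c_i^\wk \cdot \tilde{C}_i c_i^\wk \to \alpha_{H^\st_0}(\bar{c})$ by Proposition~\ref{prop:Ls-alpha}. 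For the left-hand side, Proposition~\ref{prop:appr-lip-weak} furnishes $\delta_i \to 0$ with $|u_i(\varphi^\st, \varphi^\wk) - u_i(\varphi^\st, \psi^\wk)| \le \delta_i(\diam \T^{d-m} + 1)$, which together with the assumed convergence $u_i \to u^\st$ yields $u_i(\gamma_i(t)) \to u^\st(\gamma^\st(t))$ at $t = a, b$. Combining these convergences gives
\[
u^\st(\gamma^\st(b)) - u^\st(\gamma^\st(a)) \ge \int_a^b \bigl(L^\st_0 - \bar{c}\cdot v^\st + \alpha_{H^\st_0}(\bar c)\bigr)(\gamma^\st, \dot{\gamma}^\st)\, dt.
\]
The reverse inequality is the domination of $u^\st$ by $L^\st_0 - \bar c\cdot v^\st + \alpha_{H^\st_0}(\bar c)$ that was established in the proof of Theorem~\ref{thm:semi-cont}. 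Equality on every $[a,b]$ shows $\gamma^\st$ is globally calibrated, hence $(\varphi^\st, v^\st) \in \tilde{\cI}(L^\st_0 - \bar{c}\cdot v^\st, u^\st)$. Since the conclusion depends only on $(\varphi^\st, v^\st)$, it is unaffected by the subsequence choice.

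The main obstacle is that the weak velocities $\dot{\gamma}_i^\wk$ are not uniformly bounded, so the trajectories $\gamma_i^\wk$ have no useful subsequential limit. The splitting of Lemma~\ref{lem:ls-split} is precisely what circumvents this: the weak velocity contribution is collected into the non-negative quadratic $Q_i(t)$, which may be discarded to obtain the one-sided inequality needed, while the weak potential $U^\wk_i$ vanishes uniformly. The approximate Lipschitz property in the weak angles (Proposition~\ref{prop:appr-lip-weak}) is what renders the boundary values of $u_i$ insensitive, in the limit, to the uncontrolled $\gamma_i^\wk$.
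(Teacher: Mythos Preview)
Your proof is correct and the core argument---extracting a $C^1_{loc}$ limit $\gamma^\st$ of the strong projections and showing it is calibrated by dropping the non-negative quadratic term in the splitting of Lemma~\ref{lem:ls-split}---is exactly the computation the paper carries out (it is the content of item~3 in the proof of Theorem~\ref{thm:semi-cont}, to which the paper's proof of this lemma explicitly refers).

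The only organizational difference is this: you invoke the characterization of $\tilde{\cI}(L,u)$ as the set of phase points whose full Euler--Lagrange orbit is $(u,L,\alpha)$-calibrated on all of $\R$, and then run the calibration-limit argument once on bi-infinite curves. The paper instead works directly with the definition $\tilde{\cI}(L,u)=\bigcap_{t\le 0}\phi_t(\bcG(L,u))$: it first shows that limits of points in $\bcG(L_i^s-c_i\cdot v,u_i)$ project into $\bcG(L^\st-\bar c\cdot v^\st,u^\st)$ (this is the calibration argument on $(-\infty,0]$), and then handles the intersection over $t\le 0$ by using the uniform convergence $\pi\phi^i_t\to\phi^\st_t$ from Lemma~\ref{lem:EL-est}. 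Your route is slightly more streamlined; the paper's route makes the role of flow convergence more explicit and separates the ``calibrated backward ray'' step from the ``invariance under forward flow'' step. Either way the substance is the same.
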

\begin{proof}
We first show that $(\varphi_i, v_i) \in \bcG(L_i^s - c_i \cdot v, u_i)$ implies $(\varphi^\st, v^\st )\in \bcG(L^\st - \bar{c} \cdot v^\st, u^\st)$. Indeed, there exists $\gamma_i: (-\infty, 0] \to \T^d$, each $(u_i, L^s_i - c_i \cdot v,  \alpha_{L_i^s}(c_i))$-calibrated, with $(\gamma_i, \dot{\gamma}_i)(0) = (\varphi, v)$. We follow the same line as proof of item 3 in Theorem~\ref{thm:semi-cont} (section~\ref{sec:con-weak-kam}), then by restricting to a subsequence, $\gamma_i^\st$ converges in $C^1_{loc}((-\infty,0], \T^d)$ to a $(u^\st, L^\st_0 -\bar{c}\cdot v^\st, \alpha_{H^\st_0}(\bar{c}))$-calibrated curve $\gamma^\st$. In particular $(\gamma^\st_i, \dot{\gamma}_i^\st) \to (\gamma^\st, \dot{\gamma}^\st)$, which implies $(\varphi^\st, v^\st) \in \bcG(L^\st - \bar{c} \cdot v^\st, u^\st)$. 

Let $\phi^i_t$ denote the Euler-Lagrange flow of $L_i^s$, and $\phi^\st_t$ the flow for $L^\st$. Let $\pi$ denote the projection to the strong components $(\varphi^\st, v^\st)$, then from Lemma~\ref{lem:EL-est} $\pi \phi^i_t \to  \phi^\st_t$ uniformly. As a result for a fixed $T>0$ and $(\varphi_i , v_i) \in \tilde{\cI}(L_i^\st - c_i\cdot v, u_i)$, we have 
\[
	(\varphi_i, v_i) = (\varphi^\st_i, \varphi^\wk_i, v^\st_i, v^\wk_i) \in \phi_{-T}^i\left(\,\bcG(L_i^s - c_i \cdot v, u_i) \right),	
\]
hence $(\varphi^\st_i, v^\st_i) \to (\varphi^\st, v^\st) \in \varphi^\st_{-T}\left( \bcG(L^\st - \bar{c} \cdot v^\st, u^\st) \right)$. Since $T>0$  is arbitrary, we obtain $(\varphi^\st, v^\st) \in \tilde{\cI}(L^\st_0 - \bar{c} \cdot v^\st, u^\st)$. 
\end{proof}

\begin{proof}[Proof of Proposition~\ref{prop:aubry-mane}, part I]
 We first prove item 1. Suppose $\tilde{\varphi}_i \in \tilde{\cN}_{H^s_i}(c_i)$, then there exists weak KAM solutions $u_i$ of $L^s_i - c_i\cdot v$, such that $(\varphi_i, v_i) \in \tilde{\cI}(L^s_i - c_i\cdot v, u_i)$. By Theorem~\ref{thm:semi-cont}, after restricting to a subsequence, we have $u_i(\varphi^\st, \varphi^\wk) \to u^\st(\varphi^\st)$. By Lemma~\ref{lem:semi-cont-I}, $(\varphi^\st_i, v^\st_i) \to (\varphi^\st, v^\st)$ implies $(\varphi^\st, v^\st) \in \tilde{\cI}(L^\st_0 - \bar{c}\cdot v^\st, u^\st)\subset \tilde{\cN}_{H^\st_0}(\bar{c})$. 

For item 3, suppose $\varphi_i = (\varphi_i^\st, \varphi_i^\wk) \in \cA_{H^s_i}(c_i)$ satisfies $\varphi_i^\st \to \varphi^\st \in \cA_{H^\st_0}(\bar{c})$. Then $h_{L^s_i, c_i}(\varphi_i, \cdot)$ is a weak KAM solution of $L^s_i - c\cdot v$ (see \cite{Fat08}, Theorem 5.3.6). By Theorem~\ref{thm:semi-cont}, by restricting to a subsequence, there exists a weak KAM solution $u^\st$ of $L^\st_0 - \bar{c}\cdot v^\st$ such that 
\[
	\lim_{i \to \infty} h_{L^s_i, c_i}(\varphi_i, \psi^\st, \psi^\wk) - h_{L^s_i, c_i}(\varphi_i, 0,0)   =  u^\st(\psi^\st). 
\]
We may further assume that $h_{L^s_i, c_i}(\varphi_i, 0,0) \to C \in \R$. Since $\cA_{H^\st_0}(\bar{c})$ has only one static class, there exists a constant $C_1>0$ such that
\[
	u^\st(\psi^\st) + C_1 = h_{L^\st, \bar{c}}(\varphi^\st, \psi^\st). 
\] 
Using the fact that $\varphi_i \in \cA_{H^s_i}(c_i)$, we get $h_{L^s_i, c_i}(\varphi_i, \varphi_i) =0$. Taking the limit, 
\[
	u^\st(\varphi^\st) = - C_1 = h_{L^\st, \bar{c}}(\varphi^\st, \varphi^\st) - C = -C. 
\]
Therefore 
\[
	\lim_{i \to \infty} h_{L^s_i, c_i}(\varphi_i^\st, \varphi_i^\wk, \psi^\st, \psi^\wk) = h_{L^\st, \bar{c}}(\varphi^\st, \psi^\st). 
\]

Item 4: Let $\rho_i = (\rho_i^\st, \rho_i^\wk)$ be the rotation number of minimal measures of $L_i^s - c_i \cdot v$, then  from Proposition~\ref{prop:rotation}, 
\[
	\lim_{i \to \infty} \rho^\wk_i - B_i^T A_i^{-1} \rho^\st_i - \tilde{C}_i c^\wk_i =0. 
\]
Moreover, assume that $\rho_i^\st \to \rho^\st \in \R^m$, then by taking limit in the second conclusion of Proposition~\ref{prop:rotation}, we get 
\[
	\alpha_{H^\st_0}(\bar{c}) + \beta_{H^\st_0}(\rho^\st) - \bar{c} \cdot \rho^\st =0,
\]
using Fenchel duality, $\rho^\st$ is a subdifferential of the convex function $\alpha_{H^\st_0}$ at $\bar{c}$. 
\end{proof}

\subsection{Semi-continuity of the Aubry set}
\label{sec:semi-aubry}

Our strategy of the proof mostly follow \cite{Ber10}. 

Given a compact metric space $\cX$, a semi-flow $\phi_t$ on $\cX$, and $\varepsilon, T>0$, an $(\varepsilon, T)-$chain consists of $x_0, \cdots, x_N \in \cX$ and $T_0, \cdots, T_{N-1} \ge T$, such that $d(\phi_{T_i} x_i, x_{i+1}) < \varepsilon$. We say that $x \fC_\cX y$  if for any $\varepsilon, T>0$, there exists an $(\varepsilon, T)-$chain with $x_0 = x$ and $x_N = y$. The relation $\fC_X$ is called the chain transitive relation (see \cite{Con88}).

The family of maps $\bphi_t = \phi_{t}$ defines a semi-flow on the set $\overline{\cG(L - c \cdot v, u)}$, and therefore defines a chain transitive relation. Given $\varphi, \psi \in \T^d$ and a weak KAM solution $u$ of $L - c \cdot v$,  we say that $\varphi \fC_u \psi$ if there exists $\tilde{\varphi} = (\varphi, v), \tilde{\psi} = (\psi, w) \in \T^d \times \R^d$ such that 
\[
	\tilde{\varphi} \fC_\cX \tilde{\psi},  \text{ where } \cX = \overline{\cG(L - c \cdot v, u)} . 
\]

Item 1 in the following Proposition is due to Ma\~ne, and item 2 is due to Mather. The version presented here is contained in \cite{Ber10}. 
\begin{proposition}
\label{prop:chain-trans}
Let $L$ be a Tonelli Lagrangian, then:
\begin{enumerate}
\item Let  $\varphi \in \cA_L(c)$ and $u$ be a weak KAM solution of $L - c\cdot v$, we have $\varphi \fC_u \varphi$. 
\item Suppose $\cA_L(c)$ has only finitely many static classes, and there exists a weak KAM solution $u$ such that $\varphi \fC_u \varphi$. Then $\varphi \in \cA_L(c)$. 
\end{enumerate}
\end{proposition}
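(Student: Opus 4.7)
The proposition records two classical results (item 1 due to Ma\~ne, item 2 due to Mather), and my plan is to follow Bernard's reformulation from \cite{Ber10}, working directly in the compact space $\cX = \bcG(L - c\cdot v, u)$ equipped with the restricted Euler-Lagrange semi-flow $\bphi_t$. I will use throughout that $\bcG$ is a compact $\bphi_t$-invariant set and that every orbit in $\bcG$ projects to a calibrated curve.

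For item 1, the plan is to fix $\varphi \in \cA_L(c)$ and any weak KAM solution $u$, and to produce one-step chains returning to a neighborhood of a lift $\tilde\varphi \in \bcG$. Since $h_{L,c}(\varphi,\varphi) = 0$, there exist extremal loops $\gamma_n : [0, T_n] \to \T^d$ based at $\varphi$ with $T_n \to \infty$ and $\int_0^{T_n}(L - c\cdot v + \alpha_L)(\gamma_n, \dot\gamma_n)\,dt \to 0$. A priori compactness bounds $\|\dot\gamma_n\|$ uniformly, and by extracting a limit I would obtain a globally calibrated bi-infinite curve through $\varphi$, giving a lift $\tilde\varphi = (\varphi, v) \in \bcG$. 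Comparing the near-vanishing action of $\gamma_n$ with the cost of the true calibrated trajectory starting at $\tilde\varphi$ and ending near $\varphi$, strict fiber convexity of $L$ forces $\bphi_{T_n}(\tilde\varphi)$ to approach $\tilde\varphi$ in the tangent bundle. This supplies $(\varepsilon,T)$-chains of length one for any prescribed $\varepsilon, T > 0$, establishing $\varphi \fC_u \varphi$.

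For item 2, I would assume a decomposition $\cA_L(c) = \cS_1 \cup \dots \cup \cS_N$ into finitely many static classes and $\varphi \fC_u \varphi$. Given $\varepsilon, T > 0$, fix a chain $\tilde\varphi_0, \dots, \tilde\varphi_k$ in $\bcG$ with $\tilde\varphi_0 = \tilde\varphi_k = \tilde\varphi$. Each long flow segment $\bphi_{[0,T_i]}\tilde\varphi_i$ is calibrated and hence shadows the orbit of a point $\zeta_i \in \cA_L(c)$ to within $O(\varepsilon)$. Letting $\varepsilon \to 0$ and $T \to \infty$ along sequences for which $k \to \infty$, the pigeonhole principle ensures that some class $\cS_{j_0}$ is visited by $\zeta_i$ at arbitrarily early and arbitrarily late positions along the chain. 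Extracting accumulation points, I would produce $\psi \in \cS_{j_0}$ together with near-calibrated curves from $\varphi$ to $\psi$ and from $\psi$ back to $\varphi$ whose total action tends to zero. The Lipschitz continuity of $h_{L,c}$ then yields $h_{L,c}(\varphi,\psi) = h_{L,c}(\psi,\varphi) = 0$, so $h_{L,c}(\varphi,\varphi) \le h_{L,c}(\varphi,\psi) + h_{L,c}(\psi,\varphi) = 0$, hence $\varphi \in \cA_L(c)$.

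The principal obstacle is item 2: the chain-transitivity relation is purely topological while Aubry-set membership is variational, so one must convert $C^0$ jumps along a chain into action bounds. The semi-concavity and Lipschitz properties of $u$ and of $h_{L,c}^T$ let small angular jumps be absorbed at bounded action cost, so that the limiting chain has total action zero. The finite-static-class hypothesis enters precisely in the pigeonhole step, which ensures the two ``legs'' of the argument (forward and backward from $\varphi$) terminate at a common accumulation point $\psi$; without it, the chain could drift through infinitely many distinct classes and one could not identify such a $\psi$.
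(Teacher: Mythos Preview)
The paper does not prove this proposition: immediately before stating it, the authors write ``Item 1 in the following Proposition is due to Ma\~ne, and item 2 is due to Mather. The version presented here is contained in \cite{Ber10},'' and no proof follows. The proposition is invoked as a black box in the proof of Proposition~\ref{prop:aubry-mane}, part II. So there is no ``paper's own proof'' to compare your proposal against.

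Your sketch is a reasonable outline of the argument one finds in the cited literature, and it correctly identifies the two key mechanisms: for item 1, that vanishing Peierls barrier at $(\varphi,\varphi)$ produces long nearly-closed calibrated segments whose endpoints in the tangent bundle must converge by strict convexity; for item 2, that finitely many static classes force a pigeonhole recurrence along any chain, letting one close up the action. One point to be careful about in a full write-up: the relation $\fC_u$ is defined via the semi-flow on $\overline{\cG(L-c\cdot v,u)}$, so in item 1 you must verify that the lift $\tilde\varphi$ you construct actually lies in this set for the \emph{given} weak KAM solution $u$ (not just for some solution), and that the near-return orbits stay in $\overline{\cG}$. This is true because points of the Aubry set are calibrated by every weak KAM solution, but it should be said explicitly.
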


Proposition~\ref{prop:chain-trans} implies that, when $\cA_L(c)$ has finitely many static classes, the Aubry set coincides with the set $\{ \varphi: \varphi \fC_u \varphi\}$. We will prove semi-continuity for this set. 
\begin{definition}
Let $\cX$ be a compact metric space with a semi-flow $\phi_t$. A family of piecewise continuous curves $x_i: [0, T_i] \to \cX$ is said to accumulate locally uniformly to $(\cX, \phi_t)$ if for any sequence $S_i \in [0,T_i]$, the curves $x_i(t + S_i)$ has a subsequence which converges uniformly on compact sets to a trajectory of $\phi_t$. 
\end{definition}
\begin{lemma}\cite{Ber10}
Suppose $x_i: [0, T_i] \to \cX$ accumulates locally uniformly to $(X, \phi_t)$, $x_i(0) \to x$ and $x_i(T_i) \to y$, then $x \fC_X y$. 
\end{lemma}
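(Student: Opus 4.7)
The plan is to unpack the definition of $\fC_\cX$ and show that, for each prescribed $\varepsilon, T > 0$, an $(\varepsilon, T)$-chain from $x$ to $y$ can be read off directly from one of the curves $x_i$ for sufficiently large $i$. The curves $x_i$ are ``almost trajectories'' by the locally uniform accumulation hypothesis, so the idea is to chop $[0,T_i]$ into subintervals of length roughly $T$, use the values of $x_i$ at the subdivision points as the chain, and verify that consecutive points agree with the flow up to error $\varepsilon$.

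The key technical step will be the following uniform approximation claim: for any $\varepsilon, T > 0$, there exists $i_0$ such that for all $i \ge i_0$ and all $s \in [0, T_i - T]$,
\[
d\bigl(\phi_T(x_i(s)),\, x_i(s+T)\bigr) < \varepsilon.
\]
I would prove this by contradiction: if it fails, extract $i_k \to \infty$, $s_k \in [0, T_{i_k} - T]$, witnessing $d(\phi_T(x_{i_k}(s_k)), x_{i_k}(s_k + T)) \ge \varepsilon$. By the locally uniform accumulation hypothesis applied to the reparametrized curves $t \mapsto x_{i_k}(s_k + t)$, a subsequence converges uniformly on $[0,T]$ to a trajectory $t \mapsto \phi_t(z)$ of $\phi_t$. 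Then $x_{i_k}(s_k) \to z$ and $x_{i_k}(s_k+T) \to \phi_T(z)$; by continuity of $\phi_T$ on the compact space $\cX$, we also have $\phi_T(x_{i_k}(s_k)) \to \phi_T(z)$, contradicting the separation $\varepsilon$.

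Granted the claim, I would finish as follows. Given $\varepsilon, T > 0$, choose $i$ so large that the claim applies with tolerance $\varepsilon/2$, $d(x_i(0), x) < \varepsilon/2$, $d(x_i(T_i), y) < \varepsilon$, and $T_i > T$; this is possible because $x_i(0) \to x$, $x_i(T_i) \to y$, and $T_i$ must grow (otherwise $T_i$ subconverges along a subsequence, forcing $x = y$ via a short trajectory and the chain is trivial). Pick an integer $N$ with $NT \le T_i < (N+1)T$ and set $s_k = kT$ for $0 \le k \le N-1$ and $s_N = T_i$, so each $s_{k+1} - s_k \in [T, 2T)$. Take the chain
\[
z_0 = x,\ z_1 = x_i(s_1),\ \ldots,\ z_{N-1} = x_i(s_{N-1}),\ z_N = y,
\]
with transition times $T_0 = s_1 = T$, $T_k = s_{k+1} - s_k$ for $1 \le k \le N-1$. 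The middle transitions satisfy $d(\phi_{T_k}(z_k), z_{k+1}) < \varepsilon/2$ by the claim (applied either with step $T$ or, for the last interior step, with a slightly larger step, which costs a second application of the same argument). The first transition uses continuity of $\phi_T$ together with $x_i(0) \to x$ and the claim at $s = 0$, and the final transition uses $x_i(T_i) \to y$. All steps have length $\ge T$ and all gaps are $< \varepsilon$, giving the required $(\varepsilon, T)$-chain and hence $x \fC_\cX y$.

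The main obstacle is the uniform claim: the locally uniform accumulation hypothesis is an accumulation statement along subsequences at arbitrary time shifts, and squeezing from it a uniform-in-$s$ bound on $[0, T_i - T]$ requires the compactness-plus-continuity argument above. Once this is in place, the chain construction is essentially bookkeeping.
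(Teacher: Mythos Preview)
The paper does not supply a proof; the lemma is quoted from \cite{Ber10}. Your argument is the standard one and is correct in structure: the uniform approximation claim via contradiction, followed by the subdivision chain construction, is exactly how this is done.

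One point needs correction. Your parenthetical justification that $T_i$ must be unbounded is wrong: if $T_i$ stays bounded along a subsequence, local uniform accumulation only gives $y = \phi_{T_*}(x)$ for some limit $T_* \ge 0$, not $x = y$, and $y = \phi_{T_*}(x)$ does \emph{not} in general imply $x \fC_\cX y$. For a concrete counterexample, take a gradient-like flow on $[0,1]$ with a unique sink at $1$: the pair $x = 1/2$, $y = \phi_1(1/2)$ is not chain-related, since any chain with time steps $\ge T$ for $T$ large is trapped near $1$ after the first step. So your case analysis does not rescue the lemma when $T_i$ is bounded. The unboundedness of $T_i$ should instead be read off the hypothesis itself: locally uniform accumulation to full semi-flow trajectories forces the shifted domains to exhaust $[0,\infty)$ along subsequences, and in any case $T_i \to \infty$ holds in the paper's application (where the chain times exceed $M_i \to \infty$). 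Once you take $T_i \to \infty$ as part of the setup, the rest of your proof---including the acknowledgement that the claim must be proved uniformly for all step lengths in $[T,2T]$, handled by the same compactness-plus-continuity argument---goes through.
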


\begin{proof}[Proof of Proposition~\ref{prop:aubry-mane}, part II]
We prove item 2. Let $\varphi_i = (\varphi_i^\st, \varphi_i^\wk) \in \cA_{H^s_i}(c_i)$ and $\varphi_i^\st \to \varphi^\st$, we show that $\varphi^\st \in \cA_{H^\st_0}(\bar{c})$. According to Proposition~\ref{prop:chain-trans}, $\varphi_i \fC_u \varphi_i$. Let $\tilde{\varphi}_i$ be the unique point in $\tilde{\cA}_{H^s_i}(c_i)$ projecting to $\varphi_i$, then there exists weak KAM solutions $u_i$ of $L_i^s - c_i\cdot v$, such that  $\tilde{\varphi}_i \fC \tilde{\varphi}_i$ in $\bcG(L_i^s - c_i\cdot v, u_i)$. Fix $\varepsilon_i \to 0$ and $M_i \to \infty$, then for each $i$, there exists 
\[
	T_{i,1} < \cdots < T_{i, N_i}, \quad T_{i,j+1} - T_{i,j} >M_i, 
\]
and a piecewise $C^1$ curve $\gamma_i = (\gamma^\st_i, \gamma^\wk_i): [0, T_i] \to \T^d$, satisfying
\begin{enumerate}
\item $\gamma_i|(T_{i,j}, T_{i,j+1})$ satisfies the Euler-Lagrange equation of $L_i^s$;
\item 
\[
	d\bigl( ((\gamma_i(T_{i,j}-), \dot{\gamma}_i(T_{i,j}-)), \, ((\gamma_i(T_{i,j}+), \dot{\gamma}_i(T_{i,j}+)) \bigr) < \varepsilon_i. 
\]
\end{enumerate}
Using Lemma~\ref{lem:EL-est}, the projection of the Euler-Lagrange flow of $L^s_i$ to $(\varphi^\st, v^\st)$ converges uniformly over compact interval to the Euler-Lagrange flow of $L^\st_0$. This, combined with item 2 and Lemma~\ref{lem:semi-cont-I}, implies that $(\gamma_i^\st, \dot{\gamma}_i^\st)$ accumulates locally uniformly to 
\[
	(\bcG(L^\st_0 - \bar{c} \cdot v^\st, u^\st), \phi^\st_{-t})
\]
where $\phi^\st_t$ is the Euler-Lagrange flow of $L^\st_0$. Therefore $\varphi^\st_i \to \varphi^\st$ impies $\varphi^\st \fC_u \varphi^\st$. Using Proposition~\ref{prop:chain-trans} again, we get $\varphi^\st \in \cA_{H^\st_0}(\bar{c})$. 
\end{proof}

\section{Technical estimates on weak KAM solutions}
\label{sec:tech-est}

In this section we prove Proposition~\ref{prop:appr-lip-weak} and \ref{prop:appr-lip-strong}. 
For $(\cB^\wk, p, U^\st, \cU^\wk) \in \Omega^{m,d}_{\kappa, q}(\cB^\st)\cap \{ \|U^\st\| \le R\}$, recall the notations $H^s = \cH^s(\cB^\st,\cB^\wk, p, U^\st, \cU^\wk)$, $H^\st = \cH^\st(p, U^\st)$, $L^s = L_{H^s}$, $L^\st = L_{H^\st}$.

\subsection{Approximate Lipschitz property in the strong component}
\label{sec:almost-lip-strong}

In this section we show that Proposition~\ref{prop:appr-lip-weak} implies Proposition~\ref{prop:appr-lip-strong}. Proposition~\ref{prop:appr-lip-weak} is proven in the next two sections.

We first state a lemma of action comparison between an extremal curve and its ``linear drift''. 
\begin{lemma}
\label{lem:lin-drift} 
Let $L: \T^d \times \R^d \to \R$ be a Tonelli Hamiltonian, $T \ge 1$, and  $\gamma:[0,T]\to \T^d$  be an extremal curve. Then for any $1 \le i \le d$, $h >0$, and a unit vector $f \in \R^d$, 
\begin{multline*}
	\int_0^T L(\gamma + \frac{th}{T} f, \dot{\gamma} + \frac{h}{T}f)dt - \int_0^T L(\gamma, \dot{\gamma}) dt \\
	\le (\partial_{v} L(\gamma(T), \dot{\gamma}(T)) \cdot f )h  + \left(\|f\cdot (\partial^2_{v v}L) f\| \frac{1}{T} + \|f \cdot (\partial^2_{\varphi v}L) f\|  + T \|f \cdot (\partial^2_{\varphi \varphi} L) f\|\right) h^2. 
\end{multline*}
\end{lemma}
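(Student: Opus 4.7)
The plan is a second-variation-style expansion along the linear drift, split into a first-order piece — where extremality of $\gamma$ collapses everything to a single boundary term — and a quadratic remainder bounded by the three Hessian blocks of $L$.

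First I would set $\xi(t) = \gamma(t) + \tfrac{th}{T}f$, $\dot\xi(t) = \dot\gamma(t) + \tfrac{h}{T}f$, and Taylor expand in the perturbation with integral remainder:
\[
L(\xi, \dot\xi) - L(\gamma, \dot\gamma) = \partial_\varphi L(\gamma, \dot\gamma) \cdot \tfrac{th}{T}f + \partial_v L(\gamma, \dot\gamma) \cdot \tfrac{h}{T}f + R(t),
\]
where $R(t) = \int_0^1 (1-s)\, g''(s,t)\,ds$ and $g(s,t) = L(\gamma(t) + s\tfrac{th}{T}f,\, \dot\gamma(t) + s\tfrac{h}{T}f)$.

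Next I would integrate the first-order piece over $[0,T]$ and apply the Euler--Lagrange equation $\partial_\varphi L(\gamma,\dot\gamma) = \tfrac{d}{dt}\partial_v L(\gamma,\dot\gamma)$ to the $\partial_\varphi L$ term, then integrate by parts:
\[
\int_0^T \partial_\varphi L \cdot \tfrac{th}{T}f\,dt = \partial_v L(\gamma(T), \dot\gamma(T)) \cdot hf - \int_0^T \partial_v L \cdot \tfrac{h}{T}f\,dt.
\]
The boundary term at $0$ vanishes thanks to the prefactor $t$, and the remaining interior integral cancels exactly the second term of the first-order piece. What is left is precisely $(\partial_v L(\gamma(T), \dot\gamma(T)) \cdot f)\,h$, matching the boundary contribution in the claimed inequality. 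This cancellation is the only step using extremality and is the structural heart of the argument.

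Finally I would bound $\int_0^T R(t)\,dt$ term-by-term in sup-norm: the $\partial^2_{\varphi\varphi}L$ contribution carries the weight $\int_0^T (t/T)^2\,dt = T/3$, the cross $\partial^2_{\varphi v}L$ contribution carries $2\int_0^T t/T^2\,dt = 1$, and the $\partial^2_{vv}L$ contribution carries $\int_0^T 1/T^2\,dt = 1/T$; multiplied by the Taylor weight $\int_0^1 (1-s)\,ds = 1/2$ from the integral remainder, this yields the stated $h^2$ bound (with somewhat better constants than written). I expect no genuine obstacle beyond the integration-by-parts cancellation already described; the rest is bookkeeping.
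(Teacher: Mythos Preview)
Your proposal is correct and matches the paper's proof essentially line for line: Taylor expand the integrand to second order, use the Euler--Lagrange equation to reduce the first-order terms to the boundary contribution $(\partial_v L(\gamma(T),\dot\gamma(T))\cdot f)\,h$, and bound the quadratic remainder by the three Hessian sup-norms integrated in $t$. The only cosmetic difference is that the paper writes the first-order piece directly as the exact derivative $\frac{d}{dt}\bigl(\partial_v L \cdot \tfrac{th}{T}f\bigr)$ rather than integrating by parts, which is the same computation; your observation about the sharper constants is also correct.
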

\begin{proof}
We compute
\begin{multline*}
L(\gamma + \frac{th}{T} f, \dot{\gamma})  - L(\gamma, \dot{\gamma} + \frac{h}{T}  f) 
 \le \partial_{\varphi} L(\gamma, \dot{\gamma})\cdot \frac{th}{T}f  + \partial_{v} L(\gamma, \dot{\gamma}) \frac{h}{T}f \\
  \|f\cdot (\partial^2_{v v}L) f\|  \frac{h^2}{T^2} + \|f \cdot (\partial^2_{\varphi v}L) f\| \frac{th^2}{T^2}  + \|f \cdot (\partial^2_{\varphi \varphi} L) f\| \frac{t^2 h^2}{T^2}. 
\end{multline*}
It follows from the Euler-Lagrange equation that 
\[
	 \partial_{\varphi} L(\gamma, \dot{\gamma})\cdot \frac{th}{T} + \partial_{v} L(\gamma, \dot{\gamma}) \frac{h}{T} = \frac{d}{dt} \left( \partial_{v}L \frac{th}{T} \right), 
\]
and our estimate follows from direct integration. 
\end{proof}

The following lemma establishes a relation between ``approximate semi concavity'' with approximate Lipschitz property. 
\begin{lemma}\label{lem:app-semi-conc}
For $D, \delta>0$, assume that  $u: \T^d \to \R$ satisfies that for all $\varphi\in \T^d$, there exists $l \in \R^d$ such that 
\[
	u(\varphi + y) - u(\varphi) \le  l \cdot y + D \|y\|^2 + \delta, \quad y\in \R^d, 
\]
Then $\|l\| \le \sqrt{d}(D+\delta)$, and $u$ is $(2\sqrt{d}(D+\delta), \delta)$ approximately Lipschitz. 
\end{lemma}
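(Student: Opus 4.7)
The plan is to establish both conclusions by exploiting the $\Z^d$-periodicity of $u$ together with the one-sided quadratic upper bound in the hypothesis. The key idea is that periodicity both constrains the slope vector $l$ and lets us confine any displacement into a fundamental cube where the quadratic error is automatically tame.

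For the bound $\|l\|\le \sqrt{d}(D+\delta)$, I would test the hypothesis against $y = \pm e_j$ for each standard basis vector of $\R^d$. Since the lift of $u$ is $\Z^d$-periodic, $u(\varphi + e_j) = u(\varphi) = u(\varphi - e_j)$. Applying the hypothesis with $y = e_j$ and $y = -e_j$ then gives $0 \le l_j + D + \delta$ and $0 \le -l_j + D + \delta$, so $|l_j| \le D+\delta$ for every $j$, and therefore $\|l\| \le \sqrt{d}(D+\delta)$. Note this uses only that $u$ is well-defined on $\T^d$.

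For the approximate Lipschitz property of the lift $\tilde u \colon \R^d \to \R$, the strategy is to reduce an arbitrary displacement $\psi - \varphi$ to one lying in the cube $[-1/2, 1/2]^d$. Choose $n \in \Z^d$ componentwise closest to $\psi - \varphi$, and set $y := \psi - \varphi - n$. Then each $|y_i|\le 1/2$ (so $\|y\|\le \sqrt{d}/2$) and each $|y_i|\le |\psi_i - \varphi_i|$ (so $\|y\|\le \|\psi-\varphi\|$). Applying the hypothesis at $\varphi$ with this $y$, using $\tilde u(\varphi+y) = \tilde u(\psi)$ by periodicity, Cauchy--Schwarz, and the bound on $\|l\|$ from the first step, yields
\[
\tilde u(\psi) - \tilde u(\varphi) \le \sqrt{d}(D+\delta)\|y\| + D\|y\|^2 + \delta.
\]
Since $\|y\|\le \sqrt{d}/2$, the quadratic term is absorbed as $D\|y\|^2 \le \tfrac{\sqrt{d}}{2}D\|y\|$, and the arithmetic check $\tfrac{3}{2}D + \delta \le 2(D+\delta)$ gives $\tilde u(\psi) - \tilde u(\varphi) \le 2\sqrt{d}(D+\delta)\|y\| + \delta \le 2\sqrt{d}(D+\delta)\|\psi-\varphi\| + \delta$. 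Swapping the roles of $\varphi$ and $\psi$ yields the reverse inequality, producing the two-sided bound.

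The only real obstacle is that the quadratic error $D\|y\|^2$ in the hypothesis precludes a naive chaining argument: subdividing the path from $\varphi$ to $\psi$ into $N$ small steps would accumulate an unacceptable $N\delta$ in the additive error. Periodicity is exactly what rescues us, since we never need to move more than half a lattice spacing in any coordinate, so the quadratic term stays of the same order as the linear one and the constant $\delta$ never compounds.
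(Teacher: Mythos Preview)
Your proof is correct and follows essentially the same approach as the paper: bound each $|l_j|$ by testing against unit coordinate vectors and periodicity, then absorb the quadratic term $D\|y\|^2$ into the linear one using that $\|y\|$ is bounded by $\sqrt{d}$ on a fundamental domain. The only cosmetic differences are that you reduce to $[-1/2,1/2]^d$ (the paper uses $[0,1]^d$), and you are more explicit about the reduction $\|y\|\le\|\psi-\varphi\|$ and the symmetric swap needed for the two-sided bound; these are minor elaborations rather than a different argument.
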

\begin{proof}
Assume that $l = (l_1, \cdots, l_d)$. For each $1 \le i \le d$, we pick $y = -e_i \frac{l_i}{|l_i|}$, where $e_i$ is the coordinate vector in $\varphi_i$. Then 
\[
	0 = u(\varphi + e_i) - u(\varphi) \le -|l_i| +  D + \delta,
\]
so $|l_i| \le D + \delta$. As a result $\|l\| \le \sqrt{d}(D + \delta)$. For any $y \in [0,1]^d$, we have  $\|y\|\le \sqrt{d}$ and 
\[
	u(\varphi + y) - u(\varphi) \le (\sqrt{d}(D + \delta) + D\|y\|)\|y\| + \delta < 2\sqrt{d}(D + \delta) \|y\| + \delta.
\]
\end{proof}

\begin{proof}[Proof of Proposition~\ref{prop:appr-lip-strong}]
Since $u$ is a weak KAM solution, for any $\varphi \in \T^d$, let $\gamma = (\gamma^\st, \gamma^\wk): (-\infty, 0] \to \T^d$ be a $(u, L^s - c \cdot v, \alpha_H(c))$-calibrated curve with $\gamma(0)= \varphi = (\varphi^\st, \varphi^\wk)$. Then for any $T>0$
\[
	u(\varphi) = u(\gamma(-T)) + \int_{-T}^0 (L^s - c\cdot v + \alpha_{H^s}(c))(\gamma, \dot{\gamma}) dt. 
\]
Using \eqref{eq:Lsc-split}, we get 
\begin{multline}\label{eq:u-lower}
u(\varphi) = u(\gamma(-T)) + \int_{-T}^0 (L^\st - \bar{c} \cdot v^\st)(\gamma^\st, \dot{\gamma}^\st) dt + (\alpha_{H^s}(c) - \frac12 c^\wk \cdot \tilde{C}^{-1} c^\wk)T\\
 +  \int_{-T}^0 \frac12 (\dot{\gamma}^\wk - B^T A^{-1} \dot{\gamma}^\st - \tilde{C} c^\wk) \cdot \tilde{C}^{-1} (\dot{\gamma}^\wk - B^T A^{-1} \dot{\gamma}^\st - \tilde{C} c^\wk) + U^\wk(\gamma(t)) dt . 
\end{multline}

We now produce an upper bound using a special test curve. Let $\gamma^\st_0: [-T, 0] \to \T^m$ be such that 
\begin{equation}
\label{eq:gamma-0-st}
	\int_{-T}^0 (L^\st - \bar{c} \cdot v^\st)(\gamma^\st_0, \dot{\gamma}^\st_0) dt  = \min_{\zeta}\int_{-T}^0 (L^\st - \bar{c} \cdot v^\st)(\zeta, \dot{\zeta}) dt
\end{equation}
where the minimum is over all $\zeta(-T) = \gamma^\st(-T)$ and $\zeta(0) = \gamma^\st(0)$. 

We define $\xi = (\xi^\st, \xi^\wk): [-T, 0] \to \T^d$ as follows. 
\begin{enumerate}
\item For $y\in \R^d$, 
\[
	\xi^\st(t) = \gamma^\st_0(t) + \frac{T+t}{T}y.
\]
The curve $\xi^\st$ is a linear drift over $\gamma^\st_0$ with $h = \|y\|$ and $f = \frac{y}{\|y\|}$ (see Lemma~\ref{lem:lin-drift}). 
\item Define
\[
	\xi^\wk(t) = \gamma^\wk(-T) + B^T A^{-1}(\xi^\st(t) - \gamma^\st_0(-T)) + \tilde{C}c^\wk (T+t).  
\]
We note that $\xi^\wk(-T) = \gamma^\wk(-T)$ and 
\[
	\dot{\xi}^\wk_0 - B^T A^{-1}\xi^\st - \tilde{C} c^\wk =0.
\]
\end{enumerate}

Using the fact that $u$ is dominated by $L^s - c\cdot v + \alpha_{H^s}(c)$, we have
\begin{align*}
&u(\varphi^\st + y, \xi^\wk(0)) \le  u(\gamma(-T)) + \int_{-T}^0 (L^s - c\cdot v + \alpha_{H^s}(c))(\xi, \dot{\xi}) dt \\
& =  u(\gamma(-T))  + \int_{-T}^0 (L^\st - \bar{c} \cdot v^\st)(\xi^\st, \dot{\xi}^\st) dt  \\
& + \int_{-T}^0 \frac12 (\dot{\xi}^\wk - B^T A^{-1} \dot{\xi}^\st - \tilde{C} c^\wk) \cdot \tilde{C}^{-1} (\dot{\xi}^\wk - B^T A^{-1} \dot{\xi}^\st - \tilde{C} c^\wk)  dt \\ 
& + (\alpha_{H^s}(c) - \frac12 c^\wk \cdot \tilde{C}^{-1} c^\wk)T + \int_{-T}^0 U^\wk(\xi) dt
\end{align*}
and note that the third line in the above formula vanishes, using the definition of $\xi^\wk$. Combine with \eqref{eq:u-lower}, we get 
\begin{multline*}
	u(\varphi^\st + y, \xi^\wk(0)) - u(\varphi^\st, \varphi^\wk) \\
	\le \int_{-T}^0 (L^\st - \bar{c} \cdot v^\st)(\xi^\st, \dot{\xi}^\st) dt  - \int_{-T}^0 (L^\st - \bar{c} \cdot v^\st)(\gamma^\st, \dot{\gamma}^\st) dt + 2 \|U^\wk\|_{C^0}. 
\end{multline*}
From \eqref{eq:gamma-0-st} we get 
\begin{multline*}
	u(\varphi^\st + y, \xi^\wk(0)) - u(\varphi^\st, \varphi^\wk) \\
	\le \int_{-T}^0 (L^\st - \bar{c} \cdot v^\st)(\xi^\st, \dot{\xi}^\st) dt  - \int_{-T}^0 (L^\st - \bar{c} \cdot v^\st)(\gamma^\st_0, \dot{\gamma}^\st_0) dt + 2 \|U^\wk\|_{C^0}. 
\end{multline*}
Since $\gamma_0^\st$ is an extremal of $L^\st - \bar{c}\cdot v^\st$, the linear drift lemma (Lemma~\ref{lem:lin-drift}) applies. Noting that $\|\partial^2_{v^\st v^\st}L^\st\| \le \|A^{-1}\|$, $\|\partial^2_{\varphi^\st \varphi^\st}L\| \le \|U^\st\|_{C^2} \le R$, and $\partial^2_{\varphi^\st v^\st}L =0$. We obtain from Lemma~\ref{lem:lin-drift} that 
\begin{multline*}
	\int_{-T}^0 (L^\st - \bar{c} \cdot v^\st)(\xi^\st, \dot{\xi}^\st) dt  - \int_{-T}^0 (L^\st - \bar{c} \cdot v^\st)(\gamma^\st_0, \dot{\gamma}^\st_0) dt \\ 
	 \le l \cdot y + (\|A^{-1}\| + \|U^\st\|_{C^2}) \|y\|^2, 
\end{multline*}
where $l = \partial_v L^\st(\gamma^\st_0(0), \dot{\gamma}^\st_0(0))$. Note that $\|A^{-1}\| + \|U^\st\|_{C^2}$ is a constant depending only on $\cB^\st, Q, R$. 

We now invoke Proposition~\ref{prop:appr-lip-weak} to get 
\[
	|u(\varphi^\st + y, \xi^\wk(0)) - u(\varphi^\st + y, \varphi^\wk)| \le \delta |\xi^\wk(0) - \varphi^\wk| + \delta \le 2\delta,
\]
where $\delta = M^*_1 \mu(\cB^\wk)^-(\frac{q}{2}-d+m)$ for some $M^*_1 = M^*_1(\cB^\st, Q, \kappa, q, R)$. Combine all the estimates, we get 
\[
	u(\varphi^\st + y, \varphi^\wk) - u(\varphi^\st, \varphi^\wk) \le l \cdot y + (\|A^{-1}\| + \|U^\st\|_{C^2}) \|y\|^2 + 2\delta + 2\|U^\wk\|. 
\]
We note that in $\Omega^{m,d}_{\kappa,q}$ we have $\|U^\wk\|_{C^2} \le \sum_{i=1}^{d-m}\|U^\wk_i\|_{C^2} \le (d-m)\kappa (\lM(\cB^\wk))^{-q}$. We may choose $M^*_2 = M^*_2(\cB^\st, Q, \kappa, q, R)$, such that
\[
2\delta + 2\|U^\wk\| \le M^*_2 \lM(\cB^\wk))^{-(\frac{q}{2}-d+m)} =: \delta'.
\]

We now apply Lemma~\ref{lem:app-semi-conc} to get $u(\cdot, \varphi^\wk)$ is 
\[
	(2\sqrt{d}(\|A^{-1}\| + \|U^\st\|_{C^2} + \delta'), \delta')
\]
approximately Lipschitz. Define $M' = 2 \sqrt{d} (\|A^{-1}\| + \|U^\st\|_{C^2} + M^*_2)$, and the Proposition follows.
\end{proof}

\subsection{Finer decomposition of the slow Lagrangian}
For the proof of Proposition~\ref{prop:appr-lip-weak}, we need a finer decomposition of the Lagrangian $L^s$ which treat all $\varphi^\wk_i$, $1 \le i \le d-m$ separately. First, we have the following linear algebra identity. (The proof is direct calculation)
\begin{lemma}\label{lem:block-diag}
Let $S =\bmat{ A & B \\ B^T & C}$ be a nonsingular symmetric matrix in block form. Then 
\[
	\bmat{\Id & 0 \\ - B^T A^{-1} & \Id } \bmat{ A & B \\ B^T & C} \bmat{\Id & - A^{-1} B \\  0 & \Id}  = \bmat{A & 0 \\ 0 & \tilde{C}}, 
\]
where $\tilde{C} = C - B^T A^{-1} B$. In particular, $\tilde{C}$ is positive definite if $S$ is. 
\end{lemma}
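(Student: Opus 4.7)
The statement is essentially the Schur complement identity, so my plan is purely computational: set $P = \bmat{\Id & -A^{-1}B \\ 0 & \Id}$, observe that $P^T = \bmat{\Id & 0 \\ -B^T A^{-1} & \Id}$, and verify the asserted formula $P^T S P = \bmat{A & 0 \\ 0 & \tilde{C}}$ by direct block multiplication. Concretely, I would first compute the right product $SP$: the upper-right block becomes $B + A(-A^{-1}B) = 0$, the upper-left block stays $A$, the lower-left block stays $B^T$, and the lower-right block becomes $C - B^T A^{-1} B = \tilde{C}$. Then left-multiplying by $P^T$ leaves the top row unchanged and replaces the lower-left block $B^T$ by $-B^T A^{-1} A + B^T = 0$, while the lower-right block remains $\tilde{C}$. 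This is a one-line verification that I would not spell out in detail.

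For the positive-definiteness claim, the key point is that the identity expresses the right-hand side as a \emph{congruence} transform of $S$, not just a similarity. Since $P$ is upper-triangular with $1$'s on the diagonal, $\det P = 1$, so $P$ is invertible, and congruence by an invertible matrix preserves the signature of a symmetric matrix. Hence if $S$ is positive definite, so is $P^T S P = \mathrm{diag}(A, \tilde{C})$, and a block-diagonal symmetric matrix is positive definite iff each diagonal block is; in particular $\tilde{C} > 0$.

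There is no real obstacle here. The only subtlety worth flagging is that the factorization has to be written with $P^T$ on the left and $P$ on the right in order for the symmetry argument to go through; writing a general similarity $Q^{-1} S Q$ would not preserve positive definiteness. This is built into the statement of the lemma and is exactly what allows $\tilde{C}^{-1}$ to appear as a positive-definite quadratic form in the decomposition \eqref{eq:Ls-split} of $L^s$ that this lemma is used to justify.
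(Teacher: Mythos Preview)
Your proposal is correct and matches the paper's approach: the paper simply states ``the proof is direct calculation'' and leaves it at that, so your explicit block-multiplication verification together with the congruence argument for positive definiteness is exactly what is intended.
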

We write $H^s(\varphi, I) = K(I) - U(\varphi) = K(I) - U^\st(\varphi^\st) - U^\wk(\varphi)$ and $S = \partial^2_{II}K$.  We describe a coordinate change block diagonalizing $\partial^2_{II}K$. Write $S$ in the following block form
\[
	S = 
	\bmat{ X_{d-m} & y_{d-m} \\ y^T_{d-m} & z_{d-m} }, \quad 
	X_{d-m}\in M_{(d-1)\times (d-1)},\ y_{d-m} \in \R^{d-1},\ z_{d-m}\in \R, 
\]
and for each $1 \le i \le d-m-1$, further decompose each $X_{i+1}$ as
\[
	X_{i+1} =
	\bmat{ X_i & y_i \\ y_i^T & z_i}, \quad X_i \in M_{(m+i-1)\times (m+i-1)},\ y_i \in \R^{m+i-1},\ z_i \in \R. 
\]
Note that in this notation, $X_1 =  \partial^2_{I^\st I^\st}K = A$ (see \eqref{eq:block-ABC}). 

Define, for $1 \le i \le d-m$, 
\[
	E_i = 
	\bmat{ \Id_{m+i-1} & - X_i^{-1} y_i & 0 \\
           0 & 1 & 0 \\
           0 & 0 & \Id_{d-m-i}
	}, 
\]
where $\Id_i$ denote the $i\times i$ identity matrix. Then by Lemma~\ref{lem:block-diag}
\begin{multline*}
	E_{d-m}^T S  E_{d-m} =  \\
	\bmat{ \Id_{d-1} & 0 \\ -y_{d-m}^T X_{d-m}^{-1} & 1}
	\bmat{ X_{d-m} & y_{d-m} \\ y^T_{d-m} & z_{d-m} } \bmat{ \Id_{d-1} & - X_{d-m}^{-1}y_{d-m} \\ 0 & 1} =
	\bmat{ X_{d-m} & 0 \\ 0 & \tilde{z}_{d-m} }, 
\end{multline*}
where $\tilde{z}_{d-m} = z_{d-m} - y_{d-m}^T X_{d-m}^{-1} y_{d-m}$. Moreover, for each $1 \le i \le d-m-1$, 
\begin{multline} \label{eq:Xi}
	\bmat{\Id_{m+i-1} & 0 \\ - y_i^T X_i^{-1} & 1} X_{i+1} \bmat{ \Id_{m+i-1} & - X_i^{-1} y_i \\ 0 & 1  } \\
	=	\bmat{\Id_{m+i-1} & 0 \\ - y_i^T X_i^{-1} & 1} \bmat{X_i & y_i \\ y^T_i & z_i} \bmat{ \Id_{m+i-1} & - X_i^{-1} y_i \\ 0 & 1  } 
	= \bmat{ X_i & 0  \\ 0 & \tilde{z}_i}.
\end{multline}

Let 
\begin{equation}
\label{eq:E}
	E= E_{d-m} \cdots E_1 = \left[
	\begin{array}{ccccc}	\Id_m & - X_1^{-1}y_1 & \multirow{2}{*}{$- X_2^{-1}y_2$} & \multirow{3}{*}{$\cdots$} & \multirow{4}{*}{$- X_{d-m}^{-1}y_{d-m}$} \\
	  & 1 & &  & \\
	  & & 1& & \\
	  & & & \ddots & \\
	  & & & & 1		
	\end{array} \right],
\end{equation}
then recursive computation yields
\begin{equation}
\label{eq:S-diag}
		E^T S  E = E_1^T \cdots E_{d-m}^T S E_{d-m} \cdots E_1 \\
	= \bmat{ X_1 &  &  &  \\
	          &  \tilde{z}_1 & & \\
	          & & \ddots &  \\
	          & & & \tilde{z}_{d-m}
	} =:\tilde{S}.
\end{equation}

We summarize the characterization of the Lagrangian in the following lemma. For $v = (v^\st, v_1^\wk, \cdots, v_{d-m}^\wk) \in \R^m\times \R^d$, we define
\begin{equation}
\label{eq:floor}
	\flr{v}_0 = v^\st, \quad \flr{v}_i = (v^\st, v^\wk_1, \cdots, v^\wk_i), \, 1 \le i \le d-m. 
\end{equation}
\begin{lemma}
\label{lem:lag-fine-dec}
For $v, c \in \R^d$ we denote $w = E^T v$ and $\eta = E^{-1} c$, where $E$ is defined in \eqref{eq:E}. Explicitly, we have 
\begin{equation}
\label{eq:v-to-w}
	w = \bmat{w^\st \\ w^\wk_1 \\ \vdots \\ w_{d-m}^\wk} = 
	\bmat{ v^\st \\ v_1^\wk - y_1^T X_1^{-1} \flr{v}_0 \\ \vdots \\ v_{d-m}^\wk  - y_{d-m}^T X_{d-m}^{-1} \flr{v}_{d-m-1}  }, 
\end{equation}
and
\[
	\eta^\st = c^\st + A^{-1} B c^\wk, \quad \eta = (\eta^\st, \eta^\wk), c = (c^\st, c^\wk), 
\]
 where $A, B$ are defined in \eqref{eq:block-ABC}.
Then we have 
\begin{multline}
\label{eq:Ls-fine-decomp}
L^s(\varphi, v) - c \cdot v  \\
= L^\st(\varphi^\st, v^\st) - \eta^\st \cdot v^\st + \sum_{i=1}^{d-m} \left(  \frac12 \tilde{z}_i^{-1}(w_i^\wk - \tilde{z}_i\eta_i^\wk)^2  - \frac12 z_i (\eta_i^\wk)^2 + U_i^\wk(\varphi)  \right). 
\end{multline}
\end{lemma}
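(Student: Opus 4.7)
The plan is a direct computation exploiting the block-diagonalization $E^T S E = \tilde S$ from \eqref{eq:S-diag}, where $S = \partial^2_{II} K$. Since $L_0^s(v) = \tfrac12 v^T S^{-1} v$ and $S^{-1} = E \tilde S^{-1} E^T$, setting $w = E^T v$ gives
\[
L_0^s(v) = \tfrac12 w^T \tilde S^{-1} w = \tfrac12 (w^\st)^T A^{-1} w^\st + \sum_{i=1}^{d-m} \tfrac12 \tilde z_i^{-1} (w_i^\wk)^2,
\]
since $\tilde S$ is block-diagonal with blocks $X_1 = A$ and scalars $\tilde z_1, \ldots, \tilde z_{d-m}$. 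The formula \eqref{eq:v-to-w} for $w$ can be read off row by row from $E^T$ using \eqref{eq:E}: the first $m$ rows of $E^T$ are $[\Id_m,0,\ldots,0]$ (so $w^\st = v^\st$), while the $(m+i)$-th row has $-y_i^T X_i^{-1}$ in positions $1$ through $m+i-1$, a $1$ in position $m+i$, and zeros afterwards, giving $w_i^\wk = v_i^\wk - y_i^T X_i^{-1} \flr{v}_{i-1}$.

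Next, the linear term transforms cleanly: writing $c = E\eta$, one has $c \cdot v = (E\eta)^T v = \eta^T (E^T v) = \eta \cdot w$. The identity $\eta^\st = c^\st + A^{-1} B c^\wk = \bar c$ requires unpacking $E^{-1} = E_1^{-1} \cdots E_{d-m}^{-1}$, where each $E_i^{-1}$ simply flips the sign of the off-diagonal block in $E_i$. I would verify this by induction on $d-m$: the base case $d-m=1$ is immediate from $E^{-1} = \bmat{\Id_m & A^{-1}B \\ 0 & 1}$, while for the inductive step one applies the block-inverse formula for $X_{i+1}^{-1} = \bmat{X_i & y_i \\ y_i^T & z_i}^{-1}$ (as in Lemma~\ref{lem:block-diag}) and observes that the contributions from $(X_i^{-1}y_i)_{[1:m]}$ entering the first $m$ rows of $E^{-1}$ telescope, so that the $j$-th column (in the weak block) of the resulting strong row equals $A^{-1}$ times the first $m$ entries of $y_j$, i.e.\ the $j$-th column of $A^{-1}B$ by \eqref{eq:ABC}.

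Combining these, together with $L^s(\varphi,v) = L_0^s(v) + U^\st(\varphi^\st) + \sum_{i=1}^{d-m} U_i^\wk(\varphi)$, gives
\[
L^s - c\cdot v = \bigl(L^\st(\varphi^\st, v^\st) - \eta^\st \cdot v^\st\bigr) + \sum_{i=1}^{d-m}\Bigl(\tfrac12 \tilde z_i^{-1} (w_i^\wk)^2 - \eta_i^\wk w_i^\wk + U_i^\wk(\varphi)\Bigr),
\]
and completing the square in each weak slot via
\[
\tfrac12 \tilde z_i^{-1} (w_i^\wk)^2 - \eta_i^\wk w_i^\wk = \tfrac12 \tilde z_i^{-1}\bigl(w_i^\wk - \tilde z_i \eta_i^\wk\bigr)^2 - \tfrac12 \tilde z_i (\eta_i^\wk)^2
\]
recovers \eqref{eq:Ls-fine-decomp}. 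The main obstacle is the algebraic bookkeeping used to identify the first-$m$-row block of $E^{-1}$ with the map $c \mapsto \bar c$; once the block-diagonalization \eqref{eq:S-diag} and this identification are in hand, everything else is routine manipulation of a quadratic form and completion of squares, with no new analytic input beyond Lemma~\ref{lem:block-diag} and Lemma~\ref{lem:ls-split}.
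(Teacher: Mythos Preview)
Your proof is correct and follows the same structure as the paper's: diagonalize the kinetic part via $w = E^T v$, rewrite $c \cdot v = \eta \cdot w$, then complete the square in each weak slot. The only difference is in verifying $\eta^\st = c^\st + A^{-1}Bc^\wk$: instead of your inductive unpacking of $E^{-1}$ with a telescoping argument, the paper observes that $E^T S E = \tilde S$ gives $\tilde S \eta = \tilde S E^{-1} c = E^T S c$, and since the first $m$ rows of $E^T$ are $[\Id_m\;0]$ while those of $\tilde S$ are $[A\;0]$, reading off the top block yields $A\eta^\st = Ac^\st + Bc^\wk$ in one line---this bypasses the induction entirely and is worth knowing.
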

\begin{remark}
This is a finer version of Lemma~\ref{lem:ls-split}. In particular, the strong component $L^s - \eta^\st \cdot v^\st$ is identical to the $L^s - \bar{c} \cdot v^\st$ defined in Lemma~\ref{lem:ls-split}. 
\end{remark}

\begin{proof}
Formula \eqref{eq:v-to-w} can be read directly from the definition \eqref{eq:E} and $w = E^T v$. To show $\eta^\st = c^\st + A^{-1}B c^\wk$, we compute
\[
	\bmat{A & 0 \\ 0 & \tilde{C}} \bmat{\eta^\st \\ \eta^\wk} = \tilde{S} \eta = \tilde{S} E^{-1} c = E^T S c = \bmat{\Id_m & 0 \\ * & *} \bmat{A & B \\ B^T & C} \bmat{c^\st \\ c^\wk}.
\]
The first block of the above equation yields $A \eta^\st = A c^\st + B c^\wk$, hence $\eta^\st = c^\st + A^{-1}B c^\wk$. 

We now prove \eqref{eq:Ls-fine-decomp}. We have 
\begin{align*}
& L^s(\varphi, v) - c\cdot v =  \frac12 v^T S^{-1} v - c^T v + U^\st + U^\wk \\
& = \frac12 (E^T v) \tilde{S}^{-1} (E^T v) - (E^{-1} c)^T (E^T v) + U^\st + U^\wk \\
& = \left( \frac12 w^\st \cdot A^{-1} w^\st - \eta^\st \cdot w^\st + U^\st  \right) + \sum_{i=1}^{d-m} \left(  \frac12 \tilde{z}_i^{-1}(w_i^\wk)^2 - \eta_i^\wk w_i^\wk + U_i^\wk \right). 
\end{align*}
In the above formula, the first group is equal to $L^\st - \eta^\st \cdot v^\st$, noting $w^\st = v^\st$. Moreover
\[
	\frac12 \tilde{z}_i^{-1}(w_i^\wk)^2 - \eta_i^\wk w_i^\wk = 
	\  \frac12 \tilde{z}_i^{-1}(w_i^\wk - \tilde{z}_i \eta^\wk_i)^2 - \frac12 \tilde{z}_i (\eta_i^\wk)^2, \quad 1  \le i \le d-m, 
\]
and \eqref{eq:Ls-fine-decomp} follows. 
\end{proof}

We derive some useful estimates.
\begin{lemma}
\label{lem:z-j-bounds}
There exists $M^* = M^*(\cB^\st, Q, \kappa, q) >1$ such that, for
\[
	L^s = L_{\cH^s}(\cB^\wk, p, U^\st, \cU^\st), \quad (\cB^\wk, p, U^\st, \cU^\st) \in \Omega^{m,d}_{\kappa, q},
\]
 the following hold. 
\begin{enumerate}
\item For each $1 \le i \le d-m$, we have $\sum_{j=i}^{d-m}\|U_j^\wk\|_{C^2} \le M^* |k_i^\wk|^{-q}$.
\item For each $1 \le i \le d-m$, $\tilde{z}_i^{-1} \le M^* |k_i^\wk|^{2i}$. 
\end{enumerate}
\end{lemma}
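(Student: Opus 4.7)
The plan is to handle items 1 and 2 separately; both reduce to elementary manipulations using the two defining properties of $\Omega^{m,d}_{\kappa,q}(\cB^\st)$ from section~\ref{sec:intro-dom}.

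For item 1, the key input is the near-monotonicity property (1) of $\Omega^{m,d}_{\kappa,q}$: for $1 \le i < l \le d-m$, $|k_i^\wk| \le \kappa(1+|k_l^\wk|)$. Rearranging and using $|k_l^\wk| \ge 1$, this yields $|k_l^\wk|^{-q} \le C_1(\kappa, q)\,|k_i^\wk|^{-q}$ uniformly for all $l \ge i$. Summing the defining bound $\|U_l^\wk\|_{C^2} \le \kappa|k_l^\wk|^{-q}$ from property (2) over $l = i, \ldots, d-m$ and absorbing constants into $M^*$ gives item 1.

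For item 2, the starting point is that by \eqref{eq:Xi} (Lemma~\ref{lem:block-diag}) $X_{i+1}$ is congruent to $\diag(X_i, \tilde z_i)$ via an elementary matrix with unit diagonal, and so
$$
  \tilde z_i^{-1} = (X_{i+1}^{-1})_{m+i, m+i} = e_{m+i}^T X_{i+1}^{-1} e_{m+i}.
$$
By \eqref{eq:ABC}, $X_{i+1} = \bar P_{i+1}^T\, Q_0(p)\, \bar P_{i+1}$, where $\bar P_{i+1} = [\bar k_1, \ldots, \bar k_{m+i}]$ is the $n\times(m+i)$ integer matrix of projections of the basis vectors to the first $n$ coordinates. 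Because $\Lambda$ is a resonance lattice, any nontrivial $\R$-linear relation among $\bar k_1, \ldots, \bar k_{m+i}$ would produce a rational combination of the $k_j$'s of the form $(0,\ldots,0,k^0)$, contradicting the definition; hence $\bar P_{i+1}$ has $\R$-linearly independent columns and $\bar P_{i+1}^T \bar P_{i+1}$ is invertible. Combining $Q_0(p) \ge D^{-1}\Id$ with monotonicity of matrix inversion on the cone of positive definite matrices,
$$
  \tilde z_i^{-1} \le D\,e_{m+i}^T(\bar P_{i+1}^T \bar P_{i+1})^{-1}e_{m+i} \le D\,\|(\bar P_{i+1}^T \bar P_{i+1})^{-1}\|.
$$
Lemma~\ref{lem:int-norm} applied to the integer matrix $\bar P_{i+1}$ bounds the right hand side by $c_n\prod_{j=1}^{m+i}|k_j|^2$. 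For $j \le m$, $|k_j|$ is controlled by a constant depending only on $\cB^\st$; for $m < j \le m+i$, the near-monotonicity condition again gives $|k_{j-m}^\wk| \le 2\kappa|k_i^\wk|$. Multiplying through yields $\tilde z_i^{-1} \le M^*|k_i^\wk|^{2i}$ with a constant of the required form.

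The only subtle point is the $\R$-linear independence of the projections $\bar k_j$, which follows immediately from the resonance lattice condition; given that, the rest is routine bookkeeping inside $\Omega^{m,d}_{\kappa,q}(\cB^\st)$ together with a single application of Lemma~\ref{lem:int-norm}.
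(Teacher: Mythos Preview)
Your proof is correct and follows essentially the same route as the paper: item 1 via the near-monotonicity of the $|k_j^\wk|$ together with the defining bound on $\|U_j^\wk\|_{C^2}$, and item 2 via the Schur complement identity $\tilde z_i^{-1} = e_{m+i}^T X_{i+1}^{-1} e_{m+i}$ combined with $X_{i+1} = \bar P_{i+1}^T Q_0 \bar P_{i+1}$, $Q_0 \ge D^{-1}\Id$, and Lemma~\ref{lem:int-norm}. Your explicit justification that the $\bar k_j$ are $\R$-linearly independent (from the resonance lattice condition excluding vectors of the form $(0,\ldots,0,k^0)$) is a point the paper uses without comment when invoking Lemma~\ref{lem:int-norm}, so you are if anything slightly more careful.
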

\begin{proof}
For item 1, note that for each $j \ge i$, $|k_i^\wk| \le \kappa |k_j^\wk|$, hence
\[
	\|U_j^\wk\|_{C^2} \le \kappa |k_j^\wk|^{-q} \le \kappa^{1+q} |k_i^\wk|^{-q}. 
\]
Item 1 holds for any $M^* \ge (d-m) \kappa^{1+q}$. 

For item 2,   inverting \eqref{eq:Xi} we get  
\[
	 X_{i+1}^{-1} 
	= \bmat{ \Id_{m+i-1} & - X_i^{-1} y_i \\ 0 & 1  } \bmat{X_i^{-1} & 0 \\ 0 & \tilde{z}_i^{-1}}\bmat{\Id_{m+i} & 0 \\ - y_{i+1}^T X_{i+1}^{-1} & 1}.
\]
Denote $f = (0, \cdots, 0, 1)\in \T^{m+i}$, then 
\[
	f^T X_{i+1} f = f^T \bmat{ \Id_{m+i-1} & - X_i^{-1} y_i \\ 0 & 1  } \bmat{X_i^{-1} & 0 \\ 0 & \tilde{z}_i^{-1}}\bmat{\Id_{m+i} & 0 \\ - y_{i+1}^T X_{i+1}^{-1} & 1} f = \tilde{z}_i^{-1}. 
\]
Moreover, using the definition (see \eqref{eq:Kn})
\[
S = \partial^2_{II}K = \bmat{k_1^\st & \cdots & k_m^\st & k_1^\wk & \cdots & k_{d-m}^\wk}^T Q \bmat{k_1^\st & \cdots & k_m^\st & k_1^\wk & \cdots & k_{d-m}^\wk}, 
\]
we have
\begin{multline*}
	X_{i+1} = \bmat{k_1^\st & \cdots & k_m^\st & k_1^\wk & \cdots & k_i^\wk}^T Q \bmat{k_1^\st & \cdots & k_m^\st & k_1^\wk & \cdots & k_i^\wk} \\
	= \bmat{\bark_1^\st & \cdots & \bark_m^\st & \bark_1^\wk & \cdots & \bark_i^\wk}^T Q_0 \bmat{\bark_1^\st & \cdots & \bark_m^\st & \bark_1^\wk & \cdots & \bark_i^\wk} =: \bar{P}^T Q_0 \bar{P} ,
\end{multline*}
where $\bark$ is the first $n$ components of $k$. We have assumed  $Q_0 \ge D^{-1} \Id$ for $D>1$. By Lemma~\ref{lem:int-norm}, there exists a constant $c_n>1$ depending only on $n$ such that 
\begin{multline*}
	\|X_{i+1}^{-1}\| = (\min_{\|v\|=1} v^T X_{i+1} v)^{-1} = (\min_{\|v\|=1}v^T\bar{P}Q_0 \bar{P})^{-1}  
	\le D \|\bar{P}^{-1}\|^2 \\
	\le D c_n |k_1^\st|^2 \cdots |k_m^\st|^2 |k_1^\wk|^2 \cdots |k_i^\wk|^2 \le D c_n \bar{M}^m \kappa^{i-1} |k_i^\wk|^{2i},
\end{multline*}
where $\bar{M} = |k_1^\st| + \cdots + |k_m^\st|$ depend only on $\cB^\st$. 
\end{proof}

\subsection{Approximate Lipshitz property in the weak component}

In this section we prove Proposition~\ref{prop:appr-lip-weak}. We fix $(\cB^\wk, p, U^\st, \cU^\st) \in \Omega^{m,d}_{\kappa, q} \cap \{ \|U^\st\|_{C^2} \le R\}$, and write $L^s = L_{\cH^s}(\cB^\wk, p, U^\st, \cU^\st)$. 

For $c\in \R^d$, we define
\begin{multline}
\label{eq:ls-c-i}
L^s_{c,i}(\varphi^\st, \varphi_1^\wk, \cdots, \varphi_i^\wk, v^\st, v_1^\wk, \cdots, v_i^\wk)  = L_{c,i}^s(\flr{\varphi}_i, \flr{v}_i) \\
= L^\st(\varphi^\st, v^\st) - \eta^\st \cdot v^\st + \sum_{j=1}^i \left(  \frac12 \tilde{z}_j^{-1}(w_j^\wk - \tilde{z}_j \eta_j^\wk)^2  - \frac12 z_j (\eta_j^\wk)^2 + U_j^\wk(\varphi)  \right),
\end{multline}
then
\begin{multline}
\label{eq:lsc-i-split}
L^s(\varphi, v) - c \cdot v  \\
= L_{c,i}^s(\flr{\varphi}_i, \flr{v}_i) + \sum_{j={i+1}}^{d-m} \left(  \frac12 \tilde{z}_j^{-1}(w_j^\wk - \tilde{z}_j\eta_j^\wk)^2  - \frac12 z_j (\eta_j^\wk)^2 + U_j^\wk(\varphi)  \right).
\end{multline}

Our proof of Proposition~\ref{prop:appr-lip-weak} follows an inductive scheme. Following our notational convention,  denote $e_i^\wk = e_{i+m}$, which is the coordinate vector of $\varphi_i^\wk$. 
\begin{lemma}\label{lem:last-var}
Let $u: \T^d \to \R$  be a weak KAM solution of $L^s - c\cdot v$. Then for 
\[
	\delta_{d-m}:= 2 (\tilde{z}_{d-m}^{-1}\|U_{d-m}^\wk\|_{C^2})^{\frac12},
\]
we have $u$ is $\delta_{d-m}-$semi-concave and $\delta_{d-m}-$Lipschitz in $\varphi_{d-m}^\wk$.
\end{lemma}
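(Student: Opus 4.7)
The plan is to exploit the fine decomposition \eqref{eq:Ls-fine-decomp} of $L^s - c\cdot v$, in which the last weak variable $\varphi_{d-m}^\wk$ is isolated: the potential $U^\st$ and the potentials $U_j^\wk$ for $j<d-m$ are independent of $\varphi_{d-m}^\wk$, so only $U_{d-m}^\wk$ carries dependence on this variable, while the dependence of the kinetic part on $v_{d-m}^\wk$ is concentrated in the single term $\tfrac12\tilde z_{d-m}^{-1}(w_{d-m}^\wk - \tilde z_{d-m}\eta_{d-m}^\wk)^2$. The goal is then a one-variable linear-drift computation in the direction $f = e_{d-m}^\wk$, followed by an optimization of the perturbation time $T$.

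First, I would record the three relevant second derivatives of $L^s$ in the direction $f = e_{d-m}^\wk$. Namely: (i) $\partial^2_{\varphi_{d-m}^\wk v_{d-m}^\wk} L^s = 0$ since $L^s$ is the sum of a kinetic term depending only on $v$ and potentials depending only on $\varphi$; (ii) $|\partial^2_{\varphi_{d-m}^\wk\varphi_{d-m}^\wk}(L^s - c\cdot v)| \le \|U_{d-m}^\wk\|_{C^2}$ by the isolation just described; (iii) $\partial^2_{v_{d-m}^\wk v_{d-m}^\wk}L^s = (S^{-1})_{dd} = \tilde z_{d-m}^{-1}$. The identity (iii) is the small computational lemma needed: from $S^{-1} = E \tilde S^{-1} E^T$ with $\tilde S$ as in \eqref{eq:S-diag} and $E$ as in \eqref{eq:E}, inspecting the block structure of $E$ shows that its last row equals $e_d^T$, so $E^T e_d = e_d$ and $(S^{-1})_{dd} = e_d^T \tilde S^{-1} e_d = \tilde z_{d-m}^{-1}$.

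Next, fix $\varphi \in \T^d$ and $T>0$ and let $\gamma:[-T,0] \to \T^d$ be a $(u,L^s - c\cdot v,\alpha_{H^s}(c))$-calibrated curve with $\gamma(0) = \varphi$. For $y \in \R$ define the linearly drifted curve $\xi(t) = \gamma(t) + \tfrac{t+T}{T} y\, e_{d-m}^\wk$, so $\xi(-T) = \gamma(-T)$ and $\xi(0) = \varphi + y\, e_{d-m}^\wk$. Combining domination at the endpoints with calibration of $\gamma$ gives
\[
u(\varphi + y\, e_{d-m}^\wk) - u(\varphi) \le \int_{-T}^0 \bigl[(L^s - c\cdot v)(\xi,\dot\xi) - (L^s - c\cdot v)(\gamma,\dot\gamma)\bigr]\,dt.
\]
Applying Lemma~\ref{lem:lin-drift} with $f = e_{d-m}^\wk$ and $h = y$, the three second-derivative estimates above yield
\[
u(\varphi + y\, e_{d-m}^\wk) - u(\varphi) \le l(\varphi)\, y + \Bigl(\tfrac{\tilde z_{d-m}^{-1}}{T} + T\,\|U_{d-m}^\wk\|_{C^2}\Bigr)\, y^2,
\]
with $l(\varphi) := \partial_{v_{d-m}^\wk}(L^s - c\cdot v)(\varphi,\dot\gamma(0))$. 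Optimizing at $T_* = \bigl(\tilde z_{d-m}^{-1}/\|U_{d-m}^\wk\|_{C^2}\bigr)^{1/2}$ makes the coefficient of $y^2$ equal to $2\bigl(\tilde z_{d-m}^{-1}\|U_{d-m}^\wk\|_{C^2}\bigr)^{1/2} = \delta_{d-m}$, which is the semi-concavity statement in its super-differential form.

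Finally, the Lipschitz estimate is an application of Lemma~\ref{lem:app-semi-conc} to the one-variable section $y \mapsto u(\varphi + y\, e_{d-m}^\wk)$: periodicity with period $1$ together with the super-differential bound gives $|l(\varphi)| \le \delta_{d-m}$, and hence $|u(\varphi + y\, e_{d-m}^\wk) - u(\varphi)| \lesssim \delta_{d-m}|y|$ for $|y| \le 1$, as required. The main obstacle is really the bookkeeping step (iii) above: one needs to verify, via the explicit block structure of the matrix $E$ from \eqref{eq:E}, that the diagonalization does not couple $v_{d-m}^\wk$ back into the other coordinates, so that perturbing the velocity only in the $e_{d-m}^\wk$ direction modifies only the single summand indexed by $j = d-m$ in \eqref{eq:Ls-fine-decomp}; everything else is a direct application of the linear drift lemma and an $AM$--$GM$ optimization in $T$.
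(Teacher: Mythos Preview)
Your proof is correct and follows essentially the same route as the paper: compute the three second derivatives of $L^s$ in the $e_{d-m}^\wk$ direction, apply the linear-drift lemma (Lemma~\ref{lem:lin-drift}) to a calibrated curve, optimize in $T$, and deduce the Lipschitz bound from periodicity. Your justification of $(S^{-1})_{dd} = \tilde z_{d-m}^{-1}$ via $E^T e_d = e_d$ is slightly more explicit than the paper's, which simply cites \eqref{eq:v-to-w} and \eqref{eq:Ls-fine-decomp}; and where the paper bounds $|l|$ directly by plugging $h = l/|l|$ into the semi-concavity inequality, you invoke the one-variable case of Lemma~\ref{lem:app-semi-conc}, but these are the same argument.
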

\begin{proof}
First we have
\[
	\partial^2_{\varphi_{d-m}^\wk \varphi_{d-m}^\wk} L^s = \partial^2_{\varphi_{d-m}^\wk \varphi_{d-m}^\wk}U_{d-m}^\wk, \quad \partial^2_{\varphi_{d-m}^\wk v_{d-m}^\wk}L^s =0, \quad \partial^2_{v_{d-m}^\wk v_{d-m}^\wk }L^s = \tilde{z}_{d-m}^{-1}. 
\]
The first two equality follows directly from the definition, while the last one uses \eqref{eq:v-to-w} and \eqref{eq:Ls-fine-decomp}.

For any $\varphi \in \T^d$, let $\gamma: (-\infty, 0] \to \T^d$ be a $(u, L^s, c)$-calibrated curve with $\gamma(0)= \varphi$. Then for any $T>0$
\[
	u(\varphi) = u(\gamma(-T)) + \int_{-T}^0 (L^s - c\cdot v + \alpha_{H^s}(c))(\gamma, \dot{\gamma}) dt. 
\]
Using the definition of the weak KAM solution, 
\[
	u(\varphi + h e_i^\wk) \le u(\gamma(-T)) + \int_{-T}^0 (L^s - c\cdot v + \alpha_{H^s}(c))(\gamma + \frac{th}{T}e_{d-m}^\wk, \dot{\gamma} + \frac{h}{T} e_{d-m}^\wk) dt. 
\]

Subtract the two estimates, and apply Lemma~\ref{lem:lin-drift} to $L^s-c\cdot v + \alpha_{H^s}(c)$ and $\gamma$, we get 
\begin{align*}
u(\varphi+ h e_i^\wk) - u(\varphi) &
 \le 
(\partial_{v_{d-m}^\wk}L^s(\gamma(0) , \dot{\gamma}(0))- c_{d-m}) h  \\
&+ \left(\|\partial^2_{v_{d-m} v_{d-m}}L^s\| \frac{1}{T} + \|\partial^2_{\varphi_{d-m}^\wk v_{d-m}^\wk}L^s\|  + T \|\partial^2_{\varphi_{d-m}^\wk \varphi_{d-m}^\wk} L^s\|\right) h^2 \\
&\le (\partial_{v_{d-m}^\wk}L^s(\gamma(0)) , \dot{\gamma}(0)- c_{d-m}^\wk) h + \left( \tilde{z}_{d-m}^{-1}/T + \|U_{d-m}^\wk\|_{C^2}T \right) h^2,
\end{align*}
Take $T = (\tilde{z}_{d-m} \|U_{d-m}^\wk\|_{C^2})^{-\frac12}$, and write $l = \partial_{v_{d-m}^\wk}L^s(\gamma(0)) , \dot{\gamma}(0)- c_{d-m}^\wk$, we get 
\[
	u(\varphi + he_i^\wk) - u(\varphi) \le lh + \frac12 \delta_{d-m} h^2. 
\]
The semi-concavity estimate follows. Using the fact that $u$ is $\Z^d$ periodic, we take $h = l/|l|$ to get $|l| \le \frac12 \delta_{d-m}$. Therefore for $|h| \le 1$,
\[
	|u(\varphi +  h e_i^\wk) - u(\varphi)| \le (\frac12 \delta_{d-m} + \frac12\delta_{d-m}h) h \le \delta_{d-m} h.
\] 
This is the Lipschitz estimate. 
\end{proof}

We now state the inductive step. 
\begin{proposition}\label{prop:var-induction}
Let $u: \T^d \to \R$  be a weak KAM solution of $L^s - c\cdot v$. Assume that for a given $1 \le i \le d-m-1$,  $u$ is $(\delta_j, \delta_j)$ approximately Lipschitz in $\varphi^\wk_j$ for all $i+1 \le j \le d-m$. Then for 
\[
	\sigma_i =  \left( \tilde{z}_i^{-1} \sum_{j=i}^{d-m}\|U_j^\wk\|_{C^2}\right)^{\frac12},  \quad 
	\delta_i = \sqrt{d}(6 \sigma_i + 4 \sum_{j=i+1}^{d-m} \delta_j),
\]
we have $u$ is $(\delta_i, \delta_i)$ approximately Lipschitz in $\varphi_i^\wk$. 
\end{proposition}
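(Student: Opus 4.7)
The plan is to mimic the proof of the base case Lemma~\ref{lem:last-var} ($i=d-m$), but to exploit the block--diagonalization \eqref{eq:S-diag} of the kinetic quadratic form in the $w$--coordinates together with the inductive hypothesis on the later weak coordinates.

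Fix $\varphi \in \T^d$ and pick a $(u, L^s - c\cdot v, \alpha_{H^s}(c))$-calibrated curve $\gamma : (-\infty, 0]\to \T^d$ with $\gamma(0) = \varphi$. The naive coordinate perturbation $\xi = \gamma + \frac{t+T}{T}h e_i^\wk$ is not enough here, because $e_i^{\wk T}(\partial^2_{vv}L^s)e_i^\wk = (S^{-1})_{i+m,i+m}$ couples all the $\tilde{z}_j^{-1}$ for $j\ge i$, which by Lemma~\ref{lem:z-j-bounds} can be much larger than the wanted $\tilde{z}_i^{-1}$. Instead I would perturb along the direction $f:=(E^T)^{-1} e_{i+m} \in \R^d$, where $E$ is the upper--triangular matrix of \eqref{eq:E}. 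Since $E$ has unit diagonal and its nonzero off-diagonal entries lie only in columns $m+1,\ldots,d$, the vector $f$ is supported in the block $\{e_j^\wk : j \ge i\}$ with $f_{i+m} = 1$; moreover $f^T S^{-1} f = e_{i+m}^T \tilde{S}^{-1} e_{i+m} = \tilde{z}_i^{-1}$ by \eqref{eq:S-diag}, and the potential Hessian in direction $f$ satisfies
\[
f^T(\partial^2_{\varphi\varphi}L^s) f \le \|f\|^2 \sum_{j \ge i}\|U_j^\wk\|_{C^2},
\]
because $U^\st$ and the $U_j^\wk$ with $j<i$ do not depend on $\varphi_{j}^\wk$ for $j\ge i$.

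Applying domination at the endpoint $\varphi + hf$ to the test curve $\xi(t) = \gamma(t) + \frac{t+T}{T}hf$, subtracting the calibration identity for $\gamma$, and invoking Lemma~\ref{lem:lin-drift} (using $\partial^2_{\varphi v}L^s = 0$ for this mechanical Lagrangian), I would obtain
\[
u(\varphi + hf) - u(\varphi) \le l\,h + \Bigl(\tfrac{\tilde{z}_i^{-1}}{T} + T\|f\|^2 \sum_{j \ge i}\|U_j^\wk\|_{C^2}\Bigr) h^2
\]
for an appropriate linear functional $l$ of $h$. Optimizing $T$ gives the semi--concavity--type estimate $u(\varphi + hf) - u(\varphi) \le l h + 2\|f\|\sigma_i h^2$. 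To pass from the perturbation $\varphi + hf$ to the coordinate perturbation $\varphi + h e_i^\wk$, observe that $h(f - e_i^\wk)$ is supported in $\{e_j^\wk : j > i\}$, so iteratively applying the inductive hypothesis (approximate Lipschitz in each $\varphi_j^\wk$ with $j > i$) yields
\[
|u(\varphi + hf) - u(\varphi + he_i^\wk)| \le \sum_{j > i}\delta_j(|h f_{j+m}| + 1).
\]
Combining these two estimates produces a one-dimensional semi-concavity bound for $u$ in the variable $\varphi_i^\wk$; the $\Z$--periodicity of $u$ in that variable then forces $|l|$ to be bounded by an expression linear in $\sigma_i$ and the $\delta_j$, and reinserting this bound yields the approximate Lipschitz property in $\varphi_i^\wk$, in the spirit of Lemma~\ref{lem:app-semi-conc}.

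The main technical obstacle, and where essentially all of the bookkeeping will lie, is controlling $\|f\|$ and each entry $|f_{j+m}|$ by quantities absorbable into $\sigma_i$ and the $\delta_j$, so as to recover the explicit constants $6$ and $4$ in $\delta_i = \sqrt{d}(6\sigma_i + 4\sum_{j > i}\delta_j)$. I expect these to follow from the triangular structure of $E$ together with the estimates of Lemma~\ref{lem:z-j-bounds} and the hypotheses defining $\Omega^{m,d}_{\kappa, q}(\cB^\st)$, by a careful induction on $j$ running parallel to the main induction of the proposition.
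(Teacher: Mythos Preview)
Your strategy of perturbing the calibrated curve in the skew direction $f=(E^T)^{-1}e_{i+m}$ is appealing because it makes the kinetic Hessian $f^TS^{-1}f=\tilde z_i^{-1}$ exactly, and it does cancel the unwanted $w_j^\wk$ velocity terms for $j\neq i$. The problem is the position--Hessian term. Your drift bound produces $\bigl(\tilde z_i^{-1}/T + T\,f^T(\partial^2_{\varphi\varphi}U)f\bigr)h^2$, and you estimate the second summand by $\|f\|^2\sum_{j\ge i}\|U_j^\wk\|_{C^2}$. But $\|f\|$ is \emph{not} uniformly bounded on $\Omega^{m,d}_{\kappa,q}(\cB^\st)$: the entries $f_{j+m}$ for $j>i$ are built from the blocks $y_j^TX_j^{-1}$ of $E$, which scale like $|k_j^\wk|/|k_i^\wk|$ and are unconstrained from above (the ordering condition only bounds $|k_i^\wk|/|k_j^\wk|$). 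A concrete example: with $m=1$, $Q=I$, $k_1^\st=e_1$, $k_1^\wk=Ne_2$, $k_2^\wk=N'(e_2+e_3)$, one computes $f=(0,1,N'/N)$. Consequently the optimized $h^2$--coefficient is $2\|f\|\sigma_i$, not $O(\sigma_i)$, and Lemma~\ref{lem:app-semi-conc} then yields an approximate Lipschitz constant $\sim\sqrt d\,\|f\|\sigma_i$, which does not recover the claimed $\delta_i=\sqrt d(6\sigma_i+4\sum_{j>i}\delta_j)$. Lemma~\ref{lem:z-j-bounds} bounds $\tilde z_j^{-1}$ and $\sum\|U_j^\wk\|$ but says nothing about the size of $(E^T)^{-1}$, so the ``careful induction on $j$'' you anticipate will not close without new ingredients.

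The paper sidesteps this entirely. Instead of perturbing in the skew direction $f$, it first replaces the $\flr{\gamma}_i$--component by an auxiliary minimizer $\zeta$ of the \emph{truncated} Lagrangian $L^s_{c,i}$ (defined in \eqref{eq:ls-c-i}) with the same endpoints, and applies the linear drift to $\zeta$ along the coordinate vector $e_i^\wk$ inside $\T^{m+i}$. On this truncated system $\partial^2_{v_i^\wk v_i^\wk}L^s_{c,i}=\tilde z_i^{-1}$ and $\partial^2_{\varphi_i^\wk\varphi_i^\wk}L^s_{c,i}\le\|U_i^\wk\|_{C^2}$, so no $\|f\|$ factor ever appears. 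The higher components $\xi_j^\wk$, $j>i$, are then defined by the recursion $\xi_j^\wk=\gamma_j^\wk+y_j^TX_j^{-1}(\flr{\xi}_{j-1}-\flr{\gamma}_{j-1})$, which is designed precisely so that the $w_j^\wk$ quadratic terms in \eqref{eq:lsc-i-split} coincide for $\xi$ and $\gamma$ and cancel in the action comparison; only a harmless $2T\sum_{j>i}\|U_j^\wk\|_{C^0}$ survives. The endpoint mismatch $\chi$ sits in the coordinates $j>i$ and is handled, as you do, by the inductive hypothesis together with the torus periodicity. In short: you try to kill the higher $w_j$ terms by tilting the direction of the drift, whereas the paper kills them by tilting the \emph{curve} in the higher components while keeping the drift along $e_i^\wk$ in a truncated system; the latter avoids the uncontrolled $\|f\|$.
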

\begin{proof}
The proof is very similar to the proof of Proposition~\ref{prop:appr-lip-strong}, but uses the finer decomposition in this section. 

Since $u$ is a weak KAM solution, then given any $\varphi\in \T^d$, there exists a calibrated curve $\gamma: (-\infty, 0] \to \T^d$ with $\gamma(0) = \varphi$. Then for any $T>0$
\[
	u(\varphi) = u(\gamma(-T)) + \int_{-T}^0 (L^s - c\cdot v + \alpha_{H^s}(c))(\gamma, \dot{\gamma}) dt. 
\]
Let  $h \in \R$, $\chi \in \R^d$, and a $C^1$ curve $\xi: [-T, 0]\to \T^d$ satisfies
\[
	\xi(-T) = \gamma(-T), \quad \xi(0) = \varphi + h e_i^\wk + \chi, 
\]
then 
\begin{equation}
\label{eq:u-diff}
\begin{aligned}
 u(\varphi + h e_i^\wk + \chi) &\le u(\gamma(-T)) + \int_{-T}^0 (L^s - c\cdot v + \alpha_{H^s}(c))(\xi, \dot{\xi}) dt \\
& \le u(\gamma(-T)) + \int_{-T}^0 (L^s - c\cdot v + \alpha_{H^s}(c))(\gamma, \dot{\gamma}) dt \\
& + \int_{-T}^0 (L^s - c\cdot v)(\xi, \dot{\xi}) - \int_{-T}^0 (L^s - c\cdot v)(\gamma, \dot{\gamma}) dt \\
& = u(\varphi) + \int_{-T}^0 (L^s - c\cdot v)(\xi, \dot{\xi}) - \int_{-T}^0 (L^s - c\cdot v)(\gamma, \dot{\gamma}) dt. 
\end{aligned}
\end{equation}
We will first give the precise definition of $\xi$, then estimate \eqref{eq:u-diff}, before finally obtain the desired estimate. 

\myheading{Definition of $\xi$.} Recall the Lagrangian $L^s_{c,i}: \T^{m+i} \times \R^{m+i} \to \R$ defined in \eqref{eq:ls-c-i}. Let $\xi: [-T, 0] \to \T^{m+i}$ be an $L^s_{c,i}$ minimizing curve satisfying the constraint
\[
	\zeta(-T) = \flr{\gamma}_i (-T), \quad \zeta(0) = \flr{\gamma}_i(0),
\]
where $\flr{\cdot}_i$ is defined in \eqref{eq:floor}. For $h \in \R$, we define $\xi$ in the following way. 

\begin{enumerate}
\item The first $m+i$ components of $\xi$ is $\zeta$ with an added linear drift in $e_i^\wk$, more precisely,
\begin{equation}
\label{eq:xi-flr-i}
	\flr{\xi}_i(t) = \zeta(t) + \frac{th}{T} e_i^\wk. 
\end{equation}
\item We define the other components inductively. For $i < j \le d-m$, suppose $\flr{\xi}_{j-1}(t) = (\xi^\st, \xi_1^\wk, \cdots, \xi_{j-1}^\wk)(t)$ has been defined. We define
\[
	\xi_j^\wk(t) = \gamma_j^\wk(t) + y_j^T X_j^{-1} \flr{\xi}_{j-1}(t) - y_j^T X_j^{-1} \flr{\gamma}_{j-1}(t). 
\]
\end{enumerate}
For each $i < j \le d-m$, we have
\begin{equation}
\label{eq:dot-xi}
\begin{cases}
\xi_j^\wk(-T) = \gamma_j^\wk(-T), \\
\dot{\xi}_j^\wk - y_j^T X_j^{-1} \flr{\dot{\xi}}_{j-1} = \dot{\gamma}_j^\wk - y_j^T X_j^{-1} \flr{\dot{\gamma}}_{j-1}. 
\end{cases}
\end{equation}
We define  $\chi = \xi(0) - \varphi - h e_i^\wk$, and note that from \eqref{eq:xi-flr-i},
\[
	\flr{\chi}_i = \flr{\xi}_i(0) - \flr{\gamma}_i(0) - h e_i^\wk =0. 
\]

\myheading{Action comparison.} We now compute
\begin{equation}\label{eq:lsci-comp}
\begin{aligned}
& \int_{-T}^0 (L^s - c\cdot v)(\xi, \dot{\xi}) dt - \int_{-T}^0 (L^s - c\cdot v)(\gamma, \dot{\gamma}) dt \\
& = \int_{-T}^0 L^s_{c,i}(\flr{\xi}_i, \flr{\dot{\xi}}_i) dt - \int_{-T}^0 L^s_{c,i} (\flr{\gamma}_i, \flr{\dot{\gamma}}_i) dt + \sum_{j=i+1}^{d-m} \int_{-T}^0 \left( U_j^\wk(\xi(t)) - U_j^\wk(\gamma(t))  \right) dt\\
& +  \frac12  \sum_{j=i+1}^{d-m} \tilde{z}_j^{-1} \int_{-T}^0 \left( (\xi_j^\wk - y_j^T X_j^{-1} \flr{\dot{\xi}}_{j-1}- \tilde{z}_j \eta_j^\wk)^2 - (\gamma_j^\wk - y_j^T X_j^{-1} \flr{\dot{\gamma}}_{j-1} - \tilde{z}_j \eta_j^\wk )^2\right)\\
& \le \int_{-T}^0 L^s_{c,i}(\flr{\xi}_i, \flr{\dot{\xi}}_i) dt - \int_{-T}^0 L^s_{c,i} (\flr{\gamma}_i, \flr{\dot{\gamma}}_i) dt + 2T \sum_{j=i+1}^{d-m} \|U_j^\wk\|_{C^0} . 
\end{aligned}
\end{equation}
In the above formula, the  equality is due to \eqref{eq:lsc-i-split}. Moreover, observe that from \eqref{eq:dot-xi}, the third line of the above formula vanishes. The inequality follows by replacing $U_j^\wk$ with its upper bound $\|U_j^\wk\|_{C^0}$. 

We now have
\begin{multline*}
\int_{-T}^0 L^s_{c,i}(\flr{\xi}_i, \flr{\dot{\xi}}_i) dt - \int_{-T}^0 L^s_{c,i} (\flr{\gamma}_i, \flr{\dot{\gamma}}_i) dt \\
= \int_{-T}^0 L^s_{c,i}(\flr{\xi}_i, \flr{\dot{\xi}}_i) dt  - \int_{-T}^0 L^s_{c,i}(\zeta, \dot{\zeta}) dt 
 + \int_{-T}^0 L^s_{c,i}(\zeta, \dot{\zeta}) dt  - \int_{-T}^0 L^s_{c,i} (\flr{\gamma}_i, \flr{\dot{\gamma}}_i) dt \\
 \le \int_{-T}^0 L^s_{c,i}(\flr{\xi}_i, \flr{\dot{\xi}}_i) dt  - \int_{-T}^0 L^s_{c,i}(\zeta, \dot{\zeta}) dt,
\end{multline*}
noting that  $\zeta$ is minimizing for $L^s_{c,i}$. 

Since $\zeta$ is minimizing and hence extremal for $L^s_{c,i}$, from the definition of $\xi$ in \eqref{eq:xi-flr-i}, Lemma~\ref{lem:lin-drift} applies. Hence
\[
	\int_{-T}^0 L^s_{c,i}(\flr{\xi}_i, \flr{\dot{\xi}}_i) dt  - \int_{-T}^0 L^s_{c,i}(\zeta, \dot{\zeta}) dt \le 
	l \cdot h + \left(  \frac{1}{T} \tilde{z}_i^{-1} + T \|\sum_{j=i}^{d-m}U_j^\wk\|_{C^2} \right) h^2,
\]
where $l = \partial_{v_i} (L^s_{c,i})(\zeta(0), \dot{\zeta}(0))$. As in the proof of Lemma~\ref{lem:last-var}, we choose $ T = \left( \tilde{z}_i \sum_{j=i}^{d-m} \|U_j^\wk\|_{C^2} \right)^{- \frac12}$, we get 
\[
	\int_{-T}^0 L^s_{c,i}(\flr{\xi}_i, \flr{\dot{\xi}}_i) dt  - \int_{-T}^0 L^s_{c,i}(\zeta, \dot{\zeta}) dt \le 
	l \cdot h +  \sigma_i h^2, \quad \sigma_i = \left( \tilde{z}_i^{-1} \sum_{j=i}^{d-m} \|U_j^\wk\|_{C^2} \right)^{\frac12}. 
\]
Combine with \eqref{eq:lsci-comp}, and use the upper bound $ \sum_{j=i+1}^{d-m} \|U_j^\wk\|_{C^0}  \le \sum_{j=i}^{d-m}\|U_j^\wk\|_{C^2}$, we get 
\[
	\int_{-T}^0 (L^s - c\cdot v)(\xi, \dot{\xi}) dt - \int_{-T}^0 (L^s - c\cdot v)(\gamma, \dot{\gamma}) dt \le l \cdot h +  \sigma_i h^2 + 2\sigma_i. 
\]

\myheading{Estimating the weak KAM solution.} Combine the last formula with \eqref{eq:u-diff}, we get 
\[
	u(\varphi + h e_i^\wk + \chi) - u(\varphi) \le l \cdot h + \sigma_i h^2 + \sigma_i. 
\]
Since $\flr{\chi}_i =0$, using the inductive assumption, 
\[
	|u(\varphi + h e_i^\wk + \chi) - u(\varphi + h e_i^\wk) | \le  2\sum_{j=i+1}^{d-m} \delta_j. 
\]
Therefore
\[
	u(\varphi + h e_i^\wk) - u(\varphi) \le l \cdot h + \sigma_i h^2 + 2 \sigma_i + 2\sum_{j=i+1}^{d-m} \delta_j. 
\]
We now use Lemma~\ref{lem:app-semi-conc} to get for 
\[
	\delta_i = 2\sqrt{d}(3 \sigma_i + 2\sum_{j=i+1}^{d-m} \delta_j),
\]
$u$ is $(\delta_i, \delta_i)$ approximately Lipschitz in $\varphi^\wk_i$.
\end{proof}

\begin{proof}[Proof of Proposition~\ref{prop:appr-lip-weak}]
We have shown by induction that for all $1 \le i \le d-m$, $u$ is $(\delta_i, \delta_i)$ approximately Lipschitz in $\varphi_i^\wk$, where $\delta_i$ are defined inductively in Lemma~\ref{lem:last-var} and Proposition~\ref{prop:var-induction}. 

By Lemma~\ref{lem:z-j-bounds}, for each $1 \le i \le d-m$
\[
	 \sigma_i = (\tilde{z}_i^{-1}\|U_i^\wk\|_{C^2})^{\frac12} \le M^* |k_i^\wk|^{-\frac{q}{2} + i-m}. 
\]
Then $\delta_{d-m} = 2 \sigma_{d-m} \le M^* |k_{d-m}^\wk|^{-\frac{q}2 + d-m}$. For each $1 \le i \le d-m$, we have 
\[
	\delta_i  = \sqrt{d}(6\sigma_i + 4 \sum_{j=i+1}^{d-m} \delta_i) \le (6\sqrt{d})^{i-m} \sum_{j=i}^{d-m} \sigma_i \le M^* (6\sqrt{d})^{i-m} |k_i^\wk|^{-\frac{q}2 + d-m} .
\]
For any $\varphi^\wk, \psi^\wk \in \T^{d-m}$ and $\varphi^\st \in \T^m$, 
\[
	|u(\varphi^\st, \varphi^\wk) - u(\varphi^\st, \psi^\wk)| \le \sum_{i=1}^{d-m}\delta_i|\varphi^\wk_i - \psi^\wk_i| + \sum_{i=1}^{d-m}\delta_i 
\]
Since $\sum_{i=1}^{d-m}\delta_i \le (d-m)M^* (6\sqrt{d})^{i-m} (\lM(\cB^\wk))^{-\frac{q}2 + d-m}$, the proposition follows by replacing $M^*$ by $(d-m)M^* (6\sqrt{d})^{i-m} $. 
\end{proof}

\appendix

\section{Diffusion path with dominant structure}
\label{sec:diff-path}

\subsection{Diffusion path for Arnold diffusion}
\label{sec:intro-path}

Our main motivation is to prove Arnold diffusion for a ``typical'' nearly integrable system of the form (\ref{pert-hamiltonian}). The word ``typical'' here means the cusp residual condition introduce by Mather (\cite{Mather03}). 

\begin{definition}
For $r\ge 3$, we say that a property $\cG$ hold for a cusp residual set of $C^r$ nearly integrable systems $H_\varepsilon = H_0 + \varepsilon H_1$, if:
\begin{itemize}
\item $\cG$ is an open property in $C^r$ topology;
\item There exists an open and dense set $\cV \subset \{ \|H_1\|_{C^r} =1 \}$, and a positive 
function $\varepsilon_0: \cV \to \R^+$, such that $\cG$ is $C^r$-dense on 
$\cU = \{H_0 + \varepsilon H_1: H_1 \in \cV, 0 < \varepsilon < \varepsilon_0(H_1)\}$. 
\end{itemize}
\end{definition}

We would like to show that the property of topological instability is cusp residual. 
Instabilities for multidimensional Hamiltonian systems ($n\ge 3$) are studied in 
\cite{Moe96,GK12,CY09,BKZ11,GK12,KZ14,DLS13,Tre04,Tre12,Zhe, Mar1, Mar2,KS14}. 


The main conjecture of Arnold diffusion in finite regularity may be formulated as follows.
\begin{conjecture}
There exists $r_0 >0$ such that for each $n \ge 2$, $\gm>0$, $r_0 \le r<\infty$,  
for a cusp residual set of $C^r$ nearly integrable system, the system 
admits an orbit $(\theta_\varepsilon, p_\varepsilon)(t)$ such that $\{p_\varepsilon(t)\}_{t\in \R}$ is $\gamma-$dense on the unit ball 
$B^n := \{ \|p\| \le 1\}$. 
\end{conjecture}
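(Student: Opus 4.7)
The plan is to combine the dominant system framework developed in this paper with a carefully constructed diffusion net, then invoke variational mechanisms to produce shadowing orbits. First, I would fix $n\ge 2$ and $\gamma>0$, and construct a connected, $\gamma$-dense net $\cN\subset B^n$ consisting of finitely many $(n-1)$-resonance segments together with their pairwise intersections (strong $n$-resonances). This is the content of Appendix~\ref{sec:diff-path}: by choosing each successive resonant lattice $\Lambda\supset \Lambda^\st$ with weak vectors whose norm $M(\Lambda|\Lambda^\st)$ is much larger than $\max_{k\in \cB^\st}|k|$, one can cover $B^n$ by a $\gamma$-grid of such resonances while arranging that every strong resonance along $\cN$ carries a dominant structure in the sense of $\Omega^{m,d}_{\kappa,q}(\cB^\st)$.

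Next, I would establish the genericity of the needed conditions. By Theorem~\ref{thm:basis-norm}, once $H_1$ lies in an open-dense set $\cV\subset\{\|H_1\|_{C^r}=1\}$, the slow system at each strong resonance along $\cN$ is a dominant Hamiltonian with parameters $(\kappa,q)$ where $q=r-n-2(d-m)-4$; imposing $r>r_0:=n+4(d-m)+4$ forces $q>2(d-m)$, precisely the hypothesis of Theorem~\ref{thm:semi-cont}, and $q>2$, the hypothesis of Theorem~\ref{thm:resc-est} and Theorem~\ref{thm:nhic-persist}. For $H_1\in\cV$ one then selects $\varepsilon_0(H_1)>0$ so that for $0<\varepsilon<\varepsilon_0$, each relevant slow system admits the desired structure and the standard persistence of NHIC under the fast $\sqrt{\varepsilon}$-perturbation in the normal form yields a genuine NHIC for $H_\varepsilon$ itself.

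With these in place, the dynamical content unfolds in two complementary regimes. Along a generic $(n-1)$-resonance segment, away from strong resonances, the analysis of \cite{BKZ11,KZ13} already places the relevant Aubry sets inside a $3$-dimensional NHIC on which the dynamics mimics an Aubry--Mather set of a twist map (the AM property). At each strong resonance on $\cN$, I would apply Theorem~\ref{thm:nhic-persist} to continue an NHIC of the strong Hamiltonian $H^\st$ into one of the slow Hamiltonian $H^s$, and invoke Proposition~\ref{prop:aubry-mane} to confine the projected Aubry and Ma\~ne sets of $H^s$ (at appropriate cohomologies $c$ with $\bar c$ converging into the subdifferential $\partial\alpha_{H^\st}(\bar c)$) to a neighborhood of the NHIC's strong trace. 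Thus the global invariant sets along $\cN$ are all localized inside a sequence of low-dimensional NHICs of uniform complexity.

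Finally, one constructs the diffusing orbit by a variational shadowing/connecting argument in the spirit of Bernard--Mather (cf.\ \cite{Mather11,KZ13,Ch13}): choose a sequence of cohomology classes $c_j$ interpolating along $\cN$, verify that consecutive Aubry sets admit heteroclinic connectors transverse in the sense of Mather's $c$-equivalence, and assemble a pseudo-orbit which, by Bernard's theorem, is shadowed by a true orbit whose $p$-projection is $\gamma$-close to $\cN$, hence $\gamma$-dense in $B^n$. The main obstacle will be the \emph{switching problem} at strong resonances: verifying that transversality of heteroclinic connections established at the lower-dimensional strong level persists to the full slow system. This is exactly where the semi-continuity package of Proposition~\ref{prop:aubry-mane}, together with the $C^1$ convergence of the rescaled vector field (Theorem~\ref{thm:resc-est}), is decisive — it ensures that generic transversality conditions imposed on the finite list of strong Hamiltonians $H^\st$ appearing along $\cN$ persist to $H^s$ for $M(\Lambda|\Lambda^\st)$ large. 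Because $\cN$ contains finitely many strong resonances and each genericity condition is open-dense, their conjunction is cusp-residual, yielding the conjecture.
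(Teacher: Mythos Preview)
The statement you are attempting to prove is labeled a \emph{Conjecture} in the paper, and the paper does \emph{not} prove it. The authors explicitly state that the conjecture is a theorem only for $n=2$, and that for $n\ge 3$ their Theorem~\ref{thm:am-property} establishes only ``a weaker version of Step~1.'' So your proposal cannot be compared to ``the paper's own proof'' because there is none; instead it should be evaluated against the obstructions the paper identifies.

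Your outline tracks the strategy the paper envisions (dominant diffusion path, NHIC persistence, semi-continuity of Aubry/Ma\~ne sets, variational shadowing), but it glosses over precisely the gaps the authors flag as open. First, the AM property as defined only controls Aubry sets for energies $E\ge\rho$; the low-energy regime $0\le E<\rho$ near each strong resonance is explicitly excluded, and the paper notes this is why Theorem~\ref{thm:am-property} does not even complete Step~1. Your proposal never addresses this regime. Second, and more seriously, you assert that the switching problem at strong resonances is ``decisively'' handled by Proposition~\ref{prop:aubry-mane} and Theorem~\ref{thm:resc-est}. The paper says the opposite: presence of NHICs and localization of Aubry sets is ``still not sufficient for diffusion as we need to construct the jump from one homology to another,'' that for $n=3$ this ``requires a lot more work'' (done in a separate paper), and that the authors only ``expect to generalize this construction'' to $n\ge 3$. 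Semi-continuity tells you where the Aubry sets sit in the limit; it does not by itself produce the heteroclinic connections (the ``jump'') needed to chain transition sequences across a strong resonance. That is a separate variational construction, and it is the acknowledged missing ingredient.

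In short, your plan is the right scaffolding, but the claim that the tools already in the paper close the argument is incorrect: the low-energy analysis and the jump mechanism at strong resonances for $n\ge 3$ remain genuinely open, as the authors themselves emphasize.
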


The conjecture is a theorem for $n=2$, we refer the reader to \cite{Ch13,KZ13} and reference therein.
The proof in $n=2$ follows two steps: 

\emph{Step 1},  define the set $\cV$, which contains the set of ``nondegenerate'' $H_1$. 
For $H_1 \in \cV$,  $H_0 + \varepsilon H_1$ possesses certain open structure of instability, 
such as NHICs and the AM property mentioned below.  

\emph{Step 2},  show that for any $H_0 + \varepsilon H_1$ with $H_1 \in \cV$ and $\varepsilon$ sufficiently small, one can make an arbitrarily small perturbation to $H_0 + \varepsilon H_1$ such that there exists diffusion orbits. 

In Theorem~\ref{thm:am-property}, we prove a weaker version of \emph{Step 1}. The heart of the argument is the construction of a diffusion path, on which all the essential resonances has a dominant structure. We expect the same diffusion path can be used to prove the full conjecture. To avoid excessive length, we will give an outline of the proof with key statements, and the full details will appear later.

A diffusion path $\Path$ is a subset in $\R^n$ that the diffusion orbit $p_\varepsilon(t)$ roughly shadows. We pick a diffusion path that travels along a collection of $(n-1)-$resonances or, equivalently, along a collection 
of connected 1-dimensional resonant curves. 
\begin{definition}
A diffusion path $\Path$ is a compact connected subset of 
\[
\bigcup \{ \Gamma_{\Lbi{n-1}}: \quad \Lbi{n-1} \in \cL^{(n-1)} \}	
\]
where $\cL^{(n-1)}:= \{\Lbi{n-1}_i\}_{i=1}^N$ 
 is a collection of rank $n-1$ irreducible resonant lattices (and each $\Gamma_{\Lbi{n-1}}$ is a 
1-dimensional resonant curve) . \footnote{A remark on notation: the supscript ${}^{(n-1)}$ is not used as an index, but rather an indication for the rank of the lattice. } 
\end{definition}

We define the AM property of a mechanical system relative to an integer homology class. 
\begin{itemize}
\item Let $H=K-U$ be a mechanical system on $\T^n \times \R^n$, 
\item $h$ be an integer homology class, 
\item $S_E=\{H=E\}$ be energy surface.
\item $\min U =0$, and the minimum is unique.
\end{itemize}
Denote by $\pi:\T^n\times \R^n\times \T \to \R^n$ 
the natural projection onto the action component. 

Recall that homology and cohomology are related by Legendre-Fenichel
tranform $\cL\cF_\beta(h)\subset H^1(\T^n,\R)$ (see (\ref{LF-transform})). 
By a result of Diaz Carneiro \cite{DC} for each cohomology 
$c\in H^1(\T^n,\R)$ the Aubry set $\cA(c)\subset S_{\al(c)}$. 

\begin{definition}
Let $\rho>0$. We say that $(H,h,\rho)$ has the AM  property if 
for any $\lb$ such that $c\in \cL\cF_\beta(\lb h)$ and $\al_H(c)\ge \rho$
the  Aubry set $\cA(c)$ is a finite union of hyperbolic periodic orbits 
such that each of these periodic orbits as a closed curve has 
homology $h$. 
\end{definition}

\begin{remark}
Note that in the definition we do not consider the energy $0 \le E < \rho$. 

The AM property is far from being generic. In Section 
\ref{sec:diffusion-amproperty} we discuss variety of 
ways the AM property can fail for an open class of systems. 
\end{remark}

Let $\Lbi{n-1}\subset \Sgi{n}$ be two irreducible lattices 
of rank $n-1$ and $n$ respectively. Let $\cBe{n}=[l_1,\dots,l_n]$
be an ordered basis of $\Sgi{n}$ and $\cBi{n-1} = [k_1,\dots,k_{n-1}]$ is 
be an ordered basis of $\Lbi{n-1}$. Then $\Lbi{n-1}$ and $\Sgi{n}$
induce a (unique up to a sign) irreducible integer homology class denoted 
$h(\cBi{n-1}, \cBe{n})\in  \Z^n \simeq H_1(\T^n, \Z)$ 
(see (\ref{induced-homology}) for details).

We now state the main theorem of this section. 
\bthm  \label{thm:am-property} There exists $r_0 >0, C>0$ such that 
for each $n\ge 2,\, \rho>0, \, r_0\le r<\infty$, for an open and dense set of   
$H_1$ from  $\{\|H_1\|_{C^r}=1\}\subset C^r(\T^n \times B^n\times \T)$,
there exists a diffusion path $\Path=\Path(H_1,\rho)$ with a
a finite set $\mathcal{E}$ called the punctures or strong resonances, with the following properties. 
\begin{enumerate}
\item $\Path$ is $\rho$-dense in $B^n$, i.e. $\rho$-neighborhood 
of $\Path$ contains $B^n$. 

\item For each $1$-dimensional resonant curve $\Gm_i\subset \Gm_{\Lbi{n-1}}\cap \Path$ 
there is a $3$-dimensional NHWIC $\widetilde \cC^{\,3}_i$ whose projection onto the action 
component dist$(\pi \widetilde \cC^{\,3}_i,\Gm_i)\le C\sqrt \eps$.
\item (Away from strong resonances)
For each $c\in \Gm_i$ with dist$(c,\Sigma_n)\ge C\sqrt \eps$, we have 
$\cA(c)$ belongs to $\widetilde \cC^{\,3}_i$.

\item (At strong resonance) Each puncture $p_0\in \mathcal{E}$ is given by a rank $n$ 
irreducible lattice $\Sgi{n}$, i.e. $\{p_0\} =  \Gm_{\Sgi{n}}$.

\item  Let $p_0\in \Path\cap \Gm_{\Sgi{n}}$ be a puncture. Then 
$p_0\subset \Gm_{\Lbi{n-1}}\cap \Path \ne p_0$ for some 
rank $n-1$ irreducible lattice $\Lbi{n-1}$, with bases $\cBi{n-1}$ and $\cBe{n}$.  For the induced homology 
$h=h(\cBi{n-1}|\cBe{n})$ and the slow mechanical system 
$H=H_{p_0,\cBe{n}}$, defined in (\ref{eq:slow-system}), we have 
that $(H,h,\rho)$ have AM property. 
\end{enumerate} 
\ethm

We have the following remarks.
\begin{itemize}
\item If $n=2$, a stronger version of Theorem~\ref{thm:am-property} hold. Namely, one can prove that for an \emph{fixed} diffusion path, there exists a cusp residue set of systems $H_0 + \varepsilon H_1$ for which the theorem hold. Whether this statement generalizes to higher degrees of freedom is an open question. 

In our formulation, it is essential that  
the choise of diffusion path $\Path$ \emph{does depend} on 
the perturbation $\eps H_1$. 

\item Item 3 says that $3$-dimensional cylinders 
$\widetilde \cC^{\,3}_i$ are minimal in the sense that 
they contain the Aubry sets with frequency vector from 
$\Gm_i$ away from maximal essential resonances.  

\item It turn out that away from strong resonances for each $h\in \Gm_i$ with 
dist$(h,\Sigma_n)\ge C\sqrt \eps$ and $c\in \cL\cF_\beta(h)$ we have 
not only that $\cA(c)$ belongs to $\widetilde \cC^{\,3}_i$, but also it is 
a Lipschitz graph over a certain $2$-torus $\T^2_c$, i.e. for some submersion 
$\pi_c:\T^n\times B^n\times \T \to \T^2$ we  have that 
$\pi_{\cA(c)}:\cA(c)\to \T^2_c$ 
is one-to-one and the inverse is Lischitz. This is similar but more involved than 
what is presented in \cite{BKZ11}. See discussion of $n=3$ in \cite{KZ14}.  

\item 
$3$-dimensional cylinders $\widetilde \cC^{\,3}_i$
for $H_\eps$ correspond to $2$-dimensional cylinders $\cC^2$
for averaged Hamiltonians. 

\item The cylinder $\widetilde \cC^{\,3}_i$ might 
consists of several connected components. At each maximal essential 
resonance $\Gm_{\Lbi{n}}$ this cylinder can have two connected 
components: one on each local component of 
$ \Gm_{\Lbi{n-1}}\setminus \Gm_{\Lbi{n}}$.


\item The union of hyperbolic periodic orbits gives rise to a NHIC. 

\item In section \ref{sec:diffusion-amproperty} we discuss the role 
of AM property for proving diffusion as well as the number of ways it 
can be violated. 

\item Notice that at each strong resonance, due to our definition of AM property,
 we do not discuss the case low energy $0\le E < \rho$.
This is why Theorem~\ref{thm:am-property} does not complete Step 1. For $n=2$ 
a full description can be done, see \cite{KZ13,Ch13} and references therein. For $n=3$,  
construction of NHIC for away from critical energy in general and normally hyperbolic 
invariant manifolds (NHIM) for critical energy for simple homologies 
is discussed in \cite{KZ14}, sect. 6.3. We expect these 
methods extend to arbitrary $n\ge 3$ (see also \cite{Tu14}). 

\item We point out that presence of NHIC and NHIM is still not 
sufficient for diffusion as we need to construct the jump from one homology
to another (see sect. 12 \cite{KZ13}). In the case $n=3$   it requires a lot 
more work (see sect. 8 \cite{KZ14}). We expect to generalize this construction 
of the jump from \cite{KZ14} to any $n\ge 3$. 
\end{itemize}

\subsection{Nondegeneracy conditions for Arnold diffusion}
\label{sec:nondeg}

We now describe the set $\cV$ in Theorem~\ref{thm:am-property}, using the conditions [H1] and [H2] to be defined later. Let $\rho>0$ and $r_0 \le r<\infty$. 
We say that $H_1 \in \cV$ if $\|H_1\|_{C^r} = 1$, and 
there \emph{exists} a diffusion path $\Path$
that is $\rho-$dense in $B^n$, with the following properties.
\begin{itemize}
\item For each $\Lbi{n-1} \in \cL^{(n-1)}$, and each connected component $\Gamma$ of $\Path \cap \Gamma_{\Lbi{n-1}}$,  there exists $\lambda>0$ such that  function $H_1$ satisfies condition [H1$\lambda$] on $\Gamma$. 

\item For each $\lambda>0$ and $\Lbi{n-1} \in \cL^{{n-1}}$, there exists a finite set of rank $n$ resonant lattices $\Ess(\Lbi{n-1}, \lambda)$, with the property $\Sgi{n} \supset \Lbi{n-1}$ for each $\Sgi{n} \in \Ess(\Lbi{n-1}, \lambda)$. Then   $\Gamma_{\Sgi{n}}$ is a single point contained in  $\Gamma_{\Lbi{n-1}}$. The collection $\mathcal{E} =\{\Gamma_{\Sgi{n}}: \Sgi{n} \in \Ess(\Lbi{n-1}, \lambda)\}$ is 
the set of punctures in Theorem~\ref{thm:am-property}. 

\item Let $\lambda>0$ be such that [H1$\lambda$] is satisfied for $H_1$. 
For each $\Sgi{n} \in \Ess(\Lbi{n-1}, \lambda)$ and  $\Lbi{n-1} \in\cL^{(n-1)}$ 
such that $\Gamma_{\Sgi{n}} \in \Path$, we choose basis $\cBe{n}$ and $\cBi{n-1}$. 
We say that $H_1$ satisfies condition [H2] at $\Gamma_{\Sgi{n}}$ if for all such  $\Lbi{n-1} \subset \Sgi{n}$, 
\[
		(H_{p_0, \cBi{n}}^s, h(\cBi{n-1}|\cBi{n}), \rho)
\]
satisfies the AM property. 
\end{itemize}
The condition [H1$\lambda$], and the definition of $\Ess(\Lbi{n-1}, \lambda)$ and $h(\cBi{n-1}|\cBi{n})$ will be explained below. For the moment we only remark that for a fixed $\Path$, the condition that [H1$\lambda$] holds for some $\lambda >0$ is open and dense; for a fixed $\Gamma_{\Sgi{n}}$, the AM property is open but not always dense. However, it is a dense condition if the lattice $\Sgi{n}$ satisfies a domination property. The main idea is then, to pick a particular
$H_1$-dependent diffusion path $\Path$, such that all the essential resonances on this path has this domination property.

\subsubsection*{The condition [H1]}

We now describe our first set of non-degeneracy condition. For 
$\Lbi{n-1} \in \cL^{(n-1)}$, let us fix a basis $\cB$. For $\lambda>0$ and 
a connected compact subset $\Gai{n-1} \subset \Gamma_{\Lbi{n-1}}$,  
we say that $H_1$ satisfies condition [H1$\lambda$] on $\Gai{n-1}$ if 
\begin{itemize}
\item For all $p \in \Gai{n-1}$, the function $Z_\cB(\cdot, p)$ has at most two global maxima. 
\item At each global maxima $\varphi^*$ of $Z_{\cB}(\cdot, p)$, the Hessian $\partial^2_{\varphi \varphi}Z_\cB(\varphi^*, p)\le - \lambda \Id$ as quadratic forms. 
\item Suppose $p_0$ is such that there are two global maxima $\varphi^*_1(p_0)$ and $\varphi^*_2(p_0)$. Then they extend to local 
maxima for nearby $p \in \Gai{n-1}$. We assume that 
the functions $Z_\cB(\varphi^*_1(p), p)$ and $Z_\cB(\varphi^*_2(p), p)$ 
have different derivatives along $\Gai{n-1}$, with the difference at least 
$\lambda$. 
\end{itemize}
We say that $H_1$ satisfies [H1] on $\Gai{n-1}$ if it satisfies [H1$\lambda$] for some $\lambda>0$. 
These conditions are introduced by Mather (\cite{Mather03}) for $n=2$ and assumed in \cite{BKZ11}. 
We note that the quantitative version [H1$\lambda$] of the condition depends on the choice of basis, 
while the qualitative version [H1] does not. 

For $H_1$ satisfying [H1$\lambda$], there exists a finite set of rank $n$ lattices containing $\Lbi{n-1}$,  which we will call $\Ess(\Lbi{n-1}, \lambda)$. More precisely, assume that the basis for $\Lbi{n-1}$ is $\{k_1, \cdots, k_{n-1}\}$ and there exists $M= M(\Lbi{n-1}, \lambda)>0$ such that 
\[
	\Ess(\Lbi{n-1}, \lambda) = \{ \Lambda_{k_1, \cdots, k_{n-1}, k'}: \quad |k'| \le M \}. 
\]
For each $\Lbi{n} \in \Ess(\Lbi{n-1}, \lambda)$,  $\Gamma_{\Lbi{n}}$ is 
a point contained in $1$-dimensional curve $\Gamma_{\Lbi{n-1}}$. 
The condition [H1] implies the existence of NHIC away from punctures, see \cite{BKZ11}. It is not hard to see that item 1-5 of Theorem~\ref{thm:am-property} are direct consequences of our non-degeneracy conditions. The difficulty in Theorem~\ref{thm:am-property} is in showing these conditions are open and dense. 

\subsubsection*{Induce homology and non-degeneracy}

Fix $\Lbi{n} \in \Ess(\Lbi{n-1}, \lambda)$, and let $\cBi{n}$ be an ordered 
bases of $\Lbi{n}$, $\cBi{n-1}$ is an ordered basis of $\Lbi{n-1}$ and 
$\{p_0\} = \Gamma_{\Lbi{n}}$. Our second set of non-degeneracy condition 
concerns the slow system $H^s_{p_0, \cBi{n}} : \T^n \times \R^n \to \R$, for 
a particular integer homology class $h(\cBi{n-1}|\cBi{n}) \in H^1(\T^n, \Z)$, 
uniquely defined modulo the sign. We give a more general definition here. 
\begin{definition}
For $2 \le s \le n$, irreducible lattices $\Lbi{s-1} \subset \Lbi{s}$, with 
corresponding basis $\cBi{s-1} = [k_1, \cdots, k_{s-1}]$ and 
$\cBi{s} = [l_1, \cdots, l_s]$. Since $k_i \in \Lbi{s}$, there exists a unique 
collection $a_i \in \Z^s\setminus 0, \ i=1,\dots,s-1,$ such that 
\[
	k_i = \bmat{l_1 &  \cdots & l_s} a_i.
\]
Then $h(\cBi{s-1}|\cBi{s}) \in \Z^s$ is defined by the relations
\be \label{induced-homology}
	a_i \cdot h(\cBi{s-1}|\cBi{s}) =0, \quad 1 \le i \le s-1. 
\ee
\end{definition}
This definition is  determined by the resonance relation 
\[
	k_i \cdot (\omega(p),1) = 0, \quad 1 \le i \le s-1, 
\]
after converting to the basis $[l_1, \cdots, l_s]$.

We  require the triplet
\[
	(H_{p_0, \cBi{n}}^s, h(\cBi{n-1}|\cBi{n}), \rho)
\]
satisfies the AM property. We have the following consequences of the AM property:
\begin{itemize}
\item {\it (Robustness)} The non-degeneracy condition is open. 

\item {\it (Minimality)} The condition guarantees, among other things, 
existence of an ordered collection of minimal \emph{2-dimensional} 
NHICs with heteroclinic connections of neighbors. Each cylinder is 
minimal in the sense that it is foliated 
by periodic orbits minimizing action of a certain variational problem.

\item {\it (Hyperbolicity)} Each cylinder is hyperbolic in the sense that it 
consists of hyperbolic periodic orbits.
\end{itemize}

\subsection{Properties of the nondegeneracy condition}
\label{sec:prop-nondeg}

Suppose $H^s(\cB^\st, \cB^\wk, p, U^\st, \cU^\wk)$ is a dominant system. Then the AM property extends nicely from the strong system to the slow system. More precisely, the following properties hold. 

\myheading{Property A0.} For $H = K - U$, the AM property for $(H, h,\rho)$ is 
an open condition in both $K$ and $U$. 

\myheading{Property A1.} (Genericity in $2$-degrees of freedom) For a fixed quadratic form $K$ on $\R^2$ and  $h \in \Z^2$, there exists an open and dense set of $U \in C^2(\T^2)$ on which 
$(H = K-U, h,\rho)$ have AM property.

\myheading{Property A2.} (Dimension reduction using hyperbolic fixed point) Consider the data $(\cB^\st, Q_0, \kappa, q,\rho)$ 
and the space of corresponding dominant system 
$\Omega^{m, m+2}_{\kappa, q}(\cB^\st)$. Assume that  
$U^\st_0 \in \T^m$ admits at most two non-degenerate minima. Note that each 
corresponds to a hyperbolic fixed point of $H^\st$. 

Then there exists $M>0$ and $\delta>0$ depending only on 
$\cB^\st, Q_0, \kappa, q, p_0, U_0^\st,\rho$ such that the following hold. 
For each $\cB^\wk$ with 
$$\lM(\cB^\wk) > M, \ \|p-p_0\|<\delta, \ \|U^\st- U^\st_0\|_{C^2}<\delta,
\  \text{and }h \in \Z^2,
$$ 
for an \emph{open and dense set} of $\cU^\wk$ (in the space 
$\Omega^{m, m+2}_{\kappa, q}(\cB^\st)$ restricted to fixed $\cB^\wk, p, U^\st$), the triple
\[
	\left( \cH^s(\cB^\st,\cB^\wk, p, U^\st, \cU^\wk), g,\rho \right), \quad \text{ with } g = (0, \cdots, 0, h),
\]
have AM property.

\myheading{Property A3.} (Dimension reduction using AM property) Consider the data $(\cB^\st, Q_0, \kappa, q,\rho)$ 
and the space of corresponding dominant system 
$\Omega^{m, m+1}_{\kappa, q}(\cB^\st)$. Assume that 
$p_0 \in \R^n$, $U_0^\st \in C^r(\T^m)$, $h \in \Z^m$ satisfies 
\[
	\left( \cH^\st(p_0, U^\st_0), h,\rho \right) \text{ have AM property.}
\]

Then there exists $M> \sup_{k \in \cB^\st}|k|$, $\delta>0$ depending only on $\cB^\st, Q_0, \kappa, q, p_0, U_0^\st, h,\rho$ such that the following hold. For each $\cB^\wk$ with 
$$
\lM(\cB^\wk) > M, \ \|p-p_0\|<\delta, \ \|U^\st- U^\st_0\|_{C^2}<\delta,
$$
and any nonzero pair of integers $z, w$, 
the following hold. 

For an \emph{open and dense set} of $\cU^\wk$ (in the  space 
$\Omega^{m, m+1}_{\kappa, q}(\cB^\st)$ restricted to a fixed set 
of $\cB^\st, p, U^\st$), the triple
\[
	\left(  \cH^s(\cB^\st,\cB^\wk, p, U^\st, \cU^\wk), g_0,\rho \right), \text{ where } g_0 = (zh, w), \text{ has AM property.}
\]

\myheading{Remarks}:
\begin{enumerate}
\item The proof of Theorem~\ref{thm:am-property} uses only Properties A0-A3 instead of the precise definition of AM property. Therefore the proof applies if we take Properties A0-A3 as ansatz. We expect that the properties required for the full diffusion problem satisfy the same ansatz and  our construction applies to the full diffusion problem. 

\item The list of properties A0 - A3 provides a setup for proving non-degeneracy using induction over degrees of freedom. Assume that $U^\st$ admits a non-degenerate minimum, then property A2 allows to extend this system by two more degrees of freedom, provided the homology $g$ is only nontrivial in the weak variables. If $H^\st$ is nondegenerate in a nontrivial homology $h$, property A3 allows to extend by one degree of freedom, provided the new homology $g$ is trivial in the weak variable. 
\item Let us explain the proof briefly. Property A1 is a known result. This property is used in Arnold diffusion in $2\frac12$ degrees of freedom, and we refer to \cite{Mather10}, \cite{Mather11}, \cite{KZ13}, \cite{Ch13} for more details. 

\item Property A2 uses the first type of dimension reduction. 
The assumption ensures that $H^\st$ admits at most two minimal hyperbolic saddles. 
An arbitrarily small perturbation ensures that only one of them is minimal. Using 
Theorem~\ref{thm:nhic-persist}, one obtain that $H^s$ admits a minimal \emph{four-dimensional} 
NHWIC $\cC^4$. Furthermore, Theorem~\ref{thm:semi-cont}  and 
Proposition~\ref{prop:aubry-mane} provide variational characterization for 
the cylinder. Then the restricted system to $\cC^4$ behaves like a system 
with \emph{two degrees of freedom}, and an analog of property A1 can 
be proven. In particular, there will be an ordered collection of minimal 
two-dimensional NHIC's contained in $\cC^4$. 

\item For property A3,  when the triple $(H^\st, h, \rho)$ satisfies the AM property, 
the strong system admits a family of \emph{two dimensional} NHICs. Because 
there is only one weak component, Theorem~\ref{thm:nhic-persist} implies 
that $H^s$ admits a minimal \emph{four-dimensional} NHWIC. Similar
to the previous case, the idea from property A1 can be applied to prove nondegeneracy. 

\item One can say that in the case A2 or A3, the slow system $H^s$ is ``dominated'' by the strong system $H^\st$. 

\end{enumerate}

\subsection{Construction of a diffusion path  and surgery of 
resonant manifolds}
\label{sec:const-path}

To prove Theorem~\ref{thm:am-property}, it remains to construct a diffusion path with our non-degeneracy conditions. 
\begin{proposition}
\label{prop:diff-path}
For each $\gamma>0$, there exists an open and dense set $\cV \subset \{\|H_1\|_{C^r}=1\}$, such that for any $H_1 \in \cV$, there exists a $\gamma-$dense diffusion path $\Path$, such that the non-degeneracy conditions [H1] and [H2] are satisfied along $\Path$.
\end{proposition}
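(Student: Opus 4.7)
The plan is to build $\Path$ as a connected net of $(n-1)$-resonance curves whose defining lattices admit nested, strongly dominant structures, and then invoke Properties A0--A3 inductively at each puncture to verify [H2]. The choice of $\Path$ will depend on $H_1$ only through the finite list of resonances it involves, so the bulk of the work is structural and only a small transversal perturbation of $H_1$ is needed at the end.

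First I would produce the candidate path. Fix a parameter $N \gg 1$ to be tuned later, and let $\mathcal{F}_N$ denote the family of rank-$(n-1)$ irreducible integer lattices admitting an ordered basis $[k_1,\dots,k_{n-1}]$ with $|k_{i+1}| \ge N |k_i|$ for all $i$. Because a single resonance manifold is already dense in $B^n$ and one may restrict to progressively sparser sublattices, a Dirichlet-type approximation argument shows that $\bigcup_{\Lbi{n-1}\in\mathcal{F}_N}\Gm_{\Lbi{n-1}}$ is still $\gm$-dense in $B^n$. Compactness then yields a finite subfamily $\cL^{(n-1)}\subset \mathcal{F}_N$ whose resonance curves remain $\gm$-dense. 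I would discard any pairwise intersection whose induced rank-$n$ lattice $\Sgi{n}$ fails the compatibility condition $|k_n|\ge N|k_{n-1}|$, and take $\Path$ to be the resulting connected net.

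Next I would verify [H1] on every curve of $\Path$ simultaneously. On each $\Gm \subset \Gm_{\Lbi{n-1}}\cap\Path$, condition [H1$\lambda$] is a Mather-style transversality requirement on the finite jet of the Fourier block $Z_{\cBi{n-1}}(\varphi,p)|_{p\in\Gm}$, hence open and $C^r$-dense in $\{\|H_1\|_{C^r}=1\}$; as only finitely many curves are involved, one arbitrarily small perturbation achieves [H1$\lambda$] everywhere and fixes the full puncture set $\mathcal{E}=\bigcup_{\Lbi{n-1}}\Ess(\Lbi{n-1},\lambda)\cap\Path$. Condition [H2] at each $p_0\in\mathcal{E}$ is then verified by induction on the rank, using Theorem~\ref{thm:basis-norm} to present $H^s_{p_0,\cBe{n}}$ as a tower of $(m,m{+}1)$- and $(m,m{+}2)$-dominant Hamiltonians built from the hierarchical basis. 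At the base I would apply Property A1 to the rank-$2$ strong subsystem (or Property A2 with $m=0$ when the induced homology $h=h(\cBi{n-1}|\cBe{n})$ is supported only in the weakest coordinates). At each inductive step the hierarchical condition with $N$ large enough ensures the smallness hypothesis on $\lM(\cB^\wk)$ in Property A3 (or A2), and an arbitrarily small generic perturbation of the corresponding weak Fourier block $U^\wk_j$ propagates the AM property one step further. Distinct weak blocks are supported on disjoint Fourier modes, so the perturbations needed at all finitely many punctures can be combined into a single arbitrarily small $C^r$-perturbation of $H_1$; openness of the full condition follows from Property A0, and $\cV$ is the resulting open dense set.

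The main obstacle is \emph{global compatibility} of the hierarchy. A single rank-$(n-1)$ curve typically meets several punctures, and its basis must behave hierarchically both as a stand-alone lattice \emph{and} as a sublattice of every incident rank-$n$ lattice at its endpoints, with the inductive scheme of Properties A2--A3 threading consistently through all of them. Arranging this across the whole net while retaining $\gm$-density forces $N$ to grow rapidly as a function of $H_0$ and of the combinatorics of $\cL^{(n-1)}$, and the verification that a suitable $N$ and net can always be found is where the bulk of the deferred geometry-of-numbers argument lies.
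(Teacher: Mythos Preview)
Your high-level strategy---arrange the path so that every essential rank-$n$ lattice carries a dominant structure, then propagate the AM property via Properties A1--A3---matches the paper's. But your implementation has a genuine circularity that you flag as ``the main obstacle'' yet mischaracterize. You want a single separation parameter $N$, chosen before the path, such that the hierarchy $|k_{i+1}|\ge N|k_i|$ forces all domination hypotheses. However: (i) the threshold $M$ in Properties A2 and A3 depends on the strong potential $U^\st=U_{p_0,\cB^\st}$, hence on $H_1$, not only on $H_0$; (ii) the threshold in A3 further requires the AM property already to hold for $(\cH^\st(p_0,U^\st), h, \rho)$, so it is only defined \emph{after} a perturbation at the previous level; (iii) the essential-resonance cutoff $M(\Lbi{n-1},\lambda)$ in [H1] depends on $\Lbi{n-1}$ and on the $\lambda$ achieved after perturbation. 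Thus the size of the extra vector $k'$ at a puncture $\Sgi{n}=\Lbi{n-1}\vee\{k'\}$, and hence the $N$ needed to dominate it, depends on the very family $\cL^{(n-1)}$ you are trying to choose with that $N$. No single $N$ closes this loop, and when $|k'|$ lands between two of your hierarchical $|k_i|$'s the intended tower collapses.

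The paper resolves this by building the path inductively on the rank $s=1,\dots,n-1$, interleaving lattice selection, perturbation, and threshold computation. At step $s$ one already has rank-$s$ lattices $\cLi{s}$ and a nondegenerate set $\cN_s$; one computes a domination threshold $M_s$ (a function of the current $H_1$), declares the essential rank-$(s{+}1)$ lattices $\cEi{s+1}$ to be those failing it, perturbs to enforce the AM property on $\cEi{s+1}$ (using domination inherited from earlier steps---this is where the ``type $(s{+}1,j)$'' classification enters), computes a second threshold $N_s$, and only then selects the next family $\cLi{s+1}$ of \emph{admissible} lattices: those not generated by $\cEi{s+1}$ and satisfying the ``ghost property'' $M(\Sgi{s+1}\vee\Lbi{s+1}|\Sgi{s+1})>N_s$ for every incident essential $\Sgi{s+1}$. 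This guarantees inductively that every final essential $\Sgi{n}$ carries a chain $\Lbi{1}\subset\cdots\subset\Lbi{j-1}\subset\Sgi{j+1}\subset\cdots\subset\Sgi{n}$ of dominated sublattices along which A2 fires once and A3 fires repeatedly. Your one-shot construction must be replaced by this rank-by-rank scheme; the ``deferred geometry-of-numbers argument'' you invoke cannot exist in the form you describe because the quantity it would bound depends on $H_1$ and on the inductive history.
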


The proof of Proposition~\ref{prop:diff-path} occupies the rest of this section. Since our nondegeneracy conditions are assumed to be open, it suffices to prove density. We fix an arbitrary relative open set $\cU_0 \subset \{\|H_1\|_{C^r}=1\}$, we will show there exists $H_1 \in \cU_0$ such that the conclusions hold. 
The proof follows an inductive scheme. The strategy is as follows:
\begin{enumerate}
\item At step $s$ we have   a finite collection of integer irreducible lattices 
$\cLi{s} = \{\Lbi{s}_i\}$, i.e. each 
$\Lbi{s}_i := \Span_\Z\{k_1^i, \cdots, k_s^i\}$ has rank $s$ 
and is spanned by integer vectors $k_1^i, \cdots, k_s^i $. 
The union of 
corresponding codimension $s$ resonant manifolds ($\Gamma_{\Lbi{s}}$) is called $\cPi{s}$. 

The lattices $\Lbi{s}$ has a hierarchical structure in the sense that there is an unique element $\Lbi{s-1} \in \cLi{s-1}$ such that $\Lbi{s-1}\subset \Lbi{s}$. As a result,  $\cPi{s} \subset\cPi{s-1}$, we will choose  $\cLi{s}$ such that $\cPi{s}$ is $(1 -2 \cdot 4^{-s})\gamma-$dense subset in $\cPi{s-1}$ and $(1-4^{-s})\gamma-$dense in $B^n$. 

\item A set of essential resonances $\cEi{s+1}$ is a collection of irreducible 
lattices of rank $s+1$. Each element $\Sgi{s+1} \in \cEi{s+1}$ contains at lease one element $\Lbi{s} \in \cLi{s}$. Roughly speaking, the essential lattices are the collection of lattices $\Sgi{s+1}$ that contain but does \emph{not} dominate some $\Lbi{s} \in \cLi{s}$.  


Essential resonances $\Sgi{s+1}$ correspond to a codimension $s+1$ 
resonant manifold $\Gamma_{\Sgi{s+1}}$ contained in  $\cPi{s}$. 

\item A nondegenerate set $\cN_s \subset \cPi{s}$, which is open, 
connected and $(1-2\cdot 4^{-s})\gamma-$dense in $B^n$, such that the following hold:
\begin{enumerate}
 \item For all $\Lbi{s} \in \cLi{s}$ with basis $\cBi{s} = [k_1, \cdots, k_s]$, and $p_0 \in \cN_s \cap \Gamma_{\cLi{s}}$, the slow system 
 \[
 H^s_{p_0, \cBi{s}}
 \]
 is nondegenerate in the sense of Property A2. 
 \item For all essential lattice $\Sgi{s+1} \in \cEi{s+1}$ with basis $\cBe{s+1}$, and $p_0 \in \cN_{s}\cap \Gamma_{\Sgi{s+1}}$, and every $\Lbi{s} \in \cLi{s}$ with $\Lbi{s} \subset \Sgi{s+1}$ and basis $\cBi{s}$, the triple
 \[
 (H^s_{p_0, \cBe{s+1}}, h(\cBi{s}, \cBe{s+1})), \rho)
 \]
 satisfies the AM property, setting up to apply Property A3. 
\end{enumerate}
\item The next generation of resonant lattices are carefully defined so that we can use Property A2, A3 to extend the non-degeneracy in item 3 to the next generation. 
\item The induction finishes at step $n-1$, when we obtain an open, connected and $\gamma-$dense set $\cN_{n-1} \subset \cPi{n-1}$ 
in $B^n$ which 
consists of $1$-dimensional resonant manifolds and will be our diffusion path, and all essential resonances have the AM property.  
\end{enumerate}

\subsubsection{An initial step of the induction}

Since the union of all $1-$resonant manifolds are dense and locally connected, for each $\gamma>0$ we can pick $\cLi{1} = \{\Lbi{1}_i\}$ 
such that the set 
\[
	\cPi{1} = \bigcup_{\Lbi{1} \in \cLi{1}} \Gamma_{\cLi{1}}\cap B^n
\]
 is connected and $\gamma/2-$dense in $B^n$. For each lattice 
$\cLi{1}$ denote its basis by $\cBi{1}=[k_1]$,
i.e. $\cLi{1}=\Span_\R(k_1)\cap \Z^{n+1}$. Denote by 
$\cKi{1}=\{\cBi{1}_i\}$ the union of basis vectors.

For any two sets $E_1, E_2 \subset \Z^{n+1}$, we define 
\[
	E_1 \vee E_2 = \Span_\R \{E_1 \cup E_2\} \cap \Z^{n+1}
\]
to be the smallest irreducible lattice containing $E_1$ and $E_2$.

We define a first non-degeneracy set $\cY_1^\lambda(H_1, \cPi{1}) \subset \cPi{1}$ by the following condition: For any $\Lbi{1} \in \cLi{1}$ with 
basis $\cBi{1}$ and  $p \in \Gamma_{\Lbi{1}} \cap \cPi{1}$, 
the averaged potential $U_{p, \cBi{1}}$ 
has at most two $\lambda-$nondegenerate minima.

\begin{figure}[t]
\centering
\def\svgwidth{2.8in}
\input 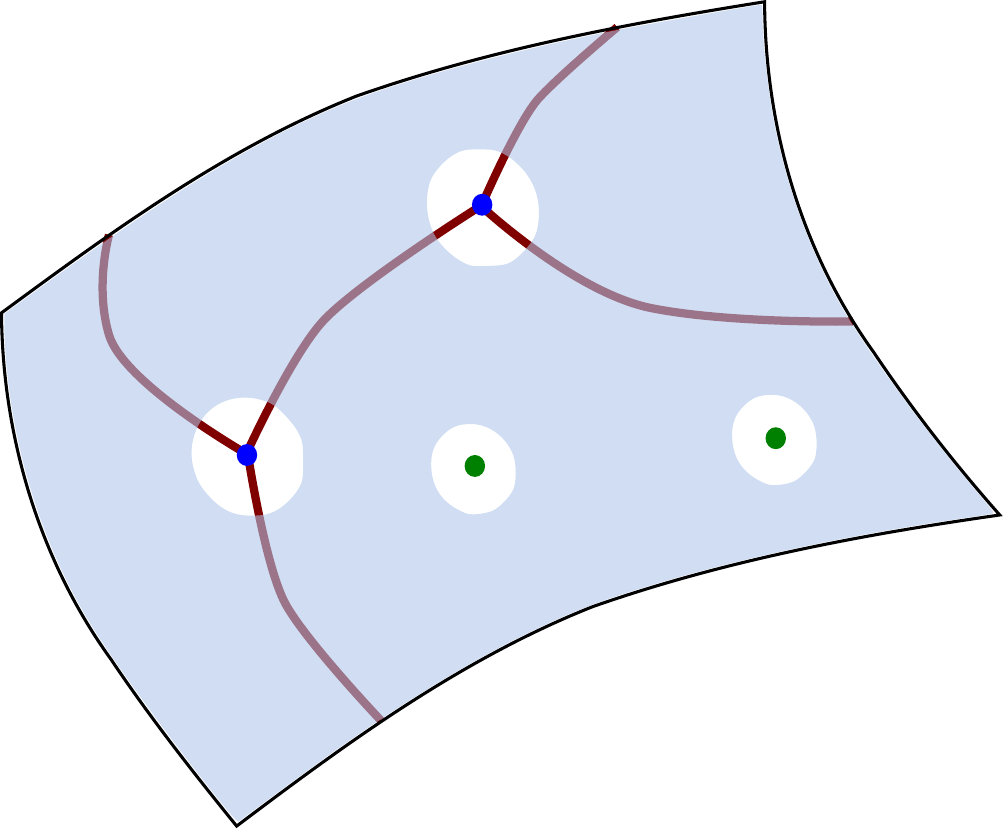_tex
\caption{The first nondegeneracy set for $n=3$: the shaded part is the $\lambda-$nondegenerate set. On the bold lines  there are two minima for $U_{p, \cBi{1}}$, at the blue dots are there are three minima. At the green dots the minimum is degenerate. }
\label{fig:first-nondeg}
\end{figure}

\begin{lemma}
There exists a relative open set $\ \cU_1 \subset \cU_0$ and $\lambda_1>0$, such that for $H_1 \in \cU_1$, the nondegeneracy set $\cY_1^{\lambda_1}(H_1, \cPi{1})$ is open, connected, $4^{-2}\gamma-$dense in $\cPi{1}$, and 
$(1-2\cdot 4^{-1}\gamma)-$dense in $B^n$. 
\end{lemma}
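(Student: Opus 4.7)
The plan is to establish the three properties (openness, $4^{-2}\gamma$-density in $\cPi{1}$, and connectedness) by combining a direct stability argument with a parametric transversality argument applied to the Fourier coefficients of $H_1$ along each lattice $\Lbi{1} \in \cLi{1}$.

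First, openness is immediate from the defining inequalities: the condition ``$U_{p, \cBi{1}}$ has at most two minima with $\partial^2_{\varphi\varphi} U \ge \lambda_1$ at each minimum'' is manifestly stable under small $C^2$ perturbations of the family $p \mapsto U_{p, \cBi{1}}$. Since $U_{p, \cBi{1}}$ depends continuously and linearly on the Fourier coefficients $\{h_{lk_1}(p)\}_{l \in \Z}$ of $H_1$, and hence continuously on $H_1$ in $C^r$, openness in $\cPi{1}$ and in the $H_1$ parameter both follow. This gives the open set $\cU_1$ once we produce a single $H_1 \in \cU_0$ for which the density and connectedness hold.

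To produce such an $H_1$, the key observation is that for each fixed $\Lbi{1} \in \cLi{1}$ with basis $\cBi{1} = [k_1]$, the map
\[
H_1 \longmapsto \bigl(p \mapsto U_{p, \cBi{1}}(\cdot)\bigr) \in C^r(\Gamma_{\Lbi{1}} \times \T)
\]
is linear and surjective onto a dense subspace: one may freely prescribe the coefficients $h_{l k_1}(p)$ for $l \ne 0$ without affecting any other slow system $U_{p, \cBi{1}'}$ with $\Lbi{1}' \ne \Lbi{1}$ (since the Fourier supports $\Z k_1$ and $\Z k_1'$ are disjoint away from $0$). Because $\cLi{1}$ is finite, the perturbations on different hypersurfaces are independent, so it suffices to produce a generic family $U_{p, \cBi{1}}$ on each $\Gamma_{\Lbi{1}}$ separately via a $C^r$-small modification of the relevant Fourier block. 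I would then apply a standard jet-transversality argument to the map $(p, \varphi) \mapsto j^2_\varphi U_{p, \cBi{1}}(\varphi) \in J^2(\T, \R)$: for a residual set of smooth families, the ``bad'' loci --- three coexisting global minima, a degenerate minimum, or two global minima whose derivatives along $\Gamma_{\Lbi{1}}$ coincide --- form subsets of codimension $\ge 1$ inside $\Gamma_{\Lbi{1}}$. The resulting bad set $\cB \subset \cPi{1}$ is therefore nowhere dense, and for each sufficiently small $\lambda_1 > 0$ the set $\cY_1^{\lambda_1}$ equals $\cPi{1}$ minus a $O(\sqrt{\lambda_1})$-tubular neighborhood of $\cB$. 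Choosing $\lambda_1$ small yields $4^{-2}\gamma$-density in $\cPi{1}$, which combined with the assumed $\tfrac{\gamma}{2}$-density of $\cPi{1}$ in $B^n$ gives the required $B^n$-density bound.

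For connectedness, when $n \ge 3$ each $\Gamma_{\Lbi{1}}$ is at least two-dimensional so removing a codimension-$\ge 1$ set does not disconnect it, and $\cPi{1}$ was built connected by choice of $\cLi{1}$; hence $\cY_1^{\lambda_1}$ is connected. The main obstacle is $n = 2$, where $\Gamma_{\Lbi{1}}$ is a curve and the bad set $\cB$ consists of isolated points whose removal generically disconnects each $\Gamma_{\Lbi{1}}$. Here I would use the graph structure of $\cPi{1}$: connectedness is recovered by routing around degenerate points through the intersections $\Gamma_{\Lbi{1}} \cap \Gamma_{\Lbi{1}'}$, which are rank-$2$ (hence $0$-dimensional) resonances providing alternative paths. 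Concretely, I would enlarge $\cLi{1}$ at the outset, if necessary, to guarantee that each component of $\Gamma_{\Lbi{1}} \setminus \cB$ remains joined to the rest of $\cPi{1}$ through at least one such intersection point that lies in $\cY_1^{\lambda_1}$ --- a condition itself automatic under the same genericity of $H_1$, since at a generic intersection point the two competing averaged potentials are unrelated and each has a unique nondegenerate minimum. This surgery step on the diffusion path skeleton is the delicate part of the argument; everything else reduces to the standard openness/transversality machinery.
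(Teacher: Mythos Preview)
The paper does not prove this lemma. Appendix~A is explicitly presented as an outline (``To avoid excessive length, we will give an outline of the proof with key statements, and the full details will appear later''), and this lemma is one of the key statements stated without argument; the text following it just refers to Figure~2 and moves on. So there is nothing to compare your proof against, and I can only assess it on its own merits.

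Your overall strategy---independence of the Fourier blocks $\{h_{lk_1}\}_{l\in\Z}$ for distinct $\Lbi{1}\in\cLi{1}$, followed by jet transversality for each one-parameter family $p\mapsto U_{p,\cBi{1}}$---is the natural one and is essentially correct. Two points deserve correction.

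First, a minor misreading: you list ``two global minima whose derivatives along $\Gamma_{\Lbi{1}}$ coincide'' among the bad loci. That is the third bullet of condition~[H1], not part of the definition of $\cY_1^\lambda$ given just above the lemma, which requires only ``at most two $\lambda$-nondegenerate minima''. Including it is harmless for density but obscures the codimension count.

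Second, your connectedness step has a genuine gap. You write that for $n\ge 3$, removing a codimension-$\ge 1$ subset from the $(n-1)$-dimensional $\Gamma_{\Lbi{1}}$ does not disconnect it. That is false in general---a hypersurface in a $2$-manifold typically separates it. What actually saves the argument is that the bad loci have codimension~$\ge 2$, not~$\ge 1$. Three coincident global minima impose two independent value equalities, hence codimension~$2$; and a degenerate global minimum of a function on $\T^1$ is necessarily $A_3$ (if $U''(\varphi^*)=0$ at a global minimum then $U'''(\varphi^*)=0$ as well, else the cubic term makes $\varphi^*$ fail to be a minimum), which is also codimension~$2$. With this sharper count, removing the bad set from each $(n-1)$-dimensional $\Gamma_{\Lbi{1}}$ preserves connectedness for $n\ge 3$, and generically the bad set does not meet the $(n-2)$-dimensional intersections $\Gamma_{\Lbi{1}}\cap\Gamma_{\Lbi{1}'}$, so the union stays connected. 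For $n=2$, codimension~$2$ in a $1$-dimensional curve means the bad set is generically empty, so your separate $n=2$ surgery discussion is not needed. This is consistent with the paper's Figure~2, where the green and blue dots (degenerate minimum, three minima) are drawn as isolated points.
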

The set $\cY_1^{\lambda_1}(H_1, \cPi{1})$ is the shaded set 
on Figure \ref{fig:first-nondeg}.  For brevity in what follows we often 
omit dependence of $\cY_1^{\lambda_1}$ on $H_1$ and $\cPi{1}$.

For each $p_0 \in \Gamma_{\Lbi{1}}\cap \cY_1^{\lambda_1}$,
where $\Lbi{1}$'s basis is $\cBi{1}$, the assumption of Property A2 
is satisfied for $\cB^\st = \cBi{1}$, $p_0$ and $U^\st = U_{p_0, \cBi{1}}$. Moreover, using compactness, for all 
\[
	m=1,\quad \Lbi{1} \in \cLi{1}, \quad p_0 \in \Gamma_{\Lbi{1}}\cap \cY_1^{\lambda_1}, \quad U^\st = U_{p_0, \cBi{1}},
\]
there exists a \emph{uniform} $M_1 = M_1(\cPi{1}, \cY_1^{\lambda_1})$, 
such that for  all $\cB^\wk = [k_1^\wk, k_2^\wk]$ with 
$\lM(\cB^\wk)>M_1$  the conclusion of Property A2 is satisfied. We assume that $M_1$ is chosen such that 
\[
	M_1(\cPi{1}, \cY_1^{\lambda_1}) > \max_{k_1 \in \cKi{1}}|k_1|. 
\]

We define the first generation of \emph{essential lattices} 
$\cEi{2} = \cEi{2}(\cPi{1}, \cY_1^{\lambda_1})$ as the set of all rank $2$ irreducible lattices $\Sgi{2}$ satisfying the following conditions: there exists $\Lbi{1} \in \cLi{1}$ such that 
\[
	\Sgi{2} \subset \Lbi{1}, \quad M(\Sgi{2}|\Lbi{1}) \le M_1(\cPi{1}, \cY_1^{\lambda_1}). 
\]
The requirement $M_1(\cPi{1}, \cY_1^{\lambda_1}) > \max_{k_1 \in \cKi{1}}|k_1|$  ensures that for any $\Lbi{1}_1, \Lbi{1}_2 \in \cLi{1}$, the lattice $\Lbi{1}_1\vee \Lbi{1}_2$ is automatically essential.  This corresponds to the intersection of  $\Gamma_{\Lbi{1}_1}$ and $\Gamma_{\Lbi{1}_2}$. 

 The essential lattice set contain all lattices that does not ``dominate'' 
the lattices in $\cLi{1}$.  Let us also denote 
\[
	\Gamma_{\cEi{2}} = \bigcup_{\Sgi{2} \in \cEi{2}} \Gamma_{\Sgi{2}}
\]
the union of all resonance manifolds corresponding to the essential lattices.

For each essential lattice $\Sgi{2}$ we fix an ordered basis $\cBe{2}$ 
(the actual choice is irrelevant). We 
define a second nondegeneracy set $\cZ_1(H_1, \Sgi{2}, \cY_1^{\lambda_1})$ to be the set of $p \in \Gamma_{\Sgi{2}} \cap \cY_1^{\lambda_1}$ such that for each $k_1\in \Sgi{2}$ with $\cBi{1} =[k_1]\in \cKi{1}$, the pair
 \[
 	\left( H_{p, \cBe{2}}^s, h(\cBi{1}|\cBe{2}) , \rho \right) 
 \]
 satisfies the AM property. We then define $\cZ_1(H_1, \cEi{2}, \cY_1^{\lambda_1})$ 
to be the union of all $\cZ_1(H_1, \Sgi{2}, \cY_1^{\lambda_1})$ over 
essential resonances $\Sgi{2} \in \cEi{2}$.

%
%

Because $H_{p, \cBe{2}}^s$ has two degrees of freedom, we can use Property A0 to obtain the following lemma. 
\begin{lemma}
There exists a relative open  set $\cU_1' \subset \cU_1$ and 
a relative open $\tilde{\cZ}_1 \subset \Gamma_{\cEi{2}}  \cap \cY_1^{\lambda_1}$ such that the following hold. 
\begin{enumerate}
\item For all  $H_1 \in \cU_1'$, $\tilde{\cZ}_1$ is  compactly 
contained in $\cZ_1(H_1, \cEi{2}, \cY_1^{\lambda_1})$. 
\item  The set 
\[
	\cN_1 :=   \cY_1^{\lambda_1} \cap 
\tilde{\cZ}_1 
\]
is open, connected, and $(1-4^{-1})\gamma-$dense in $\cPi{1}$.
\end{enumerate}
\end{lemma}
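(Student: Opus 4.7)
The plan is to reduce the construction to finitely many pointwise AM conditions and then apply the two-degree-of-freedom genericity (Property A1) at each essential resonance while preserving the first nondegeneracy set $\cY_1^{\lambda_1}$.

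First I would observe that $\cEi{2}$ is a \emph{finite} set. By its definition every $\Sgi{2}\in \cEi{2}$ is an extension of some $\Lbi{1}\in \cLi{1}$ with $M(\Sgi{2}|\Lbi{1})\le M_1$; since $\cLi{1}$ is finite and there are only finitely many rank-two irreducible extensions of a given rank-one lattice with bounded relative norm, $\cEi{2}$ is finite. Consequently the set $\Gamma_{\cEi{2}}\cap \cY_1^{\lambda_1}$ is a finite union of relatively compact pieces of codimension-two submanifolds (reducing to a finite set of points when $\Sgi{2}\vee \Lbi{1}$ has full rank, e.g.\ when $n=2$).

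Second I would use Property A1 to establish AM at each essential resonance. For fixed $\Sgi{2}\in \cEi{2}$ with basis $\cBe{2}$ and $p\in \Gamma_{\Sgi{2}}$, the slow system $H^s_{p,\cBe{2}}$ is a mechanical system on $\T^2$ whose potential $U_{p,\cBe{2}}$ depends only on the Fourier modes $\{h_k\}_{k\in \Sgi{2}}$ of $H_1$. The crucial observation is the near-disjointness of the relevant Fourier supports: modes in $\Sgi{2}\setminus \Lbi{1}$ affect $U_{p,\cBe{2}}$ but leave invariant the strong potential $U_{p,\cBi{1}}$ used to define $\cY_1^{\lambda_1}$, and modes in $\Sgi{2}\setminus (\Sgi{2}\cap \Sgi{2}')$ affect $U_{p,\cBe{2}}$ but not $U_{p,\cBe{2}'}$ for $\Sgi{2}'\ne \Sgi{2}$. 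Ordering $\cEi{2}$ and perturbing these ``exclusive'' modes one essential lattice at a time, with Property A1 applied for each of the finitely many induced homologies $h(\cBi{1}|\cBe{2})$ arising from the $\Lbi{1}\subset \Sgi{2}$ in $\cLi{1}$, produces an arbitrarily $C^r$-small modification of $H_1$, yielding $\cU_1'\subset \cU_1$ on which AM holds at every $p\in \Gamma_{\Sgi{2}}\cap \cY_1^{\lambda_1}$ in a dense open subset, while [H1$\lambda_1$] is preserved.

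Third I would assemble the sets. By Property A0 the pointwise AM condition is open, so $\cZ_1(H_1, \cEi{2},\cY_1^{\lambda_1})$ is relatively open in $\Gamma_{\cEi{2}}\cap \cY_1^{\lambda_1}$, and it is dense in that set after the perturbation above. I would choose $\tilde\cZ_1$ to be a relatively open set compactly contained in $\cZ_1$ whose complement in $\Gamma_{\cEi{2}}\cap \cY_1^{\lambda_1}$ has arbitrarily small diameter around each omitted point. The set $\cN_1$ is then obtained from $\cY_1^{\lambda_1}$ by excising the finite ``bad'' portion $\Gamma_{\cEi{2}}\setminus \tilde\cZ_1$; since $\cY_1^{\lambda_1}$ is open, connected and $(1-2\cdot 4^{-1})\gamma$-dense in $B^n$, and the excised set is a finite union of compact codimension-two sets (points in the generic case), connectedness can be preserved by choosing $\tilde\cZ_1$ to keep small neighborhoods of essential points within $\cY_1^{\lambda_1}$ traversable, and density is degraded by at most an additional $4^{-2}\gamma$, yielding the required $(1-4^{-1})\gamma$-density.

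The main obstacle is the second step: engineering a finite sequence of $C^r$-small perturbations that simultaneously (a) retains [H1$\lambda_1$] defining $\cY_1^{\lambda_1}$, (b) secures the AM property at every $\Sgi{2}\in \cEi{2}$ for each induced homology, and (c) does not undo the AM property previously established at other essential resonances. The resolution hinges on identifying, for each $\Sgi{2}$, the sublattice $\Sgi{2}\setminus (\Lbi{1}\cup \bigcup_{\Sgi{2}'\ne \Sgi{2}}(\Sgi{2}\cap \Sgi{2}'))$ of ``exclusive'' Fourier modes that can be perturbed freely, and on verifying that Property A1 can be achieved by perturbing only these modes, i.e.\ only the weak component of the averaged potential $U_{p,\cBe{2}}$. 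An auxiliary subtlety is that AM must be arranged on a whole relatively open subset of $\Gamma_{\Sgi{2}}$ rather than just at one point; continuity in $p$ of the averaged potential together with the openness of AM makes this possible but forces us to take $\tilde\cZ_1$ as a precompact subset of $\cZ_1$ so as to retain stability of AM under further $C^r$-small perturbations within $\cU_1'$.
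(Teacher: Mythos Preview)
The paper offers only a one-line justification for this lemma (``Because $H_{p,\cBe{2}}^s$ has two degrees of freedom, we can use Property A0\ldots''), the appendix being explicitly an outline with details deferred; your proposal correctly fleshes out the intended argument, invoking Property~A1 for density of the AM condition in two degrees of freedom together with Property~A0 for openness and robustness, the finiteness of $\cEi{2}$, and Fourier-support separation to decouple the perturbations at distinct essential lattices from one another and from the [H1$\lambda_1$] condition defining $\cY_1^{\lambda_1}$. You also read the intended meaning $\cN_1=\cY_1^{\lambda_1}\setminus(\Gamma_{\cEi{2}}\setminus\tilde\cZ_1)$ correctly (the displayed formula in the statement is garbled) and capture the connectedness mechanism that the paper illustrates in its figure; the obstacle you single out---that Property~A1 must be realized by perturbing only the ``exclusive'' modes in $\Sgi{2}\setminus\Lbi{1}$---is indeed the substantive point left implicit in the paper's outline.
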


We choose $\tilde{\cZ}_1$ compactly contained in $\cZ_1$ so that the nondegeneracy on $\tilde{\cZ}_1$ is \emph{uniform} due to compactness. 
The idea behind the definition of $\cN_1$ is the following: On the set 
of essential resonances $ \Gamma_{\cEi{2}}$, domination does not apply, 
so we should remove it from the nondegeneracy set $\cY_1^{\lambda_1}$. However, in this case the remaining set becomes \emph{disconnected} 
because the essential resonances divide the space (see 
Figure \ref{fig:second-nondeg}, left). Instead we only remove 
only $p$'s with the nearly degenerate essential resonances, i.e. 
$\cY_1^{\lambda_1} \cap \Gamma_{\cEi{2}} \setminus  \tilde{\cZ}_1$
(see Figure \ref{fig:second-nondeg}, right dashed line).

\begin{figure}[t]
\centering
\def\svgwidth{5in}
\input 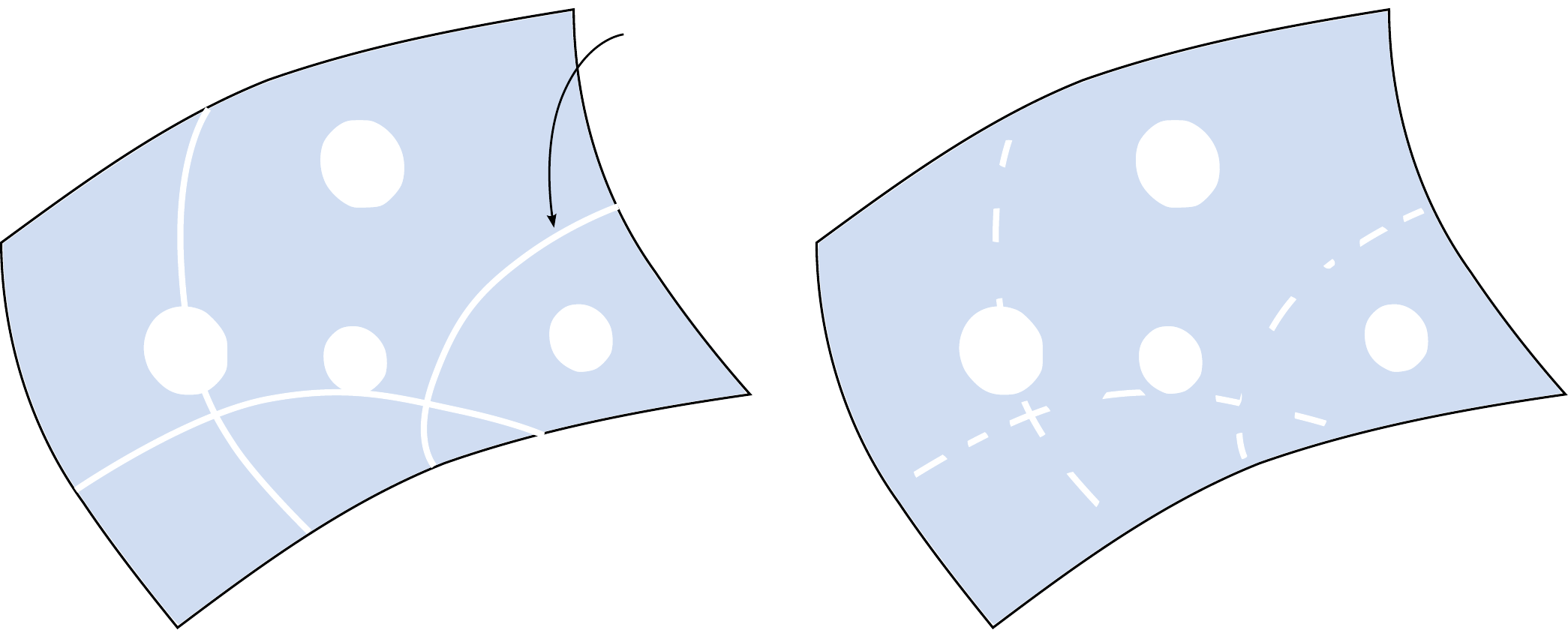_tex
\caption{The final nondegeneracy set of step 1: removing all essential resonances results in a disconnected set, but removing only the degenerate part does not destroy connectivity. }
\label{fig:second-nondeg}
\end{figure}


\subsubsection{Step 2 of the induction}

We completed step 1 with 
\begin{itemize}
\item the collection of rank one lattices $\cLi{1}$, 
with associated bases $\cBi{1}$,

\item a collection of essential rank two lattices $\cEi{2}$, 

\item a dual collection of codimension one resonant manifolds 
$\mathcal P^{(1)}$,

\item the nondegenerate set $\cN_1\subset \mathcal P^{(1)}$
and is $(1-4^{-1}\gamma)-$dense in $B^n$. 
\end{itemize}
By step 1, 
for each essential resonance $\Sgi{2} \supset \cBe{2}$ and $p \in \overline{\Gamma_{\Sgi{2}} \cap \cN_1}$  the pair $\left( H_{p, \cBe{2}}^s, h(\cBi{1}|\cBe{2})  \right) $ is nondegenerate. Therefore, Property A3 applies with 
\[
	m=2,\ \cB^\st = \cBe{2},\   U^\st_0 = U_{p_0, \cBe{2}},\  h = h(\cBi{1}|\cBe{2}).  
\]
Moreover, we can choose a \emph{uniform} constant $N_2 = N_2(\cEi{2}, \cN_1)$ over all $\Sgi{2} \in \cEi{2}$, $\cBi{1} \subset \Sgi{2}$ and $p \in \overline{\Gamma_{\Sgi{2}} \cap \cN_1}$ such that the conclusion of Property A3 hold.

We are now ready to define the set $\cLi{2}$.   We say a rank $2$ lattice $\Lbi{2}$ is 
\emph{admissible} if the following hold. 
\begin{enumerate}
\item There exists  $\Lbi{1} \in \cLi{1}$ such that  $\Lbi{1} \subset \Lbi{2}$. 
\item $\Lbi{2}$ cannot be generated by the previous generation essential resonances, namely
\begin{equation}
\label{eq:lat-generation-2}
	\Lbi{2} \not\subset \bigvee \{ \Sgi{2} \in \cEi{2}\},
\end{equation}
where $\bigvee$ is the smallest irreducible lattice that 
contains all lattices $\Sgi{2} \in \cEi{2}$. 
\item Item 2 ensures that $\Lbi{1} \subset \Lbi{2}$ is unique. Otherwise, suppose we have  $\Lbi{1}_1, \Lbi{1}_2 \subset \Lbi{2}  $ with bases $\cBi{1}_1, \cBi{1}_2$, then $\Lbi{2} = \Lbi{1}_1 \vee \Lbi{1}_2$,  and
\[
M(\Lbi{2}|\Lbi{1}_1) \le \max_{k \in \cBi{1}_1 \cup \cBi{1}_2}\{|k|\} \le M_{s}(\cPi{s}, \cN_{s}),
\]
hence $\Lbi{2} \in \cEi{2}$, which is a violation of item 2. 
\item (ghost property) For each $\Lbi{1} \subset \Lbi{2}$ and 
$\Lbi{1}\subset \Sgi{2}$, we have
\begin{equation}
\label{eq:ess-dom-2}
	M( \Sgi{2} \vee \Lbi{2}|\Sgi{2} ) > N_s(\cEi{s+1}, \cN_s). 
\end{equation}
In particular, for an adapted basis  $\cB^\st, \cB^\wk$  of $\Sgi{2} \subset \Sgi{2} \vee \Lbi{2}$
and $p \in \overline{\Gamma_{\Sgi{2}} \cap \cN_1}$
the conclusion of Property A3 hold\footnote{The name ``ghost'' comes 
from the fact that we test $k_2$ against all possible essential lattices 
$\Sgi{2}$}. We also point out in this case $\Sgi{3} = \Sgi{2} \vee \Lbi{2}$ will be an element of the next generation essential resonance. 
\end{enumerate}

We claim that the lattices that are \emph{not admissible} can be generated by a finite set of integer vectors. Therefore the resonance manifolds of the admissible lattices form a dense set. As a result:
\begin{lemma}\label{lem:step2-net}
There exists an collection of rank two admissible lattices $\cLi{2}$ 
such that 
\[
	\cPi{2} = \bigcup_{\Lbi{2} \in \cLi{2}} \Gamma_{\cLi{2}} \cap \cN_1
\]
is connected, $4^{-2} \gamma -$dense in $\cN_1$ and 
$(1- 2\cdot 4^{-2})\gamma$ dense in $B^n$. 
\end{lemma}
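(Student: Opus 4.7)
The plan is to verify the claim in the paragraph preceding the lemma---that non-admissible rank-two lattices are generated by a finite set of integer vectors---and then use density of admissible lattices to select a dense connected net of resonance curves inside $\cN_1$. Examining each admissibility clause in turn: clause (1) is automatic by restricting to $\Lbi{2}\supset \Lbi{1}$ for some $\Lbi{1}\in \cLi{1}$; clause (3) is a direct consequence of clause (2); clause (2) fails iff $\Lbi{2}$ is a rank-two sublattice of the single fixed irreducible lattice $\Lambda_0 := \bigvee\{\Sgi{2}:\Sgi{2}\in \cEi{2}\}$; clause (4) fails iff $\Lbi{2}$ contains some $\Lbi{1}\in \cLi{1}$ together with an integer vector of sup-norm at most $N_2$ that lies outside an essential $\Sgi{2}\in \cEi{2}$ with $\Sgi{2}\supset \Lbi{1}$. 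Since $\cLi{1}$, $\cEi{2}$, and $\{k\in \Z^{n+1}:|k|\le N_2\}$ are all finite, the non-admissible rank-two lattices are indeed controlled by finitely many integer vectors.

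From this it will follow that for each fixed $\Lbi{1}\in \cLi{1}$, the admissible rank-two extensions $\Lbi{2}\supset \Lbi{1}$ are dense in the family of all rank-two extensions of $\Lbi{1}$. Such an extension is parametrized by an integer direction $k\in \Z^{n+1}$ modulo $\Span_\R(\Lbi{1})$, and the non-admissible directions occupy only a finite union of cosets together with the image of $\Lambda_0$ in this quotient. The remaining admissible directions are equidistributed in the projective sense, and by transversality the intersection points $\Gamma_{\Lbi{2}}\cap \Gamma_{\Lbi{1}}$ (which are isolated) are dense in the one-dimensional curve $\Gamma_{\Lbi{1}}$. Consequently, for every $p\in \Gamma_{\Lbi{1}}\cap \cN_1$ and every $\rho'>0$ I can find an admissible $\Lbi{2}\supset \Lbi{1}$ whose resonance manifold $\Gamma_{\Lbi{2}}$ meets $\cN_1$ within distance $\rho'$ of $p$.

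The collection $\cLi{2}$ is then built by covering $\cN_1$ with a finite $4^{-2}\gamma/3$-net $\{p_\alpha\}$ arranged along a spanning tree of short paths in the connected open set $\cN_1$. For each $p_\alpha$ lying on some $\Gamma_{\Lbi{1}}\subset \cPi{1}$, choose an admissible $\Lbi{2}_\alpha\supset \Lbi{1}$ so that $\Gamma_{\Lbi{2}_\alpha}$ passes through a point of $\cN_1$ within $4^{-2}\gamma/3$ of $p_\alpha$. The union $\cPi{2} := \bigcup_\alpha \Gamma_{\Lbi{2}_\alpha}\cap \cN_1$ is then $4^{-2}\gamma$-dense in $\cN_1$; composed via the triangle inequality with the density of $\cN_1$ in $B^n$ established in the preceding induction step, this yields the required $(1-2\cdot 4^{-2})\gamma$-density in $B^n$.

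The main obstacle will be ensuring connectivity of $\cPi{2}$, since two admissible rank-two resonance curves $\Gamma_{\Lbi{2}_\alpha}, \Gamma_{\Lbi{2}_{\alpha+1}}$ intersect only at a rank-three resonance point and such a junction must fall inside the open set $\cN_1$ rather than on its excluded boundary of degenerate essential 2-resonances. I address this by taking $\{p_\alpha\}$ dense enough along the spanning tree in $\cN_1$ and exploiting the parameter freedom in the choice of admissible $\Lbi{2}_\alpha$: a small perturbation within the admissible family moves the junction $\Gamma_{\Lbi{2}_\alpha}\cap \Gamma_{\Lbi{2}_{\alpha+1}}$ continuously, while the compact containment of $\tilde{\cZ}_1$ in $\cZ_1$ together with the openness of $\cN_1$ in $\cPi{1}$ provides the uniform margin needed to keep each junction inside $\cN_1$. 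A finite coordinated choice along the tree then produces a connected $\cPi{2}$ with the required density properties.
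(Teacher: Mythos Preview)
Your approach matches the paper's, which offers no detailed proof beyond the sentence immediately preceding the lemma claiming that non-admissible lattices are ``generated by a finite set of integer vectors''; you flesh out that claim and the subsequent density and connectivity selection that the paper (explicitly presented as an outline) leaves entirely to the reader.

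One genuine imprecision in your treatment of clause~(4): the ghost property fails when some short vector $k$ with $|k|\le N_2$ lies in $\Sgi{2}\vee\Lbi{2}\setminus\Sgi{2}$, not necessarily in $\Lbi{2}$ itself, so your ``iff'' is only an ``if''. The repair is to observe that any such $k$ forces $\Sgi{2}\vee\Lbi{2}=\Sgi{2}\vee\{k\}$ (both irreducible of rank $3$), hence $\Lbi{2}\subset\Sgi{2}\vee\{k\}$ for one of finitely many rank-$3$ lattices indexed by pairs $(\Sgi{2},k)$ with $\Sgi{2}\in\cEi{2}$ and $|k|\le N_2$. Since step~2 is only invoked for $n\ge 3$, each such rank-$3$ lattice is a \emph{proper} sublattice of $\Z^{n+1}$, so the bad $k_2$-directions still form a finite union of proper sublattices and your density conclusion survives. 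A minor terminological point: for $n>3$ the junctions $\Gamma_{\Lbi{2}_\alpha}\cap\Gamma_{\Lbi{2}_{\alpha+1}}$ are $(n-3)$-dimensional rather than isolated points, which only makes your connectivity argument easier.
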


For each $\Lbi{2} \in \cLi{2}$, there is a unique $\Lbi{1} \in \cLi{1}$ with $\Lbi{1} \subset \Lbi{2}$. Since $\Lbi{1}$ comes with a standard basis $\cBi{1}$, we extend it using Proposition~\ref{prop:basis} to obtain a standard basis $\cBi{2}$ of $\Lbi{2}$. We call the collection of all basis $\cKi{2}$. 

Similar to step 1, we define the non-degeneracy set $\cY_2^\lambda(H_1, \cPi{2}) \subset \cPi{2}$ by the following condition: For any lattice 
$\Lbi{2} \in \cLi{2}$, with basis $\cBi{2}$, and  
$p \in \Gamma_{\Lbi{2}} \cap \cPi{2}$, 
the averaged potential $U_{p, \cBi{2}}$ has at most two $\lambda-$nondegenerate minima. 
\begin{lemma}
There exists an open set $\cU_2 \subset \cU_1$ and $\lambda_2>0$, such that for $H_1 \in \cU_2$, the nondegeneracy set $\cY_2^{\lambda_2}(H_1, \cPi{1})$ is open, connected, $4^{-2}\gamma-$dense in $\cPi{2}$, and 
$(1 - 2\cdot 4^{-2})\gamma-$dense in $B^n$.
\end{lemma}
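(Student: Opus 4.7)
The proof is expected to follow exactly the template of the preceding lemma on $\cY_1^{\lambda_1}$, with $\cLi{2}, \cBi{2}, \cPi{2}$ playing the roles of $\cLi{1}, \cBi{1}, \cPi{1}$. My plan is to establish, in order, openness, genericity producing the uniform $\lambda_2>0$, density, and connectedness.

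Openness of the $\lambda$-nondegeneracy condition for $U_{p,\cBi{2}}$ is immediate from the $C^2$-stability of nondegenerate Morse minima together with continuous dependence of $U_{p,\cBi{2}}(\cdot)= -Z_{\cBi{2}}(\cdot,p)$ on $(H_1,p)$. For the genericity step I would apply a standard Mather-type transversality argument (cf.\ \cite{Mather03}, Sect.~5, or \cite{BKZ11}, Proposition~5.1): the one-parameter family $p \mapsto U_{p,\cBi{2}}$ of $C^r$ functions on $\T^2$ depends linearly on the Fourier modes of $H_1$ supported in $\Lbi{2}\subset\Z^{n+1}$, and these modes can be perturbed independently. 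After an arbitrarily small $C^r$ perturbation of the given $H_1 \in \cU_1$, the ``bad set'' in each $\Gamma_{\Lbi{2}}\cap\cPi{2}$ -- the places where $U_{p,\cBi{2}}$ either has a degenerate global minimum or has three coincident global minima -- becomes a stratified subset of codimension $\ge 1$ in the ambient resonance manifold. Since $\cLi{2}$ is finite and $\overline{\cPi{2}}$ is compact, a single perturbation and a uniform $\lambda_2>0$ can be chosen for all admissible $\Lbi{2}\in\cLi{2}$ simultaneously, producing $\cU_2\subset\cU_1$ and an open set $\cY_2^{\lambda_2}$ which is $4^{-2}\gamma$-dense in $\cPi{2}$; density in $B^n$ then follows from the $(1-2\cdot 4^{-2})\gamma$-density of $\cPi{2}$ in $B^n$ given by Lemma~\ref{lem:step2-net}.

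The hard part is connectedness, and it is handled in the same spirit as at step~1. When $n\ge 4$, each resonance manifold in $\cPi{2}$ has dimension $n-2\ge 2$, and removing the codimension-$\ge 1$ bad set preserves connectedness trivially. In the marginal case $n=3$, however, $\cPi{2}$ is a finite 1-dimensional complex and isolated bad points could disconnect individual curves; the resolution, analogous to the trick that produced $\cN_1 = \cY_1^{\lambda_1}\cap\tilde{\cZ}_1$ out of the naive complement of the degenerate set, is that every such isolated bad point $p$ lies at the intersection of $\Gamma_{\Lbi{2}}$ with a higher-rank resonance manifold, and these points will be absorbed into the next generation $\cEi{3}$ of essential resonances. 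We therefore enlarge the admissible collection $\cLi{2}$ -- while preserving the conclusion of Lemma~\ref{lem:step2-net} -- so as to supply enough alternate curves through $\cN_1$ that reroute around each bad point, restoring connectedness of $\cY_2^{\lambda_2}$ without sacrificing the $4^{-2}\gamma$-density claim.
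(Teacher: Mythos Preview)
The paper does not actually supply a proof of this lemma; Appendix~A is explicitly an outline (``To avoid excessive length, we will give an outline of the proof with key statements, and the full details will appear later''), and this lemma, like its step-1 analogue, is stated without argument. So there is no paper proof to compare against, only your sketch to assess on its own merits.

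Your overall architecture (openness from $C^2$-stability of Morse minima, genericity by perturbing Fourier modes supported on $\Lbi{2}$, density inherited from Lemma~\ref{lem:step2-net}, then connectedness) is the natural one and is clearly what the authors have in mind. Two small corrections: the family $p\mapsto U_{p,\cBi{2}}$ is $(n-2)$-parameter, not one-parameter, since $p$ ranges over $\Gamma_{\Lbi{2}}$; and $\cY_2^{\lambda_2}$ is a subset of $\cPi{2}$, not $\cPi{1}$ (the statement in the paper contains a typo).

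The substantive gap is in your codimension count, and it propagates into the connectedness argument. You assert the bad locus has codimension $\ge 1$ and then claim that removing a codimension-$\ge 1$ set from a manifold of dimension $\ge 2$ ``preserves connectedness trivially''. That implication is false: a hypersurface disconnects. What saves the argument is that the bad locus is in fact codimension~$\ge 2$. Three coincident global minima impose two equal-value conditions; a \emph{degenerate global minimum} imposes not only $\det D^2U(\varphi^*)=0$ but also the vanishing of the cubic term along the kernel direction (otherwise $\varphi^*$ is an inflection, not a minimum). This is exactly why Figure~\ref{fig:first-nondeg} shows both failure modes as isolated \emph{dots} in the $(n-1)$-dimensional picture for $n=3$. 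With the correct codimension, the $n\ge 4$ case is immediate, and for $n=3$ the bad set in each $1$-dimensional $\Gamma_{\Lbi{2}}$ is generically empty, so your proposed rerouting via enlarging $\cLi{2}$ is unnecessary (and would in any case conflict with the order of the construction, in which $\cLi{2}$ is fixed before $\cY_2$ is defined).
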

Using compactness, we obtain that for  
\[
	m=2, \quad \cB^\st = \cBi{2} \in \cKi{2}, \quad p_0 \in \Gamma_{\cBi{2}} \cap \cY_2^{\lambda_2}, \quad U^\st = U_{p_0, \cBi{2}}, 
\]
there exists $M_2(\cPi{2}, \cY_2^{\lambda_2})>0$, such that the conclusion of Property A3 applies for all $\lM(\cB^\wk) > M_2$. As before, we require 
\[
	M_2(\cPi{2}, \cY_2^{\lambda_2}) > \max_{k_1, k_2 \in \cBi{2} \in \cKi{2}}(|k_1|,|k_2|). 
\]

We now define essential lattices. It suffices to define bases of these lattices. As in step 1, 
$\cEi{3}$ is the set of all rank $3$ irreducible lattices $\Sgi{3}$ satisfying the following conditions: there exists $\Lbi{2} \in \cLi{2}$ such that 
\[
	\Sgi{3} \subset \Lbi{2}, \quad M(\Sgi{3}|\Lbi{2}) \le M_2(\cPi{2}, \cY_2^{\lambda_2}). 
\]
Starting from $s=2$, the essential resonances come with a hierarchical structure (see Figure  \ref{fig:essential-lattices}).
\begin{itemize}
\item {\it Type $(3,1)$:} We say $\Sgi{3}$ is of type $(3,1)$ if 
there exists $\Sgi{2} \in \cEi{2}$ such that $\Sgi{2} \subset \Sgi{3}$. The collection of $(3,1)$ essential lattices is denoted $\cEi{3}_1$. 

This element $\Sgi{2}$ is necessarily unique, otherwise $\Sgi{3} \supset \Lbi{2}$ can be generated from two elements from  $\cEi{2}$, leading to a contradiction with item 2 in the definition of $\cLi{2}$. Moreover, since $\cLi{2}$ cannot be generated by vectors from $\Sgi{2}$, we have $\Sgi{3} = \Sgi{2} \vee \Lbi{2}$. 

By item 3 in the definition of $\cLi{2}$, 
\[
	M(\Sgi{2}\vee \Lbi{2}|\Sgi{2}) = M(\Sgi{3}|\Sgi{2}) > N_2(\cEi{2}, \cN_1).
\]
Recall that $\Sgi{2}$ comes with fixed basis $\cBe{2}$. We use Proposition~\ref{sec:basis} to extend this basis to an adapted basis $\cBe{3}$ of $\Sgi{3}$. We take this basis as the fixed basis of $\Sgi{3}$. For each $p \in \Gamma_{\Sgi{3}}$,  Property A3 applies. We say that
\[
	H_{p, \cBe{2}} \text{ dominates } H_{p, \cBe{3}}. 
\]
\item {\it Type $(3,2)$:} $\Sgi{3}$ is called type $(3,2)$ if it does not contain any element in $\cEi{2}$. The collection is denoted $\cEi{3}_2$. 

In this case, by definition, $\Sgi{3}\in \cEi{3}_2$ and we have 
\[
	M( \Sgi{3}|\Lambda_{k_1} ) > M_1(\cPi{1}, \cY_1^{\lambda_1}).
\]
We use Proposition~\ref{sec:basis} to extend $k_1$ to an adapted basis $\cBe{3}$ of $\Sgi{3}$, taken as the fixed basis for $\Sgi{3}$. Property A2 applies, and we say that 
\[
	H_{p, \cBi{1}} \text{ dominates } H_{p, \cBe{3}}. 
\]
\end{itemize}
We now have the decomposition
$$
\cEi{3}:=\cEi{3}_1\cup \cEi{3}_2.
$$

\begin{figure}[t]
\centering
\def\svgwidth{3.2in}
\input 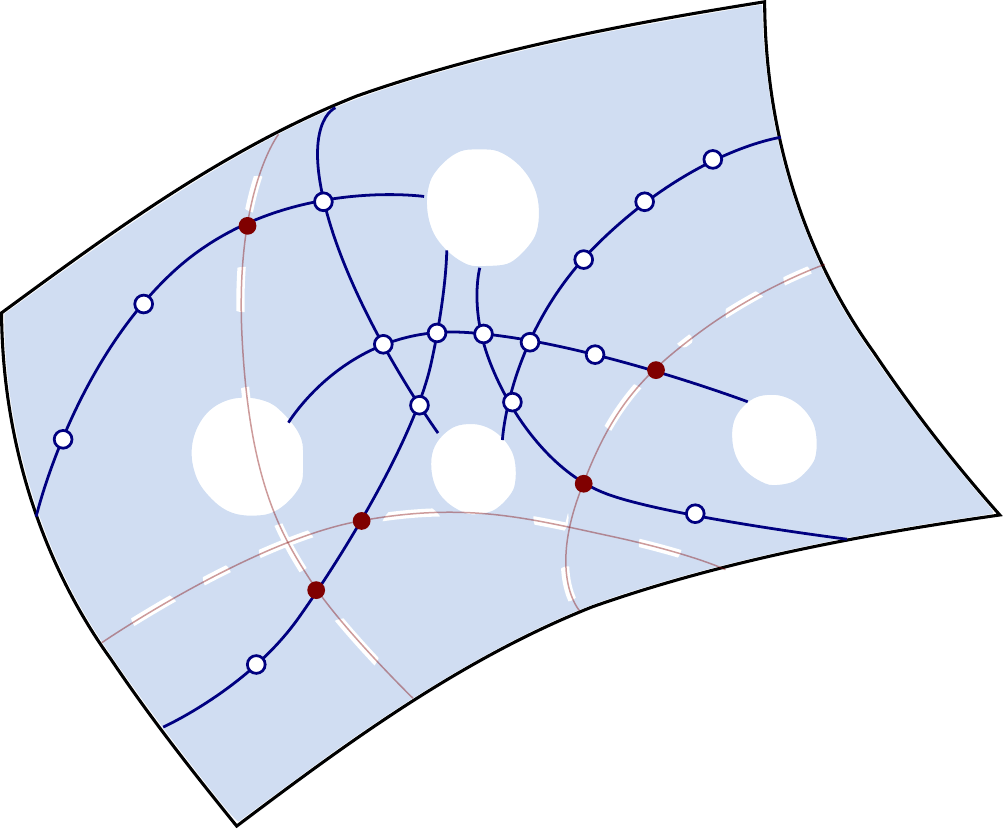_tex
\caption{Hierachy of essential resonances: faint red curves are the previous generation essential resonances, blues line are current generation diffusion path, solid red dots are of type $(3,1)$, hollow blue dots are of type $(3,2)$. }
\end{figure}

 For each $\Sgi{3} \subset \cEi{3}$ with basis $\cBe{3}$, we define the nondegeneracy set $\cZ_2(H_1, \Sgi{3}, \cY_2^{\lambda_2}) \subset 
\Gamma_{\Sgi{3}} \cap \cY_2^{\lambda_2}$ to be the subset such that 
for each $\Lbi{2} \subset \Sgi{3}$ with basis $\cBi{2}$, 
 \[
 	\left( H_{p, \cBe{3}}^s, h(\cBi{2}|\cBe{3}) , \rho \right) 
 \]
 satisfies the AM property. We then define $\cZ_2(H_1, \cEi{3}, \cY_2^{\lambda_2})$ to be the union of all $\cZ_2(H_1, \Sgi{3}, \cY_2^{\lambda_2})$ over essential resonances $\Sgi{3} \in \cEi{3}$. 

If the essential resonance $\Sgi{3} \in \cEi{3}$ is type $(3,1)$, we use Property A3; if $\Sgi{3}$ is of type $(3,2)$, we use Property A2. This allows us to prove the following non-degeneracy lemma. 
\begin{lemma} There exists a relative open set \ $\cU_2' \subset \cU_2$  and a relative open set $\tilde{\cZ}_2$ such that the following hold. 
\begin{enumerate}
\item For each $H_1 \in \cU'$, $\tilde{\cZ}_2$  is compactly contained in $\cZ_2(H_1, \cEi{3}, \cY_2^{\lambda_2})$. 
\item 
The subset
\[
	\cN_2 :=   \cY_2^{\lambda_2} \cap
\tilde{\cZ}_2 
\]
is open, connected,  $4^{-2}\gamma-$dense in $\cPi{2}$ and $(1 - 2\cdot 4^{-2})\gamma-$dense in $B^n$.
\end{enumerate}
\end{lemma}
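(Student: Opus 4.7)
The plan is to invoke Properties A2 and A3 for each essential lattice $\Sgi{3} \in \cEi{3}$, choosing which property to apply according to whether $\Sgi{3}$ is of type $(3,1)$ or $(3,2)$, then extract a single perturbation realizing the AM property on a thick subset of $\Gamma_{\cEi{3}} \cap \cY_2^{\lambda_2}$, and finally verify openness, density and connectedness of $\cN_2$.

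For a type $(3,1)$ lattice $\Sgi{3} = \Sgi{2} \vee \Lbi{2}$ with chosen adapted basis $\cBe{3}$ extending $\cBe{2}$, the ghost property from admissibility of $\cLi{2}$, namely $M(\Sgi{3}|\Sgi{2}) > N_2(\cEi{2},\cN_1)$, places us in the setting of Property A3 with $m=2$, strong basis $\cBe{2}$, strong potential $U_{p,\cBe{2}}$, and the AM-property hypothesis $(H^\st, h(\cBi{1}|\cBe{2}), \rho)$ supplied by $p \in \overline{\Gamma_{\Sgi{3}} \cap \cN_1}$, which is contained in $\cZ_1$ from step 1. A short computation in coordinates $\cBe{3} = [l_1,l_2,l_3]$, in which $k_2 \in \Lbi{2}$ necessarily has a nonzero $l_3$-component, shows that the induced homology $h(\cBi{2}|\cBe{3})$ has the form $\bigl(z\cdot h(\cBi{1}|\cBe{2}),\, w\bigr)$ with $z,w$ nonzero, which is exactly the form $(zh,w)$ required by Property A3. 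For a type $(3,2)$ lattice, admissibility forces a unique $\Lbi{1} = \Lambda_{k_1} \subset \Sgi{3}$ and the gap $M(\Sgi{3}|\Lambda_{k_1}) > M_1(\cPi{1},\cY_1^{\lambda_1})$; Property A2 then applies with $m=1$, $\cB^\st = \cBi{1}$, and the two-minima hypothesis on $U_{p,\cBi{1}}$ supplied by $p \in \cY_1^{\lambda_1}$. A parallel computation in the basis $\cBe{3} = [k_1,l_2,l_3]$ yields $h(\cBi{2}|\cBe{3}) = (0,h_2,h_3)$, precisely the form $g = (0,\dots,0,h)$ for which A2 delivers an open dense family of valid weak perturbations.

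Since $\cEi{3}$ is finite and, for each $\Sgi{3} \in \cEi{3}$, only finitely many $\Lbi{2} \in \cLi{2}$ satisfy $\Lbi{2} \subset \Sgi{3}$, we intersect the open dense families produced for each pair $(\Sgi{3},\cBi{2})$. Uniformity in the base point $p \in \overline{\Gamma_{\Sgi{3}} \cap \cY_2^{\lambda_2}}$ is obtained from openness of the AM property (Property A0) together with a standard finite covering on the compact set of $p$'s, so finitely many local adjustments yield a single $H_1^\ast \in \cU_2$ with $\cZ_2(H_1^\ast, \cEi{3}, \cY_2^{\lambda_2})$ open and appropriately dense in $\Gamma_{\cEi{3}} \cap \cY_2^{\lambda_2}$. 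We then define $\tilde{\cZ}_2$ to be a relatively open subset with $\overline{\tilde{\cZ}_2} \subset \cZ_2(H_1^\ast, \cEi{3}, \cY_2^{\lambda_2})$ and still sufficiently dense in $\cY_2^{\lambda_2} \cap \Gamma_{\cEi{3}}$. Property A0 again supplies a neighborhood $\cU_2' \subset \cU_2$ of $H_1^\ast$ on which $\overline{\tilde{\cZ}_2} \subset \cZ_2(H_1, \cEi{3}, \cY_2^{\lambda_2})$ for every $H_1 \in \cU_2'$.

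The hardest step will be maintaining connectedness of $\cN_2$: since $\Gamma_{\cEi{3}}$ sits as a codimension-one subset of $\cPi{2}$, removing any closed portion of it could in principle disconnect $\cY_2^{\lambda_2}$. This is why, exactly as in the step-one picture of Figure \ref{fig:second-nondeg}, $\cN_2$ retains the nondegenerate part $\tilde{\cZ}_2$ of essential resonances and excises only the degenerate complement $\cY_2^{\lambda_2} \cap \Gamma_{\cEi{3}} \setminus \tilde{\cZ}_2$; choosing $\tilde{\cZ}_2$ dense enough in $\cY_2^{\lambda_2} \cap \Gamma_{\cEi{3}}$ preserves sufficiently many crossings through essential resonances to keep $\cN_2$ connected. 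The density estimates then follow by chaining the $4^{-2}\gamma$-density of $\cY_2^{\lambda_2}$ in $\cPi{2}$ with the density of $\cPi{2}$ in $\cN_1$ from Lemma \ref{lem:step2-net} and the chosen density of $\tilde{\cZ}_2$ inside the essential part, which together give the claimed $4^{-2}\gamma$-density in $\cPi{2}$ and $(1 - 2\cdot 4^{-2})\gamma$-density in $B^n$.
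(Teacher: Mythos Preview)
Your proposal follows exactly the approach indicated in the paper, which only sketches the argument in the sentence preceding the lemma: use Property A3 for type $(3,1)$ essential lattices and Property A2 for type $(3,2)$, then assemble via finiteness of $\cEi{3}$, compactness, and Property A0. Since the appendix is explicitly an outline with ``full details to appear later,'' your write-up is in fact considerably more detailed than what the paper supplies, including the verification that $h(\cBi{2}|\cBe{3})$ has the required form in each case and the explicit connectedness argument via retaining $\tilde{\cZ}_2$; these are implicit in the paper but not spelled out.

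One small caution: in the type $(3,1)$ computation you assert that both $z$ and $w$ are nonzero, but if $k_2$ happens to have trivial $\Sgi{2}$-component (e.g.\ $k_2 = l_3$) then $w=0$ and $h(\cBi{2}|\cBe{3}) = (\pm h', 0)$. Whether this is covered depends on reading ``any nonzero pair of integers $z,w$'' in Property A3 as $(z,w)\ne(0,0)$ rather than both individually nonzero; the paper does not disambiguate, so you may want to flag this edge case rather than claim both are nonzero outright.
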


\subsubsection{Step $s+1$ of the induction}

We completed step $s$ with 
\begin{itemize}
\item the collection of rank $s$ lattices $\cLi{s}$, 
with associated bases $\cBi{s}$,

\item a collection of essential rank s+1 lattices $\cEi{s+1}$, 

\item a dual collection of codimension one resonant manifolds 
$\mathcal P^{(s)}$,

\item the nondegenerate set $\cN_s\subset \mathcal P^{(s)}$,
which is $(1-4^{-s}\gamma)-$dense in $B^n$. 
\end{itemize}

\breakheading{The diffusion path}. 
\begin{itemize}
\item We have the collection of lattices $\cLi{1}, \cdots, \cLi{s}$, with $\cLi{j} = \{\Lbi{j}_i\}$ are irreducible rank $j$ lattices. For each $\Lbi{j} \in \cLi{j}$, there exists a \emph{unique} $\Lbi{j-1}\subset \Lbi{j}$ and such that 
$\Lbi{j-1} \subset  \cLi{j-1}$. 

\item Each $\Lbi{s}\in \cLi{s}$ has a \emph{ordered} basis defined in the following way. For each $\Lbi{1}$ we fix a basis $\cBi{1} =\{k_1\}$ which is unique up to a sign. From the previous property, $\Lbi{s}$ comes with the chain of inclusion
\[
	\Lbi{1} \subset \cdots \subset \Lbi{s}, \quad \Lbi{j} \in \cLi{j},\ 
1 \le j \le s,
\]
and we extend the basis $\cBi{1}$ of $\Lbi{1}$ an increasing set of bases $\cBi{1} \subset \cdots \subset \cBi{s}$ by consecutive application of Proposition~\ref{prop:basis}. 

\item  We use $\cKi{s}$ to denote the collection of standard bases. For each $\cBi{s} = [k_1, \cdots, k_s]$, we denote $|\cBi{s}| = \sup_i |k_i|$. 
\item The diffusion path at step $s$ is 
\[
	\cPi{s} = \bigcup_{\Lbi{s} \in \cLi{s}} \Gamma_{\Lbi{s}}. 
\]
The set $\cPi{s}\cap B^n$ is connected and $(1- 2 \cdot 4^{-s})\gamma-$dense in $B^n$.
\end{itemize}

\breakheading{Essential resonances}. 
\begin{itemize}
\item We have the essential lattices $\cEi{2}, \cdots, \cEi{s+1}$, where for each $1\le j \le s$, $\Sgi{j+1} \in \cEi{j+1}$ is a rank $j+1$ irreducible lattice. For each $\Sgi{j+1}$, there exists at least one, and at most two element $\Lbi{j} \in \cLi{j}$, such that $\Lbi{j} \subset \cLi{j}$. 
\item If essential lattice $\Sgi{s+1}\in \cEi{s+1}$ contains only 
one element $\Sgi{s} \in \cEi{s}$, then there exists $1 \le j \le s$, such that 
\[
	\Sgi{j+1} \subset \cdots \subset \Sgi{s+1},  \quad \Sgi{t} \in \cEi{t}, j+1 \le t \le s+1
\]
is the longest chain of essential lattices, meaning $\Sgi{j+1}$  does not contain any element of $\cEi{j}$. We then have the following inclusion
\[
	\Lbi{1} \subset \cdots \subset \Lbi{j-1} \subset \Sgi{j+1} \subset \cdots \subset \Sgi{s+1}.
\]
We use Proposition~\ref{prop:basis} to obtain the chain of adapted bases (called \emph{ordered} basis:
\[
	\cBi{1} \subset \cdots \subset \cBi{j-1} \subset \cBe{j+1} \subset \cdots \subset \cBe{s+1},
\]
where each $\cBi{t}$ is a basis of $\Lbi{t}\in \cLi{t}$ and each $\cBe{t}$ is 
a basis of $\Sgi{t} \in \cEi{t}$. Recording the increment of rank in the chain, 
the essential resonance $\Sgi{s+1}$ is called of type $(s+1,j)$.
Denote by $\cEi{s+1}_j$ the set of essential resonances with this property.
\end{itemize}

\begin{figure}[h]
  \begin{center}
  \includegraphics[width=12.5cm]{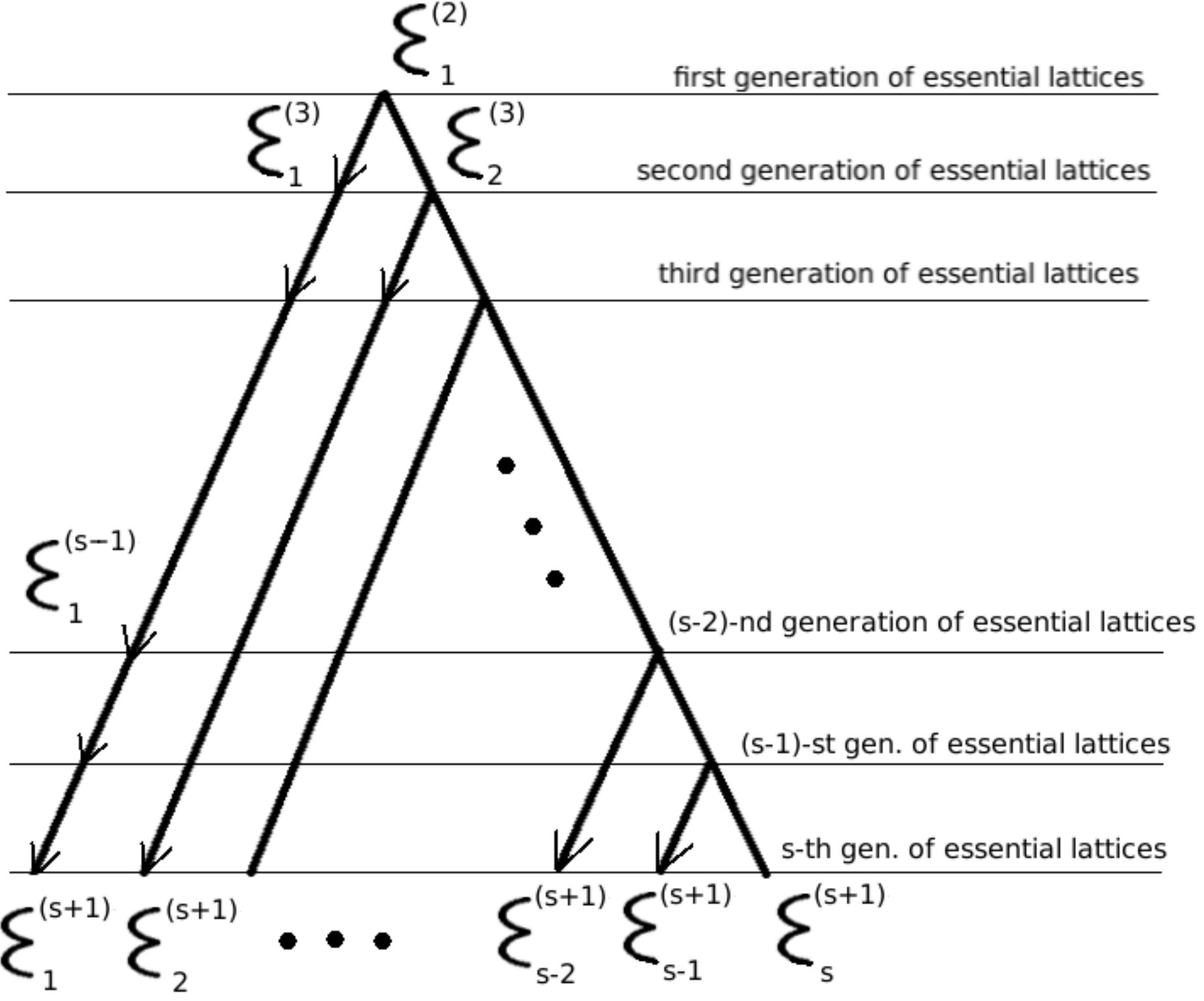}
  \end{center}
  \caption{Essential lattices }
 \label{fig:essential-lattices}
\end{figure}

\breakheading{Strong system and nondegeneracy}.
\begin{itemize}
\item For each $ 1 \le  j \le s$, there exists the nondegeneracy set $\cN_j \subset \cPi{j}$, with the property that each $\cN_j$ is relative open, connected and $(1- 4^{-j}\gamma)-$ dense in $\cPi{j}\cap B^n$ and $(1- 2\cdot 4^{-j})\gamma-$dense in $B^n$. The following inclusion hold
\[
	\cPi{1} \supset \cN_1 \supset \cdots \supset \cPi{s} \supset \cN_s. 
\]
\item There exists a sequence of (nonempty) relative open sets
\[
	\{\|H_1\|_{C^r}=1\} \supset \cU_0 \supset \cU_1 \supset \cU_1' \supset \cdots \supset \cU_s \supset \cU_s'
\]
\item There exists $\lambda_{s} >0$ such that for each $H_1 \in \cU_s$, $\Lbi{s}\in \cLi{s}$ with basis $\cBi{s}$, and  $p \in \overline{\cN_s \cap \Gamma_{\Lbi{s}}}$, the strong system $H_{p, \cBi{s}}$ is nondegenerate 
in the sense of Property A3 and the averaged potential $U_{p,\cBi{s}}$ has 
at most two $\lambda$-nondegenerate minima. Using compactness, let 
\[
	M_s(\cPi{s}, \cN_s)> \sup_{\cBi{s} \in \cKi{s}}|\cBi{s}|
\]
be a uniform constant such that Property A3 applies. 
\item For each $H_1 \in \cU_s'$, $\Sgi{s+1} \in \cEi{s+1}$ with basis $\cBe{s+1}$, each $\Lbi{s} \subset \Sgi{s+1}$ with basis $\cBi{s}$, and $p \in \overline{\cN_s \cap \Gamma_{\Sgi{s+1}}}$, the pair
\[
	\left(   H_{p, \cBe{s+1}}, h(\cBi{s}, \cBe{s+1})\right)
\]
is nondegenerate in the sense of Property A3. Using compactness, let 
\[
	N_s(\cEi{s+1}, \cN_s)>0
\]
be a uniform constant such that Property A3 applies. 
\end{itemize}

\breakheading{Domination Properties}
\begin{itemize}


\item Let $\Sgi{s+1} \in \cEi{s+1}_{ j}$ be an essential resonance of type
$(s+1,j)$, 
then we have the chain
\[
	\Lbi{1} \subset \cdots \subset \Lbi{j-1} \subset \Sgi{j+1} \subset \cdots \subset \Sgi{s+1}.
\]
The following domination property holds:
\[
	M(\Sgi{j+1}|\Lbi{j-1}) > M_{j-1}(\cPi{j-1}, \cN_{j-1}), \qquad \quad \qquad 
\]
\[
	 M(\Sgi{t+1}|\Sgi{t}) > N_{t-1}(\cEi{t}, \cN_{t-1}), \quad j+1 \le t \le s. 
\]
\item As a corollary of the domination properties, for $\Sgi{s+1}$ with the type $(s+1,j)$, 
let 
\[
	\cBi{1} \subset \cdots \subset \cBi{j-1} \subset \cBe{j+1} \subset \cdots \subset \cBe{s+1}
\]
be the chain of basis. Then
\begin{itemize}
\item For each $p \in \overline{\Gamma_{\Lbi{j-1}}\cap \cN_{j-1}}$, the system $H_{p, \cBi{j-1}}$ dominates $H_{p, \cBe{j+1}}$ in the sense of Property A2.
\item Fore each $j+1 \le t \le s$, $p \in \overline{\Gamma_{\Sgi{t}} \cap \cN_{t-1}}$, the system $H_{p, \cBe{t}}$ dominates $H_{p, \cBe{t+1}}$ in the sense of Property A3. 
\end{itemize}
\end{itemize}

We now define the set $\cLi{s+1}$. This is essentially an elaboration of step 2.  We say the rank $s+1$ lattice $\Lbi{s+1}$ is 
\emph{admissible} if the following hold. 
\begin{enumerate}
\item There exists  $\Lbi{s} \in \cLi{s}$ such that  $\Lbi{s} \subset \Lbi{s+1}$. 
\item $\Lbi{s+1}$ cannot be generated by any previous generation essential resonances, namely
\begin{equation}
\label{eq:lat-generation}
	\Lbi{s+1} \not\subset \bigvee \{ \Sgi{s+1} \in \cEi{s+1}\},
\end{equation}
where $\bigvee $ is the smallest irreducible lattice that 
contains all lattices $\Sgi{s+1} \in \cEi{s+1}$. 

\item Item 2  ensures that $\Lbi{s} \subset \Lbi{s+1}$ is unique. Otherwise, suppose we have  $\Lbi{s}_1, \Lbi{s}_2 \subset \Lbi{s+1}  $ with bases $\cBi{s}_1, \cBi{s}_2$, then $\Lbi{s+1} = \Lbi{s}_1 \vee \Lbi{s}_2$,  and
\[
M(\Lbi{s+1}|\Lbi{s}_1) \le \max\{|\cBi{s}_1|, |\cBi{s}_2|\} \le M_{s}(\cPi{s}, \cN_{s}),
\]
hence $\Lbi{s+1} \in \cEi{s+1}$, which is a violation of item 2. 
\item (ghost property) For each $\Lbi{s} \subset \Lbi{s+1}$ and 
$\Lbi{s}\subset \Sgi{s+1}$, we have
\begin{equation}
\label{eq:ess-dom-s}
	M( \Sgi{s+1} \vee \Lbi{s+1}|\Sgi{s+1} ) > N_s(\cEi{s+1}, \cN_s). 
\end{equation}

\end{enumerate}
\begin{lemma}\label{lem:steps-net}
There exists an collection $\cLi{s+1} = \{\Lbi{s+1}\}$ of admissible pairs 
such that 
\[
	\cPi{s+1} = \bigcup_{\Lbi{s+1} \in \cLi{s+1}} \Gamma_{\Lbi{s+1}} \cap \cN_s
\]
is connected, $ 4^{-s-1}\gamma-$dense in $\cN_s$,  and  $(1- 2\cdot 4^{-s-1})\gamma$ dense in $B^n$. 
\end{lemma}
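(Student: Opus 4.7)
My plan is to exploit the fact that the admissibility conditions, though restrictive, only exclude a finite collection of ``bad'' directions in $\Z^{n+1}/\Lbi{s}$. Fix $\Lbi{s} \in \cLi{s}$; any rank-$(s+1)$ extension $\Lbi{s+1} \supset \Lbi{s}$ is determined by a residue class of an extension vector $k_{s+1} \in \Z^{n+1}/\Lbi{s}$. Condition 2 excludes those $k_{s+1}$ lying in the sublattice $(\bigvee \cEi{s+1})/\Lbi{s}$; condition 4 excludes, for each of the finitely many $\Sgi{s+1} \in \cEi{s+1}$ containing $\Lbi{s}$, the extensions whose representative lies within integer distance $N_s(\cEi{s+1}, \cN_s)$ of $\Sgi{s+1}/\Lbi{s}$. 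Both constraints carve out finite tubes or sublattices in $\Z^{n+1}/\Lbi{s}$, whose complement is dense. Hence the admissible sub-resonances $\Gamma_{\Lbi{s+1}}\subset \Gamma_{\Lbi{s}}$ form a countable dense family of codimension-one submanifolds.

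The density claim is then standard. For each $\Lbi{s} \in \cLi{s}$, I choose a finite subfamily $\cLi{s+1}_{\Lbi{s}}$ of admissible extensions so that the corresponding sub-resonances are $\tfrac{1}{2}4^{-s-1}\gamma$-dense inside $\overline{\Gamma_{\Lbi{s}} \cap \cN_s}$ (this compact set can always be finitely covered since admissible sub-resonances are dense). Setting $\cLi{s+1} = \bigcup_{\Lbi{s} \in \cLi{s}} \cLi{s+1}_{\Lbi{s}}$ gives $4^{-s-1}\gamma$-density of $\cPi{s+1}$ in $\cN_s$, and the triangle inequality applied to the $(1- 2\cdot 4^{-s})\gamma$-density of $\cN_s$ in $B^n$ yields the claimed $(1 - 2\cdot 4^{-s-1})\gamma$-density.

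Connectivity is the genuine obstacle and where most of the work lies. Two leaves $\Gamma_{\Lbi{s}_1}$ and $\Gamma_{\Lbi{s}_2}$ with $\Lbi{s}_j \in \cLi{s}$ meet along the codimension-$(s+1)$ resonance $\Gamma_{\Lbi{s}_1 \vee \Lbi{s}_2}$; by the construction of $\cLi{s}$ this join is automatically essential, so the meeting set lies inside $\tilde{\cZ}_s \subset \cN_s$. Since $\cN_s$ and $\cPi{s}$ are connected, one has a combinatorial tree of leaves connected through essential meeting points. My plan is to enlarge each $\cLi{s+1}_{\Lbi{s}}$ to include, for every pair of adjacent leaves, an admissible $\Lbi{s+1}$ whose $\Gamma_{\Lbi{s+1}}$ passes within $4^{-s-1}\gamma$ of the meeting locus on \emph{both} leaves. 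Such ``connector'' extensions can be chosen precisely because admissible extension vectors accumulate arbitrarily close to any direction in $\Z^{n+1}/\Lbi{s}$ outside the finitely many forbidden tubes. Finiteness of $\cLi{s}$ and of the number of relevant meeting loci (compact combinatorics) keeps the resulting $\cLi{s+1}$ finite.

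The main technical hurdle I expect is the tension between the ghost condition (which bans extensions passing too close to existing essential resonances) and the demand for connectors through those very resonances. The key geometric lemma needed is that in the quotient lattice $\Z^{n+1}/\Lbi{s}$, which has rank $n+1-s \ge 2$, the complement of finitely many norm-bounded tubes around rank-one sublattices remains not merely dense but arc-wise ``close enough'' in integer distance that one can route admissible extensions through any desired region of $\Gamma_{\Lbi{s}} \cap \cN_s$. This is a straightforward geometry-of-numbers argument using the rank assumption $s\le n-1$, and once established the density and connectivity steps combine to give $\cLi{s+1}$ with the stated properties.
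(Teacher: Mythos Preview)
Your approach matches the paper's, but you should know that the paper does not actually prove this lemma: Appendix~A is explicitly an outline (``To avoid excessive length, we will give an outline of the proof with key statements, and the full details will appear later''), and Lemma~\ref{lem:steps-net} is simply stated. The only justification the paper offers is the one-line claim preceding the step-2 analogue, Lemma~\ref{lem:step2-net}: ``the lattices that are \emph{not admissible} can be generated by a finite set of integer vectors. Therefore the resonance manifolds of the admissible lattices form a dense set.'' Your density argument is exactly this, spelled out in the quotient $\Z^{n+1}/\Lbi{s}$.

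On connectivity you have gone further than the paper, which says nothing beyond asserting the conclusion. Your discussion is reasonable, but the ``tension'' you flag between the ghost condition and connectors near essential resonances is less severe than you suggest: condition~4 is a \emph{lattice} inequality $M(\Sgi{s+1}\vee\Lbi{s+1}\,|\,\Sgi{s+1}) > N_s$, not a geometric proximity condition on the resonance manifolds, so an admissible $\Gamma_{\Lbi{s+1}}$ can pass arbitrarily close in $\R^n$ to $\Gamma_{\Sgi{s+1}}$ provided the extension vector is large enough. The genuine connectivity mechanism across leaves is that two admissible sub-resonances $\Gamma_{\Lbi{s+1}_1}\subset\Gamma_{\Lbi{s}_1}$ and $\Gamma_{\Lbi{s+1}_2}\subset\Gamma_{\Lbi{s}_2}$ intersect along $\Gamma_{\Lbi{s+1}_1\vee\Lbi{s+1}_2}$, which has codimension at most $s+2\le n$ and is generically nonempty; one then checks this intersection can be arranged inside $\cN_s$.
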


The set $\cY_{s+1}^\lambda(H_1, \cPi{s+1}) \subset \cPi{s+1}$ is defined by the following condition: For any $\Lbi{s+1} \in \cLi{s+1}$ with basis $\cBi{s+1}$, and  $p \in \Gamma_{\cBi{s+1}} \cap \cPi{s+1}$, there exists $0 < \lambda < \lambda'$ such that the averaged potential $U_{p, \cBi{s+1}}$ has at most two $\lambda'-$nondegenerate minima. 
\begin{lemma}
There exists an open set $\ \cU_{s+1} \subset \cU_s'$ and $\lambda_{s+1}>0$, such that for $H_1 \in \cU_{s+1}$, the nondegeneracy set $\cY_{s+1}^{\lambda_{s+1}}(H_1, \cPi{s+1})$ is $4^{-s-1}\gamma-$dense in $\cPi{s+1}$ and connected. 
\end{lemma}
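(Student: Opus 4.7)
The plan is to extend the argument that establishes the analogous lemma at the initial step (the step producing $\cY_1^{\lambda_1}$) directly to step $s+1$. Once the collection $\cLi{s+1}$ has been fixed by Lemma~\ref{lem:steps-net}, the defining condition of $\cY_{s+1}^{\lambda_{s+1}}$ at a point $p \in \Gamma_{\Lbi{s+1}}$ depends only on the restriction of the Fourier expansion of $H_1$ to modes of the form $\ell_1 k_1 + \cdots + \ell_{s+1} k_{s+1}$ with $\cBi{s+1}=[k_1,\ldots,k_{s+1}]$. Since $\cKi{s+1}$ is a \emph{finite} collection of such bases, the problem reduces to a uniformly controlled finite parametric family of functions on $\T^{s+1}$, parametrized by $p$ varying in the $(n-s-1)$-dimensional manifolds $\Gamma_{\Lbi{s+1}}$ and by $H_1 \in \cU_s'$.

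To produce $\cU_{s+1}$ and $\lambda_{s+1}$, I would apply Mather's transversality argument for the condition [H1] (see \cite{Mather03,BKZ11}), in the $p$-parametrized form suited to an $(n-s-1)$-dimensional base. Define the bad set $\cB(H_1) \subset \cPi{s+1}$ as the set of those $p$ at which $U_{p,\cBi{s+1}}$ either possesses a degenerate global minimum, admits three or more global minima, or admits two global minima whose derivatives along $\Gamma_{\Lbi{s+1}}$ are insufficiently separated. A parametric Thom transversality argument in the $H_1$-direction then shows that, for a relatively open and dense $\cU_{s+1} \subset \cU_s'$, each stratum of $\cB(H_1)$ has codimension at least $1$ in $\cPi{s+1}$, with the three-minima and degeneracy strata having codimension at least $2$. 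By compactness of $\overline{\cN_s}$ and finiteness of $\cKi{s+1}$, one extracts a uniform $\lambda_{s+1}>0$ enforcing the quantitative conditions (i)--(iii) of [H1$\lambda$] throughout the complement $\cY_{s+1}^{\lambda_{s+1}} = \cPi{s+1} \setminus \cB(H_1)$, up to a slight shrinking of $\cU_{s+1}$ that preserves all properties inherited from $\cU_s'$.

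Connectedness and density are the main technical points. Density follows once $\cB(H_1)$ is closed with codimension $\geq 1$ in $\cPi{s+1}$; by slightly enlarging $\cLi{s+1}$ within its admissible class if necessary (which does not affect admissibility thanks to the genericity proven in Lemma~\ref{lem:steps-net}), the $4^{-s-1}\gamma$-neighborhood of $\cY_{s+1}^{\lambda_{s+1}}$ covers $\cPi{s+1}$. For connectedness, when $n-s-1 \geq 2$ the complement of a codimension-$\geq 2$ subset of a connected manifold remains connected, and the codimension-$1$ two-minima stratum is in fact \emph{included} in $\cY_{s+1}^{\lambda_{s+1}}$ (since two non-degenerate minima are allowed by [H1$\lambda$]); thus $\cY_{s+1}^{\lambda_{s+1}}$ inherits connectedness from $\cPi{s+1}$. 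In the borderline one-dimensional case $n-s-1=1$ one proceeds exactly as in the initial step (cf.\ Figure~\ref{fig:second-nondeg}): the problematic points are discrete, and they are not removed from $\cY_{s+1}^{\lambda_{s+1}}$ itself but rather absorbed into the next-generation essential-resonance set $\cEi{s+2}$, where they are handled by the $\tilde{\cZ}_{s+1}$ construction. The main obstacle I anticipate is precisely this 1-dimensional borderline: one must verify that the parametric transversality can be arranged \emph{uniformly} over the finite collection $\cKi{s+1}$ so that no accidental accumulation of degenerate points occurs along the finitely many 1-dimensional resonant curves at the final step; this is exactly the point at which the finiteness of $\cKi{s+1}$, inherited from the inductive construction, becomes essential.
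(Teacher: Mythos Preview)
The paper does not prove this lemma; Appendix~A is explicitly an outline with ``the full details [to] appear later,'' and this lemma (like its analogues at steps $1$ and $2$) is stated without argument. Your overall strategy---parametric transversality in $H_1$, codimension counting for the bad locus, and uniformity of $\lambda_{s+1}$ via finiteness of $\cKi{s+1}$ and compactness of $\overline{\cN_s}$---is the natural approach and presumably what the authors intend.

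There is, however, a genuine confusion in your treatment of the borderline case $n-s-1=1$. Your proposed resolution---that the problematic points ``are not removed from $\cY_{s+1}^{\lambda_{s+1}}$ itself but rather absorbed into the next-generation essential-resonance set $\cEi{s+2}$, where they are handled by the $\tilde{\cZ}_{s+1}$ construction''---conflates two distinct layers of the inductive scheme. The $\tilde{\cZ}$-mechanism and Figure~\ref{fig:second-nondeg} concern the AM property at essential resonances and the passage from $\cY$ to $\cN$; they say nothing about the definition of $\cY_{s+1}^{\lambda}$ itself. If $U_{p,\cBi{s+1}}$ has a degenerate global minimum then $p\notin\cY_{s+1}^{\lambda}$ for every $\lambda>0$, and no later bookkeeping changes that. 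The correct resolution is already implicit in your own codimension count: both the three-minima locus and the degenerate-global-minimum locus have codimension $\ge 2$ in the parameter $p$ (a critical point with $\det\mathrm{Hess}=0$ but nonvanishing cubic term along the kernel is not a local extremum, so ``degenerate local minimum'' costs one transversality condition beyond $\det\mathrm{Hess}=0$). Hence for $H_1$ in a suitably shrunk $\cU_{s+1}$ these loci are \emph{empty} along each one-dimensional $\Gamma_{\Lbi{s+1}}$, and $\cY_{s+1}^{\lambda}$ coincides with $\cPi{s+1}$ for small $\lambda$. A minor related point: the third clause in your bad set (two minima with nearby $p$-derivatives) belongs to condition [H1$\lambda$] but is not part of the definition of $\cY_{s+1}^{\lambda}$; including it is harmless for the final conclusion but muddies the codimension discussion.
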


Define 
\[M_{s+1}(\cPi{s+1}, \cY_{s+1}^{\lambda_{s+1}}) > \sup_{\cBi{s+1}\in \cKi{s+1}} |\cBi{s+1}|\]
 be the uniform constant over all $H_1 \in \cU_{s+1}$, $p \in \cY_{s+1}^{\lambda_{s+1}}$, and $\Lbi{s+1} \in \cLi{s+1}$.  
The essential lattice set $\cEi{s+2}$ is defined as the set of all rank $s+2$ irreducible lattices $\Sgi{s+2}$ satisfying the following conditions: there exists $\Lbi{s+1} \in \cLi{s+1}$ such that 
\[
	\Sgi{s+2} \supset \Lbi{s+1}, \text{ and } M(\Lbi{s+1}|\Sgi{s+2}) < M_{s+1}(\cPi{s+1}, \cY_{s+1}^{\lambda_{s+1}}). 
\]
We have the following remarks:
\begin{itemize}
\item Suppose there exists $\Sgi{s+2} \supset \Sgi{s+1}$ with $\Sgi{s+1}\in \cEi{s+1}$, then $\Sgi{s+1}$ is unique. Otherwise, suppose $\Sgi{s+2}$ contains both $\Sgi{s+1}_1, \Sgi{s+1}_2$, then there exists $\Lbi{s+1} \subset \Sgi{s+2} = \Sgi{s+1}_1 \vee \Sgi{s+1}_2$, this is a violation of \eqref{eq:lat-generation}. 
\item In case that $\Sgi{s+2} \supset \Sgi{s+1}$, then for $\Lbi{s+1} \in \cLi{s+1}$ with $\Sgi{s+2} \supset \cLi{s+1}$, we get 
\[
	M(\Sgi{s+1}|\Sgi{s+2}) = M(\Sgi{s+1}|\Sgi{s+1}\vee \Lbi{s+1}) > N_s(\cEi{s+1}, \cN_s)
\]
by \eqref{eq:ess-dom-s}. 
\end{itemize}

Finally,  for each $\Sgi{s+2} \subset \cEi{s+2}$, we define the nondegeneracy set $\cZ_{s+1}(H_1, \Sgi{s+2}, \cY_{s+1}^{\lambda_{s+1}})$ to be the subset that for each $\cBi{s+1}=(k_1, \cdots,  k_{s+1}) \subset \Sgi{s+2}$, the pair
 \[
 	\left( H_{p, \cBe{s+2}}^s, h(\cBi{s+1}|\cBe{s+2})  \right) 
 \]
 is nondegenerate. We then define $\cZ_{s+1}(H_1, \cEi{s+2}, \cY_{s+1}^{\lambda_{s+1}})$ to be the union  over essential resonances $\Sgi{s+2} \in \cEi{s+2}$. 

The following lemma is proven using the type of essential resonances, similar to step 2. 
\begin{lemma}\label{lem:ess-nondeg-s}
 Suppose $s+2 < n$.  There exists an open set $\cU_{s+1}' \subset \cU_{s+1}$  and a relative open set $\tilde{\cZ}_{s+1}$ such that the following hold. 
\begin{enumerate}
\item For each $H_1 \in \cU'_{s+1}$, $\tilde{\cZ}_{s+1}$  is compactly contained in $\cZ_{s+1}(H_1, \cEi{s+2}, \cY_{s+1}^{\lambda_{s+1}})$. 
\item 
The subset
\[
	\cN_{s+1} :=   \cY_{s+1}^{\lambda_{s+1}} \cap \tilde{\cZ}_{s+1}   
\]
is open, connected,  $4^{-s-1}\gamma-$dense in $\cPi{2}$ and $(1 - 4^{-s-1})\gamma-$dense in $B^n$.
\end{enumerate}
Moreover, if $s+2 =n$, 
\[
	\bigcup_{\Sgi{s+2} \in \cEi{s+2}} \Gamma_{\Sgi{s+2}} \cap \cY_{s+1}^{\lambda_{s+1}}
\]
is a collection of isolated points. Then the same two points hold with 
\[
	\tilde{Z}_{s+1} = \bigcup_{\Sgi{s+2} \in \cEi{s+2}} \Gamma_{\Sgi{s+2}} \cap \cY_{s+1}^{\lambda_{s+1}}. 
\]
\end{lemma}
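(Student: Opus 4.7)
The strategy is to apply Properties A2 and A3 simultaneously to every essential resonance in the finite collection $\cEi{s+2}$, then perform a ``surgery'' on the degenerate portions of the essential resonances that is delicate enough to preserve connectivity and density of $\cN_{s+1}$. All the heavy lifting has been done in setting up the inductive data: the domination constants $M_t$, $N_t$ at the earlier stages and the ghost property in the definition of $\cLi{s+1}$.

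First I would treat the essential resonances one at a time. Fix $\Sgi{s+2}\in\cEi{s+2}$ and use the type decomposition to locate the unique maximal chain
\[
\Lbi{1}\subset\cdots\subset\Lbi{j-1}\subset\Sgi{j+1}\subset\cdots\subset\Sgi{s+2}
\]
realizing $\Sgi{s+2}$ as type $(s+2,j)$. The earlier domination inequalities guarantee that $H^s_{p,\cBe{s+2}}$ is a dominant extension (in the sense of Section~\ref{sec:intro-dom}) of its immediate parent, either $H_{p,\cBi{j-1}}$ when $j=s+1$ (via Property A2) or $H^s_{p,\cBe{s+1}}$ otherwise (via Property A3). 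For every basis $\cBi{s+1}$ of every $\Lbi{s+1}\in\cLi{s+1}$ with $\Lbi{s+1}\subset\Sgi{s+2}$, apply the relevant property to deduce that the triple $(H^s_{p,\cBe{s+2}},h(\cBi{s+1}|\cBe{s+2}),\rho)$ has the AM property after an arbitrarily small perturbation of the weak potential. By Theorem~\ref{thm:basis-norm}, the weak potential is controlled by the Fourier modes of $H_1$ supported in $\Sgi{s+2}\setminus\Sgi{s+1}$ (respectively in $\Sgi{s+2}\setminus\Lbi{j-1}$), so this yields an open and dense condition on $H_1\in\cU_{s+1}$.

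Next I would intersect over the finite set $\cEi{s+2}$ and over the finitely many admissible basis choices for each $\Sgi{s+2}$ to obtain the required open dense set $\cU_{s+1}'\subset\cU_{s+1}$. For $H_1\in\cU_{s+1}'$ the set $\cZ_{s+1}(H_1,\cEi{s+2},\cY_{s+1}^{\lambda_{s+1}})$ is a relatively open subset of $\bigcup_{\Sgi{s+2}}\Gamma_{\Sgi{s+2}}\cap\cY_{s+1}^{\lambda_{s+1}}$ that is dense there. Choose $\tilde{\cZ}_{s+1}$ to be a relatively open neighborhood in $\cPi{s+1}$ whose intersection with each $\Gamma_{\Sgi{s+2}}\cap\cY_{s+1}^{\lambda_{s+1}}$ is compactly contained in $\cZ_{s+1}$, uniformly over $H_1\in\cU_{s+1}'$. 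Compactness, together with Property A0 (openness of AM), gives uniform constants needed to run the induction at step $s+2$.

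The main obstacle, and the step requiring the most care, is item 2: verifying that $\cN_{s+1}=\cY_{s+1}^{\lambda_{s+1}}\cap\tilde{\cZ}_{s+1}$ is connected, $4^{-s-1}\gamma$-dense in $\cPi{s+1}$, and $(1-4^{-s-1})\gamma$-dense in $B^n$. The difficulty is that inside $\cPi{s+1}$ the essential resonances $\Gamma_{\Sgi{s+2}}$ are locally of codimension one, so simply removing the $H_1$-dependent degenerate part can disconnect $\cPi{s+1}$, exactly as in the schematic of Figure~\ref{fig:second-nondeg}. I would follow the same ``surgery'' recipe as in the initial steps: only the \emph{nearly degenerate} portion $\Gamma_{\Sgi{s+2}}\cap\cY_{s+1}^{\lambda_{s+1}}\setminus\tilde{\cZ}_{s+1}$ is excised; density of $\cZ_{s+1}$ in each $\Gamma_{\Sgi{s+2}}$ then allows one to route any path around the excised portion by first perturbing it to cross the essential resonance transversally through a point of $\tilde{\cZ}_{s+1}$. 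The required density and connectivity of $\cPi{s+1}$ guaranteed by Lemma~\ref{lem:steps-net} is the input that makes this rerouting possible. In the terminal case $s+2=n$, each $\Gamma_{\Sgi{s+2}}$ degenerates to an isolated point, so connectivity of $\cN_{s+1}=\cY_{s+1}^{\lambda_{s+1}}$ after puncturing at a finite set is automatic, and we may take $\tilde{\cZ}_{s+1}$ to be the discrete set of AM punctures; density is then preserved trivially.
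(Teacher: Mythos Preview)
Your proposal is correct and follows essentially the same approach as the paper: the paper's own proof of this lemma is only the one-line remark ``proven using the type of essential resonances, similar to step 2,'' and your plan---classify each $\Sgi{s+2}$ by its type $(s+2,j)$, apply Property~A2 at the rank-two jump (i.e.\ when $j=s+1$, parent $H_{p,\cBi{s}}$) and Property~A3 at the rank-one steps (parent $H_{p,\cBe{s+1}}$), intersect over the finite collection to get $\cU_{s+1}'$, then perform the connectivity-preserving surgery of Figure~\ref{fig:second-nondeg}---is exactly that. Your write-up is in fact more explicit than the paper's outline, which defers full details to a later publication.
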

This finishes the construction of the lattices and verification of properties for step $s+1$.

\subsubsection{Concluding the induction} 

The induction ends when $\cU_{n-1}'$,  $\cLi{n-1}$, $\cPi{n-1}$, $\cN_{n-1}$ and $\mathcal E^{(n)}$ are defined. Then $\cPi{n-1} \cap \cN_{n-1}$ is $\gamma-$dense diffusion path in $\cB^n$, and for each $p \in \Gamma_{\Lbi{n-1}}\cap \cN_{n-1}$, $H_1 \in \cU_{n-1}'$, the potential $U_{p, \cBi{n-1}}$ has at most two $\lambda_{n-1}-$nondegenerate minima. 

We then have 
\begin{lemma}
There exists an open and dense set of $\cU''_{n-1} \subset \cU_{n-1}'$ such that  [H1$\lambda_{n-1}$] holds for all $H_1 \in \cU_{n-1}''$ on $\cPi{n-1} \cap \cN_{n-1}$. 
\end{lemma}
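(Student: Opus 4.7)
My plan is to observe that for $H_1 \in \cU'_{n-1}$ and any $p \in \cPi{n-1}\cap\cN_{n-1} \subset \cY_{n-1}^{\lambda_{n-1}}$, items (1) and (2) of [H1$\lambda_{n-1}$] are automatic: by definition of $\cY_{n-1}^{\lambda_{n-1}}$, for any $\Lbi{n-1}\in\cLi{n-1}$ with basis $\cBi{n-1}$ the potential $U_{p,\cBi{n-1}} = -Z_{\cBi{n-1}}(\cdot,p)$ has at most two $\lambda_{n-1}$-nondegenerate global minima, which translates verbatim to at most two global maxima of $Z_{\cBi{n-1}}(\cdot,p)$, each with Hessian $\le -\lambda_{n-1}\Id$. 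Hence I only need to produce an open-dense subset $\cU''_{n-1} \subset \cU'_{n-1}$ on which item (3), the velocity separation $|\partial_\Gamma F_1(p_0) - \partial_\Gamma F_2(p_0)| \ge \lambda_{n-1}$ at bifurcation points, also holds.

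Openness of (3) follows from the implicit function theorem: the uniform Hessian bound $\le -\lambda_{n-1}\Id$ gives $C^1$-smooth dependence of the local-max branches $\varphi_j^*(p,H_1)$ and the heights $F_j(p,H_1):=Z_{\cBi{n-1}}(\varphi_j^*(p,H_1),p)$ on $(p,H_1)$, and the finite collection $\cLi{n-1}$ together with compactness of each curve $\overline{\Gamma_{\Lbi{n-1}}\cap\cN_{n-1}}$ makes the inequality uniformly open. For density I would fix $H_1\in\cU'_{n-1}$ and each curve $\Gamma = \Gamma_{\Lbi{n-1}}\cap\cN_{n-1}$; after an arbitrarily small preliminary $C^r$-perturbation I may assume the zero set $\{F_1 = F_2\}$ along $\overline\Gamma$ is finite and transverse, consisting of points $\{p_1,\ldots,p_N\}$. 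At each $p_\ell$ I add a perturbation of the form $\delta H_1 = \eps\, \chi_\ell(p)\sum_{k\in\Lbi{n-1},\,|k|\le K} a_k\, e^{2\pi i k\cdot(\theta,t)}$, where $\chi_\ell$ is a bump around $p_\ell$ with support disjoint from the other $p_{\ell'}$. To first order in $\eps$, this shifts the pair $(\partial_\Gamma F_1(p_\ell), \partial_\Gamma F_2(p_\ell))$ by a rank-$2$ linear image of $(a_k)_k$, since $\varphi_1^*(p_\ell) \ne \varphi_2^*(p_\ell)$ in $\T^{n-1}$ and point evaluations at distinct torus points are linearly independent on sufficiently rich Fourier subspaces; hence I can drive $|\partial_\Gamma(F_1-F_2)|$ above $\lambda_{n-1}$ at every $p_\ell$ with an arbitrarily $C^r$-small perturbation.

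The main obstacle is guaranteeing that this density-producing perturbation does not destroy any of the nested nondegeneracy structures $\cN_1,\ldots,\cN_{n-2}$ or the essential-resonance conditions built into $\cU'_{n-1}$ over the long induction of Section~\ref{sec:const-path}. I handle this by choosing the Fourier support of $\delta H_1$ inside $\Lbi{n-1}$ but avoiding the finitely many coarser sublattices $\Lbi{s} \subset \Lbi{n-1}$, $s<n-1$, and avoiding the finitely many essential sublattices relevant near $p_\ell$, and by localizing $\chi_\ell$ in $p$ away from every coarser resonance manifold. Under these restrictions, $\delta H_1$ averages out of every earlier-stage potential $U_{p,\cBi{s}}$ and every essential-resonance potential, so all previously secured nondegeneracies are preserved. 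Taking a union of such local perturbations over the finitely many curves in $\cLi{n-1}$ and the finitely many bifurcation points on each, and using openness of each earlier condition to absorb the remaining error, yields the required open-dense set $\cU''_{n-1}$.
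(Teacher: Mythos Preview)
The paper does not actually prove this lemma. Appendix~\ref{sec:diff-path} is explicitly presented as an outline (``To avoid excessive length, we will give an outline of the proof with key statements, and the full details will appear later''), and this particular lemma is simply stated between two sentences of the concluding subsection, with no argument given. So there is no paper proof to compare against; I can only assess your proposal on its own merits.

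Your overall strategy is the natural one and is consistent with how the paper treats the analogous lemmas earlier in the induction (which are also stated without proof). The reduction to item~(3) is correct: membership in $\cN_{n-1}\subset\cY_{n-1}^{\lambda_{n-1}}$ gives items~(1) and~(2) of [H1$\lambda_{n-1}$] directly. The openness argument is fine. For density, your scheme of perturbing by Fourier modes supported in $\Lbi{n-1}$ but outside the finitely many coarser sublattices $\Lbi{s}$, and localizing in $p$ via bump functions, is exactly how one protects the lower-order potentials $U_{p,\cBi{s}}$. Two points deserve more care than you give them. First, the essential resonances $\Sgi{n}\in\cEi{n}$ \emph{contain} $\Lbi{n-1}$, so your perturbation does alter the slow systems $H^s_{p_0,\cBe{n}}$ at the punctures; you cannot ``avoid'' them by Fourier support choices. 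You must instead rely on openness of the AM property (Property~A0) and on the fact that the punctures form a finite set, so $\chi_\ell$ can be supported away from them unless a bifurcation point $p_\ell$ coincides with a puncture---and in that residual case you need a separate argument that a small perturbation simultaneously achieves separation and preserves AM. Second, the lemma asks for the \emph{quantitative} condition [H1$\lambda_{n-1}$] with the same $\lambda_{n-1}$ already fixed for item~(2); since arbitrarily small perturbations only produce arbitrarily small derivative separation, you should either argue that $\lambda_{n-1}$ may be halved without affecting the prior induction (which is true, since $\cY^{\lambda}$ grows as $\lambda$ shrinks), or note that the paper's downstream use only requires the qualitative [H1].
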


Moreover, from Lemma~\ref{lem:ess-nondeg-s} we know that condition [H2] holds on all essential resonances. Therefore, the diffusion path $\cPi{n-1}\cap \cN_{n-1}$ satisfies all the conditions required.

\section{Diffusion mechanism and  AM property}
\label{sec:diffusion-amproperty}

The goal of this section is to give a short review of diffusion mechanisms. Then we focus on diffusion mechanism 
using variational methods and discuss difficulties arising 
in higher dimensions. After that we explain the role of 
dominant systems. 

 In \cite{Arn64} Arnold proposed the following example 
\[
H(q,p,\varphi,I,t)=\dfrac{I^2}{2}+\dfrac{p^2}{2}+\eps
(1-\cos q)(1-\mu (\sin \varphi+\sin t)),
\]
where $q,\varphi$ and $t$ are angles and $p,I\in\R$. 
This example is a perturbation of the product of 
a one-dimensional pendulum and a one-dimensional rotator.
There is a rich literature on 
Arnold example and we do not intend to give extensive list 
of references; we mention 
\cite{AKN, BB02, DLS06, Tre04}, and references therein. 

The important feature of this example is that it has a 3-dimensional NHIC
$\Lambda=\{p=q=0\}$, which is the direct product of $\R$ 
and $2$-dimensional torus $\T^2$. Later having 
a 3-dimensional NHIC means that there is a NHIM diffeomorphic to 
the direct product of $\R$ and $2$-dimensional torus $\T^2$.

\begin{center}
{\bf Whiskered tori and transition chains}
\end{center}

In \cite{Arn64} Arnold noticed that for each $\omega \in\R$ 
there are an invariant $2$-dimensional torus
$$\T^2_\omega=\{p=q=0,I=\omega\}$$ having 
$3$-dimensional stable and instable manifolds
$W^s(\T^2_\omega)$ and $W^u(\T^2_\omega)$ resp. 
Notice that orbits inside $\T^2_\omega$ have a well
defined rotation number equal to $\omega$.  

Call an ordered sequence of tori $\{\T^2_{\omega_i}\}_{i}$
{\it transition chain} if for each $i$ we have 
\[
W^s(\T^2_{\omega_i}) \text{ and }W^u(\T^2_{\omega_{i+1}})
\text{ intersect transversally}.
\]
In \cite{Arn64} proved that for any $a<b$ 
there is a transition chain such that $\omega_0=a$ 
and $\omega_N=b$ for some $N$ and showed that 
this implies existence of orbits asymptotic to 
$\T^2_b$ in the future and to $\T^2_a$ in the past. 

\begin{center}
{\bf Generalized transition chains}
\end{center}

In \cite{Mather91a} Mather proposed a diffusion 
mechanism where invariant tori where replaced 
by Aubry-Mather invariant sets for twist maps. 

For fiber convex superliear time-periodic Hamiltonians 
$H(\theta,p,t), \theta,t\in\T, p\in \R$ for each rotation 
number $\omega$ there is a ``minimal'' invariant set 
$\cA_\omega$ consisting ``minimal'' orbits rotation 
number $\omega$. 

\begin{center}
{\bf The $2$-torus graph property.}
\end{center}
Let $\pi:(\theta,p,t)\to (\theta,t)$ is the natural projection.
Mather proved that each such a set $\cA_\omega$ 
is a Lipschitz graph over $\pi \cA_{\omega}$, i.e. $\pi$ is 
one-to-one on $\cA_{\omega}$ and the inverse 
$\pi^{-1}: \pi \cA_{\omega}\to \cA_{\omega}$
is Lipschitz. 

We say that an invariant set $\cA_\omega$ has 
{\it a $2$-torus graph property}, if there is $C^1$ 
smooth map $\pi:T\T\times \T \to \T^2$ having 
maximal rank in a neighborhood of $\cA_\omega$ 
such that $\pi$ is one-to-one on $\cA_{\omega}$ 
and the inverse $\pi^{-1}: \pi \cA_{\omega}\to 
\cA_{\omega}$ is Lipschitz. 

\begin{itemize}
\item (rational case) if $\omega=p/q$ for some integers 
$p,q$ with $q>0$ the set $\cA_\omega$ contains 
``minimal'' periodic  orbits.
\item (irrational case) if $\omega\not\in \mathbb Q$ 
the set $\cA_\omega$ either  a Lipschitz $2$-torus, i.e. 
$\pi \cA_\omega=\T^2$ or contains a suspension of a Denjoy set. 
\end{itemize}

\begin{center}
{\bf A generalized transition chain.}
\end{center}
Using give a precise meaning of a stable and 
an unstable set of each invariant set $\cA_\omega$. 
These sets are not necessarily manifolds, but still denoted 
$W^s(\cA_{\omega})$ and $W^u(\cA_{\omega})$ resp. 
One can give a precise meaning of transverse intersection 
of these sets using the barrier function. Call 
it {\it a generalized transverse intersection.} 

An ordered sequence of ``minimal'' invariant sets 
$\{\cA_{\omega_i}\}_i$ is called {\it a generalized transition chain}
if 
\begin{itemize}
\item each invariant set $\cA_{\omega_i}$ has 
a $2$-torus graph property;
\item for each $i$ invariant sets $W^s(\cA_{\omega_i})$ 
and $W^u(\cA_{\omega_{i+1}})$ have 
{\it a generalized transverse intersection}. 
\end{itemize}

In \cite{Mather91,Mather93} Mather proposed 
a generalization of constuction from \cite{Mather91a}. 
Inspired by these ideas, in \cite{Ber08, CY04,CY09} 
proved that for a generic perturbation in the Arnold 
example there are generalized transition chains. 
Moreover, there are orbits shadowing this transition chain. 

\begin{center}
{\bf An equivalence of invariant sets in a generalized 
transition chain.}
\end{center}
In \cite{Ber08} replaces generalized transversality 
condition with forcing relation. Then he shows that if 
$\cA_{\omega}$ forces $\cA_{\omega'}$ and vise versa
then this is an equivalence relation. In particular, if 
nearby invariant sets $\cA_{\omega_i}$ and 
$\cA_{\omega_{i+1}}$ are equivalent, then there 
are orbits heteroclinic orbits for any pair of invariant 
sets in a generalized transition chain. 

In \cite{BKZ11} we construct ``short'' $3$-dimensional 
NHICs. Then we show 
that each of such cylinders carries a generalized 
transition chain. Moreover, all invariant sets in such 
a chain are equivalent and, therefore, there are orbits 
connecting any pair of invariant sets in 
this transition chain.

In \cite{KZ13} we construct a ``connected'' collection 
of $3$-dimensional NHICs and show that each cylinder 
carries a generalized transition chain.

\begin{center}
{\bf The $2$-torus graph and AM properties.}
\end{center}

Partial averaving of nearly integrable system 
$H_\eps = H_0+\eps H_1$ near a resonant manifold 
leads to a mechanical system of $d\le n$ degrees of
freedom  
\[
H(\varphi, I) = K(I) - U(\varphi), \quad \varphi\in \T^d, I \in \T^d,
\]
where $K(I)$ is a positive definite quadratic form
and $U$ is a sufficiently smooth function (see (\ref{eq:slow-system})). 
 
In order to find ``minimal'' invariant set
having the $2$-torus graph property
\begin{itemize}
\item we construct $3$-dimensional NHICs;
 
\item we prove that each $3$-dimensional NHIC
contains a family of ``minimal'' invariant set
and each such a set is localized.

\item due to localization we prove that the projection 
along the action component onto the $2$-torus $\T^2$
is one-to-one with a Lipchitz inverse. 
\end{itemize}

In order to construct a $3$-dimensional NHIC for 
$H_\eps$ near a maximal order resonance it suffices
to construct a $2$-dimensional NHIC, diffeomorphic
to the standard cylinder, for the averaged mechanical 
system $H$. 

Due to concervation of energy each $2$-dimensional 
NHIC consists of ``minimal'' hyperbolic periodic orbits. 
This leads to the following problem: 

{\it Construct a family of ``minimal'' invariant sets 
consisting of hyperbolic periodic orbits! }

In the case $d=2$ it suffices to consider minimal sets 
with rational rotation vectors. Indeed, generically 
minimal sets with rational rotation vector is a hyperbolic 
periodic orbit. In the case $d>2$ it is not longer true as 
the well-know Hedlund example shows  (see e.g. \cite{Lev97}). 
 
More exactly, if we consider an integer homology $h$ on $\T^d$ 
and consider infinite minimizers of homology class $h$, i.e. the Aubry 
set $\cA(h)$ (see section \ref{sec:intro-weak-kam} for precise definition). Then 
\begin{itemize}
\item $\cA(h)$ does not have to consist of periodic orbits or does not even have to have 
countably many invariant components (see e.g. \cite{Mather04}). 

\item $\cA(h)$ consisting  of periodic orbits do not 
guarantee they have homology $h$ (see \cite{Lev97}). 

\item In the class of Tonelli Hamiltonians minimization within 
the class of closed loops in some homology class $h$ might lead 
to non-hyperbolic minimal periodic orbits (see \cite{Arn98}).  
\end{itemize}
The AM property guarantee all these properties. 

\begin{center}
{\bf The jump.}
\end{center}

In \cite{KZ13} section 12 we show that for each pair 
of ``crossing'' cylinders there is {\bf a jump} from 
an invariant set from one generalized transition chain 
with another one. The jump, in particular, means that 
these invariant sets are equivalent and, therefore, 
invariant sets from both generalized transition chains 
are equivalent. 

One of the main conclusions of this paper is that 
we construct a diffusion path $\Gm$ and a ``connnected'' 
collection $3$-dimensional NHICs and show each of 
these cylinders carries a collection of invariant 
sets haing $2$-torus property. 

Using the technique from \cite{BKZ11,KZ13}
it should imply that invariant sets in each cylinder
form a generalized transition chain and are equivalent. 

Aside of many technical details we beleive that the only 
important missing part of construction of diffusing
orbits along the path $\Gamma$ is {\bf the jump}.
Construction of a variational problem leading to 
the jump for $3\frac 12$-degree of freedom 
is in section 8 \cite{KZ14}.

\section{Normally hyperbolic invariant manifolds}
\label{sec:nhic-proof}

In this section we state a version of the center manifold theorem and prove Theorem~\ref{thm:nhic-persist}. 
While the central manifold theorem is classical,  we need an version whose center direction is a non-compact 
set equipped with a Riemannian metric. This is done in the first two subsections. In the last subsection, 
we perform a reduction on our system to apply the central manifold theorem. 

\subsection{Normally hyperbolic invariant manifolds via isolation block}

We state an abstract theorem on existence of normally hyperbolic invariant 
manifolds for a smooth map $F$. based on Conley's isolation blocks (see McGehee, \cite{McG73}). 


We introduce a set of notations. 
We have three components $x \in \R^s,\ y\in \R^u,
\ z\in \Omega^c \subset \R^c$ , where $\Omega^c$ is a (possibly unbounded) convex set. 
We assume that $\Omega^c$ admits a $C^1$ complete Riemannian metric $g$. We also consider a Riemannian metric  on the product space $W = \R^s \times \R^u \times \Omega^c$ by taking the tensor product of $g$ and $\Omega^c$, and the standard Euclidean metric on $\R^s, \R^u$. 

Fix some $r>0$ and let  $D^s\subset \R^{s}$ and 
$D^u \subset \R^{u}$ be \emph{closed}
 balls of radius $r$ at the origin in $\R^s$ and $\R^u$ ($r$ is considered fixed and we omit the dependence).  Denote 
$D^{sc} = D^s \times \Omega^c$, $D^{uc} = D^u \times M$ and  $D = D^{sc} \times D^u$. 
 .

Consider a $C^1$ smooth map
\[
F:D=D^s\times D^u \times \Omega^c \to \R^s \times \R^u \times \Omega^c,
\]
we state a set of conditions guaranteeing the set 
\[
W^{sc}(F)=\{Z\in D: F^k(Z)\in D \text{ for all }k>0 \},
\]
called the center-stable manifold, 
is a graph $\{(X,Y) \in D^{sc}\times D^u: 
w^{sc}(X)=Y\}$ for a $C^1$ function $w^{sc}$.

\begin{itemize}
\item[{[C1]}] $\pi_{sc}F(D^{sc}\times D^u)
\subset D^{sc}$. 

\item[{[C2]}] $F$ maps $D^{sc}\times \partial D^u$ into  $D^{sc}
\times \R^u \setminus D^u$ and  is 
a homotopy equivalence.
%
\end{itemize}

The first two conditions guarantee a {\it topological 
isolating block}: $F$ stretches $D^{sc}\times B^u$ along 
the unstable component $D^u$ and is a weak contraction along the center-stable 
component $D^{sc}$. 

Now we state the {\it cone conditions}. For some $\mu>0$
\[
C^u_\mu(Z)=\{v=(v^c,v^s,v^u)\in T_ZD: 
\mu \|v^u\|^2\ge \|v^c\|^2+\|v^s\|^2\}.
\] 
Note that 
\[
	(C^u_\mu(Z))^c = \{v=(v^c,v^s,v^u)\in T_ZD: 
 \mu^{-1}(\|v^c\|^2+\|v^s\|^2) \ge \|v^u\|^2\} =: C^{sc}_{\mu^{-1}}(Z). 
\]
Let us also define
\[
	K_u^\mu(x_1, y_1, z_1) = \{(x_2,y_2,z_2) : \mu \|y_2 - y_1\|^2 \ge \|x_2 - x_1\|^2 + \dist(z_1, z_2)^2 \},
\]
where the distance is induced by the Riemannian metric $g$. 

We assume there is $\mu >1$ and $\nu>1$ with the property that 
for any $Z_1,Z_2\in D$ such that 
$Z_2 \in K_u^\mu(Z_1)$ we have 
\begin{itemize}
\item[{[C3]}] $F(Z_2)\in K_\mu^u(F(Z_1)).$
\item[{[C4]}]  $\|\pi_u(F(Z_2)-F(Z_1))\|
\ge \nu \|\pi_u(Z_2-Z_1)\|.$
\end{itemize}

\begin{proposition}
 \label{prop:lipschitz-manifold} 
(Lipshitz center-stable manifold theorem) Suppose $F$ satisfies 
conditions [C1-C4], then 
$W^{sc}(F)$ is given by the graph of a Lipschitz function
\[
W^{sc}(F) = \{(x, y, z) \in D : w^{sc}(x,z) = y\}.
\]
Moreover, for Lebesgue almost every $Z \in W^{sc}(F)$, we have 
\[
	T_Z W^{sc}(F) \in C^{sc}_{\mu^{-1}}(Z). 
\]
\end{proposition}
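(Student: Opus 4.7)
The plan is to establish the graph structure in three steps---existence, uniqueness, and Lipschitz regularity---and then deduce the tangent cone condition via Rademacher's theorem. Throughout, I exploit the fact that although $\Omega^c$ may be noncompact, the fiber $D^u$ is compact, so the critical compactness arguments happen entirely in the $y$-variable.

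For existence, fix $(x_0, z_0) \in D^{sc}$ and consider, for each $N \geq 0$, the closed set
\[
W_N(x_0, z_0) = \{y \in D^u : F^k(x_0, y, z_0) \in D \text{ for all } 0 \leq k \leq N\}.
\]
By [C1] the only way a forward orbit can leave $D$ in finite time is through the unstable boundary, so membership in $W_N$ is a purely unstable-side constraint. The homotopy equivalence hypothesis in [C2], applied fiberwise over the fixed $(x_0, z_0)$, implies via a standard Wa\.zewski/degree argument (compare \cite{McG73}) that each $W_N$ is nonempty. Since the $W_N$ are nested compact subsets of $D^u$, the finite intersection property yields $\bigcap_{N \geq 0} W_N \neq \emptyset$, producing at least one fiber point whose full forward orbit lies in $D$. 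For uniqueness, suppose $Z_1 \neq Z_2$ both have forward orbits in $D$ and $Z_2 \in K_u^\mu(Z_1)$. Iterating [C3] and [C4] gives
\[
F^k(Z_2) \in K_u^\mu(F^k(Z_1)), \qquad \|\pi_u(F^k Z_2 - F^k Z_1)\| \geq \nu^k \|\pi_u(Z_2 - Z_1)\|,
\]
and since $\nu > 1$ while $\pi_u F^k(Z_i) \in D^u$ is uniformly bounded, this forces $\pi_u(Z_2 - Z_1) = 0$; the defining inequality of $K_u^\mu$ then collapses to force $\pi_{sc}(Z_2 - Z_1) = 0$ as well, so $Z_1 = Z_2$. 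In particular, any two points of $W^{sc}(F)$ with the same $(x,z)$ coincide, and $W^{sc}(F)$ is the graph of a well-defined map $w^{sc}: D^{sc} \to D^u$.

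For Lipschitz regularity, the contrapositive of the uniqueness argument says that any two distinct points $Z_1, Z_2 \in W^{sc}(F)$ satisfy $Z_2 \notin K_u^\mu(Z_1)$, which upon expanding is precisely
\[
\|w^{sc}(x_2, z_2) - w^{sc}(x_1, z_1)\|^2 \leq \mu^{-1}\bigl(\|x_2 - x_1\|^2 + \dist(z_1, z_2)^2\bigr).
\]
Thus $w^{sc}$ is Lipschitz with constant $\mu^{-1/2}$. Since the $C^1$ Riemannian metric $g$ on $\Omega^c$ induces a Lebesgue measure class in local charts, Rademacher's theorem yields differentiability of $w^{sc}$ almost everywhere, and at any such point the bound $\mu^{-1/2}$ on the differential of the graphing function is exactly the statement $T_Z W^{sc}(F) \subset C^{sc}_{\mu^{-1}}(Z)$.

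The principal obstacle is the existence step: with $\Omega^c$ noncompact, one cannot directly appeal to a global Brouwer-degree argument over the whole slab $D$. What makes the argument go through is that $(x_0, z_0)$ is fixed during the existence proof, so the degree computation only needs to be carried out in the compact $y$-fiber $D^u$, and [C2] supplies exactly enough topological information to do so. Once existence and uniqueness are in hand, the cone conditions [C3]-[C4] make the Lipschitz bound and the tangent cone condition essentially automatic.
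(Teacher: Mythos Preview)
Your proof is correct and follows the same architecture as the paper's; the uniqueness, Lipschitz, and tangent-cone steps are identical in content.

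The existence step is where the two diverge. The paper introduces the class $\cV$ of Lipschitz graphs over $D^u$ satisfying the unstable-cone condition and proves a graph-transform lemma: if $\Gamma \in \cV$ then $F(\Gamma) \cap D \in \cV$, using [C2] for surjectivity of $\pi_u$ on the image and [C3] to preserve the cone (hence disk) structure. Iterating from the vertical fiber $\Gamma_X = \{(x_0,z_0)\} \times D^u$ gives $W_N \neq \emptyset$ directly. Your appeal to a Wa\.zewski/degree argument reaches the same conclusion, but the phrase ``applied fiberwise over the fixed $(x_0,z_0)$'' is misleading: after one application of $F$ the orbit has left that fiber, and what must propagate under iteration is the disk-over-$D^u$ structure. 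For $N \geq 2$ a pure degree argument needs to know that the image $F^{N-1}(\Gamma_X)\cap D$ is still topologically a disk so that [C2] can be reapplied, and this is exactly what [C3] provides---a point your write-up leaves implicit. The paper's graph-transform packaging has the advantage of being self-contained and making this role of the cone condition in the existence step visible, whereas your version buys brevity by outsourcing it to McGehee.
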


In order to obtain the center-unstable manifold, consider the involution $I:(x,y,z) \mapsto (y,x,z)$
and assume $\inv(F) = I \circ F^{-1} \circ I^{-1}$ satisfies the same conditions. 

\begin{theorem}\label{thm:center-c1}
Assume that $F,\inv(F)$ satisfies the conditions [C1-C4], there exists a $C^1$
function $w^c: M \to D$ such that 
\[
	W^c(F):=W^{sc}(F) \cap W^{uc}(F) = \{(x,y,z) \in D: (x,y) = w^c(z)\}. 
\]

\end{theorem}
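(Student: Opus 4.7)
The plan is to first obtain the center-stable and center-unstable graphs separately from Proposition~\ref{prop:lipschitz-manifold}, intersect them to produce $W^c(F)$ as a Lipschitz graph over $\Omega^c$, and then upgrade this to $C^1$ by constructing an invariant section in a Grassmannian bundle over $W^c(F)$.

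First I would apply Proposition~\ref{prop:lipschitz-manifold} to $F$ to obtain $W^{sc}(F)$ as the graph $\{y = w^{sc}(x,z)\}$ of a Lipschitz map $w^{sc}:D^{sc}\to D^u$. Applying the same proposition to $\inv(F) = I\circ F^{-1}\circ I^{-1}$, which by hypothesis also satisfies [C1--C4], and pulling back through the involution $I$ that swaps $x$ and $y$, yields $W^{uc}(F)$ as the graph $\{x = w^{uc}(y,z)\}$ of a Lipschitz map. The cone conditions [C3--C4] force $w^{sc}(\cdot,z)$ and $w^{uc}(\cdot,z)$ to have Lipschitz constants in the $x$- and $y$-variables that are $O(\mu^{-1/2})$, which is strictly less than $1$ for $\mu>1$. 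Consequently, for each fixed $z$ the map $x\mapsto w^{uc}(w^{sc}(x,z),z)$ is a contraction on $D^s$, and its unique fixed point $x^*(z)$ gives, together with $y^*(z) = w^{sc}(x^*(z),z)$, a Lipschitz map $w^c:\Omega^c\to D^s\times D^u$ whose graph equals $W^{sc}(F)\cap W^{uc}(F) = W^c(F)$.

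The second, and harder, step is promoting $w^c$ from Lipschitz to $C^1$. For this I would pass to the bundle $\cG\to W^c(F)$ whose fiber over $Z$ is the set of $c$-dimensional subspaces $E\subset T_Z W$ contained in an appropriate narrow center cone, endowed with the Grassmannian angular metric (which is complete on each such cone). Since $W^c(F)$ is $F$-invariant, the differential $DF$ induces a fiberwise map $\Gamma: \cG\to\cG$ covering $F|_{W^c(F)}$, sending $E$ to $DF(Z)\cdot E$. The hyperbolic cone estimates [C3--C4] applied to $F$ and to $\inv(F)$ together show that $\Gamma$ is a uniform fiberwise contraction with rate bounded by a power of $\mu^{-1}$. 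Banach's fixed point theorem in the space of bounded continuous sections of $\cG$ (with respect to the sup of fiberwise distances) produces a unique invariant continuous section $E^c$. Because $w^c$ is already Lipschitz, it is differentiable almost everywhere, and its tangent planes must lie in $E^c$ by invariance; continuity of $E^c$ then forces the differential of $w^c$ to extend continuously to all of $\Omega^c$, giving $C^1$ regularity.

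The main obstacle I expect is handling the non-compact center factor $\Omega^c$ with the $C^1$ complete Riemannian metric $g$ rather than a Euclidean one. One has to verify that all fiberwise contraction estimates for both the graph transform (in Step~1) and the Grassmannian transform (in Step~3) are uniform in $z$, which requires working consistently with the product metric on $W$ and checking that the assumed $C^1$ regularity of $g$ is strong enough so that the induced bundle structure on $\cG$ is $C^0$ and $\Gamma$ is continuous. Once uniformity in $z$ is secured, the classical contraction and invariant-section arguments go through unchanged, and the identification $TW^c(F) = E^c$ finishes the proof.
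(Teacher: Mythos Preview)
Your argument is correct and essentially parallel to the paper's, but the two proofs organize the steps in opposite orders. The paper first upgrades each of $w^{sc}$ and $w^{uc}$ to $C^1$ by a one-line citation (Theorem~5.18 in \cite{Shub}), and only then intersects: the cone conditions $T_ZW^{sc}\subset C^{sc}_{\mu^{-1}}$ and $T_ZW^{uc}\subset C^{uc}_{\mu^{-1}}$ with $\mu>1$ force the two $C^1$ manifolds to meet transversally, and the intersection is then automatically a $C^1$ graph over the center. You instead intersect at the Lipschitz level via the fixed-point argument $x\mapsto w^{uc}(w^{sc}(x,z),z)$, and only afterwards promote $w^c$ itself to $C^1$ by building the invariant Grassmannian section. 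Your route is more self-contained---it spells out the graph-transform mechanism that the paper offloads to Shub---while the paper's is shorter but relies on the reader accepting that the cited result applies verbatim in the non-compact, Riemannian-center setting. The uniformity-in-$z$ concern you flag is real and is exactly what the completeness of $g$ and the uniform cone constants are there to handle; the paper does not comment on it explicitly.
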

\begin{proof}
Proposition~\ref{prop:lipschitz-manifold} implies the existence of 
Lipshitz functions $w^{uc}: D^{uc} \to D$ and $w^{sc}: D^{sc} \to D$, 
with
\[
	W^{sc}(F) = \{x = w^{sc}(y,z)\}, \quad W^{uc}(F) = W^{sc}(\inv(F)) = \{y = w^{uc}(x,z)\}. 
\]
Then standard arguments (see Theorem 5.18 in \cite{Shub}) implies 
these functions are $C^1$. The fact that $\mu>1$ and 
\[
	T_Z W^{sc}(F) \in C^{sc}_{\mu^{-1}}(Z),  \quad T_Z W^u(F) \in C^{sc}_{\mu^{-1}}(Z)
\]
implies $W^{sc}(F)$ and $W^{uc}(F)$ intersect transversally, and $W^{sc}(F)\cap W^{uc}(F)$ is a graph
over the center component $M$. 
\end{proof}

\subsection{Existence of Lipschitz invariant manifolds}

We prove Proposition~\ref{prop:lipschitz-manifold}.
Let $\cV$ be the set  $\Gamma \subset D$ satisfying the following conditions: (a) $\pi_{u}\Gamma = D^u$, (b) $Z_2 \in K_\mu^u(Z_1)$ for all $Z_1, Z_2 \in \Gamma$, where $\pi_u$ is the projection to the unstable component. These conditions ensures $\pi_u: \Gamma \to D^u$ is one-to-one and onto, therefore $\Gamma$ is a graph over $D^u$. Moreover, condition (b) further implies that the graph is Lipshitz. In particular, each $\Gamma \in \cV$  is a topological disk. 

\begin{lemma}\label{lem:inv-unstable-disk}
Let $\Gamma \in \cV$, then $F(\Gamma)\cap D\in \cV$. 
\end{lemma}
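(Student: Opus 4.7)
The plan is to verify the two defining conditions of $\cV$ for the set $F(\Gamma)\cap D$ separately. Condition (b) (the pairwise cone condition) will follow directly from the cone invariance [C3]: if $W_1,W_2\in F(\Gamma)\cap D$ and we write $W_i=F(Z_i)$ with $Z_i\in\Gamma$, then $Z_2\in K_\mu^u(Z_1)$ by property (b) of $\Gamma$, and [C3] yields $W_2\in K_\mu^u(W_1)$. In particular $\pi_u|_{F(\Gamma)\cap D}$ is injective, so $F(\Gamma)\cap D$ is already a Lipschitz graph over $\pi_u(F(\Gamma)\cap D)\subset D^u$, and the content of the lemma reduces to showing that this projection is all of $D^u$.

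To establish the surjectivity statement (a), I would parametrize $\Gamma$ by letting $\phi:D^u\to\Gamma$ be the inverse of the homeomorphism $\pi_u|_\Gamma$ and defining
\[
\psi := \pi_u\circ F\circ\phi\colon D^u\to\R^u.
\]
Applying [C4] to the pair $\phi(y_1),\phi(y_2)\in\Gamma$, which lies in a common unstable cone by (b), I obtain
\[
\|\psi(y_2)-\psi(y_1)\|\ \ge\ \nu\,\|\pi_u(\phi(y_2)-\phi(y_1))\|\ =\ \nu\,\|y_2-y_1\|,
\]
so $\psi$ is an expanding embedding. Moreover, [C2] guarantees $\psi(\partial D^u)\subset\R^u\setminus D^u$.

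Next I would fix an arbitrary $y_0\in\mathrm{int}\,D^u$ and argue by Brouwer degree that $y_0\in\psi(D^u)$. The homotopy-equivalence clause of [C2] says that $F|_{D^{sc}\times\partial D^u}\colon D^{sc}\times\partial D^u\to D^{sc}\times(\R^u\setminus D^u)$ is a homotopy equivalence; since $D^{sc}$ is convex, hence contractible, this passes to the statement that $\psi|_{\partial D^u}\colon\partial D^u\to\R^u\setminus D^u$ is a degree $\pm1$ map onto the sphere at infinity of $\R^u\setminus D^u$. Combined with the fact that $\psi$ extends continuously across the disk $D^u$, the usual winding-number argument then forces the bounded region $D^u$ into the image: $\psi(D^u)\supset D^u$. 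Finally, by [C1], for any $y'\in D^u$ the point $Z=\phi(y')\in\Gamma$ satisfies $\pi_{sc}F(Z)\in D^{sc}$, and if we choose $y'$ so that $\psi(y')=y_0$, then also $\pi_u F(Z)=y_0\in D^u$, giving $F(Z)\in D$ and hence $y_0\in\pi_u(F(\Gamma)\cap D)$.

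The main obstacle I anticipate is extracting the correct degree information from [C2]. The cone/expansion conditions keep $\psi$ injective and push $\psi(\partial D^u)$ outside $D^u$, but these alone do not determine the orientation of the covering; it is [C2]'s homotopy-equivalence assertion that delivers the essential topological input, and the verification has to check that the $D^{sc}$-factor can be suppressed cleanly. Once this degree computation is in hand the rest is a standard Brouwer-type argument, and conditions (a),(b) for $F(\Gamma)\cap D$ follow together.
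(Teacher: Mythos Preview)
Your proposal is correct and follows essentially the same approach as the paper: both arguments dispatch condition (b) via the cone invariance [C3], and for condition (a) both extract degree-type information from the homotopy-equivalence clause of [C2] (after suppressing the contractible $D^{sc}$ factor) to force $D^u\subset\pi_u(F(\Gamma))$, with [C1] handling the center-stable component. The only cosmetic difference is that the paper packages the surjectivity step as a contradiction (if some $Z_*\in D^u$ were missed, a commutative square would force $\partial\Gamma\hookrightarrow\Gamma$ to be a homotopy equivalence), whereas you run the equivalent direct Brouwer-degree argument.
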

\begin{proof}
By [C4] for any $Z_1$ and $Z_2$ we have that 
$F(Z_2)$ belongs to the cone $K^u_{F(Z_1)}$ of $F(Z_1)$.
Thus, it suffices to show that $D^{u}\subset \pi_{u}
(F(\Gamma)\cap D)$. The proof is by contradiction. Suppose 
there is $Z_* \in B^{u}$ such that 
$Z_*\not\in \pi_{u}(F(\Gamma))$. 

We have the following commutative diagram
\be 
\beal 
\partial \Gamma \qquad & \quad \xhookrightarrow{i_1} & \Gamma \ \ \qquad \\ 
\downarrow \pi_{u}\circ F & & \downarrow \pi_{u}\circ F \\
\R^u\setminus D^u& \quad \xhookrightarrow{i_2} & \R^u \setminus\{Z_*\}
\enal. 
\ee
From [C2] and using the fact that $B^s$ and $\Omega^c$ are contractible, $\pi_u \circ F|\Gamma$ is a homotopy equivalence. Note that  
$i_2$ is a homotopy equivalences, and  $\pi_u \circ F | \Gamma$ is 
a homeomorphism onto its image. Let $h$ and $g$ be the homotopy inverses of 
$\pi_u \circ F | \partial \Gamma$ and $i_2$, then $h \circ g \circ (\pi_u \circ F) $ defines a homotopy inverse of $i_1$. As a result $\Gamma$ is homotopic to $\partial \Gamma$, this is a contradiction. 
\end{proof}

Proposition~\ref{prop:lipschitz-manifold} follows from the next statement.
\begin{proposition}
The mapping $\pi_{sc}: W^{sc}(F)\to D^{sc}$ is one-to-one and onto, therefore it is the graph of a function $w^{sc}$. Moreover $w^{sc}$ is Lipshitz and 
\[
	T_Z W^{sc}(F) \in (C_\mu^{u}(Z))^c = C_{\mu^{-1}}^{sc}(Z), \quad Z \in W^{sc}(F). 
\] 
\end{proposition}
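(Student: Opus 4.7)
The plan is to prove existence, uniqueness, and the Lipschitz bound simultaneously using Lemma~\ref{lem:inv-unstable-disk} and the cone conditions [C3]--[C4], and then to extract the tangent cone inclusion via Rademacher's theorem.

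For surjectivity of $\pi_{sc}$ onto $D^{sc}$, I will fix $(x_0,z_0)\in D^{sc}$ and consider the vertical disk $V=\{x_0\}\times D^u\times\{z_0\}$. The cone condition is trivially satisfied on $V$ since the $s$- and $c$-components are constant, so $V\in\cV$. Define $V_k:=\{Z\in V : F^j(Z)\in D\text{ for all } 1\le j\le k\}$ and observe the set identity $F^k(V_k)=F^k(V)\cap D$, which is what lets the graph transform $\Gamma\mapsto F(\Gamma)\cap D$ be iterated consistently. By iterating Lemma~\ref{lem:inv-unstable-disk}, each $F^k(V)\cap D$ lies in $\cV$ and is in particular non-empty; pulling back, each $V_k$ is a non-empty closed subset of the compact disk $V$. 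The nested compact intersection $\bigcap_k V_k$ is then non-empty and contained in $V\cap W^{sc}(F)$, giving a preimage of $(x_0,z_0)$.

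For injectivity together with the Lipschitz bound, suppose $P_1,P_2\in W^{sc}(F)$ with $P_2\in K_\mu^u(P_1)$. Condition [C3] propagates this cone relation under iteration, so $F^k(P_2)\in K_\mu^u(F^k(P_1))$ for all $k\ge 0$; condition [C4] then yields
\[
\|\pi_u F^k(P_2)-\pi_u F^k(P_1)\|\ge \nu^k\|y_1-y_2\|.
\]
The left-hand side is bounded by the diameter $2r$ of $D^u$, so $\nu>1$ forces $y_1=y_2$, and then the cone inequality collapses to $P_1=P_2$. Contrapositively, any two distinct points of $W^{sc}(F)$ satisfy $\mu\|y_1-y_2\|^2<\|x_1-x_2\|^2+\dist(z_1,z_2)^2$. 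This simultaneously delivers the injectivity of $\pi_{sc}$ (set $x_1=x_2,\,z_1=z_2$) and the Lipschitz estimate $\|w^{sc}(x_1,z_1)-w^{sc}(x_2,z_2)\|\le \mu^{-1/2}\sqrt{\|x_1-x_2\|^2+\dist(z_1,z_2)^2}$ on $D^{sc}$.

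For the tangent cone inclusion, Rademacher's theorem applied to the Lipschitz function $w^{sc}$ on the $C^1$ Riemannian domain $D^s\times\Omega^c$ gives differentiability almost everywhere. At such a point $Z$, the tangent space to the graph is parameterized by $(v^s,v^c)\mapsto(v^s,\,Dw^{sc}(v^s,v^c),\,v^c)$, and the Lipschitz constant $\mu^{-1/2}$ translates into $\|v^u\|^2\le\mu^{-1}(\|v^s\|^2+\|v^c\|^2)$, which is the defining inequality for $C_{\mu^{-1}}^{sc}(Z)$. The main care point throughout will be verifying the identity $F^k(V_k)=F^k(V)\cap D$ that underpins the iteration: it is purely set-theoretic bookkeeping about the domains of partial compositions of $F$, but it is what allows the single Lemma~\ref{lem:inv-unstable-disk} to carry all the geometric content.
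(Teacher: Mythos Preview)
Your proof is correct and follows essentially the same route as the paper: the vertical fiber $V=\Gamma_X\in\cV$, iteration of Lemma~\ref{lem:inv-unstable-disk} plus compactness for surjectivity, the expansion estimate [C4] along the unstable cone to force coincidence (hence injectivity and the $\mu^{-1/2}$ Lipschitz bound), and Rademacher for the a.e.\ tangent cone inclusion. Your version is in fact a bit cleaner than the paper's on the surjectivity step---you make the nested-compact-set argument and the bookkeeping identity $\Gamma_k=F^k(V_k)$ explicit, whereas the paper's contradiction phrasing (``there is $n$ such that $F^n(\Gamma_X)\cap D=\varnothing$'') tacitly relies on the same compactness without spelling it out.
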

\begin{proof}
For each $X \in D^{sc}$, we define $\Gamma_X = (\pi_{sc})^{-1} X$, clearly $\Gamma_X \in \cV$. We first show $\Gamma_X \cap W^{sc}(F)$ is nonempty and consists of a single point. Assume first that $\Gamma_X \cap W^{sc}(F)$ is empty. Then by definition of $W^{sc}(F)$, there is $n \in \N$ such that $F^n(\Gamma_X) \cap D = \varnothing$. However, by Lemma~\ref{lem:inv-unstable-disk}, $\bigcap_{i=1}^n F^i(\Gamma_X) \cap D \in \cV$ is always nonempty, a contradiction. We now consider two points $Z_1, Z_2 \in W^{sc}(F)$ with $\pi_u Z_1 = \pi_u Z_2$. Note that $F^k(Z_1), F^k(Z_2) \in D$ for all $k \ge 0$, and $Z_2 \in K_\mu^u(Z_1)$, by [C4] we have 
\[
2\ge \|\pi_u(F^k(Z_1)-F^k(Z_2))\|\ge \nu^k\|\pi_u(Z_1-Z_2)\| 
\]
for all $k$, which implies $Z_1=Z_2$.

The last argument actually shows $Z_2 \notin K_\mu^u(Z_1)$ for all $Z_1, Z_2 \in W^{sc}(F)$. For any $\epsilon>0$, for $Z_1=(X_1, Y_1), Z_2=(X_2, Y_2) \in W^{sc}(F)$ with $\dist(X_1, X_2)$ small, we have  $\|Y_1 - Y_2\| \le \mu^{-\frac12}  \dist(X_1, X_2)$. This implies both the Lipshitz and the cone properties in our proposition.
\end{proof}

\subsection{NHIC for the dominant system}

We prove Theorem~\ref{thm:nhic-persist} in this section. First, an overview of notations.
\begin{enumerate}
 \item The strong Hamiltonian is $H^\st = \cH^\st(p_0, \cB^\st, U^\st)$ defined on $\T^m\times \R^m$,
  and its associated Lagrangian vector field is $X^\st$ (see \eqref{eq:Xst}). We call the 
  We denote the time-$1$-map of $X^\st$ by $G^\st_0$ and we will lift it to the universal cover $\R^m \times \R^m$ without changing its name. 
 \item The vector field $X^\st$ is extended trivially to $(\T^m \times \R^m)\times (\T^{d-m}\times \R^{d-m})$ (see \eqref{eq:XstL}). The time-$1$-map is denoted $G_0$, and we have $G_0 = G_0^\st \times \Id$. We will also lift it to the universal cover with the same name. 
 \item The slow Hamiltonian is $H^s = \cH^s(\cB^\st, \cB^\wk, p_0, U^\st, \cU^\wk)$,  and consider its Lagrangian vector field $X_{Lag}^s$. We apply a coordinate change  $(\varphi^\st, v^\st, \varphi^\wk, v^\wk) = \Phi(\varphi^\st, v^\st, \varphi^\wk, I^\wk)$ as in
   \eqref{eq:half-lag}, and a rescaling $\Phi_\Sigma$ as defined in \eqref{eq:rescaling}. The new vector field is denoted $\tilde{X}^s$ (see \eqref{eq:Xs}, \eqref{eq:rescaling}). We denote its time-$1$-map $G$, which is considered a map on the Euclidean space $\R^m \times \R^m \times \R^{d-m} \times \R^{d-m}$. 
\end{enumerate}
By Theorem~\ref{thm:resc-est}, we have:
\begin{corollary}\label{cor:G-G0}
	Assume that $(\cB^\wk, p_0, U^\st, \cU^\wk) \in \Omega_{\kappa,q}^{m,d}(\cB^\st)$, then for any $\delta_1 >0$, there exists $M>0$ such that for all $(\cB^\wk, p_0, U^\st, \cU^\wk)$ with $\lM(\cB^\wk)>M$, uniformly on $\R^m\times \R^m \times\R^{d-m} \times \R^{d-m}$, we have
	\[
		\|\Pi_{(\varphi^\st, v^\st)}(G - G_0)\|< \delta_1, \quad \|DG - DG_0\| < \delta_1. 
	\]
\end{corollary}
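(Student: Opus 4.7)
The plan is to deduce both conclusions from Theorem~\ref{thm:resc-est} by integrating Gronwall-type estimates over the compact time interval $[0,1]$. The crucial features we exploit are that the vector field estimates in Theorem~\ref{thm:resc-est} are \emph{uniform} on all of $\R^m\times\R^m\times\R^{d-m}\times\R^{d-m}$, and that $X^\st_L$ is the trivial lift of a $\T^m$-periodic Hamiltonian vector field, so $X^\st_L$ and $DX^\st_L$ are automatically bounded and uniformly continuous on the universal cover.

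For the $C^0$ projection estimate, fix $Z_0\in\R^{2d}$ and let $Z(t), Z_*(t)$ be the trajectories of $\tilde X^s$ and $X^\st_L$ starting at $Z_0$, so that $G(Z_0)=Z(1)$ and $G_0(Z_0)=Z_*(1)$. Set $\Delta(t)=\Pi_{(\varphi^\st,v^\st)}(Z(t)-Z_*(t))$. From \eqref{eq:Xs} and \eqref{eq:XstL},
\[
\dot\Delta(t)=\Pi_{(\varphi^\st,v^\st)}X^\st_L(Z(t))-\Pi_{(\varphi^\st,v^\st)}X^\st_L(Z_*(t))+\Pi_{(\varphi^\st,v^\st)}(\tilde X^s-X^\st_L)(Z(t)).
\]
The first difference is bounded by $L\,\|\Delta(t)\|$, where $L$ is the Lipschitz constant of $\Pi_{(\varphi^\st,v^\st)}X^\st_L$ in $(\varphi^\st,v^\st)$ (note that $X^\st_L$ is independent of the weak variables), a quantity controlled by $\cB^\st$, $Q$, $\|U^\st\|_{C^2}$. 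The inhomogeneous term is bounded by $M\lM(\cB^\wk)^{-(q-1)}$ by Theorem~\ref{thm:resc-est}. Since $\Delta(0)=0$, Gronwall's inequality gives $\|\Delta(1)\|\le e^L\,M\lM(\cB^\wk)^{-(q-1)}$, which is less than $\delta_1$ once $\lM(\cB^\wk)$ is sufficiently large, uniformly in $Z_0$.

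For the $C^1$ estimate, we compare the variational matrices $W(t)=D\phi^{\tilde X^s}_t(Z_0)$ and $W_*(t)=D\phi^{X^\st_L}_t(Z_0)$, satisfying $\dot W=D\tilde X^s(Z(t))\,W$ and $\dot W_*=DX^\st_L(Z_*(t))\,W_*$, both with initial data $\Id$. Subtracting,
\[
\tfrac{d}{dt}(W-W_*)=D\tilde X^s(Z(t))\,(W-W_*)+\bigl[D\tilde X^s(Z(t))-DX^\st_L(Z_*(t))\bigr]\,W_*.
\]
The coefficient $D\tilde X^s$ is uniformly bounded by $\|DX^\st_L\|_{C^0}+\|D\tilde X^s-DX^\st_L\|_{C^0}$. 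The forcing factor splits as $[D\tilde X^s-DX^\st_L](Z(t))+[DX^\st_L(Z(t))-DX^\st_L(Z_*(t))]$; the first piece is $\le M\lM(\cB^\wk)^{-(q-2)/3}$ by Theorem~\ref{thm:resc-est}, and the second vanishes uniformly as $\lM(\cB^\wk)\to\infty$ by the uniform continuity of $DX^\st_L$ (which descends to a continuous function on compact $\T^m$) combined with the previously established $C^0$ closeness of the strong projections. A second Gronwall argument then yields $\|W(1)-W_*(1)\|\to 0$ uniformly in $Z_0$.

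No serious obstacle is anticipated; the argument is a standard two-step Gronwall application, and the only point requiring mild care is that the uniform estimates extend over the non-compact universal cover. This uniformity is inherited directly from Theorem~\ref{thm:resc-est} together with the $\T^m$-periodicity and trivial weak-variable dependence of $X^\st_L$.
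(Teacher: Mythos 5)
Your proof is correct and takes the same route the paper intends: the paper states this corollary with no written proof beyond "By Theorem~\ref{thm:resc-est}," and your two-step Gronwall argument (first for the strong projections of the trajectories, then for the variational equations) is exactly the standard filling-in of that deduction. The two points you single out — that the uniform vector-field bounds of Theorem~\ref{thm:resc-est} hold on the whole universal cover, and that $X^\st_L$ and $DX^\st_L$ depend only on the strong variables (indeed only on $\varphi^\st$ for $DX^\st_L$), so periodicity gives global Lipschitz/uniform-continuity bounds — are precisely the facts needed to make the argument go through uniformly.
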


By assumption, the Hamiltonian flow $H^\st$ admits an NHIC $\chi^\st(\T^l \times B_{1+a}^l)$, where $\chi^\st$ is an embedding. Therefore $G_0^\st$ admits an NHIC $\Lambda_a^\st = \Phi^{-1} \circ (\T^l \times B_{1+a}^l)$ with the exponents $\alpha, \beta$. We use local coordinates in a tubular neighborhood to simplify the setting. 
\begin{lemma}\label{lem:F0-coord}
There exists a tubular neighborhood $N(\Lambda^\st_a) \subset \T^m\times \R^m$ of $\Lambda_a^\st$ and a diffeomorphism 
\[
	h^\st: B_1^l \times B_1^l \times (\T^l \times B_{1+a}^l) \to N(\Lambda^\st_a)
\] 
such that:
\begin{enumerate}
 \item $h^\st(0,0,z) = \chi^\st(z)$, in particular $h^\st(\cC_a^\st) := h^\st(\{0\}\times \{0\} \times (\T^l \times B_{1+a}^l)) = \Lambda_a^\st$.  
 \item For the map $F_0^\st := (h^\st) \circ G_0^\st \circ (h^\st)^{-1}$:
 \begin{enumerate}
  \item $\cC_a^\st$ is an NHIC for $F_0^\st$ with the same exponents $\alpha, \beta$. 
  \item The associated stable/unstable bundles take the form
  \[
  E^s = \R^l \times \{0\} \times \{0\}, \quad E^u = \{0\}\times \R^l \times \{0\}. 
  \]
  In particular, $DF_0^\st$ is a block diagonal matrix in the blocks corresponding to the three components. 
  \item Let $g_0$ denote the Euclidean metric. Then there exists a Riemannian metric $g$ on $\T^l \times B^l_{1+a}$ such that the tensor metric $g_0 \otimes g_0 \otimes g$ on $B_1^l \times B_1^l \times (\T^l \times B^l_{1+a})$ is an adapted metric for the NHIC $\cC_a^\st$. 
 \end{enumerate}
\end{enumerate}
\end{lemma}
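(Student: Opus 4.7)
The plan is to build $h^\st$ by trivializing the stable and unstable bundles along $\Lambda^\st_a$ using $C^1$ frames, and to define $g$ as the pullback of an adapted metric along $\chi^\st$. The bunching hypothesis $\alpha<\beta^2$ in Theorem~\ref{thm:nhic-persist} ensures that $E^s$ and $E^u$ are $C^1$ subbundles of $T(\T^m\times\R^m)|_{\Lambda^\st_a}$, and, by the bulleted remark preceding Section~\ref{sec:nhic-persistence}, there exists an adapted metric $\tilde g$ for which $E^c,E^s,E^u$ are mutually orthogonal. I would first fix such a $\tilde g$ and set $g:=(\chi^\st)^\ast(\tilde g|_{E^c})$; this is a $C^1$ complete metric on $\T^l\times B^l_{1+a}$, so it is a legitimate choice for the center factor.

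Next, I would choose $C^1$ orthonormal frames $\{e^s_i(z)\}_{i=1}^l$ and $\{e^u_j(z)\}_{j=1}^l$ for $E^s$ and $E^u$ along $\Lambda^\st_a$, and define
\[
h^\st(x,y,z):=\chi^\st(z)+\sum_{i}x_i\,e^s_i(z)+\sum_{j}y_j\,e^u_j(z),
\]
interpreted in the ambient affine structure on the universal cover of $\T^m\times\R^m$. The differential $Dh^\st(0,0,z)$ sends the three product factors isomorphically onto $E^s(\chi^\st(z))$, $E^u(\chi^\st(z))$, and $T_{\chi^\st(z)}\Lambda^\st_a$ respectively, and by orthonormality of the frames combined with the definition of $g$ it is an isometry from $g_0\otimes g_0\otimes g$ to $\tilde g$. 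A uniform inverse function theorem argument, using compactness of $\T^l$ and the $C^1$-boundedness of $\chi^\st$ and the frames, shows that after an initial rescaling absorbed into the $e^{s,u}_i$, $h^\st$ restricts to a $C^1$ diffeomorphism from $B_1^l\times B_1^l\times(\T^l\times B^l_{1+a})$ onto a tubular neighborhood $N(\Lambda^\st_a)$.

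Properties (1)--(2c) then follow by direct verification. Property (1) is immediate from the formula. For (2a)--(2b), the zero section $\cC^\st_a$ is invariant under $F_0^\st=(h^\st)^{-1}\circ G^\st_0\circ h^\st$ because $\Lambda^\st_a$ is invariant under $G^\st_0$; at any $(0,0,z)\in\cC^\st_a$, $DF^\st_0$ is the conjugate of $DG^\st_0(\chi^\st(z))$ by $Dh^\st(0,0,z)$, and the latter maps the three coordinate factors onto the invariant splitting $E^s\oplus E^u\oplus E^c$, so $DF^\st_0$ is block diagonal and the bundles take the claimed product form. For (2c), because $Dh^\st(0,0,z)$ is an isometry between $g_0\otimes g_0\otimes g$ and $\tilde g$, every hyperbolicity inequality in the adapted metric $\tilde g$ transfers verbatim to $g_0\otimes g_0\otimes g$ on $\cC^\st_a$, preserving the constants $\alpha,\beta$.

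The main obstacle I anticipate is arranging the global triviality of $E^s$ and $E^u$ as rank-$l$ vector bundles over $\T^l\times B^l_{1+a}$, since without trivializations no product-form diffeomorphism $h^\st$ of the stated domain exists. In the intended applications---in particular, the two- and three-dimensional cylinders produced in Appendix~\ref{sec:diff-path}, where $\Lambda^\st_a$ deformation retracts onto a circle and the bundles are orientable---triviality can be verified directly. Handling this cleanly (either by adding a triviality hypothesis or by a case-by-case check in the applications) is the one delicate conceptual point; all remaining work is routine, and the entire argument would be absorbed into Appendix~\ref{sec:nhic-proof} as preparation for applying the center-stable manifold theorem of Proposition~\ref{prop:lipschitz-manifold}.
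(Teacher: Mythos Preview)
Your proposal is correct and follows essentially the same approach as the paper: the paper's own proof is a two-sentence sketch that says to use $E^s$, $E^u$, and $\chi^\st$ to coordinatize the normal bundle and then pull back the adapted metric, which is exactly what you have spelled out in detail. Your observation about the global triviality of $E^s$ and $E^u$ over $\T^l\times B^l_{1+a}$ is a genuine point that the paper does not address; it is implicitly assumed, and your remark that in the intended low-dimensional applications it can be checked directly is the appropriate resolution.
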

\begin{proof}
We use the bundles $E^u$, $E^s$, and the parametrization $\chi^\st$ of $\Lambda_a^\st$ to build a coordinate system for the normal bundle to $\Lambda_a^\st$, which is diffeomorphic to the tubular neighborhood. We then pull back the adapted metric of $\Lambda_a^\st$ using this map to $\cC_a^\st$. 
\end{proof}

Denote $\Omega^\wk = \R^{d-m}\times \R^{d-m}$ and consider the trivial extension 
\[
	h: B_1^l \times B_1^l \times ((\T^l \times B_{1+a}^l) \times \Omega^\wk) \to  N(\Lambda_a^\st) \times \Omega^\wk
\]
by $h(x,y, (z^\st, z^\wk)) = (h^\st(x, y, z^\st), z^\wk)$. Define the following maps
\begin{equation}\label{eq:def-F0F}
  	F_0 = h^{-1} \circ G_0 \circ h = (F_0^\st, \Id) , \quad F = h^{-1} \circ G \circ h. 
\end{equation}

Finally, to apply Theorem~\ref{thm:center-c1}, we denote $\Omega_a^\st = \R^l \times B_{1+a}^l$ which is the universal cover of $\T^l \times B_{1+a}^l$. We lift the maps $F_0, F$ to the covering space without changing their names, namely
\[
	F, F_0: B_1^l \times B_1^l \times (\Omega_{1+a}^\st\times \Omega^\wk) \circlearrowleft. 
\]
$\Omega_{1+a}^\st\times \Omega^\wk$ is our center component and is denoted $\Omega$. While the maps are defined on unbounded regions, we keep in mind that $F_0 = (F_0^\st, \Id)$ where $F_0^\st$ is defined on a compact set $B_1^l \times B_1^l \times (\T^l \times B_{1+a}^l)$.

We still need one reduction to apply Theorem~\ref{thm:center-c1}. Recall that $\Omega_a^\st = \R^l \times B_{1+a}^l$. Write $F_0 = (F_0^x, F_0^y, F_0^z)$, define
\begin{equation}\label{eq:def-L}
	L(x,y,z) = (D_xF_0^x(0,0,z)\cdot x ,  D_yF_0^y(0,0,y) \cdot y, F_0^z(0,0,z))
\end{equation}
this is the linearized map at $(0,0,z)$ (we used $F_0(0,0,z) = (0,0, F_0^z)$, and  $DF_0$ is block diagonal from Lemma~\ref{lem:F0-coord}). since $F_0= (F_0^\st, \Id)$ and $F_0^\st$ is defined over a compact set, we obtain as $r \to 0$,
\begin{equation}\label{eq:L-F0}
  	 	\|L - F_0\| = o(r), \quad \|DL - DF_0\| = o(1) \text{ on } B_r^l \times B_r^l \times (\Omega_a^\st \times \Omega^\wk).
\end{equation}

Moreover, since $F_0$ preserves $\{0\}\times \{0\} \times \partial \Omega$, we get 
\begin{equation}\label{eq:fix-boundary}
  	L (B_1^l \times B_1^l \times \partial \Omega )\subset  \R^l \times \R^l \times \partial \Omega. 
\end{equation}
Namely, the linearized map $L$ preserves the boundary of the center component. Finally, we modify the map $F$ so that it also fixes the center boundary. 
 Let $\rho$ be a standard mollifier satisfying
\[
	\begin{cases}
	\rho(x,y, (z^\st, z^\wk)) =  \rho(z^\st) = 0 & z^\st \in \Omega_0^\st \\
	\rho(x, y, (z^\st, z^\wk)) = \rho(z^\st) =  1 & z^\st \in \Omega_a^\st \setminus \Omega_{a/2}^\st. 
	\end{cases}
\]
Let
\begin{equation}\label{eq:tildeF}
  	\tilde{F} = F(1-\rho) + L \rho ,
\end{equation}
we have:
\begin{lemma}\label{lem:F-iso}
For any $\mu>1$, $\epsilon>0$ and $r_0 >0$, there exists $\delta_1 >0$ and $0<r<r_0$ such that if $G$ and $G_0$ satisfies
	\[
		\|\Pi_{(\varphi^\st, v^\st)}(G - G_0)\|< \delta_1, \quad \|DG - DG_0\| < \delta_1,
	\]
  the map $\tilde{F}$  defined by \eqref{eq:def-F0F}, \eqref{eq:def-L} and \eqref{eq:tildeF} satisfies conditions [C1]-[C4] with the parameters $\mu$ and $\nu = \alpha^{-1} - \epsilon$ on $B_r^l \times B_r^l \times \Omega$. The same hold for the map $\inv(\tilde{F})$. 
\end{lemma}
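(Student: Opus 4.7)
The plan is to verify conditions [C1]--[C4] for $\tilde F$ by combining three quantitative ingredients. Corollary~\ref{cor:G-G0} gives $\|F - F_0\|_{C^1} < \delta_1$; formula~\eqref{eq:L-F0} gives $\|F_0 - L\|_{C^0} = o(r)$ and $\|DF_0 - DL\|_{C^0} = o(1)$ as $r\to 0$; and Lemma~\ref{lem:F0-coord} yields a block-diagonal linearization $DL(0,0,z) = \diag(D_xF_0^x, D_yF_0^y, DF_0^z)(0,0,z)$ with hyperbolic rates $\|D_xF_0^x\|\le\alpha$ and $\|(D_yF_0^y)^{-1}\|\le\alpha$ in the adapted metric, inherited from the NHIC structure of $\cC^\st_a$. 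Combined with the bound $\|D\rho\|\le C(a)$ and the product-rule identity $D\tilde F = (1-\rho)DF + \rho DL + (L-F)D\rho^T$, a short computation gives
\[
  \|\tilde F - L\|_{C^0} \le C(\delta_1 + r), \quad \|D\tilde F - DL\|_{C^0} \le C(\delta_1 + r),
\]
both vanishing as $\delta_1, r \to 0$.

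Next I would dispose of [C1]. Stable contraction gives $\|\tilde F^x\|\le \alpha r + C(\delta_1+r) < r$ once $r$ and $\delta_1$ are small. For the center component, $\Omega = \Omega_a^\st \times \Omega^\wk$ has a genuine boundary $\R^l\times\partial B^l_{1+a}\times\Omega^\wk$ in the bounded factor; by~\eqref{eq:fix-boundary} $L$ preserves this boundary, and $L^z(x,y,z) = F_0^z(0,0,z)\in\Omega$ because $\Lambda^\st_a$ is fully invariant. On the outer shell $\Omega_a^\st\setminus\Omega_{a/2}^\st$ one has $\rho=1$, so $\tilde F = L$ and the image lies in $\Omega$ identically; on the inner region $\Omega_{a/2}^\st$, the distance to the boundary of $\Omega_a^\st$ is at least $a/2$, which absorbs the $C^0$ error between $\tilde F$ and $L$.

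Then [C2] follows from uniform unstable expansion: $L^y(x,y,z) = D_yF_0^y\cdot y$ sends $\partial D^u$ strictly outside $D^u$ by a margin $(\alpha^{-1}-1)r$, preserved under a $C^0$-small perturbation; the resulting restriction of $\tilde F$ to each fiber $\{(x,z)\}\times\partial D^u$ is a $C^1$-small perturbation of a linear isomorphism, hence has degree $\pm 1$ on the sphere $S^{u-1}$. Since the base $D^{sc}$ is contractible (as $\Omega_a^\st = \R^l\times B^l_{1+a}$ and $\Omega^\wk$ both are), the total map $D^{sc}\times\partial D^u\to D^{sc}\times(\R^u\setminus D^u)$ is a homotopy equivalence. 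For [C3] and [C4], I would first verify the cone condition infinitesimally for $DL$: the block-diagonal structure and the bunching $\alpha<\beta^2$ yield a $\mu>1$ such that $DL$ preserves $C^u_\mu$ with a definite quantitative margin and expands the unstable component by at least $\alpha^{-1}$. Integrating $D\tilde F$ along the segment from $Z_1$ to $Z_2\in K_\mu^u(Z_1)$ (which lies in $D$ by convexity of both $D$ and the quadratic cone) converts this into the finite-difference conditions [C3] and [C4] with $\nu = \alpha^{-1}-\epsilon$, provided $\|D\tilde F - DL\|$ is small compared to the margin, which is ensured by shrinking $\delta_1$ and $r$.

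Finally, $\inv(\tilde F) = I\circ\tilde F^{-1}\circ I^{-1}$ satisfies the same conditions by symmetry: the involution $I$ swaps $x$ and $y$ while fixing $z$, so stable and unstable interchange; the mollifier depends only on $z^\st$ and is $I$-invariant; and the rate $\alpha$ plays the same role for stable and unstable, making the verification identical. The main obstacle I expect is the passage from the infinitesimal cone bound on $DL$ to the finite-difference conditions [C3]--[C4] uniformly over the unbounded center domain $\Omega$; the saving grace is the tensor structure of the adapted metric from Lemma~\ref{lem:F0-coord}, which makes the cone-preservation margin of $DL(0,0,z)$ uniform in $z\in\Omega$.
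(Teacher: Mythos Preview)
Your approach matches the paper's closely: both verify [C1]--[C2] from $C^0$ control in the stable/unstable directions together with the mollifier making $\tilde F=L$ near $\partial\Omega$, and both obtain [C3]--[C4] from uniform closeness $\|D\tilde F-DL\|<\epsilon$ combined with the block-diagonal cone contraction $DF_0(0,0,z)\,C^u_\mu\subset C^u_{\alpha\mu/\beta}$, integrated over the short segment joining $Z_1$ and $Z_2$ (short because $\dist(z_1,z_2)\le \sqrt{\mu}\,\|\Delta y\|\le 2\sqrt{\mu}\,r$).

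One genuine caution: Corollary~\ref{cor:G-G0} does \emph{not} give $\|F-F_0\|_{C^1}<\delta_1$. The $C^0$ bound is only on the strong projection $\Pi_{(\varphi^\st,v^\st)}(G-G_0)$; the weak $z^\wk$-component of $G-G_0$ contains the term $\Sigma B^TA^{-1}v^\st-\Sigma\tilde C\Sigma\tilde I^\wk$, which is unbounded on $\Omega^\wk$. Consequently your displayed estimate $\|\tilde F-L\|_{C^0}\le C(\delta_1+r)$ is false as stated. This is harmless for your argument, because every place you actually use the $C^0$ bound (the $x$, $y$, and $z^\st$ components) lies in the strong block, and $\Omega^\wk=\R^{2(d-m)}$ has no boundary to violate---but you should state and use only the strong-projection version, as the paper does via $\|\Pi_{(x,y)}(F-F_0)\|\le C\,\|\Pi_{(\varphi^\st,v^\st)}(G-G_0)\|$.

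A minor point: the cone invariance holds for every given $\mu>1$ once $\alpha<\beta$; the bunching $\alpha<\beta^2$ is not used in the cone computation itself but earlier, in Lemma~\ref{lem:F0-coord}, to obtain $C^1$ bundles and hence the block-diagonal adapted chart.
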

\begin{proof}
First of all, from Lemma~\ref{lem:F0-coord}, $DF_0(0,0,z) = \diag\{ D_xF_0^x, D_y F_0^y, D_zF_0^z\}$ with 
\begin{equation}\label{eq:F0-hyper}
  	\|D_xF_0^x\|, \|(D_y F_0^y)^{-1}\|^{-1} \le \alpha, \quad \|D_zF_0^z\|, \|(D_zF_0^z)^{-1}\|  \ge \beta. 
\end{equation}
Recall that $F_0 = (F_0^\st, \Id)$ where $F_0^\st$ is defined over a compact set. Therefore for sufficiently small $r>0$, we have 
\[
	\|\Pi_x F_0(x,y,z)\| \le (\alpha + \epsilon) \|x\|, \quad \|\Pi_y DF_0(x,y,z)\| \ge (\alpha + \epsilon)^{-1} \|y\|,
\]
hence
\[
	\Pi_xF_0(B_r^l \times B_r^l \times \Omega) \subset B_{(\alpha +\epsilon) r}^l, \quad \|\Pi_yF_0(B_r^l \times \partial B_r^l \times \Omega)\|  \ge (\alpha + \epsilon)^{-1} r .
\]
Since $\|\tilde{F} - F_0\| \le \|(1-\rho)(F-F_0)\| + \|\rho(L-F_0)\|$,  $\|\Pi_{(x,y)}(F- F_0)\| \le C \|\Pi_{(\varphi^\st, v^\st)}(G - G_0)\| \le C \delta_1$, and $\|L - F_0\| =o(r)$, by choosing $\delta_1,r$ small enough, we get 
\[
	\Pi_x\tilde{F}(B_r^l \times B_r^l \times \Omega) \subset B_r^l, \quad \|\Pi_y \tilde{F} (B_r^l \times \partial B_r^l \times \Omega)\|  > r .
\]
The first half of the above formula combined with \eqref{eq:fix-boundary} gives [C1], and the second half gives [C2]. 

We now prove the cone conditions [C3] and [C4]. We first show the map $\tilde{F}$ is well approximated by the linearized map $DF_0(0,0,z)$. 
Given any $\epsilon>0$, we use Corollary~\ref{cor:G-G0} to choose $\delta_1$ so small such that 
\[
\|D(F- F_0)\| + \|\Pi_{z^\st}(F-F_0)\| \|d\rho\| \le C\|D(G- G_0)\| + \|\Pi_{\varphi^\st, v^\st}(G-G_0)\| \|d\rho\| < \epsilon/2	
\]
 By \eqref{eq:L-F0}, we can choose $r_1$ such that for $0< r < r_1$, $\|D(F_0-L)\|+\|F_0-L\|\|d\rho\| < \epsilon/2$ on $B_r^l \times B_r^l \times (\Omega_a^\st \times \Omega^\wk)$. Then from $\tilde{F} = F + (L-F)\rho$, and the fact that $\rho$ depends only on $z^\st$ gives 
\begin{multline*}
	\|D\tilde{F}(x,y,z) - DL(x,y,z)\|  \le \|DF - DL\|  + \|\Pi_{z^\st}(F-L)\|\|d\rho\|  \\
	\le \|D(F-F_0)\| + \|D(F_0 - L)\| + (\|\Pi_{z^\st}(F-F_0)\| + \|\Pi_{z^\st}(F_0- L)\|)\|d \rho\| < \epsilon.
\end{multline*}

Consider $(x_1, y_1, z_1), (x_2, y_2, z_2) \in B_r^l \times B_r^l \times \Omega$, denote $(\Delta x, \Delta y, \Delta z) = (x_2, y_2, z_2) - (x_1,y_1,z_1)$ and $d = \|\Delta x\| + \|\Delta y\| + \dist(z_1,z_2)$. For $d$ small enough
\begin{align*}
	& \|\tilde{F}(x_2, y_2, z_2) - \tilde{F}(x_1, y_1, z_1) - DF_0(0,0,z)(\Delta x, \Delta y, \Delta z) \| \\
	&= \|L(x_2, y_2, z_2) - L(x_1, y_1, z_1)  + (\tilde{F}-L)(x_2,y_2, z_2) - (\tilde{F}-L)(x_1,y_1, z_1)
	\\ &-  DF_0(0,0,z_1)(\Delta x, \Delta y, \Delta z) \| \\
	& \le   \|D(\tilde{F}-L)(x_1, y_1, z_1)\|d \le \epsilon d.
\end{align*}

To prove [C3], we first show the linear map preserves the unstable cone. 
For any $\mu>1$ and   $(v_x, v_y, v_z) \in T_{(x_1, y_1, z_1)}B_r^l \times B_r^l \times \Omega$ with $\mu \|v^y\|^2 \ge \|v^z\|^2 + \|v^x\|^2$, let $(v_x', v_y', v_z') = DF_0(0,0,z) (v_x, v_y, v_z)$, we have
\[
	\mu \|v_y'\|^2 \ge \mu \alpha^{-1} \|v_y\|^2 \ge \alpha^{-1} (\|v_x\|^2 + \|v_z\|^2) \ge \alpha^{-1}(\|v_x'\|^2 + \beta\|v_z'\|^2) \ge \frac{\beta}{\alpha}(\|v_x'\|^2 + \|v_z'\|^2).
\]
In other words, for any $\mu>1$, we have $ DF_0(0,0,z) C_\mu^u  \subset C_{\alpha\mu/\beta}^u$.

Coming to the non-linear map $\tilde{F}$, for $(x_1, y_1, z_1), (x_2, y_2, z_2) \in B_r^l \times B_r^l \times \Omega$, let $(x_i', y_i', z_i') = \tilde{F}(x_i, y_i, z_i)$,  and $(\Delta x, \Delta y, \Delta z)$ , $(\Delta x', \Delta y', \Delta z') $ be the corresponding difference. If $(x_2, y_2, z_2) \in K_\mu^u(x_1, y_1, z_1)$, then  $\mu\|\Delta y\|^2 \ge \|\Delta x\|^2 + \dist(z_1,z_2)^2$. In particular $\dist(z_1,z_2)^2 \le \mu \|\Delta y\|^2 \le \mu r^2$. When $r$ is small enough $\|(\Delta x', \Delta y', \Delta z') - DF_0(0,0,z_1)(\Delta x, \Delta y, \Delta z) \| \le \epsilon d $.
Furthermore assume $r$ is so small that $(1-\epsilon) \dist(z,z') \le \|\Delta z\|_{(0,0,z_1)} \le (1+\epsilon) \dist(z,z')$, where $\|\Delta z\|_{(0,0,z_1)}$ is measured using the local Riemannian metric. We drop the subscript from now on. Using the linear calculation, there exists a uniform constant $C>1$ such that  
\begin{multline*}
\mu \|\Delta y'\|^2 \ge \frac{\beta}{\alpha}(\|\Delta x'\|^2 + \|\Delta z'\|^2) - C \epsilon^2 d^2 \ge \frac{\beta}{\alpha}(\|\Delta x'\|^2 + \|\Delta z'\|^2) - C (1+\mu)\epsilon^2 \|\Delta y\|^2 \\
\ge  (1-\epsilon)\frac{\beta}{\alpha}(\|\Delta x'\|^2 + \dist(z_1', z_2')^2) - C^2(1+\mu)\epsilon^2 \|\Delta y'\|^2.
\end{multline*}
noting that  $\|D\tilde{F}^{-1}\|, \|D\tilde{F}\|$ are uniformly bounded. When $\epsilon$ is small enough we get $\mu \|\Delta y\|^2 \ge \|\Delta x\|^2 + \dist(z_1', z_2')^2$. [C3] is proven. 

[C4] follows directly from $\|(\Delta x', \Delta y', \Delta z') - DF_0(0,0,z_1)(\Delta x, \Delta y, \Delta z) \| \le \epsilon d $  and \eqref{eq:F0-hyper}. The proof for $\inv(\tilde{F})$ is identical and is omitted. 
\end{proof}


\begin{proof}[Proof of Theorem~\ref{thm:nhic-persist}]
For any $\delta>0$, we choose $0 < r < \delta/C$ and $\mu>1$, where $C$ is a constant specified later. Apply  Lemma \ref{lem:F-iso}, there exists $M>0$ such that whenever $\lM(\cB^\wk)>M$, the map $\tilde{F}$ associated to $H^s(\cB^\st, \cB^\wk, p_0, U^\st, \cU^\wk)$ satisfies [C1]-[C4] on $B_r^l \times B_r^l \times (\Omega_a^\st \times \Omega^\wk)$. As a result, we obtain a function $w^c: \Omega_a^\st \times \Omega^\wk \to B_r^{m-l} \times B_r^{m-l}$ such that 
\[
	W^c = \graph(w^c) = \{(x, y, (z^\st, z^\wk)): (x,y) = w^c(z^\st, z^\wk)\}
\]
is invariant under $\tilde{F}$, and is the maximally invariant set on $B_r^{m-l} \times B_r^{m-l} \times \Omega_a^\st \times \Omega^\wk$. Since $\tilde{F} = F$ on whenever $z^\st \in \Omega_0^\st$, any $F$ invariant set with $z^\st \in \Omega_0^\st$ is also $\tilde{F}$ invariant and hence is contained in $W^c$. We now consider the map
\[
	\zeta: (z^\st, z^\wk) \mapsto h(w^c(z^\st, z^\wk), z^\st, z^\wk),
\]
then $\graph(\zeta)$ is an $F-$invariant set. Finally we invert the coordinate changes \eqref{eq:half-lag} and \eqref{eq:rescaling} to obtain the desired embedding $\eta^s = \Phi \circ \Phi_\Sigma \circ(\zeta, \Id)$. 

 Moreover, we have $\|w^c\|_{C^0} \le r$, using the fact that $h^\st(0, 0, z) = \chi^\st(z)$, and that $\Phi$ does not change the strong component,  there is $C>0$ such that $\|\zeta - \Phi^{-1} \circ \chi^\st\|_{C^0}\le C'r$. Finally, since the rescaling $\Phi_\Sigma$ do not change the strong component, there exists $C>0$ depending only on $\Phi$ such that 
 \[
 \|\Pi_{(\varphi^\st, I^\st)}\eta^s - \chi^\st\| = \|\Phi ( \zeta - \Phi^{-1} \circ \chi^\st)\| \le Cr <\delta. 
 \]
 We choose the open set $V = \Phi\circ h^\st (B_r^{m-l} \times B_r^{m-l} \times \Omega_0^\st)$, then any invariant set in $V \times \Omega^\wk$ must be contained in $\eta^s(\Omega^\st_0 \times \Omega^\wk)$. This concludes the proof. 
\end{proof}

\subsection*{Acknowledgments} The first author acknowledges NSF 
for partial support grant DMS-5237860. The authors would 
like to thank John Mather, Marcel Guardia, and Abed 
Bounemoura for useful conversations. 

\printbibliography

\end{document}